  \definecolor{theblue}{rgb}{0.02,0.04,0.8}%
  \definecolor{thered}{rgb}{0.8,0.04,0.07}%
  \definecolor{thegreen}{rgb}{0.06,0.44,0.08}%
  \definecolor{thegrey}{gray}{0.5}%
  \definecolor{theshade}{gray}{0.92}%
\newtheorem*{theorem*}{Theorem}
\newtheorem{theorem}{Theorem}[subsection]
\newtheorem{proposition}[theorem]{Proposition}
\newtheorem*{proposition*}{Proposition}
\newtheorem{lemma}[theorem]{Lemma}
\newtheorem{corollary}[theorem]{Corollary}
\newtheorem{lemma-def}[theorem]{Lemma-Definition}
\newtheorem{claim}[theorem]{Claim}
\theoremstyle{definition}
\newtheorem{definition}[theorem]{Definition}
\newtheorem{paragr}[theorem]{} 
\theoremstyle{remark}
\newtheorem{remark}[theorem]{Remark}
\newtheorem*{remark*}{Remark}
\numberwithin{equation}{theorem}%
\newcommand{\braid}[2]{\lbrace #1, #2\rbrace}
\newcommand{\cprod}[1]{\wedge^{#1}} 
\newcommand{\eqdef}{\overset{\text{\normalfont\tiny def}}{=}}
\newcommand{\coloneq}{\mathrel{\mathop:}=}
\renewcommand{\phi}{\varphi}
\renewcommand{\epsilon}{\varepsilon}
\newcommand{\Cech}{\v{C}ech\xspace}
\newcommand{\loccit}{loc.\ cit.\xspace}
\newcommand{\lto}{\longrightarrow} 
\newcommand{\lmto}{\longmapsto}
\newcommand{\iso}{\simeq}
\newcommand{\isoto}{\overset{\sim}{\rightarrow}} 
\newcommand{\lisoto}{\overset{\sim}{\longrightarrow}} 
\newcommand{\coin}{\equiv} 
\newcommand{\coho}[1]{\operatorname{\mathrm{#1}}}%
\renewcommand{\H}{\coho{H}} 
\newcommand{\shH}{\operatorname{\underline{\mathrm{H}}}} 
\def\varLim@#1#2{%
  \vtop{\m@th\ialign{##\cr
    \hfil$#1\operator@font Lim$\hfil\cr
    \noalign{\nointerlineskip\kern1.5\ex@}#2\cr
    \noalign{\nointerlineskip\kern-\ex@}\cr}}%
}
\def\varinjLim{%
  \mathop{\mathpalette\varLim@{\rightarrowfill@\textstyle}}\nmlimits@
}
\def\varprojLim{%
  \mathop{\mathpalette\varLim@{\leftarrowfill@\textstyle}}\nmlimits@
}
\DeclareMathOperator{\Aut}{Aut}
\newcommand{\shAut}{\operatorname{\underline{\mathrm{Aut}}}}
\newcommand{\shcatAut}{\operatorname{\mathscr{A}\mspace{-3mu}\mathit{ut}}}
\DeclareMathOperator{\B}{B}
\DeclareMathOperator{\cech}{\check{C}}
\DeclareMathOperator{\Coker}{Coker}
\DeclareMathOperator{\cosk}{cosk}
\DeclareMathOperator{\sk}{sk}
\newcommand{\del}{\partial} 
\newcommand{\Desc}{\mathrm{Desc}}
\DeclareMathOperator{\catExt}{\mathsf{Ext}}
\DeclareMathOperator{\shcatExt}{\mathscr{E}\mspace{-3mu}\mathit{xt}}
\DeclareMathOperator{\Hom}{Hom} 
\DeclareMathOperator{\catHom}{\mathsf{Hom}}
\DeclareMathOperator{\twocatHom}{\underline{\mathsf{Hom}}}
\DeclareMathOperator{\shHom}{\underline{\mathrm{Hom}}}
\DeclareMathOperator{\shcatHom}{\mathscr{H}\mspace{-4mu}\mathit{om}}
\DeclareMathOperator{\id}{id} 
\DeclareMathOperator{\im}{Im}
\DeclareMathOperator{\Id}{Id}
\DeclareMathOperator{\Ker}{Ker} 
\newcommand{\shcatKer}{\mathscr{K}\mspace{-4mu}\mathit{er}}
\newcommand{\colim}{\varinjlim}
\renewcommand{\lim}{\varprojlim}
\newcommand{\Lim}{\varprojLim}
\DeclareMathOperator{\N}{N}
\DeclareMathOperator{\W}{\overline{W}}
\newcommand{\cat}[1]{\mathsf{#1}} 
\newcommand{\twocat}[1]{\underline{\mathsf{#1}}}  
\newcommand{\bicat}[1]{\underline{\mathsf{#1}}} 
\DeclareMathOperator{\Ob}{Ob}
\DeclareMathOperator{\Mor}{Mor}
\newcommand{\Set}{\cat{Set}}
\newcommand{\Mod}{\cat{Mod}}
\newcommand{\site}[1]{\cat{#1}}
\newcommand{\sS}{\site{S}}
\newcommand{\s}{\sS}
\newcommand{\rng}[1][\s]{\mathcal{O}_{#1}}
\newcommand{\pre}[1]{{\site{#1}^\wedge}}
\newcommand{\prshv}{\pre{\s}}
\newcommand{\grpd}[1]{\cat{#1}}
\newcommand{\smp}[1]{\underline{#1}}
\newcommand{\fib}[1]{\mathscr{#1}}
\newcommand{\fibf}{\fib{F}}
\newcommand{\stack}[1]{\mathscr{#1}}
\newcommand{\stb}{\stack{B}}
\newcommand{\stc}{\stack{C}}
\newcommand{\stf}{\stack{F}}
\newcommand{\stx}{\stack{X}}
\newcommand{\sty}{\stack{Y}}
\newcommand{\stwm}{\stack{W}\mspace{-6mu}\stack{M}}
\newcommand{\grstack}[1]{\mathscr{#1}}
\newcommand{\gra}{\grstack{A}}
\newcommand{\grc}{\grstack{C}}
\newcommand{\grd}{\grstack{D}}
\newcommand{\grg}{\grstack{G}}
\newcommand{\grh}{\grstack{H}}
\newcommand{\grk}{\grstack{K}}
\newcommand{\twofib}[1]{\mathfrak{#1}}
\newcommand{\twostack}[1]{\mathfrak{#1}}
\newcommand{\bistack}[1]{\mathfrak{#1}}
\newcommand{\tstf}{\twostack{F}}
\newcommand{\CM}{\bicat{XMod}} 
\newcommand{\cm}{\bistack{XMod}} 
\newcommand{\picCM}{\bicat{PicXMod}} 
\newcommand{\piccm}{\bistack{PicXMod}} 
\newcommand{\ch}{\bistack{Ch}^{[-1,0]}}
\newcommand{\tors}{\operatorname{\textsc{Tors}}}
\newcommand{\stacks}{\operatorname{\textsc{Stacks}}}
\newcommand{\grstacks}{\operatorname{\textsc{Gr\mbox{-}Stacks}}}
\newcommand{\picstacks}{\twostack{Pic}}
\author{Ettore Aldrovandi}
\address{Department of Mathematics\\
  Florida State University\\
  1017 Academic Way\\
  Tallahassee, FL 32306-4510, USA}
\email{aldrovandi@math.fsu.edu}
\author{Behrang Noohi}
\address{Department of Mathematics\\
  Florida State University\\
  1017 Academic Way\\
  Tallahassee, FL 32306-4510, USA}
\email{noohi@math.fsu.edu}
\title{Butterflies I: morphisms of 2-group stacks}
\begin{document}
\frontmatter
\begin{abstract}
  Weak morphisms of non-abelian complexes of length 2, or crossed
  modules, are morphisms of the associated 2-group stacks, or
  gr-stacks.  We present a full description of the weak morphisms
  in terms of diagrams we call butterflies. We give a complete
  description of the resulting bicategory of crossed modules,
  which we show is fibered and biequivalent to the 2-stack of 2-group
  stacks. As a consequence we obtain a complete characterization
  of the non-abelian derived category of complexes of length
  2.  Deligne's analogous theorem in the case of
  Picard stacks and abelian sheaves becomes an immediate corollary.
  Commutativity laws on 2-group stacks are also analyzed in terms
  of butterflies, yielding new characterizations of braided,
  symmetric, and Picard 2-group stacks.  Furthermore, the
  description of a weak morphism in terms of the corresponding
  butterfly diagram allows us to obtain a long exact sequence in
  non-abelian cohomology, removing a preexisting fibration
  condition on the coefficients short exact sequence.
\end{abstract}

\maketitle%
\tableofcontents%

\mainmatter
\section{Introduction}

\label{sec:introduction}

The notion of $n$-group or, loosely speaking, $n$-categorical
group is by now quite established in mathematics.  This paper is
the first of a series aimed at a systematic study of the
$n$-category of $n$-groups.  We do it in a geometric fashion,
working with $n$-groups over a general Grothendieck site, therefore
we should rather be saying that we are studying the $n$-stack of
$n$-groups. (This is a bit imprecise, though, and it will be
appropriately qualified later, especially concerning laxness of
the various $n$-categorical constructions we consider.) Torsors
over $n$-groups are also included in our study.

The $n$-categorical aspect is emphasized, since what is of
fundamental importance are the morphisms (1-morphisms and higher)
between $n$-groups. In particular, while it is relatively
harmless to consider $n$-groups as ``strict,'' in the categorical
sense, we now understand it is not so when dealing with their
morphisms. In other words, \emph{weak} morphisms are the
important ones---and they cannot be made strict. Thus one of our
main points is to characterize precisely and provide an explicit
and---we believe---very manageable construction of these weak
morphisms: the butterflies of this series' title.  In doing so,
we are also in position to obtain a very complete and concrete
description of \emph{torsors} over said $n$-groups, in particular
concerning the functorial aspects, where previously only strict
morphisms were considered.

To explain things in slightly more detail, it is useful to do it
in the case $n=2$, which is covered in the present paper
and its sequel \cite{ButterfliesII}, which deals with the
torsors, whereas the situation for $n=3$ and higher is dealt with
in \cite{ButterfliesIII,ButterfliesIV}.  The reader shall be
aware that many remarks and observations apply to higher $n$'s as
well.

What we do in this paper is present a general framework for
morphisms of 2-groups on a Grothendieck site.  It is roughly
divided in two parts.  The first, more general part deals with
the 2-categorical aspects; the second, discusses what may even be
called applications of the theory of morphisms of 2-groups built
in the first part. We discuss exact sequences of 2-groups, the
long exact sequence in non-abelian cohomology, and devote some
space to 2-groups in abelian categories, that is Picard
groupoids, as in~\cite{0259.14006}. This is the fundamental
motivating example, and in a sense, one could characterize the
present (and similar) investigations as quest for a geometric
approach to the (non-abelian) derived category in the same spirit
as \loccit

\subsection{General beginning remarks}
\label{sec:gener-beginn-remarks}

First, a question of terminology: in this introduction, and when
used in the rest of the paper, the term ``2-group'' without
further qualification usually means ``2-group stack.''  This is
(should be?) a modern substitute for the term ``gr-stack.'' Over
a point, a gr-stack becomes a gr-category, and this is also
called a 2-group (over a point). Not having been able to complete
the transition ourselves, we too pervasively use the terms
gr-stack and gr-category in the main body.

Whichever way it is called, a 2-group is usually by nature a lax
object, in the sense that its algebraic operations involve higher
coherences; for example, multiplication is only associative up to
coherent isomorphism. This certainly the case for 2-groups
\emph{stacks} over a site.

Over a point, it has been known for
quite some time (see \cite{sinh:gr-cats}) that a 2-group
$\mathscr{G}$ can always be made strict.  This means that
$\mathscr{G}$ is equivalent, as a category, to some $\grpd{G}$
such that for the latter the group operations hold with
equalities, and not just up to isomorphism, and the equivalence
is an additive functor.  Standard arguments then imply that
$\grpd{G}$ is the strict gr-category determined by---and
determining---a crossed module
\begin{equation*}
  G^{\bullet} \colon G^{-1} \overset{\delta}{\lto} G^0.
\end{equation*}
$G^\bullet$ is the reduced Moore complex obtained from the nerve
$\smp{G}_\bullet = \N_\bullet \grpd{G}$, which is a simplicial
group.  Over a site $\s$ something very similar can be achieved,
namely it is possible to find a (sheaf of) crossed
module(s) $G^\bullet$ such that $\grstack{G}$ is now equivalent
to the stack associated to the (sheaf of) groupoid(s)
$\grpd{G}$---so this is a prestack---determined by $G^\bullet$.
This is certainly well known, but since it plays an important
rôle in our arguments throughout, we have decided to provide an
explicit statement with proof (see~\ref{prop:9}).

On the other hand, when we turn our attention to morphisms
between 2-groups, it is not possible to make them strict, and
this is already true---and well known---in the set-theoretic
case, that is, over a point.  To express it in a more precise
way, let $F\colon \mathscr{H}\to \mathscr{G}$ be an additive
functor between 2-groups. Assuming $\mathscr{H}$ and
$\mathscr{G}$ are equivalent to strict 2-groups $\grpd{H}$ and
$\grpd{G}$, in general $F$ will not be isomorphic to a strict
morphism $\grpd{H}\to \grpd{G}$ or, to put it in a different but
equivalent way, cannot be realized as a morphism of crossed
modules $H^\bullet \to G^\bullet$.  Thus we must grapple with the
problem that the groupoid $\shcatHom (\mathscr{H}, \mathscr{G})$
of additive functors is much larger than that of strict morphisms
from $\grpd{H}$ to $\grpd{G}$. It follows that in order to work
with strict 2-groups and still retain all the features of the
2-category of 2-groups, one must find a ``derived'' version
$\mathbf{R}\catHom (\grpd{H}, \grpd{G})$ of the groupoid of
strict morphisms from $\grpd{H}$ to $\grpd{G}$. The requirement
is that this new groupoid be equivalent to $\shcatHom
(\mathscr{H}, \mathscr{G})$. It objects are the \emph{weak
  morphisms} from $\grpd{H}$ to $\grpd{G}$.

In the set-theoretic case, this problem was solved by the
second-named author in refs.\ \cite{Noohi:notes,Noohi:weakmaps}.
The existence of $\mathbf{R}\catHom (\grpd{H}, \grpd{G})$ was
established by methods of homotopical algebra. Then, a very
concrete description of it was given in terms of group diagrams
called \emph{butterflies.}

A butterfly from $H^\bullet$ to $G^\bullet$ is a diagram of the
form
\begin{equation}
  \label{eq:1}
  \vcenter{%
  \xymatrix@=1pc{%
    H^{-1}\ar[dd] \ar[dr]  & &G^{-1} \ar[dl] \ar[dd]\\
    & E\ar[dl] \ar[dr] &  \\
    H^0 & & G^0
  }}\tag{*}
\end{equation}
where the NW-SE sequence is a complex, the NE-SW sequence is
exact, i.e.\ a group extension, plus some other conditions which
will be explained later. There is also a notion of morphism of
butterflies: it is induced by a group isomorphism $E\isoto E'$
compatible with the rest of the various maps, so one has a
groupoid.  Strict morphisms corresponds to butterfly diagrams
whose NE-SW sequence is split, with the group at the center equal
to the semi-direct product $H^0\ltimes G^{-1}$.

Incidentally, the adjective ``derived'' used earlier is not
entirely out of place: an equivalent formulation of the butterfly
is that a weak morphism can be realized as a triangle
\begin{equation}
  \label{eq:2}
  \vcenter{%
  \xymatrix@C-2.5pc{%
    &  [H^{-1}\times G^{-1} \to E ] \ar @(l,ur) [dl]_(0.7)\sim
    \ar @(r,ul) [dr] & \\ 
    [H^{-1} \to H^0] & &  [G^{-1} \to G^0]
  }}\tag{+}
\end{equation}
where each arrow is a strict morphism of crossed modules, and the
left one is a quasi-isomorphism (it preserves homotopy groups).

Since butterflies can be nicely composed by juxtaposition, one
obtains a nice bicategory of strict 2-groups and weak morphisms
which carries the right kind of homotopical information.

The actual choice of the definition of weak morphism is discussed
in section~\ref{sec:remarks-weak-morph}, where it is compared to
the topological one given in refs.\
\cite{Noohi:notes,Noohi:weakmaps}.  One may also wonder whether
it is possible to further weaken the notion of weak morphism as
given here. We observe (still in
section~\ref{sec:remarks-weak-morph}) that it is related to a
categorification of Morita's theory and is, however, essentially
a theory of 2-stacks. Furthermore it leads to non-additive
functors.

\subsection{The content of the paper}
\label{sec:content-paper}
The following is a rather long discussion of the main ideas in
this paper.  For a quick glance at the content's description, the
reader is invited to read section~\ref{sec:organization-paper}
first.

\subsubsection{Butterflies and 2-groups}
\label{sec:butterflies-2-groups}

The first part of this paper, and in part its immediate sequel
\cite{ButterfliesII}, center on the same question of finding a
correct model for the 2-category of 2-group stacks: we want to
strictify the objects, but retain an accurate information on the
morphisms. At a minimum, the ``model'' in question is a
bicategory, whose objects are the crossed modules over the site,
equipped with a biequivalence with the 2-category of
2-groups. More is true when working over a general site, as we
now explain.

First, let us be clear about the 2-category of 2-groups, let us
denote it by $\twocat{Gr\mbox{-}Stacks}(\s)$.  This is an honest
2-category, and it is a sub-2-category of the 2-category
$\twocat{Stacks} (\s)$. Moreover, considering the site $\s /U$,
for each object $U\in \s$, and the 2-category
$\twocat{Gr\mbox{-}Stacks}(\s/ U)$ yields a fibered 2-category
over the site $\s$ (in the sense of \cite{MR0364245}). It turns
out this fibered 2-category, denoted $\grstacks (\s)$, actually
is a 2-stack over $\s$.  This fact is certainly well known to
experts, but, being unable to find a published account of it, we
have included it here (cf.\ Appendix~\ref{sec:2-stack-gr}).

We have already observed that given a 2-group $\grg$ over $\s$ we
can find a crossed module $G^\bullet$ such that its associated
stack is equivalent to $\grg$ (cf.\ Proposition~\ref{prop:9}). It
is convenient to denote by $[G^{-1}\to G^0]\sptilde$ the stack
associated to $G^\bullet$. Note that the groupoid $\grpd{G}$
determined by $G^\bullet$ is in general only a prestack: in going
from the 2-group to the crossed module, the price we pay is to
lose the stack condition, namely the gluing conditions on
objects.

With the obvious changes, the notion of butterfly
diagram~\eqref{eq:1} still makes sense over $\s$, as well as that
of morphism of butterflies.  Let $\cat{B}(H^\bullet, G^\bullet)$
be the groupoid of butterflies from $H^\bullet$ to
$G^\bullet$. In the main text we prove (cf.\
Theorem~\ref{thm:2}):
\begin{theorem*}
  There is an equivalence
  \begin{equation*}
    \cat{B}(H^\bullet, G^\bullet)\lisoto 
    \catHom_{\twocat{Gr\mbox{-}Stacks} (\s)} ({H^\bullet}\sptilde, {G^\bullet}\sptilde).
  \end{equation*}
\end{theorem*}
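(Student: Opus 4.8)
The plan is to produce an explicit functor $\Phi\colon \cat{B}(H^\bullet, G^\bullet)\to \catHom_{\twocat{Gr\mbox{-}Stacks}(\s)}(\grh,\grg)$, where $\grh$ and $\grg$ denote the $2$-group stacks associated to $H^\bullet$ and $G^\bullet$ by Proposition~\ref{prop:9}, and then to check that $\Phi$ is essentially surjective and fully faithful. The construction of $\Phi$ rests on the fraction picture~\eqref{eq:2}. Given a butterfly~\eqref{eq:1} with central group $E$ and structure maps $\kappa\colon H^{-1}\to E$, $\iota\colon G^{-1}\to E$, $\pi\colon E\to H^0$, $p\colon E\to G^0$, I would assemble the crossed module $K^\bullet = [\,H^{-1}\times G^{-1}\to E\,]$, whose differential sends $(h,g)$ to $\kappa(h)\,\iota(g)$ and on which $E$ acts through $\pi$ and $p$ together with the actions in $H^\bullet$ and $G^\bullet$. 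The two projections furnish strict morphisms $p_H\colon K^\bullet\to H^\bullet$ and $p_G\colon K^\bullet\to G^\bullet$, and a direct computation with the butterfly axioms (the exactness of $G^{-1}\to E\to H^0$, the relations $\pi\kappa=\delta$ and $p\iota=\delta$, and the vanishing $p\kappa=1$) shows that $p_H$ induces isomorphisms on both homotopy groups, hence is a quasi-isomorphism. Passing to associated stacks, $p_H$ becomes an equivalence $\grstack{K}\lisoto\grh$; choosing a quasi-inverse and composing with the functor induced by $p_G$ yields the additive functor $\Phi(\text{butterfly})\colon \grh\to\grg$. A morphism of butterflies is an isomorphism $E\isoto E'$ compatible with all the structure maps, hence an isomorphism of the two fractions, and therefore induces a natural isomorphism of the associated functors; this makes $\Phi$ a functor of groupoids.

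For essential surjectivity I would reverse the construction, building the butterfly directly from an additive functor $F\colon\grh\to\grg$. Presenting $\grg$ through $G^\bullet$ and its canonical atlas $G^0\to\grg$ (Proposition~\ref{prop:9}), define $E$ to be the sheaf whose sections over $U$ are pairs $(a,\sigma)$, where $a$ is a section of $H^0$ and $\sigma$ is a trivialisation of $F(a)$, i.e.\ a lift of $F(a)$ along the atlas; equivalently, $E$ is the rigidified homotopy fibre product $\grh\times^{h}_{\grg}G^0$. The first projection gives $\pi\colon E\to H^0$, whose kernel is the sheaf $G^{-1}$ of automorphisms of the trivial object, producing the group extension $1\to G^{-1}\to E\to H^0\to 1$ that is the NE--SW row of the butterfly; the record of the $G^0$-lift gives $p\colon E\to G^0$, the inclusion of automorphisms gives $\iota\colon G^{-1}\to E$, and applying $F$ to the morphisms of $\grpd{H}$ (parametrised by $H^{-1}$) gives $\kappa\colon H^{-1}\to E$. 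Additivity of $F$, together with the presentation of $\grg$ from Proposition~\ref{prop:9}, yields all the butterfly axioms, and unwinding the definition of $\Phi$ shows that $\Phi$ of this butterfly is naturally isomorphic to $F$.

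Full faithfulness is then a matter of matching morphisms on the two sides, both of which are groupoids. A $2$-morphism between the functors attached to two butterflies, read through the fraction presentation and the atlas description of $\grg$, is exactly a compatible isomorphism of the central groups $E\isoto E'$, i.e.\ a morphism of butterflies, and this correspondence is bijective; in the trivialisation model above it is the identification of a natural transformation $F\Rightarrow F'$ with an isomorphism of the corresponding sheaves of pairs respecting $\pi$, $p$, $\iota$, $\kappa$.

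The step I expect to be the main obstacle is essential surjectivity, specifically showing that the sheaf $E$ built from $F$ is genuinely a sheaf of \emph{groups} fitting into the extension $1\to G^{-1}\to E\to H^0\to 1$ and that all the equivariance conditions hold. Over a point this is the second-named author's set-theoretic computation, but over a general site the passage through the associated-stack functor and the local nature of trivialisations must be handled with care; it is precisely Proposition~\ref{prop:9}, presenting $\grg$ by $G^\bullet$ with its atlas, that makes the homotopy fibre product $\grh\times^{h}_{\grg}G^0$ representable by the required sheaf of groups and thereby converts an abstract weak morphism back into a concrete butterfly.
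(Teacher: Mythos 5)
Your proposal is correct, but the route you take to build the functor from butterflies to additive functors is genuinely different from the paper's. The paper defines $\Phi$ directly on the torsor model: for an $(H_1,H_0)$-torsor $(Q,t)$ it sets $\Phi(E)(Q,t)=\bigl(\shHom_{H_1}(Q,E)_t,\ \jmath\circ(-)\bigr)$, the $G_1$-torsor of local lifts of $t$ to $E$; full faithfulness is then proved by evaluating on the trivial torsors $\pi_\grh(y)$, so that a natural transformation gives maps of fibers $E_y\to E'_y$ which are glued into a group isomorphism $E\isoto E'$ via Schreier theory (the bitorsor cocycle $E_y\wedge^{G_1}E_{y'}\isoto E_{yy'}$). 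You instead pass through the fraction $H^\bullet\xleftarrow{\ \sim\ }[H^{-1}\times G^{-1}\to E]\to G^\bullet$, invert the quasi-isomorphism after stackification, and compose; this is exactly the "derived" picture the paper only records \emph{after} the theorem (section on the derived category of crossed modules), and it buys conceptual clarity -- it makes the statement visibly a categorification of Deligne's -- at the price of (i) needing the lemma that a quasi-isomorphism of gr-stacks is an equivalence (the paper proves this independently, by gerbe theory, so there is no circularity, but you must prove or cite it), and (ii) managing the coherence of chosen quasi-inverses when making $\Phi$ functorial. Your essential-surjectivity step coincides with the paper's $\Psi$: the group is $H^0\times_{\grg}G^0$ (note your displayed formula $\grh\times^{h}_{\grg}G^0$ is a slip; your verbal description of pairs $(a,\sigma)$ with $a$ a section of $H^0$ is the correct one), and the comparison $\Phi(\Psi(F))\iso F$ is precisely the content of the paper's key proposition, proved there first on objects $\pi_\grh(y)$ and then extended by descent -- which matches what you flag as the main obstacle. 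The thinnest part of your write-up is full faithfulness: you assert the bijection between natural transformations and butterfly isomorphisms rather than prove it; to close it one evaluates on the atlas as above and then needs the gluing argument (compatibility with the multiplicative structure of the fibers $E_y$, i.e.\ Schreier theory) to see that the resulting family of maps $E_y\to E'_y$ assembles into a single group homomorphism -- the ingredients you name are the right ones, but this gluing step is where the actual work sits in the paper's proof.
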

The right-hand side is more or less by definition the groupoid of
\emph{weak morphisms} from $H^\bullet$ to $G^\bullet$.  Moreover,
both sides have fibered analogs
\begin{equation*}
  \stack{B}(H^\bullet, G^\bullet)\,,\quad
  \shcatHom_{\grstacks (\s)} ({H^\bullet}\sptilde, {G^\bullet}\sptilde)
\end{equation*}
over $\s$ which are stacks (in groupoids).

For three crossed modules $K^\bullet$, $H^\bullet$, and
$G^\bullet$ there is a non-associative composition
\begin{equation*}
  \cat{B}(K^\bullet, H^\bullet)\times
  \cat{B}(H^\bullet, G^\bullet)\lto
  \cat{B}(K^\bullet, G^\bullet),
\end{equation*}
again given by juxtaposition, and similarly for $\stack{B}(-,-)$
replacing $\cat{B}(-,-)$. Combining with the fact that for every
2-group $\grg$ we can find a crossed module $G^\bullet$ such that
there is an equivalence $\grg\iso {G^\bullet}\sptilde$, we obtain
the following statement (roughly corresponding to
Theorem~\ref{thm:5}):
\begin{theorem*}
  There is a bicategory $\cm (\s)$ fibered over $\s$ whose
  objects are crossed modules, 1-morphisms are butterflies, and
  2-morphisms are morphisms of butterflies. Moreover $\cm (\s)$
  is a bistack over $\s$ and the correspondence $G^\bullet \to
  {G^\bullet}\sptilde$ induces a biequivalence
  \begin{equation*}
    \cm (\s) \lisoto \grstacks (\s).
  \end{equation*}
\end{theorem*}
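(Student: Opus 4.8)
The plan is to build the bicategory $\cm(\s)$ piece by piece and then promote the pointwise theorem (the first displayed theorem in the excerpt, i.e.\ Theorem~\ref{thm:2}) to a biequivalence of bistacks. First I would define the data of $\cm(\s)$ explicitly: objects are crossed modules over $\s$, the Hom-groupoid from $H^\bullet$ to $G^\bullet$ is $\cat{B}(H^\bullet,G^\bullet)$, and horizontal composition is the juxtaposition of butterflies already introduced. The immediate technical chore is to produce the coherence data making this a genuine bicategory: I would exhibit the associator as the canonical isomorphism comparing the two bracketings of a triple juxtaposition (both are computed from pullbacks/pushouts of the central extensions $E$, so a universal-property argument yields a canonical iso), exhibit left and right unitors against the ``identity butterfly'' (the one whose central group is the semidirect product, corresponding to the identity crossed module), and then verify the pentagon and triangle axioms. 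These verifications are diagram chases on the central groups $E$ and are routine but must be recorded.

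Next I would pass to the fibered/stack-theoretic level. For each $U\in\s$ the same construction over $\s/U$ gives a bicategory, and restriction along $U'\to U$ gives the pseudofunctoriality, so $\cm(\s)\to\s$ is a fibered bicategory. To show it is a \emph{bistack} I must check descent in two layers: the Hom-stacks $\stack{B}(H^\bullet,G^\bullet)$ are already asserted to be stacks in the excerpt, which gives descent for $1$- and $2$-morphisms; what remains is descent for objects, i.e.\ that a crossed module given locally together with gluing butterflies satisfying the cocycle condition can be glued to a global crossed module. Here I would not glue the crossed modules directly (they need not glue, since passing to $\grpd{G}$ only yields a prestack) but rather glue the associated $2$-group stacks ${G^\bullet}\sptilde$ using that $\grstacks(\s)$ is a $2$-stack (Appendix~\ref{sec:2-stack-gr}), then invoke Proposition~\ref{prop:9} to strictify the glued $2$-group back to a crossed module. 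This reduces bistack-descent for $\cm(\s)$ to the already-established $2$-stack property of $\grstacks(\s)$ together with strictification.

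Finally I would assemble the biequivalence $\cm(\s)\lisoto\grstacks(\s)$. The assignment on objects is $G^\bullet\mapsto{G^\bullet}\sptilde$, and on Hom-groupoids it is precisely the equivalence of Theorem~\ref{thm:2} (in its fibered form comparing $\stack{B}(H^\bullet,G^\bullet)$ with $\shcatHom_{\grstacks(\s)}({H^\bullet}\sptilde,{G^\bullet}\sptilde)$). To upgrade this to a pseudofunctor I must supply the compositor $2$-cells witnessing that juxtaposition of butterflies corresponds, under $\sptilde$, to composition of weak morphisms of $2$-groups, and check these compositors are coherent. Essential surjectivity on objects is exactly Proposition~\ref{prop:9} (every $2$-group is equivalent to some ${G^\bullet}\sptilde$), and fully-faithfulness at the level of Hom-bicategories is Theorem~\ref{thm:2}; a biequivalence then follows from the bicategorical analogue of the ``essentially surjective $+$ fully faithful'' criterion.

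I expect the main obstacle to be the compatibility of the compositor with the associator---that is, verifying that the equivalence of Theorem~\ref{thm:2} is not merely an equivalence of groupoids for each fixed pair of objects but is \emph{pseudofunctorial}, intertwining the non-associative juxtaposition on the butterfly side with the honest composition of additive functors on the $2$-group side, coherently up to the associators. This is the step where the non-associativity of butterfly composition (flagged in the excerpt) meets the genuine associativity of functor composition, so the associator $2$-cells must be shown to be exactly the data that $\sptilde$ transports, and the resulting hexagon/coherence diagrams must be checked to commute.
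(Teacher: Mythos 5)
Your proposal follows essentially the same route as the paper: the bicategory structure is Theorem~\ref{thm:3}, fiberedness is Proposition~\ref{prop:8}, the stack property of the Hom-fibrations $\stb(H^\bullet,G^\bullet)$ gives the pre-bistack structure, 2-faithfulness of $G^\bullet\mapsto{G^\bullet}\sptilde$ is Theorem~\ref{thm:2}, essential surjectivity is the strictification Proposition~\ref{prop:9}, and object-descent is obtained by passing through the 2-stack $\grstacks(\s)$ of Theorem~\ref{thm:4}. The only (immaterial) difference is one of ordering: you run the gluing-and-strictify argument to get the bistack property before assembling the biequivalence, whereas the paper establishes the biequivalence first and then deduces that $\cm(\s)$ is a bistack as a consequence.
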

The right-hand side above is a genuine 2-stack, i.e.\ it is
fibered in 2-categories, which is considered as being fibered in
bicategories in the obvious way.

This theorem has the rather striking consequence that crossed
modules can be glued relative to 2-descent data formulated in
terms of butterflies.

We would like to remark that the previous theorem gives
us the right take on on the strict/weak question. Namely, on one
hand we have strict 2-groups: they are simpler to deal with, but
somewhat non geometric, in the sense that the stack condition does
not hold, and therefore there is no gluing on their objects, in
general.  Strictness of the group law entails that they must comprise
a bicategory: their morphisms compose in a non-associative way.
At the same time morphisms can be described rather concretely, in
terms of diagram, but observe that they necessarily cannot be
functors relative to the (strict) group law. On the
other hand, weak 2-groups, that is 2-group stacks, are seemingly
more complicated, but they are the true geometric, in that their
objects glue and can be given a description in terms of
torsors. Weakness of the group law allows them to collectively
form a genuine 2-category:  morphisms between weak 2-groups are
\emph{functors} relative to the weak group law, and therefore
they obviously compose in an associative way.

\subsubsection{Exact sequences and abelian categories}
\label{sec:exact-sequ-abel}

There is a number of immediate applications ensuing from the
notion of butterfly diagram, which are discussed in the rest of
the paper.
The reason for including them in this paper is
that they are closer to the general theory developed in the first
part of the paper.  In particular, they result from the analysis
of the homotopy kernel and homotopy fiber of a butterfly, that is
of a weak morphism.

The motivating example is again~\cite{0259.14006}: weak morphisms of
length-two complexes of abelian sheaves correspond to additive
functors of Picard stacks. This allows to geometrically describe
the derived category $\cat{D}^{[-1,0]}(\s)$ of complexes of
abelian sheaves over $\s$ whose homology is concentrated in
degrees $[-1,0]$.

Deligne's constructions become a special case of those of
sections~\ref{sec:butt-weak-morph} and
\ref{sec:fiber-bicat-cross}, when they are specialized to the
abelian category of abelian sheaves over $\s$. In this case, the
objects are complexes $G^{-1}\to G^0$ of abelian sheaves without
further qualifications. The associated stack $[G^{-1}\to
G^0]\sptilde$ is Picard.  Since the butterfly diagram from
$H^\bullet$ to $G^\bullet$ is a more or less canonical
representation of a weak morphism, i.e.\ an additive functor of
Picard stacks $[H^{-1}\to H^0]\sptilde \to [G^{-1}\to
G^0]\sptilde$, its cone in the derived category $\cat{D} (\s)$
(the cone is no longer in $\cat{D}^{[-1,0]} (\s)$ is visible in
the butterfly: it is the NW-SE diagonal of~\eqref{eq:1}.

In the non-abelian setting one should rather be using the
homotopy fiber construction, as it was done in
\cite{Noohi:weakmaps} in the set-theoretic context. Over a
general site $\s$, the corresponding construction is the
following (cf.\ Theorem~\ref{thm:6}).
\begin{theorem*}
  Let $F^\bullet\colon H^{-1}\to E\to G^0$ be the complex in
  degrees $[-2,0]$ corresponding to the NW-SE diagonal
  of~\eqref{eq:1}. There is a 2-stack $\tstf$ over $\s$
  associated to $F^\bullet$ whose
  homotopy groups in the sense of \cite{MR95m:18006} are
  \begin{equation*}
    \pi_i (\tstf) = \H^{-i} (F^\bullet)\,,\quad i=0,1,2,
  \end{equation*}
  and fit into the expected exact sequence
  \begin{equation*}
    \xymatrix{%
      0 \ar[r] & \pi_{2}(\twostack{F}) \ar[r]&
      \pi_{1}(\grh) \ar[r] & \pi_{1} (\grg) \ar[r] &
      \pi_{1}(\twostack{F}) \ar `r[d] `[lll] `[dlll] [dlll]\\
      & \pi_{0}(\grh) \ar[r] & \pi_{0} (\grg) \ar[r] & \pi_0
      (\twostack{F}) \ar[r] & 1
    }
  \end{equation*}
  The 2-stack $\tstf$ is part of the homotopy fiber sequence of
  2-stacks over $\s$:
  \begin{equation*}
    \grh[0] \xrightarrow{F[0]} \grg[0]
    \overset{\iota}{\lto} \tstf
    \overset{\Delta}{\lto}
    \tors ({\grh}) \overset{F_*}{\lto} \tors ({\grg}).
  \end{equation*}
\end{theorem*}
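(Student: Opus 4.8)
The plan is to obtain $\tstf$ as the homotopy fiber of the morphism $F_{*}\colon\tors(\grh)\to\tors(\grg)$ induced on classifying $2$-stacks by the weak morphism $F$ that the butterfly~\eqref{eq:1} represents (Theorem~\ref{thm:2}), and then to read off both the homotopy sheaves and the fibration sequence from the resulting long exact sequence. First I record the input that makes this work: by the strictification $\grg\iso{G^\bullet}\sptilde$ (Proposition~\ref{prop:9}) and Breen's computation of the homotopy sheaves of a $2$-stack \cite{MR95m:18006}, $\tors(\grg)$ is a connected $2$-stack with homotopy sheaves $\pi_{0}(\tors\grg)=*$, $\pi_{1}(\tors\grg)=\pi_{0}(\grg)=\Coker\delta_{G}$, and $\pi_{2}(\tors\grg)=\pi_{1}(\grg)=\Ker\delta_{G}$, and likewise for $\grh$; the vanishing of $\pi_{0}(\tors\grg)$ is exactly local triviality of torsors. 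Setting $\tstf=\mathrm{hfib}(F_{*})$ over the trivial $\grg$-torsor then produces, formally, the last three maps of the asserted sequence, $\tstf\overset{\Delta}{\lto}\tors(\grh)\overset{F_{*}}{\lto}\tors(\grg)$.

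The heart of the argument is to identify this abstract homotopy fiber with the $2$-stack built directly from the NW--SE diagonal $F^{\bullet}\colon H^{-1}\overset{\iota_{H}}{\lto}E\overset{\pi_{G}}{\lto}G^{0}$, which is what justifies the phrase ``associated to $F^{\bullet}$.'' Here I use the full package of butterfly relations carried by~\eqref{eq:1}: the extension $1\to G^{-1}\overset{\iota_{G}}{\lto}E\overset{\pi_{H}}{\lto}H^{0}\to1$, the factorizations $\pi_{H}\iota_{H}=\delta_{H}$ and $\pi_{G}\iota_{G}=\delta_{G}$, and the complex condition $\pi_{G}\iota_{H}=1$. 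Presenting $\grh$ through the quasi-isomorphism $[H^{-1}\times G^{-1}\to E]\to[H^{-1}\to H^{0}]$ of~\eqref{eq:2}, an object of $\tstf$ over $U$ is an $\grh$-torsor equipped with a trivialization of its image under $F_{*}$; unwinding this datum through $E$ and $\pi_{G}$ shows that its descent data are precisely those of $F^{\bullet}$, so that $\tstf$ is equivalent, as a $2$-stack, to the one attached to $F^{\bullet}$. The $2$-stack structure, and in particular its gluing, is supplied by the bistack statement of Theorem~\ref{thm:5}.

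With this model in hand the homotopy sheaves are immediate, either by direct inspection of $F^{\bullet}$ or from the long exact sequence of the fibration $\tstf\to\tors(\grh)\to\tors(\grg)$. Directly, $\pi_{0}(\tstf)=\Coker\pi_{G}=\H^{0}(F^{\bullet})$, $\pi_{1}(\tstf)=\Ker\pi_{G}/\im\iota_{H}=\H^{-1}(F^{\bullet})$, and $\pi_{2}(\tstf)=\Ker\iota_{H}=\H^{-2}(F^{\bullet})$, the needed normalities (e.g.\ $\im\pi_{G}\triangleleft G^{0}$ and $\im\iota_{H}\triangleleft\Ker\pi_{G}$) following from the equivariance built into the butterfly. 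Feeding the homotopy sheaves of $\tors(\grh)$ and $\tors(\grg)$ into the long exact sequence, and using $\pi_{0}(\tors\grh)=*$ to truncate it on the right, yields exactly the displayed six-term sequence; one checks that the connecting map $\pi_{1}(\grh)\to\pi_{1}(\grg)$, i.e.\ $\Ker\delta_{H}\to\Ker\delta_{G}$, sends $h$ to the unique $g$ with $\iota_{G}(g)=\iota_{H}(h)$, which exists because $\pi_{H}\iota_{H}(h)=\delta_{H}(h)=1$ places $\iota_{H}(h)$ in $\im\iota_{G}$, and whose kernel is $\Ker\iota_{H}=\pi_{2}(\tstf)$, in agreement with the leftmost injection.

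Finally, the full fibration sequence is the Puppe sequence of $F_{*}$: looping $\tors(\grh)\overset{F_{*}}{\lto}\tors(\grg)$ on the left and invoking the delooping equivalences $\Omega\tors(\grg)\iso\grg$ and $\Omega\tors(\grh)\iso\grh$ produces $\grh[0]\overset{F[0]}{\lto}\grg[0]\overset{\iota}{\lto}\tstf\overset{\Delta}{\lto}\tors(\grh)\overset{F_{*}}{\lto}\tors(\grg)$, where $\iota$ is the canonical map from the loops of the base into the fiber. I expect the main obstacle to be the middle step: giving a clean meaning to the $2$-stack attached to the \emph{nonabelian, length-three} complex $F^{\bullet}$ and proving it agrees with $\mathrm{hfib}(F_{*})$. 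Unlike the Picard situation of \cite{0259.14006}, where Dold--Kan applies verbatim, here $\pi_{0}$ is nonabelian and $\pi_{2}$ central, so the equivalence must be established at the level of $2$-descent data rather than through a chain-level formula, which is precisely what the bistack property in Theorem~\ref{thm:5} is there to furnish.
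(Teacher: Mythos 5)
Your global strategy is the same as the paper's: realize $\tstf$ as the homotopy fiber of $F_*\colon \tors(\grh)\to\tors(\grg)$, identify that fiber with a 2-stack built out of the NW--SE complex $F^\bullet$, and read off both the six-term sequence and the fibration sequence. But the step you yourself flag as ``the main obstacle''---giving a meaning to the 2-stack attached to the non-abelian length-three complex $F^\bullet$ and proving it agrees with $\mathrm{hfib}(F_*)$---is precisely where the proposal stops being a proof, and the tool you invoke to close it cannot do so: Theorem~\ref{thm:5} concerns the bistack $\cm(\s)$ of \emph{crossed modules}, i.e.\ length-two complexes, and supplies neither a construction of, nor a descent theory for, an object associated to a length-three complex. ``Unwinding the descent data through $E$ and $\pi_G$'' is not a routine verification here; the paper itself warns that this material needs extra background and therefore works with an explicit model rather than an abstract unwinding.

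What the paper does to fill exactly this gap is: (i) introduce the auxiliary gr-stack $\grc=[H_1\to E]\sptilde\iso\tors(H_1,E)$ together with the additive functor $J\colon\grc\to G_0$ induced by $\jmath$; (ii) \emph{define} the model $\tstf=\tors(\grc,G_0)$, the 2-stack of $\grc$-torsors with $\grc$-equivariant map to $G_0$, and check that its 2-descent data are $0$-cocycles with values in $F_\bullet$ (this is what justifies ``associated to $F^\bullet$''); (iii) identify $\shcatAut(I)\iso\grk=\bigl[H_1\to\ker\jmath\bigr]\sptilde$, so that Breen's 2-gerbe theory yields $\pi_2(\tstf)=\pi_1(\grk)$ and $\pi_1(\tstf)=\pi_0(\grk)$; and (iv) prove Proposition~\ref{prop:13}, namely that the explicit 2-functor sending a $(\grc,G_0)$-torsor $(\stx,s)$ to the $(\grh,\grg)$-torsor $\bigl(\stx\cprod{\grc}\grh,\,(\pi_\grg\circ s)\wedge F\bigr)$ is a 2-equivalence $\tstf\lisoto\tors(\grh,\grg)$, the latter being the concrete description (Lemma~\ref{lem:4}) of $\mathrm{hfib}(F_*)$; the equivalence is concluded by a band argument, both sides being 2-gerbes over the same $\pi_0$ banded by $\grk$. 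Note also that this missing identification propagates to your computation of the homotopy sheaves: the long exact sequence alone only exhibits $\pi_1(\tstf)$ as an extension of $\Ker\bigl(\pi_0(\grh)\to\pi_0(\grg)\bigr)$ by $\Coker\bigl(\pi_1(\grh)\to\pi_1(\grg)\bigr)$, and does not by itself identify it with $\Ker\jmath/\im\kappa=\shH^{-1}(F_\bullet)$; that identification requires the concrete model (or equivalently the computation of the kernel $\grk$ and the band of $\tstf$), which is the very content you left unproved.
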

This provides an imperfect analog of the exact triangle in the
non-abelian setting. The fact that $\tstf$ is a 2-stack
corresponds to the fact that in the (abelian) derived category
situation by taking the cone we are now dealing with complexes of
length three. More serious is the fact that $\tstf$ does not
admit a group law, in general.

A special case occurs when $\pi_0 (\tstf) = *$. In this case the
butterfly~\eqref{eq:1} corresponds to an essentially surjective
morphism $\grh\to \grg$.  The homotopy fiber of such a morphism
is correspondingly simpler
\begin{equation*}
  \tstf \iso \tors (\grk),
\end{equation*}
where $\grk$ is the homotopy kernel of the morphism. It is itself
a 2-group stack and it has an explicit characterization in terms
of the group-objects occurring in the butterfly; namely we have
\begin{proposition*}[~\ref{prop:10}]
  The homotopy kernel of~\eqref{eq:1} is equivalent to the
  2-group stack associated to the crossed module
  \begin{equation*}
    H^{-1} \lto \ker (E\to G^0).
  \end{equation*}
\end{proposition*}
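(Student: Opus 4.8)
The plan is to compute the homotopy kernel $\grk$ of the weak morphism $F\colon\grh\to\grg$ presented by~\eqref{eq:1}, where $\grh=[H^{-1}\to H^0]\sptilde$ and $\grg=[G^{-1}\to G^0]\sptilde$, and to exhibit an explicit equivalence with $[H^{-1}\to\ker\pi]\sptilde$. Write $\kappa\colon H^{-1}\to E$, $\iota\colon G^{-1}\to E$, $p\colon E\to H^0$, $\pi\colon E\to G^0$ for the four maps of the butterfly, and set $K\coloneq\ker\pi$. First I would record that $K^\bullet\colon H^{-1}\xrightarrow{\kappa}K$ really is a crossed module: since $\pi\kappa=1$, the map $\kappa$ does land in $K$; the group $K$ acts on $H^{-1}$ through the restriction $p|_K\colon K\to H^0$ and the given $H^0$-action on $H^{-1}$; and the equivariance and Peiffer identities follow at once from the corresponding butterfly axioms ($E$-equivariance of $\kappa$, and $p\kappa=\delta_H$). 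This is essentially the only place where the defining conditions of a butterfly are used, and it is routine.

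Next I would reduce to a strict morphism. By the zig-zag presentation~\eqref{eq:2}, $F$ is modeled by $r\sptilde\circ(q\sptilde)^{-1}$, where $\tilde E^\bullet=[H^{-1}\times G^{-1}\to E]$, the quasi-isomorphism $q\colon\tilde E^\bullet\to H^\bullet$ has components $p$ and the first projection, and $r\colon\tilde E^\bullet\to G^\bullet$ has components $\pi$ and the second projection. Since $q\sptilde$ is an equivalence, it identifies $\grh$ with $\tilde E^\bullet\sptilde$ and carries the homotopy kernel of $F$ to the homotopy kernel of the \emph{strict} morphism $r\sptilde\colon\tilde E^\bullet\sptilde\to\grg$, so it suffices to compute the latter.

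The crucial observation is then purely formal: the inclusion $K\hookrightarrow E$ together with $h\mapsto(h,e)$ defines a strict morphism of crossed modules $K^\bullet\hookrightarrow\tilde E^\bullet$, and because $K=\ker\pi$ while $r$ has components $\pi$ and the projection to $G^{-1}$, the composite $K^\bullet\hookrightarrow\tilde E^\bullet\xrightarrow{r}G^\bullet$ is the \emph{trivial} morphism, not merely trivial up to homotopy. Hence, passing to associated stacks, the morphism $K^\bullet\sptilde\to\tilde E^\bullet\sptilde\iso\grh$ comes equipped with a canonical trivialization of its composite with $F$, and the universal property of the homotopy kernel produces a canonical additive functor $\Phi\colon K^\bullet\sptilde\to\grk$.

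It remains to show $\Phi$ is an equivalence, which I would do at the level of the presenting prestacks and then stackify. An object of $\grk$ over $U$ is a pair $(x,\beta)$ with $x\in E$ and $\beta\in G^{-1}$ trivializing $r(x)$, i.e. $\pi(x)=\delta_G(\beta)^{-1}$, and a morphism $(x,\beta)\to(x',\beta')$ is a pair $(h,g)\in H^{-1}\times G^{-1}$ compatible with the $\beta$'s; the functor $\Phi$ sends $y\in K$ to $(y,e)$. The substitution $(x,\beta)\mapsto\iota(\beta)x$ is the key: since $\pi(\iota(\beta)x)=\delta_G(\beta)\delta_G(\beta)^{-1}=e$, it lands in $K$, and the morphism $(e,\beta^{-1})$ exhibits $(x,\beta)\iso(\iota(\beta)x,e)$, giving essential surjectivity; dually, trivialization-compatibility forces the $G^{-1}$-component of any morphism between objects of the form $(y,e)$ to vanish, leaving exactly the morphisms $h\in H^{-1}$ of $K^\bullet\sptilde$, which gives full faithfulness. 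I would conclude that $\Phi$ is an equivalence of gr-stacks. (Alternatively, one reads $\pi_0(\grk)=K/\kappa(H^{-1})$ and $\pi_1(\grk)=\ker\kappa$ off the fiber sequence of Theorem~\ref{thm:6}, matches them with $\pi_0,\pi_1$ of $K^\bullet$, and invokes that an additive functor inducing isomorphisms on homotopy sheaves is an equivalence.) The main obstacle I anticipate is not essential surjectivity or faithfulness, which are short, but the bookkeeping needed to verify that $\Phi$ is genuinely \emph{monoidal} — that $y=\iota(\beta)x$ intertwines the induced group law on $\grk$, where trivializations multiply through the monoidal structure of $F$, with the crossed-module group law on $K^\bullet$ — together with the check that forming the homotopy kernel commutes with stackification; both ultimately rest on the equivariance axioms of the butterfly.
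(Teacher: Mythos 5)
Your route is correct in outline but genuinely different from the paper's, and one of your explicit formulas is wrong (though easily repaired). The paper does not reduce to a strict morphism at all: it stays inside the torsor model of Theorem~\ref{thm:1}, where $\grh\iso\tors(H^{-1},H^0)$ and the functor attached to the butterfly sends $(Q,t)$ to the $G^{-1}$-torsor $\shHom_{H^{-1}}(Q,E)_t$ of local lifts of $t$ to $E$, with equivariant map $e\mapsto\pi(e)$ (in your notation). A trivialization of $F(Q,t)$ is then literally a global lift $e\colon Q\to E$ of $t$ with $\pi\circ e=1$, i.e.\ an $H^{-1}$-equivariant map $Q\to\ker\pi$; so objects of $\grk$ are exactly $(H^{-1},\ker\pi)$-torsors, whence $\grk\iso\tors(H^{-1},\ker\pi)\iso[H^{-1}\to\ker\pi]\sptilde$, with functoriality, additivity, and the strictness of $J$ visible at once. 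Your reduction via the fraction~\eqref{eq:2} buys an elementary groupoid computation, but it costs exactly the two inputs you defer: that $q\sptilde$ is an equivalence (quasi-isomorphism implies equivalence of associated gr-stacks is itself a lemma, Lemma~\ref{lem:5}, proved by a gerbe argument), and that stackification commutes with the 2-fiber product defining the kernel --- an exactness property of stackification, not, as your closing sentence suggests, a consequence of the butterfly equivariance axioms. The paper's argument avoids both because its model is already a stack. Also, the monoidality bookkeeping you worry about is free on your own route: since $\Phi$ comes from the universal property in gr-stacks it is additive by construction, and an additive functor whose underlying functor is an equivalence of stacks is an equivalence of gr-stacks.

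The concrete error is in essential surjectivity. With the paper's right-action conventions (a morphism of $\grpd{G}$ is $(x,g)\colon x\to x\,\del g$) and your normalization $\pi(x)=\del_G(\beta)^{-1}$, a morphism $(x,\beta)\to(y,1)$ in the prestack kernel is a pair $(h,g)$ with $y=x\,\kappa(h)\,\iota(g)$, and compatibility with the trivializations forces $g=\beta$; hence the objects of $K$ isomorphic to $(x,\beta)$ are exactly those of the form $x\,\kappa(h)\,\iota(\beta)$. The correct receptacle is therefore $x\,\iota(\beta)$, with connecting morphism $(1,\beta)$. Your candidate $\iota(\beta)\,x$ does lie in $K$, but by~\eqref{eq:28} it equals $x\,\iota\bigl(\beta^{\pi(x)}\bigr)$, so it is isomorphic to $(x,\beta)$ in the kernel only when $\iota\bigl(\beta^{\pi(x)}\beta^{-1}\bigr)$ lies in the image of $\kappa$, which fails in general; and your proposed morphism $(e,\beta^{-1})$ has target $x\,\iota(\beta)^{-1}$, which does not even lie in $K$. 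After this one-line fix, your full-faithfulness argument (compatibility forces $g=1$ between objects of the form $(y,1)$, leaving exactly the morphisms coming from $H^{-1}$) is correct, and the proof goes through.
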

Thus the butterfly corresponds to the second morphism of an
extension of 2-group stacks:
\begin{equation}
  \label{eq:86}
  \grk \lto \grh \lto \grg,\tag{**}
\end{equation}
as in \cite{MR93k:18019}. Unlike \loccit, we do
\emph{not} require that the second morphism be a
fibration. Thanks to~\eqref{eq:2}, $\grh$ can be replaced with an
equivalent $\grstack{E}$. The morphism $\grstack{E}\to \grg$
\emph{is} a fibration. Appealing to a former results of Breen
(\cite{MR92m:18019}, which uses the fibration condition) we are
able to conclude that the short exact sequence~\eqref{eq:86}
without the fibration condition still induces the long exact
sequence in non-abelian cohomology (Proposition~\ref{prop:12}).

We discuss the relation with Deligne's constructions in \cite[\S
1.4]{0259.14006} in detail in
section~\ref{sec:abelian-categories}. The general idea is that in
an abelian category a crossed module is simply a complex of
length 2, therefore all constructions carry over, by simply
forgetting most of the action requirement.

If $A^\bullet \colon A^{-1}\to A^0$ is such a complex, its
associated stack is Picard. This is a very strong commutativity
condition. It is well known that on a 2-group one can impose
various degree of commutativity on the monoidal operation, so
that it becomes, in order of increasing specialization: (1)
Braided, (2) Symmetric, (3) Picard.

One thing we do is obtain all these conditions from a special
kind of butterfly diagram, which is necessarily associated to a
braided 2-group $\grg$.  We argue that a braiding on the monoidal
structure of $\grg$ is tantamount to requiring that the monoidal
structure itself
\begin{equation*}
  \otimes \colon \grg\times \grg \lto \grg
\end{equation*}
be an additive functor of 2-group stacks.  By our theorem this
must be realized by an appropriate butterfly diagram of crossed
modules
\begin{equation}
  \label{eq:87}
  \vcenter{%
    \xymatrix@R-1em{%
      G^{-1}\times G^{-1} \ar[dd] \ar@/_0.1pc/[dr]  & &
      G^{-1} \ar@/^0.1pc/[dl] \ar[dd]\\
      & P\ar@/_0.1pc/[dl] \ar@/^0.1pc/[dr] &  \\
      G^0\times G^0 & & G^0
    }}\tag{++}
\end{equation}
which must satisfy other conditions too, most notably that the
extension given by the NE-SW diagonal be split (with a fixed
splitting) when restricted to either factor.  We can go as far as
\emph{defining} a crossed module to braided if it admits such a
structure. This is reasonable in view of the following
\begin{proposition*}[~\ref{prop:15}]
  A 2-group stack $\grg$ is braided if and only if a (hence, any)
  corresponding crossed module $G^\bullet$ is braided in the
  standard sense by a braiding map $\braid{-}{-}\colon G^0\times
  G^0\to G^{-1}$, if and only if it is braided in the sense of
  admitting a butterfly diagram such as~\eqref{eq:87}.
\end{proposition*}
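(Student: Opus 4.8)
The plan is to prove the proposition by establishing a chain of equivalences among three conditions: (i) $\grg$ being braided as a 2-group stack, (ii) a crossed module $G^\bullet$ representing $\grg$ being braided in the standard sense via a map $\braid{-}{-}\colon G^0\times G^0\to G^{-1}$, and (iii) $G^\bullet$ admitting a butterfly diagram of the form~\eqref{eq:87}. The key conceptual input, which I would state up front, is the observation already flagged in the introduction: a braiding on $\grg$ is the same data as making the monoidal product $\otimes\colon\grg\times\grg\to\grg$ into an additive functor of 2-group stacks. This reduces the abstract commutativity condition to the existence of a morphism in $\twocat{Gr\mbox{-}Stacks}(\s)$, at which point the main theorem (Theorem~\ref{thm:2}) becomes applicable.

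First I would make precise the claim that a braiding is equivalent to additivity of $\otimes$. The source $\grg\times\grg$ carries the componentwise monoidal structure, and asking that $\otimes$ be a morphism of 2-group stacks means supplying a natural isomorphism $(a\otimes b)\otimes(c\otimes d)\cong(a\otimes c)\otimes(b\otimes d)$ satisfying coherence; unwinding this in the case where one factor is the unit recovers exactly the braiding isomorphism $c_{a,b}\colon a\otimes b\cong b\otimes a$, and the coherence hexagons correspond to the additive-functor axioms. This gives the equivalence (i)$\Leftrightarrow$(additivity of $\otimes$) essentially by definition-chasing, and is the step where I must be careful that the bookkeeping of coherences matches the standard braided axioms.

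Next, to pass to crossed modules, I would apply Theorem~\ref{thm:2} to the additive functor $\otimes\colon\grg\times\grg\to\grg$. Since $\grg\iso{G^\bullet}\sptilde$, the product $\grg\times\grg$ is represented by the crossed module $G^\bullet\times G^\bullet$, namely $G^{-1}\times G^{-1}\to G^0\times G^0$. The theorem then says that additive functors ${G^\bullet\times G^\bullet}\sptilde\to{G^\bullet}\sptilde$ correspond to butterflies in $\cat{B}(G^\bullet\times G^\bullet, G^\bullet)$, and such a butterfly is precisely a diagram of the shape~\eqref{eq:87} with central group $P$. This yields (additivity of $\otimes$)$\Leftrightarrow$(iii) directly. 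The remaining equivalence (ii)$\Leftrightarrow$(iii) I would prove by explicit translation: given a braiding map $\braid{-}{-}$ in the standard sense, I construct the central extension $P$ together with its two diagonal maps and verify the splitting-on-each-factor condition; conversely, from a butterfly~\eqref{eq:87} with fixed splittings on each factor, I extract $\braid{-}{-}$ by comparing the two ways of lifting through $P$ and reading off the resulting element of $G^{-1}=\ker(E\to G^0)$-type kernel, checking that the braiding axioms follow from the butterfly's defining conditions.

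The main obstacle I expect is the last translation, step (ii)$\Leftrightarrow$(iii), specifically matching the condition that the NE-SW extension split (with fixed splitting) on each factor against the bilinearity and hexagon identities satisfied by $\braid{-}{-}$. The splittings encode that $\otimes$ restricts to the identity monoidal structure on each factor separately, and extracting the braiding requires carefully tracking the discrepancy between the two factor-splittings over the diagonal; ensuring the cocycle-type identities for $\braid{-}{-}$ emerge correctly from the group structure of $P$ and the compatibility of the diagonal maps is where the genuine computation lies. Everything else reduces either to the cited theorem or to routine coherence-matching, so I would relegate those verifications to short remarks and concentrate the written argument on this dictionary between braiding maps and split-on-each-factor butterflies.
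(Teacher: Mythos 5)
Your proposal is correct and follows essentially the same route as the paper: the reduction of a braiding on $\grg$ to additivity of $\otimes\colon \grg\times\grg\to\grg$, the application of Theorem~\ref{thm:2} to produce a butterfly from $G_\bullet\times G_\bullet$ to $G_\bullet$ whose factor splittings $s_1,s_2$ arise from $\otimes\circ\iota_i\cong\id$, and the extraction of the braiding map as the commutator-type discrepancy $\beta(c(x,y))=s_2(y)^{-1}s_1(x)^{-1}s_2(y)s_1(x)$ between the two splittings, with the hexagon and functoriality identities following from $s_1,s_2$ being homomorphisms and from the compatibility conditions on the splittings. The only organizational difference is that you propose proving the equivalence of the crossed-module braiding with the butterfly condition directly in both directions, whereas the paper closes the cycle of implications by using the standard passage from a braided crossed module back to a braided gr-stack.
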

The symmetry and Picard conditions on the braiding structures can
be described in an entirely similar fashion. Namely, let $T
\colon \grg\times \grg\to \grg\times \grg$ be the swap morphism,
and let $\Delta\colon \grg\to \grg\times \grg$ be the diagonal
morphism. We have:
\begin{proposition*}[~\ref{prop:16} and
  \ref{sec:picard-cross-modul}] \hfill
  \begin{itemize}
  \item The braiding on $\grg$ is symmetric if and only
    if~\eqref{eq:87} is isomorphic to its own pullback by $T$;
  \item The braiding is furthermore Picard if in addition the
    induced isomorphism on the butterfly pulled back by $\Delta$
    is the identity.
  \end{itemize}
\end{proposition*}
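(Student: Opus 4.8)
The plan is to run everything through the dictionary furnished by Theorem~\ref{thm:2} (in its fibered, bicategorical strengthening, Theorem~\ref{thm:5}): butterflies \emph{are} additive functors, and morphisms of butterflies \emph{are} natural isomorphisms of additive functors. Write $B$ for the butterfly~\eqref{eq:87} representing $\otimes\colon\grg\times\grg\to\grg$, whose existence is the substance of Proposition~\ref{prop:15}. Since the swap $T$ and the diagonal $\Delta$ are \emph{strict} additive functors, precomposition with them carries butterflies to butterflies and butterfly morphisms to butterfly morphisms; concretely, $T^{*}B$ is the butterfly of $\otimes\circ T$, i.e.\ of the opposite tensor $(X,Y)\mapsto Y\otimes X$, while $\Delta^{*}B$ is the butterfly of $X\mapsto X\otimes X$. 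The braiding $c_{X,Y}\colon X\otimes Y\to Y\otimes X$ is, to begin with, a natural isomorphism $\otimes\Rightarrow\otimes\circ T$ of the \emph{underlying} functors; I would emphasize at the outset that what is at issue is exactly when this $c$ promotes to a morphism of \emph{additive} functors, equivalently to a morphism of butterflies $\beta\colon B\to T^{*}B$ (an isomorphism of the central groups, built from $\braid{-}{-}$, intertwining the four structural maps and the fixed splittings on each factor).

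For the symmetric case I would start from the axiom $c_{Y,X}\circ c_{X,Y}=\id_{X\otimes Y}$ and rewrite it $2$-categorically. Pulling $c$ back along $T$ gives $T^{*}c\colon \otimes\circ T\Rightarrow\otimes$ (using $T\circ T=\id$), whose component at $(X,Y)$ is $c_{Y,X}$, so the symmetry axiom is precisely the identity $T^{*}c\circ c=\id_{\otimes}$ of natural transformations. The crux, which is where the real work sits, is the claim that $c$ is a morphism of additive functors if and only if this holds: the monoidal-naturality square for $c$ against the additive structure $\mu$ on $\otimes$ (which is itself assembled from $c$) and the structure $\mu^{op}$ on $\otimes\circ T$ unwinds, after cancelling the crossings common to both sides, to exactly the relation $c_{Y,X}c_{X,Y}=\id$; at the level of the braiding map this reads as $\braid{X}{Y}$ and $\braid{Y}{X}$ being mutually inverse in $G^{-1}$. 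Granting this, Theorem~\ref{thm:2} turns the morphism of additive functors into a morphism of butterflies, yielding $B\cong T^{*}B$; conversely, any butterfly isomorphism $B\lisoto T^{*}B$ produces a monoidal natural isomorphism $\otimes\cong\otimes\circ T$, and running the same interchange computation backwards recovers the symmetry relation. This establishes the first bullet.

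The Picard refinement is obtained by the identical mechanism applied to $\Delta$. The extra axiom singling out Picard $2$-groups among the symmetric ones is $c_{X,X}=\id_{X\otimes X}$. Because $T\circ\Delta=\Delta$, one has $\Delta^{*}T^{*}B=(T\circ\Delta)^{*}B=\Delta^{*}B$, so the braiding-induced $\beta$ restricts along the diagonal to an \emph{automorphism} $\Delta^{*}\beta$ of the single butterfly $\Delta^{*}B$, whose component at $X$ is $c_{X,X}$. Hence $c_{X,X}=\id$ is exactly the equation $\Delta^{*}\beta=\id_{\Delta^{*}B}$, i.e.\ the induced isomorphism on the butterfly pulled back by $\Delta$ is the identity; at the level of $\braid{-}{-}$ this is the triviality of $\braid{X}{X}$. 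Combined with the symmetric step, this gives the second bullet.

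The conceptual reductions above are immediate once Theorems~\ref{thm:2} and~\ref{thm:5} are in hand; the step I expect to be the main obstacle is the bookkeeping verifying that a morphism of butterflies $\beta\colon B\to T^{*}B$ — an isomorphism of the central group $P$ compatible with all four structural maps and with the two split additive structures — encodes \emph{exactly} the symmetry relation and not merely the presence of some braiding. This requires comparing the two monoidal-structure maps on $B$ and on $T^{*}B$, both assembled from $\braid{-}{-}$, and checking that the compatibility squares force precisely the cancellation of the repeated crossing. A secondary subtlety is variance: one must pin down that $T^{*}c$ has component $c_{Y,X}$, that $T\circ T=\id$ and $T\circ\Delta=\Delta$ hold on the nose (or track the canonical $2$-cells if they do not), so that ``isomorphic to its own pullback by $T$'' is read with the correct self-inverse condition rather than as an abstract isomorphism.
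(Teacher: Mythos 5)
Your proposal is correct and follows essentially the same route as the paper: its Proposition~\ref{prop:17} is exactly your key lemma (the braiding is symmetric if and only if it is a morphism of additive functors $\otimes_\grg \Rightarrow \otimes_\grg\circ T$, proved by the same computation with diagram~\eqref{eq:76} and the hexagons), Theorem~\ref{thm:2} then converts this into the butterfly isomorphism of Definition~\ref{def:14}, and the Picard bullet is likewise obtained by pulling back along $\Delta$ using $T\circ\Delta=\Delta$ so that the induced automorphism has components $c_{X,X}$. The bookkeeping you defer at the end---that an isomorphism $\psi\colon P\isoto T^*P$ compatible with the wings and with the splittings encodes precisely the symmetry relation---is carried out explicitly in the paper: such a $\psi$ must fix $G_1$ inside $P$ and swap $s_1$ and $s_2$, whence $c(x,y)=\psi(c(x,y))=c(y,x)^{-1}$, matching your outline including its one substantive verification.
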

There is of course a notion of \emph{braided butterfly} between
braided 2-groups which expresses the fact that the corresponding
morphism is a morphism of braided objects.

We are therefore able to approach Picard stacks along two
slightly different lines: as a direct byproduct of the general
theory via the additional properties imposed by the Picard
condition, as explained above, or as a special repetition of the
general theory for an abelian category.

An interesting example of the latter arises when considering
extra structures, in particular the one provided by the existence
of a sheaf of commutative rings on the base site. In a ringed
site we can talk about \emph{modules,} and in particular about
locally free ones. As an application we show that in such
situation all butterflies among complexes of locally free modules
are themselves given by locally free objects, and, more
importantly, they are always \emph{locally split,} in the sense
that upon restricting to a suitable cover they split and
correspond to a strict morphism. This culminates the discussion
in section~\ref{sec:abelian-categories}.

\subsection{Organization of the paper}
\label{sec:organization-paper}

This paper is organized as follows.
Sections~\ref{sec:background-notions}
and~\ref{sec:cross-modul-their} collect a number of facts,
notions, and propositions concerning the formalism of
(hyper)covers and descent data, stackification, gr-categories,
gr-stacks, and crossed modules.  With it we have made an attempt
at making the paper somewhat self-contained and at easing the
reader's task in hunting down the various needed results from the
literature.  The idea is for the reader to refer back to them as
needed. We have broken this rule for facts concerning 2-stacks,
for which we entirely rely on the existing references, with the
possible exception of some elementary facts concerning 2-descent
data. Section~\ref{sec:cross-modul-their}, in particular, recalls
several results of ref.\ \cite{MR92m:18019}, which we have
reviewed in some detail, also due to the use of different
conventions.

In section~\ref{sec:butt-weak-morph} we define weak morphisms,
butterflies, and prove the main equivalence theorem. Then in
section~\ref{sec:fiber-bicat-cross} we describe the bicategory of
crossed modules, showing it is bi-equivalent to the 2-stack of
2-group stacks.

Sections~\ref{sec:exact-sequences-gr},
\ref{sec:abelian-butterflies}, and~\ref{sec:abelian-categories}
are devoted to applications. In
section~\ref{sec:exact-sequences-gr} we reexamine the notion of
exact sequence of 2-groups and obtain the long exact sequence in
non-abelian cohomology without the fibration assumption. We also
describe the homotopy fiber. This section requires more
background (especially on 2-stacks) than the rest of the present
work. Section~\ref{sec:abelian-butterflies} is devoted to the
various commutativity laws we can impose on a 2-group stack.  In
particular, for a braided 2-group stack, we obtain the braiding
bracket directly from the butterfly expressing the fact that the
multiplication law is a morphism of 2-groups.  We believe this is
new even in the set-theoretic case. In
section~\ref{sec:abelian-categories}, we discuss the connection
with Deligne's results, in particular theorem in ref.\
\cite{0259.14006} becomes a corollary of our main theorem in
sect.~\ref{sec:butt-weak-morph}. We terminate the discussion with
an exposition of the theory for modules in a ring site, devoting
special attention to locally split butterflies, in
section~\ref{sec:locally-split-butterflies}.

\subsection{Conventions and notations}
\label{sec:conv-notat}

We will work in the context of sheaves, stacks, etc. defined on a
site $\s$. It will be convenient to introduce the associated
topos $\cat{T}$ of sheaves on $\s$, and to say that $F$ is an
object (respectively, group object) of $\cat{T}$, rather than
specifying that $F$ is a sheaf of sets (respectively, groups) on
$\s$.  In a similar vein, we will usually adopt an ``element''
style notation by silently employing the device of identifying
objects of $\s$ with the (pre)sheaves they represent, thereby
identifying them with objects of $\cat{T}$, as per more or less
standard practice. Apart from this, we will not use the
properties of $\cat{T}$ as an abstract topos in any significant
manner.

We have tried not not make the paper dependent on specific
hypotheses on the nature of the underlying site $\s$.  We have
also tried to refrain from making cocycle-type arguments too
prominent. When we did have to run these type of arguments, we
used generalized covers and hypercovers. Using hypercovers does
not lead to a complication of the formalism, provided the right
simplicial one is used from the start.

As a general rule, objects of the underlying site $\s$ are
denoted by capital letters: $U$, $V$, etc.  Same for generalized
covers, and the various sheaves on $\s$, i.e. objects of
$\cat{T}$.  For categorical objects we use:
\begin{itemize}
\item $\cat{C}$, $\cat{D}$,... ``generic'' categories;
\item $\stx$, $\sty$, $\grg$,...  fibered categories, stacks, gr-stacks;
\item $\twocat{C}$, $\twocat{D}$,... ``generic'' 2- (or bi-)categories;
\item $\twostack{C}$, $\twostack{F}$,... 2-categories, 2-stacks,
  fibered bicategories.
\end{itemize}
Special items, such as the stack of $G$-torsors, for a group $G$,
are denoted by $\tors (G)$. Same if $\grg$ is a 2-group stack:
its 2-stack of torsors is denoted by $\tors (\grg)$.  $\prshv$
denotes the category of presheaves of sets on $\s$.

Complexes, and in particular simplicial objects, carry a bullet
for additional emphasis, so that, for example, hypercovers are
usually denoted by $U_\bullet,V_\bullet,\dotsc$ and so
on. Complexes always are cohomological, and usually placed in
\emph{negative} degrees. Except for the last section
(section~\ref{sec:abelian-categories}), this is not reflected in
the notation: for convenience, throughout most of the paper we
denote a crossed module by $G_\bullet\colon [G_1\to G_0]$.

\section{Background notions}
\label{sec:background-notions}

\subsection{Topology}
\label{sec:topology}

We will work on a fixed site $\s$, not assumed to necessarily
have fibered products.  It will be assumed that the topology on
$\s$ is subcanonical.

Recall that for an object $U$ of $\s$ a \emph{sieve} $R$ over $U$
is a collection of morphisms $i\colon V\to U$ of $\s$ which is
best described by saying that $R$ is a subfunctor of
$U=\Hom_\s(-,U)$.  A morphism $u\colon Y\to X$ in $\prshv$ is a
\emph{local epimorphism,} or a \emph{generalized cover,} if for
every morphism $U\to X$ in $\prshv$ with $U\in \Ob\s$ there
exists a covering sieve $R$ of $U$ such that for each $(i\colon
V\to U)\in R$ the composition $V\to U\to X$ lifts to $Y$.  A
generalized cover $u\colon Y\to X$ factors as
\begin{equation*}
  Y \lto \im(u) \lto X
\end{equation*}
where the first map is an epimorphism (hence a generalized cover)
and the second a local isomorphism. In particular, if $u\colon
Y\to U$, with $U\in \Ob\s$, is a generalized cover, then
$R=\im(u)$ is a sieve which is covering by definition: it is
precisely the sieve comprising morphisms $V\to U$ which lift to
$Y$ (hence the name \emph{local} epimorphims for $u$). This
correspondence allows to recast the axioms characterizing a
Grothendieck topology by reformulating them in terms of
generalized covers instead of sieves (see \cite{MR2182076} for
more details).
 
If $u\colon Y_\bullet\to X_\bullet$ is a simplicial morphism
between simplicial objects in $\prshv$, the modern point of view
is to say that $u$ is a \emph{hypercover} if all the maps
\begin{equation*}
  Y_n \lto (\cosk_{n-1}Y)_n\times_{(\cosk_{n-1}X)_n} X_n
\end{equation*}
are generalized covers (\cite{Jardine:Fields-spre}). It is shown
in ref.\ \cite{MR2034012} that this is equivalent to $u$ being a
local acyclic fibration.  More classically, following the
formally stated definition in \loccit and refs.\ \cite{MR0245577}
and \cite[Exp.\ V.\ 7]{MR0354653} one has:
\begin{definition}
  The augmented simplicial object $u\colon Y_\bullet\to U$, with
  $U\in \Ob\s$ is a \emph{hypercover} if:
  \begin{enumerate}
  \item $u$ is a local acyclic fibration in $\prshv$ ($U$ is
    regarded as a constant simplicial object), and
  \item each $Y_n$ is a coproduct of representable objects.
  \end{enumerate}
\end{definition}
One sees immediately that all maps $Y_n \to (\cosk_{n-1}Y)_n$ and
$Y_0\to U$ are local epimorphisms.

A hypercover $Y_\bullet \to U$ is \emph{bounded} or more
precisely, $p$-bounded, or a $p$-hypercover, for an integer
$p\geq 0$, if these maps are actually isomorphisms for $n\geq p$.
The \emph{\Cech covers} are the hypercovers in the sense of the
previous definition for which $p=0$, that is all maps as above
are isomorphisms. In general, for a morphism $u\colon Y\to X$ in
$\prshv$ we define the associated \emph{\Cech complex} to be the
simplicial object $\cech u$ (or $\cech_X Y$) defined by:
\begin{equation*}
  (\cech u)_n = 
  \underbrace{Y\times_X Y\times_X\dots \times_X Y}_{n+1}
\end{equation*}
Indeed, regarding $Y$ as a constant simplicial object in
$s\prshv$, we see that (\cite{MR0245577})
\begin{equation*}
  \cech u = \cosk_0 Y
\end{equation*}
Thus, upon considering a local epimorphism $u\colon Y\to U$ with
$U\in \Ob\s$, we see that a $0$-hypercover is precisely the
old-fashioned \Cech complex.

\subsection{Descent data}
\label{sec:descent-data}

We collect here a few reminders about the formalism of descent
data. We choose to formulate descent data using \Cech resolutions
of generalized covers and hypercovers. For this, one needs to define
 $\fibf (X)$ when $\fibf$ is a fibered category over $\s$ and $X$
 is an object of $\prshv$.
\begin{paragr}
  \label{prgr:1}
  Let $\fibf$ be a fibered category over $\s$.  For $X\in
  \Ob\prshv$ set
  \begin{equation*}
    \fibf (X) \eqdef \Lim_{(i\colon V\to
      X)\in (\s \downarrow X)} \fibf (V)
    \eqdef \catHom_\s(X,\fibf)
  \end{equation*}
  where $(\s \downarrow X)$ is the overcategory of objects of
  $\s$ over $X$ via the Yoneda embedding $\s\to \prshv$, and the
  right hand side is the category of morphisms of fibered
  categories. The functor $X\in\Ob\prshv$ is interpreted as a
  fibered category over $\s$ in the standard way (see e.g.\
  \cite{MR49:8992}).
\end{paragr}
For completeness let us recall the explicit form for objects and
morphisms in $\fibf (X)$, see, e.g.\ \cite{MR2182076}.  We will
not need to use the formulas in the sequel.
\begin{paragr}
  \label{prgr:3}
  An object of $\fibf (X)$ is a family $\lbrace x_i,\phi_j
  \rbrace$ of objects $x_i \in \Ob \fibf (V)$ parametrized by $(i
  \colon V\to X) \in \Ob (\s\downarrow X)$, and isomorphisms
  $\phi_j\colon j^*x_i \isoto x_{i j}$ for each
  $W\overset{j}{\to} V$, such that there is a commutative diagram
  \begin{equation*}
    \vcenter{%
      \xymatrix{%
        k^*j^*x_i \ar[r]^{k^*(\phi_j)} \ar[d]_{c_{j,k}} &
        k^*x_{ij} \ar[d]^{\phi_k} \\
        (jk)^*x_i  \ar[r]_{\phi_{jk}} & x_{ijk}
      }}
  \end{equation*}
  for any composable triple $Z\to W \to V\to X$. In the above
  diagram the vertical arrow to the left is the ``cleavage'' of
  the fibered category $\fibf$.  A morphism
  \begin{equation*}
    \lbrace x_i,\phi_j \rbrace \lto \lbrace x'_i,\phi'_j \rbrace
  \end{equation*}
  is a family $f_i\colon x_i\to x'_i$ for each $(i\colon V\to
  X)\in (\s\downarrow X)$ such that for any $j\colon W\to V$ the
  diagram
  \begin{equation*}
    \vcenter{%
      \xymatrix{%
        j^*x_i \ar[r]^{\phi_j} \ar[d]_{j^*(f_i)} & x_{ij}
        \ar[d]^{f_{ij}} \\
        j^*x'_i \ar[r]_{\phi'_j} & x'_{ij}
      }
    }
  \end{equation*}
  commutes.
\end{paragr}

For a hypercover $u\colon Y_\bullet\to U$, we define:
\begin{definition}[{\cite[\S 10]{MR0245577}}]
  \label{def:1}
  A \emph{descent datum} for $\fibf$ over $U$ relative to $u$ is
  given by an object $x\in \fibf (Y_0)$ and an isomorphism
  $\phi\colon d^*_0x\isoto d^*_1x$ in $\fibf (Y_1)$ satisfying
  the cocycle condition:
  \begin{equation*}
    d_1^*\phi = d_2^*\phi \circ d_0^*\phi\,.
  \end{equation*}
\end{definition}
It turns out that for a \emph{pre-stack} $p\colon\stx\to \s$ the
categories of descent data with respect to hypercovers are
equivalent to those determined by their $0$-coskeleta, that is,
\Cech covers.  This is explicitly proved in \cite[Proposition
10.3]{MR0245577} when $\s$ has fiber products, but the argument
goes through for generalized covers as well, or it follows more
generally from the results of \cite{MR2034012}.  On the other
hand, if working simplicially there really is no additional
complication in working with hypercovers---even the notation
would be the same.
\begin{paragr}
  \label{prgr:4}
  It is more appropriate to talk about the \emph{category} of
  descent data---the notion of \emph{morphism} between descent
  data $(x,\phi)$ and $(x',\phi')$ being defined by a morphism
  $\psi\colon x\to x'$ in $\fibf (Y_0)$ such that
  \begin{equation}
    \label{eq:3}
    d_1^*\psi\, \phi = \phi'\, d_0^*\psi
  \end{equation}
  in $\fibf (Y_1)$.  Let us denote by $\Desc(u,\fibf)$ the
  category of descent data for $\fibf$ relative to the hypercover
  $u\colon Y_\bullet \to U$.
\end{paragr}

\subsection{2-Descent data}
\label{sec:2-descent}

We will need to discuss the analog of the descent condition for
fibered 2-categories (or even bicategories), a concept for which
we refer to ref.\ \cite{MR0364245} (see also \cite[Chapter
1]{MR95m:18006}).
\begin{paragr}
  \label{prgr:5}
  Let $\twofib{C}$ be a fibered 2-category, and let $X\in
  \prshv$. Define, analogously to~\ref{prgr:1}
  \begin{equation*}
    \twofib{C} (X) \eqdef \twocatHom_{\s} (X,\twofib{C}).
  \end{equation*}
  where $X$ is interpreted as a fibered 2-category. This also
  equals $\Lim_{(i\colon V\to X)\in (\s \downarrow X)} \twofib{C}
  (V)$.  (See ref.\ \cite{MR0364245} for details.)
\end{paragr}

\begin{definition}
  \label{def:18}
  Let $\twofib{C}$ be a fibered 2-category. Let $U\in \Ob \s$,
  and $u\colon Y_\bullet\to U$ a hypercover. A \emph{2-descent
    datum} over $U$ relative to $u$ is given by an object $x\in
  \Ob \twofib{C} (Y_0)$, an isomorphism $\phi\colon d^*_0x\isoto
  d^*_1x$ in $\twofib{C} (Y_1)$, and a 2-morphism
  \begin{equation*}
    \alpha \colon 
    d_1^*\phi \lto d_2^*\phi \circ d_0^*\phi,
  \end{equation*}
  over $Y_2$, satisfying the cocycle condition visualized by the diagram:
  \begin{equation*}
    \bigl( (d_2d_3)^*\phi * d_0^*\alpha\bigr)
    \circ d_2^*\alpha =
    \bigl( d_3^*\alpha * (d_0d_1)^*\phi\bigr)
    \circ d_1^*\alpha
  \end{equation*}
  over $Y_3$.
\end{definition}

\subsection{Stack associated to a prestack}
\label{sec:stack-assoc-prest}

Recall that for any prestack $\stx$ there is canonically
associated a stack $\stx\sptilde$ and a morphism (the
``stackification'') $a\colon \stx \to \stx\sptilde$, such that
for every morphism (of prestacks) $F\colon \stx\to \sty$ to a
stack $\sty$ there is a factorization
\begin{equation*}
  \vcenter{%
    \xymatrix{%
      \stx \ar[r]^a \ar@/_/[dr]_F \drtwocell<\omit>{<-1>}
      & **[r]\stx\sptilde \ar@{.>}[d]^{F^a}  \\
      & \sty 
    }
  }
\end{equation*}
$\stx\sptilde$ is determined up to equivalence. The previous
diagram expresses the universal property of the associated stack.

There are explicit constructions of $\stx\sptilde$, which involve
``adding descent data.'' Given $\stx$, one defines
\begin{equation*}
  \stx^+ (U) = \colim_{Y\to U} \Desc ( \cech Y\to U, \stx),
\end{equation*}
where $U$ is an object of $\s$.  This leads to the explicit
description as found e.g.\ in \cite{MR1771927}: an object $(Y\to
U, x, \phi)$ of $\stx^+$ over $U$ comprises a generalized cover
and a descent datum relative to it.  A morphism
\begin{equation*}
  (Y\to U, x, \phi)\lto (Y'\to U, x', \phi')
\end{equation*}
is a morphism of descent data over $Y\times_U Y'\to U$.
Equivalently, one could use homotopy classes of hypercovers. Our
main example will be the gr-stack associated to a crossed module,
for which there exists an explicit model (see below).
\begin{theorem}[\cite{MR49:8992,MR1771927}]
  If $\stx$ is a prestack over $\s$, then $\stx^+$ is a stack.
\end{theorem}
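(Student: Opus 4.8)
The plan is to verify the two defining conditions of a stack for the fibered category $\stx^+$: that its presheaves of morphisms are sheaves (the prestack condition), and that every descent datum for $\stx^+$ is effective. Throughout I would use that the composition of two generalized covers is again a generalized cover, and that in $\prshv$ any two generalized covers of a common base admit a common refinement (via the fiber product, which exists in $\prshv$ even though $\s$ need not have fiber products)—so that the colimit defining $\stx^+(U)$ is filtered. I would also use freely that, since $\stx$ is a prestack, descent data for $\stx$ relative to a hypercover agree with those relative to its $0$-coskeleton (the remark following Definition~\ref{def:1}); this lets me pass between a generalized cover $Y\to U$ and its \Cech nerve $\cech Y$. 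The fibered-category structure on $\stx^+$ is routine: for $V\to U$ in $\s$ and an object $(Y\to U, x, \phi)$ one pulls back the cover along $V\to U$ and pulls back $(x,\phi)$ accordingly, with functoriality holding up to canonical isomorphism.

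The prestack condition I expect to be essentially formal. A morphism in $\stx^+(U)$ between $(Y\to U, x, \phi)$ and $(Y'\to U, x', \phi')$ is, after passing to a common refinement of $Y$ and $Y'$, a morphism of $\stx$-descent data in the sense of~\eqref{eq:3}. Because $\stx$ is already a prestack, the presheaf $\shHom(x,x')$ is a sheaf on the relevant cover, and a short diagram chase then shows that the induced presheaf of morphisms between the two objects of $\stx^+$ is a sheaf as well. This step is inherited directly from the prestack property of $\stx$.

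The substantive step is effectivity of descent for $\stx^+$. Here I would start from a generalized cover $Z\to U$ and a descent datum $(s,\psi)$ for $\stx^+$ relative to it. The object $s\in\stx^+(Z)$ is itself a triple $(W\to Z, x, \phi)$, where $W\to Z$ is a generalized cover and $(x,\phi)$ is a $\stx$-descent datum on $\cech W$, while $\psi$ lives over $Z\times_U Z$ and satisfies its own cocycle condition over the triple product of $Z$ over $U$. The idea is to compose covers: $W\to Z\to U$ is again a generalized cover of $U$, and I would assemble $x$, the inner gluing $\phi$, and the outer gluing $\psi$ into a single $\stx$-descent datum relative to $\cech W\to U$. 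This produces an object of $\stx^+(U)$ whose restriction along $Z\to U$ recovers $(s,\psi)$ up to canonical isomorphism.

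The hard part will be verifying that this assembled gluing datum satisfies the cocycle condition over the triple fiber product of the composite cover $W$ over $U$. This is precisely where the two levels of gluing interact: one must combine the cocycle identity for $\phi$ on $\cech W$, the cocycle identity for $\psi$ over the triple product of $Z$, and the compatibility of $\psi$ as a \emph{morphism} of descent data, namely~\eqref{eq:3}. Organizing this bookkeeping—tracking the pullbacks along the various projections from the iterated fiber products of $W$ over $U$—is the main obstacle, but it is a purely diagrammatic verification requiring no input beyond the prestack hypothesis on $\stx$. Finally I would remark that a single application of the plus construction suffices here precisely because $\stx$ is assumed to be a prestack; for an arbitrary fibered category one would instead need to iterate $(-)^+$ twice.
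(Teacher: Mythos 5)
Your proposal is sound, but note that the paper itself contains no proof of this theorem: it is stated as a quoted result from \cite{MR49:8992,MR1771927}, so there is no internal argument to compare against. Your outline — the prestack condition for $\stx^+$ inherited from that of $\stx$ via the sheaf property of $\shHom(x,x')$, and effectivity of descent obtained by composing the outer cover $Z\to U$ with the inner cover $W\to Z$ and assembling the inner gluing $\phi$ and the outer gluing $\psi$ into a single descent datum relative to $\cech (W\to U)$ — is precisely the standard plus-construction argument given in those references, so the approach is correct (only the cocycle bookkeeping you defer would remain to be written out).
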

There is an obvious morphism $\stx\to \stx^+$ which consists in
sending the object $x$ over $U$ to $(\id\colon U\to U, x, \id)$.
This allows us to take $\stx\sptilde$ to be $\stx^+$ and the
morphism $a\colon \stx\to \stx^+$ to be the one just described.

\section{Recollections on gr-categories, gr-stacks, and crossed
  modules}
\label{sec:cross-modul-their}

\subsection{Gr-categories and gr-stacks}
\label{sec:gr-categories}
The reference for gr-categories is the not easily accessible
thesis \cite{sinh:gr-cats} (see also \cite{MR0338002}). The basic
facts are recalled in ref.\ \cite{MR93k:18019}, which we follow
for terminology and conventions (see also \cite{MR2072344} and
\cite{MR1935985}).
\begin{paragr}
  A 2-group, or gr-category, is a monoidal, unital, compact
  groupoid, that is a groupoid $\cat{C}$ equipped with a
  composition law, a unit object $I$, and for each object $X\in
  \cat{C}$ a choice of (right) inverse $X^*$, respectively.  The
  composition law is a functor
  \begin{equation*}
    \otimes \colon \cat{C} \times \cat{C} \lto \cat{C}
  \end{equation*}
  obeying an associativity constraint: for each triple $X,Y,Z\in
  \Ob\cat{C}$ there is a functorial isomorphism (the associator)
  \begin{equation*}
    a_{X,Y,Z} \colon (X\otimes Y)\otimes Z \lisoto
    X\otimes (Y\otimes Z)
  \end{equation*}
  required to satisfy a coherence condition expressed by the
  well-know Mac~Lane's pentagon diagram.  Furthermore, for each
  object $X\in \Ob\cat{C}$ there are functorial isomorphisms
  \begin{equation}
    \label{eq:67}
    l_X\colon X \isoto I\otimes X\,,\quad
    r_X\colon X\isoto X\otimes I
  \end{equation}
  required to satisfy the compatibility diagram
  \begin{equation}
    \label{eq:5}
    \vcenter{%
      \xymatrix@C-2pc@R-0.5pc{%
        (X\otimes I) \otimes Y \ar[rr]
        & & X\otimes ( I \otimes Y) \\
        & X \otimes Y \ar@/^0.5pc/[ul] \ar@/_0.5pc/[ur] 
      }}
  \end{equation}
  The inverse map $X\mapsto X^*$ is a functor
  $\cat{C}^{\mathrm{op}}\to \cat{C}$, and one has $(X\otimes Y)^*
  \isoto Y^*\otimes X^*$.  Furthermore, there is an isomorphism
  \begin{equation*}
    X\otimes X^*\isoto I\,.
  \end{equation*}
  The choice of the latter determines the arrow $I\isoto X^*
  \otimes X$. For all the remaining properties, as well as the
  compatibility diagrams not displayed here, we refer to the
  above mentioned works.
\end{paragr}
\begin{paragr}
  To a gr-category $\cat{C}$ are associated its group of
  isomorphism classes of objects, $\pi_0(\cat{C}) = \Ob
  \cat{C}/\sim$, and the group of automorphisms of the identity
  object, $\pi_1(\cat{C}) = \Aut(I)$.  The latter is an abelian
  group owing to the fact that for any object $x\in \Ob\cat{C}$,
  left (say) multiplication by $x$ is an equivalence of
  $\cat{C}$, hence it allows to coherently identify $\Aut (x)$
  with $\Aut (I)$. This implies abelianness (cf.\
  \cite{sinh:gr-cats,MR1702420}).  One also has that
  $\pi_1(\cat{C})$ carries a (right) $\pi_0(\cat{C})$-action,
  induced by right multiplication by objects of $\cat{C}$.
\end{paragr}
Let now $\cat{C}$ and $\cat{D}$ be two gr-categories.
\begin{paragr}
  An additive functor from $\cat{C}$ to $\cat{D}$ is a pair
  $(F,\lambda)$, where
  \begin{equation*}
    F\colon \cat{C} \lto \cat{D}
  \end{equation*}
  is a functor between the underlying groupoids, and for each
  pair of objects $X,Y\in \Ob\cat{C}$ there is a functorial
  (iso)morphism
  \begin{equation*}
    \lambda_{X,Y}\colon F(X)\otimes F(Y) \lisoto F(XY)\,.
  \end{equation*}
  (Since in a gr-category the multiplication functor by every
  object is an equivalence, the condition $I\isoto F(I)$ follows
  from the existence of $\lambda$, cf.\ the above quoted
  references.)  The isomorphisms $\lambda$ must be compatible
  with the associativity morphism, in the sense that the
  following diagram must commute:
  \begin{equation}
    \label{eq:6}
    \vcenter{%
      \xymatrix@-1ex{%
        (F(X)\otimes F(Y))\otimes F(Z) \ar[d] \ar[r] &
        F(X\otimes Y)\otimes F(Z) \ar[r] &
        F((X\otimes Y)\otimes Z) \ar[d] \\
        F(X)\otimes (F(Y)\otimes F(Z))  \ar[r] &
        F(X) \otimes F(Y\otimes Z) \ar[r] &
        F(X\otimes (Y\otimes Z))}}
  \end{equation}
  The diagrams resulting from the compatibility between the
  isomorphism $I\isoto F(I)$ and $l_X$ and $r_X$ (for any object
  $X$) must commute as well.
\end{paragr}
\begin{paragr}
  A natural transformation of additive functors $(F,\lambda)$ and
  $(G,\mu)$ consists of a natural transformation of the
  underlying functors $\theta \colon F \Rightarrow G$ in the
  standard sense, such that the diagram
  \begin{equation}
    \label{eq:76}
    \vcenter{%
      \xymatrix{%
        F(X)\otimes F(Y) \ar[r]^(.6){\lambda_{X,Y}}
        \ar[d]_{\theta_{X}\otimes \theta_{Y}} & F(XY) 
        \ar[d]^{\theta_{XY}} \\
        G(X)\otimes G(Y) \ar[r]_(.6){\mu_{X,Y}} & G(XY)
      }}
  \end{equation}
  commutes. (Note that the diagram expressing the compatibility
  between $\theta$ and the associators obtained by combining the
  previous two diagrams is automatically commutative.)  
\end{paragr}
\begin{paragr}
  There is a canonical way of composing additive
  functors.  The composition of
  $(F_1,\lambda^1)\colon \cat{C}_0\to \cat{C}_1$ and
  $(F_2,\lambda^2)\colon \cat{C}_1\to \cat{C}_2$ is $F_2\circ
  F_1$ equipped with $\lambda^2*\lambda^1$ given by
  \begin{equation*}
    {\lambda^2*\lambda^1}_{X,Y} =
    F_2({\lambda^1}_{X,Y})\circ {\lambda^2}_{F_1(X),F_1(Y)}.
  \end{equation*}
  Note that this composition \emph{is} associative.
\end{paragr}
\begin{paragr}
  The preceding constructions carry over to the sheaf-theoretic
  context.  Suppose we are given a stack $\grg$ in groupoids on
  the site $\s$.  Following ref.\ \cite{MR93k:18019}, we will say
  that $\grg$ is a 2-group stack, or a gr-stack, if, again, it is
  equipped with a composition law embodied by morphisms of stacks
  \begin{equation*}
    \otimes \colon \grg\times \grg \lto \grg
  \end{equation*}
  and
  \begin{equation*}
    (\cdot)^*\colon \grg \lto \grg \,, \quad x \lmto x^*
  \end{equation*}
  plus a global identity object $I$.  These data will be required
  to satisfy the same formal properties as for a gr-category.  A
  morphism $F\colon \grg \to \grh$ of gr-stack is actually an
  additive functor, that is, a pair $(F,\lambda)$, where the
  underlying functor $F$ is a morphism of stacks.  Again, $F$ and
  $\lambda$ are required to satisfy the same properties listed
  for gr-categories.  The same definitions hold with the word
  ``stack'' replaced by ``pre-stack.''  Our main examples of
  gr-(pre)stacks will arise from crossed-modules, whose main
  definitions and properties we are going to recall below.
\end{paragr}
\begin{paragr}
  For $\grg$ a gr-stack we define $\pi_0(\grg)$ (or simply
  $\pi_0$, for short, when no danger of confusion can arise) to
  be the sheaf associated to the presheaf
  \begin{equation*}
    U\rightsquigarrow \pi_0(\grg_U),
  \end{equation*}
  where $U$ is an object of the underlying site, so that
  $\pi_0(\grg_U)$ is the group of isomorphism classes of objects
  of the gr-(fiber)-category $\grg_U$ over $U$. It is known (and
  easy to see, cf.\ refs.\ \cite{MR1771927,MR95m:18006}) that the
  projection
  \begin{equation*}
    \grg \lto \pi_0
  \end{equation*}
  makes $\grg$ a gerbe over $\pi_0$.  We also set $\pi_1(\grg) =
  \shAut(I)$ (or simply $\pi_1$ when possible), the sheaf of
  automorphisms of the identity object.  The coherence argument
  mentioned above remains valid in this case, implying that
  $\pi_1$ is a sheaf of abelian groups.  Moreover, it is the band
  of the gerbe $\grg\vert_{\pi_0}$ (\cite{MR95m:18006}).
\end{paragr}

\subsection{Crossed modules}
\label{sec:crossed-modules}

The notion of crossed module is of course by now well known.  We
will recall the main definitions here to merely establish the
necessary conventions.  Following ref.\ \cite{0259.14006}, a
crossed module will be considered as a complex
\begin{equation*}
  G^\bullet  \colon [ G^{-1} \overset{\del}{\lto} G^0 ]_{-1,0}
\end{equation*}
placed in (cohomological) degrees $-1,0$.  For notational
convenience, we will use a homological (subscript indices)
notation via the standard re-indexing $G_i=G^{-i}$.  (All actions
to be considered in this paper will be on the right, and crossed
modules will be no exception.)
\begin{definition}
  \label{def:2}
  A crossed module in $\cat{T}$ is a homomorphism of group
  objects
  \begin{equation*}
    \del \colon G_1\lto G_0
  \end{equation*}
  together with a right action
  \begin{equation*}
    G_1\times G_0\lto G_1
  \end{equation*}
  written as $(g,x)\mapsto g^x $ in set-theoretic terms, for
  $g\in G_1$ and $x\in G_0$, satisfying
  \begin{equation}
    \label{eq:7}
    \begin{aligned}
      \del ( g^x ) & = x^{-1}\, \del (g)\, x,\\
      {g_0}^{\del (g_1)} & = g_1^{-1}\, g_0\, g_1\,,
    \end{aligned}
  \end{equation}
  for $x\in G_0$ and $g,g_0,g_1\in G_1$.
\end{definition}
\begin{remark}
  \label{rem:1}
  The use of set-theoretic element-notation in eqns.~\eqref{eq:7}
  can of course be avoided. The axioms can be written in a purely
  arrow-theoretic way:
  \begin{equation}
    \label{eq:8}
    \vcenter{%
      \xymatrix{%
        G_1\times G_0 \ar[r]
        \ar[d]_{\id \times \del} &
        G_1 \ar[d]^\del \\
        G_0 \times G_0
        \ar[r] & G_0}
    } \qquad
    \vcenter{%
      \xymatrix@-0.5pc{%
        & G_0 \ar@/^/[dr]^\jmath   & \\
        G_1 \ar@/^/[ur]^\del \ar[rr]_{\imath_{G_1}}
        && \Aut(G_1)}
    }
  \end{equation}
  In the diagram to the left the top horizontal arrow is simply
  the action of $G_0$ on $G_1$, whereas the bottom one
  corresponds to the (right) action of $G_0$ on itself given by
  conjugation.  In the diagram to the right $\jmath$ is the
  morphism corresponding to the action of $G_0$ on $G_1$, and
  $i_{G_1}$ is homomorphisms given by the inner conjugation,
  namely $g\mapsto \imath_g\colon g' \mapsto g^{-1}g'g$.
\end{remark}
\begin{definition}
  \label{def:3}
  A \emph{strict} morphism of crossed modules is a diagram
  \begin{equation}
    \label{eq:9}
    \vcenter{%
      \xymatrix{%
        H_1 \ar[r]^{f_1} \ar[d]_{\del_G} & G_1 \ar[d]^{\del_H} \\
        H_0 \ar[r]_{f_0} & G_0 
      }}
  \end{equation}
  of group homomorphisms, where the columns are crossed modules,
  and $f_1$ is $f_0$-equivariant, that is:
  \begin{equation}
    \label{eq:10}
    f_1 (h^x) = f_1 (h)^{f_0(x)}
  \end{equation}
  for $h\in H_1$, $x\in H_0$.
\end{definition}
As the usage of the qualifier ``strict'' in the previous
definition suggests, there exist also \emph{weak} morphisms,
where conditions \eqref{eq:9} and \eqref{eq:10} are substantially
relaxed. They are defined to be simply additive functors between
the corresponding gr-categories, cf.\ refs.\
\cite{Noohi:notes,Noohi:weakmaps}.  They will be treated in
detail in a later section, from a rather different perspective
than the one adopted in \loccit

As one may expect, there are also morphisms between (strict)
morphisms (i.e. natural transformations), defined as follows.
\begin{definition}
  \label{def:4}
  Given two morphisms $f$, $f'$ as in \eqref{eq:9}, a
  \emph{homotopy} $\gamma \colon f \Rightarrow f'$ between them
  is a map
  \begin{equation}
    \label{eq:11}
    \gamma \colon H_0 \longrightarrow G_1
  \end{equation}
  satisfying the following relations:
  \begin{subequations}
    \label{eq:12}
    \begin{align}
      \label{eq:13}
      f_0 (x)\, \del_G(\gamma_x) & = f'_0 (x) \\
      \label{eq:14}
      \gamma_x\, f'_1(h) &= f_1(h)\, \gamma_y \\
      \label{eq:15}
      \gamma_{xx'} & = \gamma_x^{f_0(x')} \, \gamma_{x'}
    \end{align}
  \end{subequations}
  for all $h\in H_1$, $x,y\in H_0$ such that $x\, \del_H h = y$,
  and $x'\in H_0$.
\end{definition}
\begin{paragr}
  A crossed module gives rise to a groupoid
  \begin{equation*}
    \grpd{G}\colon
    \xymatrix@1{%
      G_0\times G_1 \ar@<0.3pc>[r]^(.6)s \ar@<-0.1pc>[r]_(.6)t & G_0
    }
  \end{equation*}
  where the source and target maps are:
  \begin{equation}
    \label{eq:16}
    s(x, g)=x\,,\quad t(x, g) = x\,\del(g)\,,
  \end{equation}
  where $x\in G_0$, $g\in G_1$.  This groupoid is in fact a
  strict gr-category, where the composition functor
  \begin{equation*}
    \otimes \colon \grpd{G}\times \grpd{G} \lto \grpd{G}
  \end{equation*}
  is given on objects (i.e.\ $G_0$) by the group law of $G_0$,
  and on morphisms by:
  \begin{equation}
    \label{eq:17}
    (x_0,g_0)\otimes (x_1,g_1) = (x_0 x_1, g_0^{x_1} g_1)\,,
  \end{equation}
  with obvious meaning of the variables.  It is easy to verify
  that this group law is strictly associative.  It is also easy
  to check that a strict morphism in the sense of
  Definition~\ref{def:3} gives an additive functor $F\colon
  \grpd{G}\to \grpd{H}$. Note that this functor will be additive
  in the strictest possible sense, namely all the isomorphism
  $\lambda_{x,y}$ are the identity. Finally, a homotopy as in
  Definition~\ref{def:4} gives rise to a morphism of such
  additive functors $F\Rightarrow F'$.
\end{paragr}
\begin{paragr}
  A crossed module $[\del\colon G_1\to G_0]$ gives rise to an
  obvious exact sequence
  \begin{equation*}
    0\lto A\lto G_1\overset{\del}{\lto}G_0\lto B\lto 1\,,
  \end{equation*}
  where $A=\Ker \del$ and $B=\Coker \del$. It is immediately
  verified that $B=\pi_0(\grpd{G})$, and $A = \pi_1(\grpd{G})$.
  It follows from the more general considerations about
  gr-categories, or from direct computations with~\eqref{eq:7},
  that $A$ is a $B$-module, and it is central in $G_1$, hence
  abelian.
\end{paragr}

\subsection{Cocycles}
\label{sec:cocycles}

To a crossed module $[G_1\to G_0]\sptilde$ there is a canonically
associated simplicial group object of $\cat{T}$, namely the nerve
of the groupoid $\grpd{G}\colon G_0\times G_1\rightrightarrows
G_0$. It is well known that this simplicial group, which we
denote $\smp{G}_\bullet$, is given by
\begin{equation*}
  \smp{G}_0 = G_0,
  \qquad
  \smp{G}_n = G_0\times \underbrace{G_1\times \dotsb \times
    G_1}_{n}\,, \quad n\geq 1.
\end{equation*}
with face and degeneracy maps $d_i \colon
\smp{G}_n\to \smp{G}_{n-1}$ and $s_i \colon \smp{G}_n\to
\smp{G}_{n+1}$:
\begin{align*}
  d_i (x, g_0, \dotsc, g_{n-1}) & =
  \begin{cases}
    (x\, \del g_0, g_1, \dotsc, g_{n-1}) & i=0\\
    (x, g_0, \dotsc, g_{i-1}g_{i}, \dotsc, g_{n-1}) & 0 < i < n\\
    (x, g_0, \dotsc, g_{n-2}) & i=n
  \end{cases} \\
  s_i (x, g_0, \dotsc, g_{n-1}) & = (x, g_0, \dotsc,
  g_{i-1},1,g_{i},\dotsc, g_{n-1}),\quad i=0,\dotsc,n.
\end{align*}
\begin{definition}
  \label{def:5}
  Let $Y_\bullet\to U$ be a hypercover of $\prshv$. A
  \emph{$0$-cocycle over $U$} is a simplicial map $\xi\colon
  Y_\bullet\to \smp{G}_\bullet$.  Two such cocycles $\xi,\xi'$
  are \emph{equivalent} if there is a simplicial homotopy $\alpha
  \colon \xi \Rightarrow \xi'\colon Y_\bullet \to
  \smp{G}_\bullet$.
\end{definition}
\begin{paragr}
  Computing with simplicial maps and simplicial identities, and
  the above definition of $\smp{G}_\bullet$, shows that there is
  a one-to-one correspondence between such simplicial maps
  $\xi\colon Y_\bullet \to \smp{G}_\bullet$ and pairs $(x,g)$,
  $x\colon Y_0\to G_0$ and $g\colon Y_1\to G_1$, satisfying
  \begin{equation}
    \label{eq:18}
    \begin{aligned}
      d_0^*x  & = d_1^*x\, \del g\\
      d_1^*g & = d_2^*g\, d_0^*g
    \end{aligned}
  \end{equation}
  and the normalization condition $s_0^*g =1$.  The simplicial
  map $\xi$ itself is given by:
  \begin{align*}
    \xi_0 & = x\\
    \xi_1 & = ( d_1^*x,g) \\
    \xi_2 & = ( (d_1d_2)^*x, d_2^*g, d_0^*g).
  \end{align*}
  (Like the nerve of any category $\smp{G}_\bullet$ is
  2-coskeletal, hence $\xi$ is completely determined by its
  2-truncation.) To express the correspondence between $\xi$ and
  $(x,g)$ we will simply write $\xi = (x,g)$.

  Similarly, another direct calculation using the definitions
  reveals that a simplicial homotopy $\alpha\colon \xi
  \Rightarrow \xi'$ is uniquely determined by an element $a\colon
  Y_0\to G_1$ such that
  \begin{equation}
    \label{eq:19}
    \begin{aligned}
      x' & = x\, \del a \\
      g\,d_0^*a & = d_1^*a \,g'.
    \end{aligned}
  \end{equation}
  According to the classical formulas found, e.g.\ in
  \cite{MR1206474} the homotopy $\alpha$ is concretely realized
  as a simplicial homotopy \emph{from $\xi'$ to $\xi$} in the
  sense of \loccit \S 5, and it consists of maps
  $\alpha^0_0\colon Y_0\to G_0\times G_1$ and
  $\alpha^1_0,\alpha^1_1\colon Y_1\to G_0\times G_1\times G_1$
  given by:
  \begin{align*}
    \alpha^0_0 & = (x, a) \\
    \alpha^1_0 & = (d_1^*x, d_1^*a, g') \\
    \alpha^1_0 & = (d_1^*x, g, d_0^*a).
  \end{align*}
\end{paragr}

\subsection{Gr-stacks associated to crossed modules}
\label{sec:gr-stacks-associated}

A sheaf of groupoids is in an obvious way a prestack
(\cite{MR1771927}).  Given a crossed module $[G_1 \to G_0]$ and
the groupoid
\begin{math}
  \grpd{G}\colon G_0\times G_1 \rightrightarrows G_0,
\end{math}
We usually indicate by
\begin{math}
  [G_1\to G_0]\sptilde
\end{math}
(rather than $\grpd{G}\sptilde$) the associated stack.  In
general we have:
\begin{proposition}
  \label{prop:2}
  If $\grg$ is a gr-prestack, then the associated stack
  $\grg\sptilde$ acquires the structure of gr-stack, and the
  stackification morphism $a\colon \grg\to \grg\sptilde$ becomes
  an additive functor.
\end{proposition}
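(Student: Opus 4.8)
The plan is to transport every component of the gr-structure from the prestack $\grg$ to its stackification by means of the universal property of $a\colon\grg\to\grg\sptilde$, using as the single external input that stackification preserves finite products. Recall first that the universal property amounts, for every stack $\sty$, to an equivalence of categories
\begin{equation*}
  a^*\colon \catHom(\grg\sptilde,\sty)\lisoto\catHom(\grg,\sty),
\end{equation*}
so that a morphism out of $\grg\sptilde$ is determined up to unique isomorphism by its restriction along $a$, and the same holds for $2$-morphisms since the equivalence is in particular fully faithful. From the explicit model $\grg^+(U)=\colim_{Y\to U}\Desc(\cech Y\to U,\grg)$ one reads off that stackification commutes with finite products: these are finite limits, and they commute both with the filtered colimit over covers and with the limit defining descent data. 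This yields natural equivalences $(\grg\times\grg)\sptilde\simeq\grg\sptilde\times\grg\sptilde$ (and its ternary and quaternary analogues), while the terminal prestack stackifies to the terminal stack; note also that a finite product of stacks is again a stack.

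First I would produce the operations. The composite $\grg\times\grg\xrightarrow{\otimes}\grg\xrightarrow{a}\grg\sptilde$ is a morphism into a stack, hence, by essential surjectivity of $a^*$ together with the product equivalence, it factors up to isomorphism through $\grg\sptilde\times\grg\sptilde$, producing
\begin{equation*}
  \otimes\colon \grg\sptilde\times\grg\sptilde\lto\grg\sptilde
\end{equation*}
equipped with a canonical isomorphism $\lambda_{X,Y}\colon a(X)\otimes a(Y)\lisoto a(X\otimes Y)$. This $\lambda$ is exactly the datum that will exhibit $a$ as additive. Applying the same recipe to the inverse functor $(\cdot)^*\colon\grg\to\grg$ and to the unit $I\colon\ast\to\grg$ (using that the point stackifies to the point) produces the inverse functor and the global unit object on $\grg\sptilde$.

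Next I would transport the coherence isomorphisms. Using full faithfulness of $a^*$ on $\catHom(\grg\sptilde\times\grg\sptilde\times\grg\sptilde,\grg\sptilde)\simeq\catHom(\grg^{\times3},\grg\sptilde)$, I define the associator on $\grg\sptilde$ to be the unique $2$-isomorphism whose restriction along $a$ agrees, through the $\lambda$'s, with $a$ applied to the associator of $\grg$; the unit constraints $l_X,r_X$ and the duality data are transported in the same way. With these definitions the compatibility diagram~\eqref{eq:6} exhibiting $a$ as additive, and the analogous compatibilities for the unit, hold \emph{by construction}. The remaining axioms on $\grg\sptilde$ — Mac~Lane's pentagon and the triangle~\eqref{eq:5} — are equalities of $2$-morphisms; since $a^*$ is faithful, each holds as soon as it holds after restriction along $a$, where it reduces to the corresponding axiom already known on $\grg$. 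Hence $\grg\sptilde$ is a gr-stack and $a$ is additive.

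The main obstacle is the product-preservation statement together with the coherent bookkeeping of the chosen equivalences. Once the identifications $(\grg\times\grg)\sptilde\simeq\grg\sptilde\times\grg\sptilde$ and their higher-arity analogues are fixed once and for all, every remaining verification is forced by the full faithfulness of $a^*$ and requires no genuinely new computation. The real care lies in checking that these product equivalences are mutually compatible — for instance that the two ways of identifying $(\grg\times\grg\times\grg)\sptilde$ with $\grg\sptilde\times\grg\sptilde\times\grg\sptilde$ coincide — which is exactly where the naturality of the stackification construction must be invoked.
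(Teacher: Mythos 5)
Your proposal is correct and takes essentially the same route as the paper, whose own argument is only an ``idea of proof'': apply the universal property of the associated stack to the composite $\grg\times\grg\to\grg\to\grg\sptilde$ to obtain $\otimes\colon\grg\sptilde\times\grg\sptilde\to\grg\sptilde$, and similarly for the diagrams expressing associativity and inversion. Your write-up simply supplies the details the paper leaves implicit—most notably that stackification preserves finite products, and that the coherence data and axioms transfer by full faithfulness of restriction along $a$.
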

\begin{proof}[Idea of proof]
  This can be seen by applying the diagram expressing the
  universal property of the associated stack at the beginning
  of~\ref{sec:stack-assoc-prest} to the morphism $\grg \times
  \grg \to \grg\to \grg\sptilde$ to obtain $\otimes \colon
  \grg\sptilde\times \grg\sptilde\to \grg\sptilde$; and similarly
  for the other diagrams expressing the associativity and
  inversion laws.
\end{proof}
It follows that $[G_1\to G_0]\sptilde$ is a gr-stack---the
associated \emph{gr}-stack to the crossed module $[G_1\to G_0]$.
Its gr-stack structure can be explicitly described in terms of
descent data.

Using Definition~\ref{def:1}, the maps in~\eqref{eq:16},
and eq.~\eqref{eq:18}, we see that in the present case descent
data just become cocycles with values in $[G_1\to G_0]$. (The
correspondence being $(x,y)\to (x, g^{-1})$, to be precise.)
Similarly, from~\eqref{eq:16} and~\eqref{eq:3} it follows that
morphisms of descent data correspond to their respective cocycles
being equivalent in the sense of~\ref{def:5}.
\begin{remark}
  Whenever the site $\s$ admits fiber products, and the topology
  on $\s$ is given in terms of covers, the cocycle
  relations~\eqref{eq:18} take the more familiar form
  \begin{equation*}
    \begin{aligned}
      x_j & = x_i \, \del g_{ij} \\
      g_{ik} & = g_{ij}\, g_{jk}\,,
    \end{aligned}
  \end{equation*}
  with respect to a cover $\lbrace U_i\to U\rbrace_{i\in I}$, see
  e.g.\ \cite[2.4.5.1, 2.4.5.2]{MR93k:18019}. In the same way,
  \eqref{eq:19} become:
  \begin{equation*}
    \begin{aligned}
      x'_i & = x_i\, \del a_i \\
      g_{ij}\, a_{j} & = a_i \, g'_{ij}
    \end{aligned}
  \end{equation*}
  expressing the familiar equivalence relation between cocycles
  with values in $[G_1\to G_0]$ over $U$, see \loccit In general
  one must take care that $Y=\sqcup_i U_i$ in $\prshv$ and
  similarly that $U_{ij}\coloneq U_i\times_UU_j$ exists a priori
  only in $\prshv$ as well, so $x_i$ and $g_{ij}$ should properly
  interpreted as morphisms $x_i\colon U_i\to G_0$ and
  $g_{ij}\colon U_{ij}\to G_1$ in $\cat{T}$.
\end{remark}
\begin{paragr}
  Now, given two cocycles $\xi,\xi'\colon Y_\bullet\to
  \smp{G}_\bullet$ there is an obvious definition of
  \begin{equation*}
    \xi\otimes \xi'\colon Y_\bullet \lto \smp{G}_\bullet
  \end{equation*}
  by ``pointwise'' multiplication using the simplicial group
  structure of $\smp{G}_\bullet$:
  \begin{equation}
    \label{eq:20}
    (\xi\otimes \xi')_n \coloneq \xi_n\xi'_n.
  \end{equation}
  Computing with~\eqref{eq:17}, we find that if $\xi = (x,g)$ and
  $\xi' = (x',g')$, then
  \begin{equation}
    \label{eq:21}
    \xi \otimes \xi' = ( xx', g^{d_1^*x}g' ).
  \end{equation}
  The unit is $(1,1)$ and inverse maps will be the obvious one
  computed from~\eqref{eq:21}.
\end{paragr}
It follows from the definitions in
section~\ref{sec:stack-assoc-prest} that objects of $[G_1\to
G_0]\sptilde$ over $U\in \Ob \s$ are pairs $X=(Y,\xi)$ where
$Y\to U$ is a generalized cover and $\xi \colon \cech Y\to
\smp{G}_\bullet$.  A better way would probably be to visualize
them as a fraction
\begin{equation*}
  X = \vcenter{
    \xymatrix@-1em{%
      & \cech Y \ar@/_{.2pc}/[dl] \ar@/^{.2pc}/[dr]^{\xi}& \\
      U     &   &  \smp{G}_\bullet
    }}
\end{equation*}
\begin{paragr}
  Given two such objects $X=(Y,\xi)$ and $X'=(Y',\xi')$ over $U$
  we define their product as:
  \begin{equation}
    \label{eq:22}
    X\otimes X' = \bigl( Y\times_U Y', p^*\xi\otimes p'^*\xi')
  \end{equation}
  where $p^*\xi$ is the pull-back of $\xi$ to $\cech
  (Y\times_UY')$ via $p\colon Y\times_UY'\to Y$, and similarly
  for $p'^*\xi'$. The $\otimes$-product in the right-hand side
  of~\eqref{eq:22} is the one computed via~\eqref{eq:20}.
  Considering that the simplicial map $\xi$ is itself determined
  by the pair $(x,g)$, we can just write the object $X$ as
  $X=(Y,x,g)$, where now $x\colon Y\to G_0$ and $g\colon
  Y\times_UY'\to G_1$. This is just the classical way to write
  descent data. Therefore given $(Y,x,g)$ and $(Y',x',g')$
  objects of $[G_1\to G_0]\sptilde$ over $U$, we can simply
  write~\eqref{eq:22} more classically as:
  \begin{equation}
    \label{eq:23}
    (Y,x,g)\otimes (Y',x',g') =
    (Z, x x', g^{d_0^*(x')}g'),
  \end{equation}
  where $Z\to U$ refines both $Y,Y'$, e.g. $Z=Y\times_U Y'$, in
  $\prshv$ and for simplicity on the right hand side we have
  suppressed the pullbacks to $Z$.  Similarly, if morphisms
  $(Y,x,g)\to (Y_1,x_1,g_1)$ and $(Y',x',g')\to (Y'_1,x'_1,g'_1)$
  are given by ``elements'' $a\colon Z\to G_1$ and $a'\colon
  Z'\to G_1$ as in~\eqref{eq:19}, their product is given by
  $a^{x'}a'$ over a refinement $W$ of $Z, Z'$.
\end{paragr}
The reader will be able to verify without difficulty:
\begin{proposition}[\cite{MR92m:18019}]
  \label{prop:3}
  The product~\eqref{eq:22} gives $[G_1\to G_0]\sptilde$ the
  structure of a gr-stack.
\end{proposition}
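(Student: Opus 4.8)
The plan is to deduce the statement from Proposition~\ref{prop:2} instead of verifying the gr-stack axioms directly. From the discussion surrounding~\eqref{eq:16}--\eqref{eq:17} we already know that the sheaf of groupoids $\grpd{G}$ attached to the crossed module is a \emph{strict} gr-category: its multiplication is given on objects ($G_0$) by the group law and on morphisms by~\eqref{eq:17}, and it is strictly associative. Regarded as a prestack, $\grpd{G}$ is thus a gr-prestack in the sense of Section~\ref{sec:gr-categories}, so Proposition~\ref{prop:2} applies and equips the associated stack $\grpd{G}\sptilde = [G_1\to G_0]\sptilde$ with a gr-stack structure, the stackification $a\colon \grpd{G}\to [G_1\to G_0]\sptilde$ being additive. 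All that then remains is to check that the product thereby induced coincides with the explicit formula~\eqref{eq:22}.

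For this I would unwind the universal property used in Proposition~\ref{prop:2}. By construction the monoidal product on $[G_1\to G_0]\sptilde$ is the morphism obtained by applying the universal property of the associated stack to the composite $\grpd{G}\times\grpd{G}\overset{\otimes}{\to}\grpd{G}\overset{a}{\to}\grpd{G}\sptilde$. An object of $[G_1\to G_0]\sptilde$ over $U$ is a generalized cover $Y\to U$ together with a descent datum, which by the dictionary established around~\eqref{eq:18} is the same as a cocycle $\xi\colon\cech Y\to\smp{G}_\bullet$; tracing the $(-)^+$ construction through this identification shows that the induced product of two such objects is computed over a common refinement by multiplying the corresponding cocycles. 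Since $\smp{G}_\bullet=\N_\bullet\grpd{G}$ is the nerve of $\grpd{G}$, its degreewise group law is exactly the one coming from $\otimes$, so this is precisely the pointwise product~\eqref{eq:20}; the explicit computation with~\eqref{eq:17} done before the statement then yields~\eqref{eq:21} and the classical form~\eqref{eq:23}. The unit, inverses, and the associativity and unit constraints descend from the corresponding data on $\grpd{G}$ by the same universal property, and strict associativity of the prestack product makes the resulting associator canonical.

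The one point that is not purely formal, and which I would isolate, is that passing the product morphism from $\grpd{G}\times\grpd{G}$ to the level of associated stacks presupposes that stackification carries finite products to finite products, i.e.\ that $(\grpd{G}\times\grpd{G})\sptilde\simeq\grpd{G}\sptilde\times\grpd{G}\sptilde$; this is what lets the universal property produce $\otimes\colon[G_1\to G_0]\sptilde\times[G_1\to G_0]\sptilde\to[G_1\to G_0]\sptilde$ in the first place, and it is already implicit in the idea of proof of Proposition~\ref{prop:2}. Granting it, the remaining identification of the abstract product with~\eqref{eq:22} is a matter of bookkeeping with cocycles, which the correspondence of~\eqref{eq:18} renders routine---exactly as the text's remark that the reader will verify it without difficulty suggests.
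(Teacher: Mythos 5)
Your proposal is correct and takes essentially the same route as the paper: the text derives the gr-stack structure on $[G_1\to G_0]\sptilde$ from Proposition~\ref{prop:2} applied to the gr-prestack $\grpd{G}$, and then treats the identification of the induced product with the explicit formula~\eqref{eq:22} as a routine cocycle verification left to the reader (with a citation to Breen). The one point you isolate---that stackification preserves finite products, so that $(\grpd{G}\times\grpd{G})\sptilde\simeq\grpd{G}\sptilde\times\grpd{G}\sptilde$---is exactly the fact implicit in the paper's sketch of Proposition~\ref{prop:2}, so your write-up simply makes the paper's intended argument explicit.
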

\begin{remark}
  Note that the group law on $[G_1\to G_0]\sptilde$ just
  introduced is \emph{not} strict, even though the one on
  $\grpd{G}$ is, due to the various pullbacks. Thus, for example,
  there will be an associativity morphism
  \begin{equation*}
    ((Y,x,g)\otimes (Y',x',g'))\otimes (Y'',x'',g'') \iso
    (Y,x,g)\otimes ((Y',x',g')\otimes (Y'',x'',g'') )
  \end{equation*}
  resulting from $(Y\times_U Y')\times_U Y''$ being isomorphic to
  $Y\times_U (Y'\times_U Y'')$.
\end{remark}
\begin{paragr}
  There is an equivalent but more geometric realization of the
  associated gr-stack of $[G_1\to G_0]$. Let $\delta\colon G\to
  H$ be a group homomorphism of $\cat{T}$. Following ref.\
  \cite{MR546620}, let us denote by $\tors(G,H)$ the stack of
  right $G$-torsors equipped with a trivialization of their
  extension to $H$-torsors. In other words an object of
  $\tors(G,H)$ is a pair $(P,s)$ where $P$ is a right $G$-torsor
  and $s$ is global isomorphism $s\colon P\cprod{G} H\isoto H$.
  An object of $\tors(G,H)$ will be called a $(G,H)$-torsor. The
  morphism $s$ will be identified with a $G$-equivariant morphism
  $s\colon P\to H$ where all the actions are on the right, namely
  $s(u g) = s(u)\delta (g)$.  With this convention, the precise
  correspondence
  \begin{align*}
    \shHom_G(P,H) &\lisoto P\cprod{G}H \\
    (P,s)&\longmapsto (u, s(u)^{-1})
  \end{align*}
  in set-theoretic notation.

  A morphism $f\colon (P,s)\to (Q,t)$ is a morphism $f\colon P\to
  Q$ of $G$-torsors compatible with the trivializations.
  Equivalently, the diagram
  \begin{equation}
    \label{eq:24}
    \vcenter{%
      \xymatrix@-.75pc{%
        P \ar[rr]^f \ar@/_/[dr]_s && Q \ar@/^/[dl]^t \\ & H &}}
  \end{equation}
  commutes.
\end{paragr}
\begin{paragr}
  All this becomes much more interesting when it is applied to
  the group homomorphism underlying a crossed module $[\del
  \colon G_1\to G_0]$.  It is shown in \cite{MR92m:18019} that in
  this situation each object $(P,s)$ of $\tors(G_1,G_0)$ is in
  fact a $G_1$-bitorsor with the left $G_1$-action defined
  (set-theoretically) by:
  \begin{equation*}
    g\star u = u\, g^{s(u)}.
  \end{equation*}
  As a consequence, posing, as in \loccit
  \begin{equation}
    \label{eq:25}
    (P,s)\otimes (Q,t) \coloneq (P\cprod{G_1} Q, s\wedge t)
  \end{equation}
  endows $\tors(G_1,G_0)$ with a gr-stack structure. Here
  $s\wedge t$ is the $G$-equivariant map from $P\cprod{G_1} Q$ to
  $G_0$ given by $s(u)t(v)$, where $(u,v)$ represents a point of
  $P\cprod{G_1} Q$.
\end{paragr}
Moreover, we have:
\begin{theorem}[{\cite[Théorème 4.6]{MR92m:18019}}]
  \label{thm:1}
  There is an equivalence of gr-stacks
  \begin{equation*}
    \tors(G_1,G_0)\lisoto [G_1\to G_0]\sptilde .
  \end{equation*}
\end{theorem}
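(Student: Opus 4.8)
The plan is to realize the equivalence as the stackification of a comparison morphism defined already at the level of the presheaf of groupoids $\grpd{G}$, and then to check the two local conditions that upgrade such a morphism to an equivalence of stacks. Since $\tors(G_1,G_0)$ is a stack, it suffices to produce an additive morphism of gr-prestacks $\Phi\colon \grpd{G}\to \tors(G_1,G_0)$ which is fully faithful on fibres and whose essential image generates $\tors(G_1,G_0)$ locally. The universal property of stackification recalled in section~\ref{sec:stack-assoc-prest}, together with Proposition~\ref{prop:2}, then yields an additive morphism $\Phi\sptilde\colon [G_1\to G_0]\sptilde\to \tors(G_1,G_0)$, and the two local conditions guarantee that $\Phi\sptilde$ is an equivalence of stacks, hence of gr-stacks.

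First I would define $\Phi$. An object of $\grpd{G}$ over $U$ is a section $x\colon U\to G_0$, and I send it to the trivial right $G_1$-torsor $P=G_1$ equipped with the trivialization $s_x\colon G_1\to G_0$, $s_x(g)=x\,\del(g)$; the first crossed-module relation in~\eqref{eq:7} makes $s_x$ equivariant in the sense $s_x(ug)=s_x(u)\,\del(g)$, so that $(G_1,s_x)$ is a genuine $(G_1,G_0)$-torsor. A morphism $x\to x'$ in $\grpd{G}$ is, by~\eqref{eq:16}, an element $g\in G_1$ with $x'=x\,\del(g)$, which I send to the morphism of trivial torsors given by left translation by $g^{-1}$; commutativity of the triangle~\eqref{eq:24} then reduces to $x=x'\,\del(g^{-1})$, which holds. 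This is the inversion appearing in the cocycle/descent-data correspondence recorded in section~\ref{sec:gr-stacks-associated}.

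Next I would check additivity, that is, compatibility of $\Phi$ with the tensor product on $\grpd{G}$ coming from~\eqref{eq:17} and the one on $\tors(G_1,G_0)$ from~\eqref{eq:25}. Concretely this amounts to constructing a natural isomorphism $\Phi(x)\otimes\Phi(x')\iso \Phi(xx')$ by identifying $G_1\cprod{G_1}G_1$ with $G_1$ and verifying that under this identification the trivialization $s_x\wedge s_{x'}$, which sends the class of $(u,v)$ to $s_x(u)\,s_{x'}(v)$, becomes $s_{xx'}$. This step carries the real computational weight: the contracted product is formed using the left $G_1$-action $g\star u=u\,g^{s(u)}$ defining the bitorsor structure, and disentangling it requires substituting $s(u)=x\,\del(u)$ and repeatedly invoking the crossed-module axioms~\eqref{eq:7} to push the action past $\del$. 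I expect this identification — and the attendant verification that it respects the associativity and unit constraints — to be the main obstacle, though it is entirely mechanical once the conventions are pinned down.

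Finally I would establish the two local properties. For local essential surjectivity, any $(G_1,G_0)$-torsor $(P,s)$ has $P$ locally trivial as a $G_1$-torsor, so over a suitable generalized cover $Y\to U$ a local section $u_0$ of $P$ trivializes $P|_Y$; transporting $s$ through this trivialization gives a $G_1$-equivariant map $G_1\to G_0$, which is necessarily $g\mapsto x\,\del(g)$ with $x=s(u_0)$, whence $(P,s)|_Y\iso \Phi(x)$. For full faithfulness on fibres, a morphism $\Phi(x)\to\Phi(x')$ is a right-$G_1$-equivariant self-map of the trivial torsor, i.e.\ left translation by some $h\in G_1$, and compatibility with $s_x,s_{x'}$ forces $x=x'\,\del(h)$; these are exactly the morphisms $x\to x'$ in $\grpd{G}$. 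Since a fully faithful, locally essentially surjective morphism from a prestack into a stack exhibits the target as the stackification of the source, $\Phi\sptilde$ is an equivalence of stacks, and being additive it is the asserted equivalence of gr-stacks.
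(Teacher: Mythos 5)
Your proposal is correct, but it takes a genuinely different route from the paper's. The paper constructs the functor in the stated direction, $\tors(G_1,G_0)\to[G_1\to G_0]\sptilde$: it trivializes an arbitrary $(P,s)$ over a cover $Y\to *$, reads off from the descent isomorphism a cocycle $(x,g)$ satisfying \eqref{eq:18}, and then devotes essentially the whole sketch to checking that the contracted product \eqref{eq:25} of two trivialized torsors yields the product cocycle $\bigl(xx',\, g^{d_1^*x'}g'\bigr)$ of \eqref{eq:21}, deferring all remaining details to Breen's original account. You instead build the comparison in the opposite direction and at the prestack level: your $\Phi$ is exactly the morphism $\pi_\grg$ of \eqref{eq:26}, extended to the groupoid $\grpd{G}$, and the theorem then follows from the universal property of stackification (with Proposition~\ref{prop:2}) together with the standard fact that a fully faithful, locally essentially surjective morphism from a prestack into a stack identifies that stack with the stackification of the source. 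Your route buys a cleaner skeleton — no hypercover/descent-data bookkeeping, all computation confined to trivial torsors — while the paper's route produces the explicit cocycle dictionary that it reuses elsewhere. Note that the computational heart is the same bitorsor contraction in both proofs: the paper performs it on descent morphisms (computing $\phi\wedge\phi'$), you perform it on objects (comparing $s_x\wedge s_{x'}$ with $s_{xx'}$).

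Since you flagged that contraction as the main obstacle but did not carry it out, let me confirm that it closes. In $G_1\cprod{G_1}G_1$ every class has a representative of the form $(1,w)$, via $(u,v)\sim\bigl(1,\,u\star v\bigr)=\bigl(1,\,v\,u^{x'\del(v)}\bigr)$; the resulting identification $G_1\cprod{G_1}G_1\iso G_1$ is right $G_1$-equivariant by the Peiffer relation (second line of \eqref{eq:7}), and it intertwines $s_x\wedge s_{x'}$ with $s_{xx'}$ by the first line:
\begin{equation*}
  s_{xx'}\bigl(v\,u^{x'\del(v)}\bigr)
  = x\,x'\,\del(v)\,\bigl(x'\del(v)\bigr)^{-1}\,\del(u)\,\bigl(x'\del(v)\bigr)
  = x\,\del(u)\,x'\,\del(v)
  = (s_x\wedge s_{x'})(u,v).
\end{equation*}
Two harmless slips in your write-up: equivariance of $s_x$ needs only that $\del$ is a group homomorphism — the relations \eqref{eq:7} enter precisely in the computation above, not there; and in your full-faithfulness check the bijection between morphisms $g\colon x\to x'$ of $\grpd{G}$ and the admissible left translations $h$ is $h=g^{-1}$, which is consistent with your definition of $\Phi$ on morphisms but worth stating explicitly.
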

\begin{proof}[Proof (Sketch)]
  We limit ourselves to an outline the argument leading to the
  equivalence of the product structures, referring to the
  original reference for the complete details.
  
  By the argument of \loccit an object $(P,s)$ of
  $\tors(G_1,G_0)$ determines descent data in the usual way. Let
  $Y\to *$ be a generalized cover of the terminal object $*\in
  \cat{T}$ with a trivialization of the underlying
  right-$G_1$-torsor $P$ via the section $u\colon Y\to P$.  These
  data determine an isomorphism of $(G_1,G_0)$-torsors, therefore
  the morphism $\phi\colon d_0^*P_Y\isoto d_1^*P_Y$ must also
  satisfy $s(d_0^*) = s(\phi(d_0^*u))$.  This determines $g\colon
  Y\times Y\to G_1$ such that $\phi (d_0^*u) = (d_1^*u)g$ and
  $x\coin s(u)\colon Y\to G_0$ such that~\eqref{eq:18} are
  satisfied.

  Assuming for convenience that $(P,s)$ and $(P',s')$ are
  trivialized over the same $Y\to *$ from~\eqref{eq:25} we have
  that
  \begin{equation*}
    \phi (d_0^*u)\wedge \phi' (d_0^*u')
    = (d_1^*u) g\wedge (d_1^*u') g'
    = (d_1^*u) \wedge g \star (d_1^*u') g'
  \end{equation*}
  and using the form of the left action given above
  \begin{equation*}
    g \star (d_1^*u') g' =  (d_1^*u') g^{s'(d_1^*u')} g'
  \end{equation*}
  so that
  \begin{equation*}
    \phi (d_0^*u)\wedge \phi' (d_0^*u')
    = (d_1^*u) \wedge (d_1^*u') (d_1^*u') g^{s'(d_1^*u')} g'
  \end{equation*}
  we conclude the morphism $\phi\wedge\phi'$ is represented by
  $g^{d_1^*x'}g'$.  Since obviously the value of $s\wedge s'$
  over $u\wedge u'$ is $xx'$, we finally have obtained that the
  cocycle corresponding to $(P,s)\otimes (P',s')$ is the product
  of the two cocycles in the sense of~\eqref{eq:21} (or, more
  precisely, \eqref{eq:22}).
\end{proof}
\begin{paragr}
  We conclude this section with the following observation, which
  will be useful elsewhere in this paper: if $\grg$ is the
  associated gr-stack to $[G_1\to G_0]$, then there is an exact
  sequence:
  \begin{equation}
    \label{eq:26}
    G_1\overset{\del}{\lto} G_0 \xrightarrow{\pi_G} \grg
  \end{equation}
  of gr-stacks over $\s$.  Here $G_1$ and $G_0$ are considered as
  gr-stacks in the obvious way. The map $\pi_\grg$ associates to
  the element $x\colon U\to G_0$ the trivial $(G_1,G_0)$-torsor
  $(G_1\rvert_U, x)$ over $U$, where $x$ is identified with the
  equivariant map sending the global section $1$ to $x$.
  Exactness is intended in the sense of stacks: there it is a
  pull-back square
  \begin{equation*}
    \xymatrix{%
      G_1\ar[r]^\del \ar[d] & G_0 \ar[d]^{\pi_\grg} \\
      \mathbf{1} \ar[r] & \grg
    }
  \end{equation*}
  which is 2-commutative. ($\mathbf{1}$ is the category with one
  object and one arrow.)  This is discussed in
  sections~\ref{sec:bist-cross-modul}, and with respect to the
  exactness question,~\ref{sec:exact-sequences}.

  In terms of the corresponding simplicial group objects, the
  above sequence corresponds to the highlighted portion of the
  following homotopy exact sequence\ \cite[eq.\
  (3.11.2)]{MR92m:18019}:
  \begin{equation*}
    \begin{xy}
      \xymatrix@1{ {*} \ar[r] & \Omega\smp{G}_\bullet \ar[r] &
        G_1 \ar[r]%
        \save []!<-1pc,1pc>.{[0,2]!<1pc,-1pc>}*[F--]\frm{}
        \restore%
        & G_0 \ar[r] & \smp{G}_\bullet \ar[r] & \B G_1 \ar[r] &
        \B G_0 \ar[r] & \W\smp{G}_\bullet}
    \end{xy}
  \end{equation*}
  where $G_0$ and $G_1$ are considered as constant simplicial
  groups.  We will say more about~\eqref{eq:26} further down in
  the paper.
\end{paragr}

\section{Butterflies and weak morphisms of crossed modules}
\label{sec:butt-weak-morph}

Morphisms of crossed modules as defined in~\ref{def:3} can be
generalized quite a bit, and the resulting theory has a more
geometric flavor.  Over the punctual topos, that is, when we are
dealing with groups and crossed modules in $\Set$, the notion of
weak morphism has been developed by the second author in
refs.~\cite{Noohi:notes,Noohi:weakmaps}.  As mentioned above, the
framework of weak morphisms of crossed modules translates into
the calculus of diagrams called ``butterflies,'' owing to their
shape.

In this section we recast this discussion in the sheaf theoretic
context of gr-stacks. As this is more than a mere translation,
our treatment is going to be quite different from that in the
above mentioned references.  From this more geometric point of
view we posit that weak morphisms of crossed modules are additive
functors between the associated gr-stacks. It is equivalent, in a
sense made precise below, to considering butterfly diagrams as
morphisms between crossed modules.

In a later section (sect.~\ref{sec:fiber-bicat-cross}) we will
show how crossed modules equipped with butterflies as their
morphisms (i.e.\ weak morphisms) form a fibered bicategory which
is biequivalent to the fibered 2-category of gr-stacks.

\subsection{General definitions}
\label{sec:general-definitions}

Let $[H_1\to H_0]$ and $[G_1\to G_0]$ be two crossed modules of
$\cat{T}$.
\begin{definition}
  \label{def:6}
  A \emph{weak morphism} $F\colon H_\bullet \to G_\bullet$ is an
  additive functor
  \begin{equation*}
    F\colon [H_1\to H_0]\sptilde \lto [G_1\to G_0]\sptilde
  \end{equation*}
  between the corresponding gr-stacks. A (weak) 2-morphism is a
  morphism of such additive functors (as in
  sect.~\ref{sec:gr-categories}).
\end{definition}
\begin{remark}
  Strict morphisms from $H_\bullet$ to $G_\bullet$, as defined in
  Definition~\ref{def:3}, give rise to weak morphisms in the
  obvious way, since they give rise to strict additive functors
  \begin{equation*}
    \left[
      \xymatrix@1{%
        H_0\times H_1 \ar@<0.3pc>[r]^(.6)s \ar@<-0.1pc>[r]_(.6)t & H_0
      }\right] \lto
    \left[
      \xymatrix@1{%
        G_0\times G_1 \ar@<0.3pc>[r]^(.6)s \ar@<-0.1pc>[r]_(.6)t & G_0
      }\right]
  \end{equation*}
  and therefore to morphisms between the associated stacks.
\end{remark}
\begin{definition}
  \label{def:7}
  A \emph{butterfly} from $H_\bullet$ to $G_\bullet$ is a
  commutative diagram of group homomorphisms of the form
  \begin{equation}
    \label{eq:27}
    \vcenter{%
      \xymatrix@R-0.5em{%
        H_1\ar[dd]_\del \ar@/_0.1pc/[dr]^\kappa  & &
        G_1 \ar@/^0.1pc/[dl]_\imath \ar[dd]^\del\\
        & E\ar@/_0.1pc/[dl]_\pi \ar@/^0.1pc/[dr]^\jmath &  \\
        H_0 & & G_0
      }}
  \end{equation}
  where $E$ is a group object of $\cat{T}$, the NW-SW sequence is
  a complex, and the NE-SW sequence is a group extension.  The
  various maps satisfy the equivariance conditions written
  set-theoretically as:
  \begin{equation}
    \label{eq:28}
    \imath (g^{\jmath(e)}) = e^{-1} \imath (g) e,\quad
    \kappa (h^{\pi (e)}) = e^{-1} \kappa (h) e
  \end{equation}
  where $g\in G_1, h\in H_1, e\in E$.
\end{definition}
Let us use the short-hand notation $[H_\bullet,E,G_\bullet]$ for
the butterfly diagram~\eqref{eq:27}, or even just $E$ when there
is no danger of confusion. As in~\cite{Noohi:weakmaps}, we have:
\begin{proposition}
  \label{prop:4}
  The images of $\kappa$ and $\imath$ commute in $E$.
\end{proposition}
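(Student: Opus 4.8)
The plan is to extract the commutation relation directly from the equivariance conditions~\eqref{eq:28}, exploiting the fact that each of the two ``wings'' $\imath$ and $\kappa$ lands in the kernel of the opposite projection. Concretely, I would start from the second identity in~\eqref{eq:28}, namely $\kappa(h^{\pi(e)}) = e^{-1}\kappa(h)\,e$, valid for all $h\in H_1$ and $e\in E$, and specialize the parameter $e$ to an element of the form $\imath(g)$ with $g\in G_1$.

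The one input needed is that $\pi\circ\imath$ is trivial. This is immediate from the hypotheses: since the NE-SW diagonal $G_1 \xrightarrow{\imath} E \xrightarrow{\pi} H_0$ is a group extension, one has $\im(\imath) = \ker(\pi)$, and in particular $\pi(\imath(g)) = 1$ for every $g\in G_1$. Feeding $e = \imath(g)$ into the equivariance relation then gives
\[
  \imath(g)^{-1}\,\kappa(h)\,\imath(g) = \kappa\bigl(h^{\pi(\imath(g))}\bigr) = \kappa(h^1) = \kappa(h),
\]
which is precisely the assertion that $\imath(g)$ and $\kappa(h)$ commute in $E$. As an independent check one can run the mirror-image argument: substituting $e = \kappa(h)$ into the first identity $\imath(g^{\jmath(e)}) = e^{-1}\imath(g)\,e$ and using the complex condition $\jmath\circ\kappa = 1$ yields $\kappa(h)^{-1}\imath(g)\kappa(h) = \imath(g)$, the same conclusion.

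The only subtlety is that the identities in~\eqref{eq:28} are written in the set-theoretic ``element'' style, whereas we are really manipulating group objects of $\cat{T}$. The substitution $e = \imath(g)$ should therefore be read as a composition of morphisms over the product $H_1\times G_1$, i.e.\ as a statement about generalized elements, in keeping with the element-style conventions fixed in section~\ref{sec:conv-notat}. Once this translation is understood there is genuinely nothing to compute beyond the single substitution above, so I do not expect any real obstacle: the entire content of the proposition is the pair of facts $\pi\imath = 1$ and the $\pi$-equivariance of $\kappa$.
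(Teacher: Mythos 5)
Your proof is correct and is exactly the argument the paper intends: its proof of Proposition~\ref{prop:4} reads simply ``Easy consequence of~\eqref{eq:28},'' and the intended easy consequence is precisely your substitution $e=\imath(g)$ into $\kappa(h^{\pi(e)})=e^{-1}\kappa(h)e$ together with $\pi\circ\imath=1$ from exactness of the NE-SW diagonal (your mirror argument via $\jmath\circ\kappa=1$ is an equally valid variant). Nothing further is needed.
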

\begin{proof}
  Easy consequence of~\eqref{eq:28}.
\end{proof}
\begin{definition}
  \label{def:19}
  A butterfly~\eqref{eq:27} is \emph{flippable,} or
  \emph{reversible,} if both diagonals are extensions.
\end{definition}

A slightly stronger version of the definition of a butterfly
plays a non-trivial role in some examples, most notably those
related to braidings.
\begin{definition}
  \label{def:8}
  A \emph{strong butterfly} is a butterfly~\eqref{eq:27} equipped
  with a global section $s\colon H_0\to E$ of $\pi\colon E\to
  H_0$ of underlying $\Set$-valued sheaves, namely such that
  $\pi\circ s = \id_{H_0}$.
\end{definition}

Morphism of butterflies are defined as follows:
\begin{definition}
  \label{def:9}
  A morphism of butterflies $\phi\colon
  [H_\bullet,E,G_\bullet]\to [H_\bullet,E',G_\bullet]$ is given
  by a group isomorphism $\phi\colon E\isoto E'$ such that
  \begin{equation*}
    \xymatrix@C+1pc{%
      H_1 \ar[r] \ar@/_0.1pc/[dr] \ar[dd] &  E' \ar@/^/[ddr]|(.35)\hole
      \ar@/_/[ddl]|(.35)\hole  & G_1 \ar[l] \ar[dd] \ar@/^0.1pc/[dl] \\
      &  E \ar@/_0.2pc/[dl] \ar@/^0.2pc/[dr] \ar[u] \\
      H_0  & & G_0}
  \end{equation*}
  commutes and is compatible with all the conditions
  in~\ref{def:7}.  Two morphisms
  \begin{equation*}
    \phi\colon
    [H_\bullet,E,G_\bullet]\to [H_\bullet,E',G_\bullet] \,,\quad
    \phi'\colon [H_\bullet,E',G_\bullet]\to
  [H_\bullet,E'',G_\bullet]
  \end{equation*}
  are composed in the obvious way.
\end{definition}
\begin{paragr}
  It is clear from Definitions~\ref{def:7} and~\ref{def:9} that
  butterflies from $H_\bullet$ to $G_\bullet$ and their morphisms
  form a groupoid. Let us denote it by $\cat{B}
  (H_\bullet,G_\bullet)$.

  Another groupoid naturally associated with two crossed modules
  $H_\bullet$ and $G_\bullet$ is the groupoid of weak morphisms
  as defined in sect.~\ref{sec:general-definitions}.  This
  groupoid will be denoted by $\cat{WM}(H_\bullet, G_\bullet)$.
\end{paragr}

\subsection{Remarks on the definition of weak morphism}
\label{sec:remarks-weak-morph}

In the set-theoretic case, the definition of weak morphism is
seemingly different. In \cite{Noohi:notes,Noohi:weakmaps} weak
morphisms from $H_\bullet$ to $G_\bullet$ are defined as
(pointed) lax functors from $\grpd{H}[1]$ to
$\grpd{G}[1]$. Recall that $\grpd{G}$ is the groupoid determined
by $G_\bullet$, and that $\grpd{G}[1]$ is the ``suspension'' of
$\grpd{G}$, namely the 2-category with only one object and
1-morphisms given by the objects of $\grpd{G}$, with composition
law given by the monoidal law of $\grpd{G}$.

\begin{paragr}
  In the context of sheaves over a site, given a crossed module
  $G_\bullet$ we have different notions of suspension:
  $\grpd{G}[1]$, the suspension of the groupoid $\grpd{G}$
  itself; $\grg[1]$, the suspension of the gr-stack $\grg$
  associated to $G_\bullet$; and finally $\tors (\grg)$, the
  2-gerbe of $\grg$-torsors (see \cite{MR95m:18006}). As remarked
  in \loccit and later on in section~\ref{sec:cone-butterfly},
  the latter is the correct one from a geometric point of view,
  as it is associated to $\grg [1]$ by a process of
  2-stackification. ($\grg [1]$ is a fibered bicategory over $\s$
  which deserves to be called a pre-bistack, since $\grg$ itself
  is a stack, but for which the 2-descent condition on objects
  does not hold; $\grpd{G}[1]$ is even less geometric: as
  $\grpd{G}$ itself is only a prestack, in the suspension only
  the 2-morphisms form a sheaf over $\s$.)
\end{paragr}
\begin{paragr}
  Given crossed modules $H_\bullet$ and $G_\bullet$, one can
  consider the following groupoids:
  \begin{enumerate}
  \item\label{item:1} $\operatorname{LaxFnct}_* (\grpd{H}[1], \grpd{G}[1])$:
    lax pointed 2-functors;
  \item\label{item:2} $\twocatHom_* (\grh [1], \grg [1])$:
    pointed Cartesian functors of fibered bicategories;
  \item\label{item:3} $\twocatHom_* (\tors (\grh), \tors (\grg))$: pointed
    Cartesian 2-functors of fibered 2-categories.
  \end{enumerate}
  A priori these are 2-groupoids, but since we are in the pointed
  case, they actually are equivalent to 1-groupoids. In the
  latter case, $\tors (\grg)$ is naturally pointed by the trivial
  torsor. Thus pointed morphisms send the trivial $\grh$-torsor
  to the trivial $\grg$-torsor up to equivalence.
\end{paragr}
\begin{paragr}
  There are equivalences:
  \begin{equation*}
    \cat{WM}(H_\bullet, G_\bullet) \lisoto
    \twocatHom_* (\grh [1], \grg [1])\lisoto
    \twocatHom_* (\tors (\grh), \tors (\grg)).
  \end{equation*}
  By Morita theory (see \cite{MR2183393}, or more precisely a
  categorification of it) the \emph{un-}pointed 2-groupoid
  $\twocatHom (\tors (\grh), \tors (\grg))$ consists of
  $(\grh,\grg)$-bimodules, namely stacks with simultaneous left
  $\grh$ and right $\grg$-actions, that are actually torsors for
  the right $\grg$-action. The \emph{pointed} ones are the ones
  for which the corresponding bimodule is actually equivalent to
  the trivial torsor. Hence they correspond to actual additive
  functors $\grh \to \grg$.

  The first equivalence between $\cat{WM}(H_\bullet, G_\bullet)$
  and the groupoid~\ref{item:2} is an application of the
  definitions.

  The groupoid~\ref{item:1} in the list is \emph{strictly}
  smaller, however. It is rather easy to see that it only leads
  to additive functors of the form $F(U)\colon \grpd{H}(U)\to
  \grpd{G}(U)$ for each object $U$ of the site $\s$. In other
  words, it gives rise to additive functors between the
  corresponding prestacks.  In light of
  sections~\ref{sec:weak-morph-butt}
  and~\ref{sec:proof-theor-refthm:3}, that choice only
  corresponds to \emph{strong} butterflies in the sense of
  Definition~\ref{def:8}.
\end{paragr}

\subsection{Weak morphisms and butterflies}
\label{sec:weak-morph-butt}
One of our main results is the theorem stating that the groupoid
of butterflies $\cat{B} (H_\bullet,G_\bullet)$ from $H_\bullet$
to $G_\bullet$ is equivalent to that of weak morphisms. More precisely:
\begin{theorem}
  \label{thm:2}
  There exists a pair of quasi-inverse functors
  \begin{gather*}
    \Phi\colon \cat{B} (H_\bullet, G_\bullet) \lto
    \cat{WM} (H_\bullet, G_\bullet) \\
    \intertext{and} \Psi \colon \cat{WM} (H_\bullet, G_\bullet)
    \lto \cat{B} (H_\bullet, G_\bullet).
  \end{gather*}
  defining an equivalence between $\cat{B} (H_\bullet,
  G_\bullet)$ and $\cat{WM} (H_\bullet, G_\bullet)$.
\end{theorem}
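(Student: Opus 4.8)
The plan is to construct the two functors explicitly and then produce natural isomorphisms $\Phi\circ\Psi\iso\id$ and $\Psi\circ\Phi\iso\id$. Throughout I would work with the torsor model of Theorem~\ref{thm:1}, writing $\grh=[H_1\to H_0]\sptilde\iso\tors(H_1,H_0)$ and $\grg=[G_1\to G_0]\sptilde\iso\tors(G_1,G_0)$, and I would use the canonical maps $\pi_H\colon H_0\to\grh$, $\pi_G\colon G_0\to\grg$ together with the exact sequence~\eqref{eq:26}.

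First I would define $\Phi$. Given a butterfly $[H_\bullet,E,G_\bullet]$, the NE--SW extension $1\to G_1\overset{\imath}{\to}E\overset{\pi}{\to}H_0\to1$ exhibits $\pi\colon E\to H_0$ as a right $G_1$-torsor, while $\kappa\colon H_1\to E$ makes $E$ a left $H_1$-object with the two actions commuting by Proposition~\ref{prop:4}. For an object $(P,s)$ of $\grh$ over $U$ — a right $H_1$-torsor with equivariant $s\colon P\to H_0$ — I would set
\[
  \Phi(E)(P,s)=\bigl(P\cprod{H_1}E,\,t\bigr),\qquad t([p,e])=\jmath(e),
\]
the contracted product formed via $\kappa$. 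This is well defined because $\jmath\kappa=1$ (the NW--SE complex condition); it is a right $G_1$-torsor precisely because the right $G_1$-action through $\imath$ commutes with the $H_1$-action through $\kappa$; and $t$ is $G_1$-equivariant into $G_0$ since $\jmath\imath=\del_G$, so $\Phi(E)(P,s)$ is a $(G_1,G_0)$-torsor. The monoidal constraint $\lambda$ is supplied by the commutativity of the images of $\kappa$ and $\imath$ together with the equivariance relations~\eqref{eq:28}, and checking the coherence diagram~\eqref{eq:6} is a direct computation. A morphism of butterflies $\phi\colon E\isoto E'$ transports contracted products and hence induces a natural isomorphism $\Phi(E)\Rightarrow\Phi(E')$, so $\Phi$ is a functor.

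Conversely, to define $\Psi$ I would send an additive functor $F\colon\grh\to\grg$ to the homotopy fibre product
\[
  E\eqdef H_0\times_{\grg}G_0,
\]
taken along $F\circ\pi_H$ and $\pi_G$: concretely $E$ is the sheaf of triples $(h,x,\alpha)$ with $h\in H_0$, $x\in G_0$, and $\alpha\colon F\pi_H(h)\isoto\pi_G(x)$. Because $\grg$ is a stack the isomorphism presheaf is a sheaf, so $E$ is representable, and since every map in sight is additive $E$ is a group object. The two projections give $\pi\colon E\to H_0$ and $\jmath\colon E\to G_0$, while $\imath$ and $\kappa$ come from applying the pullback square~\eqref{eq:26} and $F$ to the canonical trivializations of $\pi_G\circ\del_G$ and $\pi_H\circ\del_H$. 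Feeding~\eqref{eq:26} into the fibre-product computation identifies $\ker\pi$ with $G_1$ and shows $\pi$ is a local epimorphism, so the NE--SW diagonal is an extension; $\jmath\kappa=1$ and commutativity of the verticals are immediate, and~\eqref{eq:28} follows from the group law on $E$. Thus $\Psi(F)$ is a butterfly, functorially in $F$.

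It remains to treat the two round trips. For $\Psi\circ\Phi\iso\id$ I would use the middle crossed module $[H_1\times G_1\to E]$ of~\eqref{eq:2}, with differential $(h,g)\mapsto\kappa(h)\imath(g)$: it maps by a quasi-isomorphism $q$ to $H_\bullet$ (via $\pi$ and $\mathrm{pr}_1$) and strictly by $p$ to $G_\bullet$ (via $\jmath$ and $\mathrm{pr}_2$); since a quasi-isomorphism of crossed modules induces an equivalence of associated gr-stacks, $\Phi(E)\iso p\sptilde\circ(q\sptilde)^{-1}$, and recomputing $H_0\times_\grg G_0$ for this functor returns $E$ up to canonical isomorphism. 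The identity $\Phi\circ\Psi\iso\id$ amounts to showing that, for $E=\Psi(F)$, the contracted-product functor $\Phi(E)$ is naturally isomorphic to $F$, the transformation being assembled from the tautological isomorphisms $\alpha$ recording membership in $E$. I expect this last identity — producing the coherent natural isomorphism $\Phi(\Psi(F))\iso F$ and checking it respects the monoidal constraints — to be the \emph{main obstacle}, since it is precisely where the abstract $2$-fibre product must be reconciled with the explicit torsor-level formula for $\Phi$; the representability of $E$ and the extension property of $\pi$, both consequences of~\eqref{eq:26} and the stack condition on $\grg$, are the facts that make this reconciliation go through.
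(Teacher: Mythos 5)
Your construction of $\Psi$ is the paper's (the fiber product $H_0\times_{\grg}G_0$), but your formula for $\Phi$ has a genuine gap: $\Phi(E)(P,s)=P\cprod{H_1}E$ never uses the equivariant map $s$, and the result is not an object of $\grg$. Locally, trivializing $P$, the contracted product $P\cprod{H_1}E$ is isomorphic to all of $E$, on which $G_1$ acts through $\imath$ freely but not transitively---the orbits are exactly the fibers of $\pi\colon E\to H_0$. So $P\cprod{H_1}E$ is a $G_1$-sheaf that is locally $E$, not a $G_1$-torsor over $U$; commutativity of the two actions only guarantees that the $G_1$-action descends to the contracted product, not torsorhood. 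The fix is to cut down to the locus where $s$ and $\pi$ agree: take
\begin{equation*}
  \Phi(E)(P,s) \;=\; \bigl(P\times_{H_0}E\bigr)/H_1,
\end{equation*}
the fiber product along $s$ and $\pi$, modulo the diagonal right $H_1$-action $(p,e)\cdot h=(ph,\,e\kappa(h))$ (which preserves the fiber-product condition because $\pi\circ\kappa=\del_H$), with $G_1$ acting by $[p,e]\cdot g=[p,\,e\,\imath(g)]$ (well defined by Proposition~\ref{prop:4}) and $t([p,e])=\jmath(e)$. Locally this is the fiber $E_{s(p_0)}$ of $\pi$, hence a genuine $G_1$-torsor. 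This corrected object is canonically the paper's $\shHom_{H_1}(P,E)_s$, the sheaf of $H_1$-equivariant local lifts of $s$ through $\pi$---i.e., precisely the ``fiber over $s$'' that your formula omits.

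Granting this correction, your overall strategy differs from the paper's in a legitimate way: the paper shows $\Phi$ is fully faithful (reconstructing $E$ and morphisms of butterflies from the fibers $E_y$ via the bitorsor cocycles of Schreier theory) and then essentially surjective, the latter being exactly your isomorphism $\Phi(\Psi(F))\iso F$ (its Proposition~\ref{prop:5}); it never needs your other round trip $\Psi(\Phi(E))\iso E$. Your sketch of that direction can be made to work, but the operative step is not the fraction decomposition per se: what one needs is the identification $\Phi(E)(\pi_\grh(y))\iso E_y$ with the fiber of $\pi$ over $y$, after which a triple $(y,f,x)$ of $H_0\times_{\grg}G_0$ amounts to a point $e\in E$ with $\pi(e)=y$ and $\jmath(e)=x$, giving $\Psi(\Phi(E))\iso E$ directly. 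Note that this identification is available only for the corrected $\Phi$; with your formula as written, both round trips fail at the first step, since $\Phi(E)$ does not land in $\grg$.
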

We will give a proof in~\ref{sec:proof-theor-refthm:3}.  It will
be simpler to directly show that $\Phi$ is fully faithful and
essentially surjective. However it still is worthwhile to have
the explicit definition of both functors at hand.

\subsubsection*{Definition of $\Phi$}

To define $\Phi$, we need to construct an additive functor
\begin{equation*}
  \Phi (E) \colon \grh \lto \grg
\end{equation*}
for each object of $\cat{B}(H_\bullet, G_\bullet)$, i.e.\ a
butterfly $[H_\bullet, E, G_\bullet]$.  Given an $H_1$-torsor
with an equivariant map $t\colon Q\to H_0$, consider the obvious
map
\begin{equation*}
  \pi_*\colon \shHom_{H_1}(Q,E) \lto \shHom_{H_1}(Q,H_0)
\end{equation*}
induced by $\pi\colon E\to H_0$ in the butterfly. $t$ is a global
section of $\shHom_{H_1}(Q,H_0)$, and we consider its local lifts
to $E$, that is the fiber over $t$:
\begin{equation*}
  \shHom_{H_1}(Q,E)_t =
  \bigl\lbrace e\in \Hom_{H_1}(Q\rvert_U,E\rvert_U) \,
  \big\vert\, \pi \circ e=t\rvert_U\bigr\rbrace
\end{equation*}
\begin{claim}
  $\shHom_{H_1}(Q,E)_t$ is a $G_1$-torsor.
\end{claim}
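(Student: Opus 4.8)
The plan is to equip $\shHom_{H_1}(Q,E)_t$ with a right $G_1$-action and show it is free, transitive, and locally non-empty. Throughout, $H_1$ acts on $E$ on the right through $\kappa$, so that an element $e$ of $\shHom_{H_1}(Q,E)$ is an $H_1$-equivariant map satisfying $e(qh)=e(q)\,\kappa(h)$, while $t$ is equivariant in the sense $t(qh)=t(q)\,\del(h)$, and $\pi\circ\kappa=\del$ by commutativity of the left wing of the butterfly. I would let $G_1$ act by post-multiplication through $\imath$, setting $(e\cdot g)(q)=e(q)\,\imath(g)$ for a local section $g$ of $G_1$.

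First I would verify this action lands back in $\shHom_{H_1}(Q,E)_t$. The fibre condition holds because $\pi\circ\imath$ is trivial (the NE--SW sequence $G_1\overset{\imath}{\to}E\overset{\pi}{\to}H_0$ is an extension), whence $\pi\bigl((e\cdot g)(q)\bigr)=\pi(e(q))=t(q)$. The $H_1$-equivariance of $e\cdot g$ requires sliding $\imath(g)$ past $\kappa(h)$, which is precisely guaranteed by Proposition~\ref{prop:4}: the images of $\kappa$ and $\imath$ commute in $E$. Freeness is then immediate, since $e\cdot g=e$ forces $\imath(g)=1$, and $\imath$ is injective, being the kernel inclusion of the extension.

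The heart of the argument is transitivity. Given two lifts $e,e'$, I would form the pointwise difference $\phi(q)=e(q)^{-1}e'(q)$ in $E$. Since $\pi\circ e=\pi\circ e'=t$, the map $\phi$ takes values in $\ker\pi=\im\imath$, so it factors as $\phi=\imath\circ g$ for a unique $g$ with values in $G_1$. The crucial observation is that $\phi$ is $H_1$-invariant: using $e(qh)=e(q)\kappa(h)$ and again the commutation of $\im\imath$ with $\im\kappa$ from Proposition~\ref{prop:4}, one computes $\phi(qh)=\kappa(h)^{-1}\phi(q)\kappa(h)=\phi(q)$. Because $Q$ is an $H_1$-torsor, its $H_1$-quotient is the base object, so the invariant map $\phi$ descends there and $g$ is a genuine section of $G_1$ satisfying $e'=e\cdot g$. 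This descent step is where I expect the main subtlety to lie: one must carry it out sheaf-theoretically and correctly identify the target of the descended map with $G_1$ via the injection $\imath$.

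Finally I would establish local non-emptiness by working over a cover that trivialises $Q$. Choosing a local section $q_0$ of $Q$, the element $t(q_0)$ of $H_0$ lifts through $\pi$ to a local section $\tilde e$ of $E$---possible since $\pi$, as the surjection of a group extension, is a local epimorphism---and then $e(q_0h):=\tilde e\,\kappa(h)$ defines a local element of $\shHom_{H_1}(Q,E)_t$, using $\pi\circ\kappa=\del$ and the equivariance of $t$ to check both the equivariance and the fibre condition. Combining local non-emptiness with the free transitive $G_1$-action shows that $\shHom_{H_1}(Q,E)_t$ is locally isomorphic to $G_1$, hence a $G_1$-torsor, which is the assertion of the claim.
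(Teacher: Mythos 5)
Your proof is correct and follows essentially the same route as the paper's: the paper likewise establishes free transitivity by showing that any two lifts $e,e'$ differ by a unique $g\colon U\to G_1$ with $e'=e\,\imath(g)$, invoking Proposition~\ref{prop:4} to see that $g$ depends only on $U$ rather than on $Q$, and it obtains local non-emptiness from $\pi\colon E\to H_0$ being a sheaf epimorphism. You have simply written out in full the verifications (stability of the fibre under the $\imath$-action, $H_1$-invariance of the pointwise difference, and the explicit construction of local lifts) that the paper's two-line proof leaves implicit.
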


\begin{proof}
  Given two lifts $e,e'$ of $t$ there exists a unique $g\colon
  U\to G_1$ such that $e' = e \, \imath (g)$.  That $g$ is not a
  map from $Q$ to $G_1$, and only depends on $U$, follows from
  Proposition~\ref{prop:4}.

  $\shHom_{H_1}(Q,E)_t$ is locally non-empty since lifts exist,
  $\pi\colon E\to H_0$ being a sheaf epimorphism.
\end{proof}
Set $P=\shHom_{H_1}(Q,E)_t$. Now define $s\colon P\to G_0$ as
\begin{align*}
  s\colon \shHom_{H_1}(Q,E)_t & \lto G_0\\
  e &\longmapsto \jmath\circ e
\end{align*}
It follows from the equivariance of $e$ that $s$ is well-defined
map, that is, that it only depends on $U$, rather than the full
$Q\rvert_U$: indeed, one has, with set-theoretic notation:
\begin{equation*}
  \jmath (e (v h)) = \jmath (e (v) \kappa (h)) = \jmath (e (v)).
\end{equation*}
Moreover, if $e' = e\imath (g)$, for $g\in G_1$, then it
immediately follows that $s(e') = s (e) \del g$.
\begin{paragr}
  In sum, declare $(P,s)$ so constructed to be the object
  corresponding to $(Q,t)$.  If $\phi\colon (Q_1,t_1)\to
  (Q_0,t_0)$ is a morphism of $(H_1,H_0)$-torsors as
  in~\eqref{eq:24}, then the pull-back
  \begin{equation}
    \label{eq:29}
    \begin{aligned}
      (\phi^{-1})^* \colon \shHom_{H_1}(Q_1,E)_{t_1} & \lto
      \shHom_{H_1}(Q_0,E)_{t_0} \\
      e &\longmapsto e\circ \phi^{-1}
    \end{aligned}
  \end{equation}
  is clearly a morphism of $(G_1,G_0)$-torsors. Indeed
  eq.~\eqref{eq:24} is trivially satisfied simply because
  $\jmath\circ (e_0\circ \phi^{-1}) = (\jmath\circ e_0)\circ
  \phi^{-1}$, where $e_0\in \shHom_{H_1}(Q_0,E)$. Clearly this
  respects composition and identity objects.

  It is also clear that given two $(H_1,H_0)$-torsors $(Q_0,t_0)$
  and $(Q_1,t_1)$ there is an isomorphism of $(G_1,G_0)$-torsors
  \begin{equation}
    \label{eq:30}
    \Phi(E) (Q_0,t_0)\cprod{G_1} \Phi(E)(Q_1,t_1) \lisoto
    \Phi(E) \bigl(Q_0\cprod{H_1}Q_1,t_0t_1\bigr),
  \end{equation}
  in that given two lifts $e_0,e_1$ of $t_0,t_1$ to $E$ the
  product $e_0e_1$ is a lift of $t_0t_1$. It is verified at once
  that this isomorphism satisfies the required properties in the
  definition of additive functors (cf.\ sect.\
  \ref{sec:gr-categories}).
\end{paragr}
\begin{paragr}
  If $\alpha\colon E\to E'$ gives a morphism of butterflies, then
  there is an induced isomorphism
  \begin{equation*}
    \alpha_*\colon
    \shHom_{H_1}(Q,E)_t \lisoto \shHom_{H_1}(Q,E')_t
  \end{equation*}
  for each $(H_1,H_0)$-torsor $(Q,t)$, obtained by pushing along
  $\alpha$. Since this is clearly natural with respect to the
  pull-backs~\eqref{eq:29}, it provides a natural transformation
  $\Phi(E)\Rightarrow \Phi(E')$, which is immediately verified to
  be compatible with~\eqref{eq:30}, hence with the additive
  structure, as required.
\end{paragr}

\subsubsection*{Definition of $\Psi$}

Given a $F\colon \grh\to \grg$ be a morphism of gr-stacks over
$\s$, consider the stack fibered product:
\begin{equation*}
  \xymatrix{%
    H_0\times_{\grg}G_0 \ar[rr] \ar[d] && G_0 \ar[d]^{\pi_\grg}\\
    H_0 \ar[r]_{\pi_\grh}& \grh \ar[r]_F & \grg
  }
\end{equation*}
By definition of stack fibered product (\cite{MR1771927}),
$E=H_0\times_{\grg}G_0$ consists of elements $(y,f,x)$, where
$y\colon U\to H_0$ and $x\colon U\to G_0$, and $f$ is an
isomorphism $f\colon F(\pi_\grh(y))\isoto \pi_\grg (x)$.  Both
$G_0$ and $H_0$ are group objects, hence in particular spaces,
therefore so is the stack fiber product.  Moreover, putting
$(y_0,f_0, x_0)(y_1, f_1, x_1) = (y_0y_1, f_0f_1, x_0x_1)$, if
$y_0, \dotsc$ etc. are points of $H_0$ and $G_0$, obviously
endows it with a group structure. Here $f_0f_1$ stands for the
composition
\begin{equation*}
  F(\pi_\grh(y_0y_1)) \iso
  F(\pi_\grh(y_0))\otimes F(\pi_\grh(y_1))
  \xrightarrow{f_0\otimes f_1}
  \pi_\grg(x_0)\otimes \pi_\grg(x_1)
  = \pi_\grg (x_0x_1).
\end{equation*}
\begin{paragr}
  We have the following diagram over $\grg$:
  \begin{equation}
    \label{eq:31}
    \vcenter{%
      \xymatrix@R-1em@C+1em{%
        H_1\ar[dd]_{\del_H} \ar@/_0.1pc/[dr]^\kappa  & &
        G_1 \ar@/^0.1pc/[dl]_\imath \ar[dd]^{\del_G}\\
        & H_0\times_{\grg}G_0 \ar@/_0.1pc/[dl]_\pi
        \ar@/^0.1pc/[dr]^\jmath &  \\
        H_0 \ar@/_0.1pc/[dr]_{F\circ\pi_{\grh}} & &
        G_0 \ar@/^0.1pc/[dl]^{\pi_{\grg}} \\
        & \grg
      }
    }
  \end{equation}
  In~\eqref{eq:31} the maps $\pi$ and $\jmath$ are defined to be
  the canonical projections to the respective factors.

  The precise definitions of the maps $\kappa$ and $\imath$ are
  slightly more involved. Let
  \begin{equation*}
    \lambda_F\colon F(\pi_\grh(1))\to \pi_\grg(1)
  \end{equation*}
  be the isomorphism between the image of the unit object of
  $\grh$ and the unit object of $\grg$ (cf.\
  sect.~\ref{sec:gr-categories}). The homomorphism $\kappa$ is
  given by
  \begin{equation*}
    \kappa (h) = (\del_Hh, f_h, 1),
  \end{equation*}
  where $f_h\colon F(\pi_\grh(\del_Hh))\isoto \pi_\grg(1)$ is
  defined by:
  \begin{equation}
    \label{eq:32}
    \vcenter{%
      \xymatrix@-0.8pc{%
        **[l]F(\pi_\grh(\del h)) \ar@/^0.2pc/[dr]^{f_h} \ar[dd]_{F(h)} \\
        & \pi_\grg(1) \\
        **[l]F(\pi_\grh(1)) \ar@/_0.2pc/[ur]_{\lambda_F}
      }}
  \end{equation}
  where $h$ is regarded as a morphism
  \begin{equation*}
    (H_1\rvert_U,\del h)=
    \pi_\grh(\del h) \lto \pi_\grh(1)=(H_1\rvert_U,1)
  \end{equation*}
  between objects of $\grh$ via the identification
  $\shAut_{H_1}(H_1)\iso H_1$ (\cite[III.1.2.7 (ii)]{MR49:8992}).
  Similarly, the definition of $\imath$:
  \begin{equation*}
    \imath (g) = (1,f_g, \del g)
  \end{equation*}
  where $f_g \colon F(\pi_\grh(1))\isoto \pi_\grg(\del g)$ is
  defined by:
  \begin{equation*}
    \xymatrix@-0.8pc{%
      & \pi_\grg(\del g) \ar[dd]^g \\
      **[l]F(\pi_\grh(1)) \ar@/^0.2pc/[ur]^{f_g}
      \ar@/_0.2pc/[dr]_{\lambda_F} \\
      & \pi_\grg (1)
    }
  \end{equation*}
\end{paragr}
\begin{paragr}
  The NE-SW diagonal in~\eqref{eq:31} is exact, since it is the
  pull-back of the exact sequence
  \begin{equation*}
    G_1\overset{\del}{\lto} G_0 \xrightarrow{\pi_G} \grg
  \end{equation*}
  (recall~\eqref{eq:26} at the end of
  sect.~\ref{sec:gr-stacks-associated}) to $H_0$. The NW-SW
  diagonal is only a complex since it is the pull-back to $G_0$
  of the composite
  \begin{equation*}
    H_1\overset{\del}{\lto} H_0 \xrightarrow{\pi_H}
    \grh \overset{F}{\lto} \grg
  \end{equation*}
  which is itself only a complex. It is immediately verified that
  the various maps satisfy the conditions in def.~\ref{def:7}, so
  the diagram~\eqref{eq:31} is a butterfly from $H_\bullet$ to
  $G_\bullet$.
\end{paragr}
Therefore we define:
\begin{equation*}
  \Psi (F) = [H_\bullet, H_0\times_\grg G_0, G_\bullet]
\end{equation*}
Moreover, if $\theta\colon F\Rightarrow F'\colon \grh\to \grg$ is
a morphism of additive functors, and $E'$ is the fibered product
constructed as in~\eqref{eq:31}, with $F'$ in place of $F$, then
there is an induced isomorphism $E\isoto E'$, obtained by sending
the triple $(y,f,x)$ of $E$ to $(y,f',x)$, where $f'$ is the
composite of $f$ with the inverse of:
\begin{equation*}
  \theta_{\pi_\grh(y)}\colon F(\pi_\grh (y)) \lisoto
  F'(\pi_\grh(y)). 
\end{equation*}
One can easily check that with these definitions $\Psi$ is indeed
a functor.

\subsection{Proof of theorem \ref{thm:2}}
\label{sec:proof-theor-refthm:3}
\begin{lemma}
  \label{lem:9}
  $\Phi\colon \cat{B} (H_\bullet, G_\bullet) \to \cat{WM}
  (H_\bullet, G_\bullet)$ is fully faithful.
\end{lemma}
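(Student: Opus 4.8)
The plan is to reconstruct each butterfly morphism, and more generally each $2$-morphism in $\cat{WM}$, from the value of the associated natural transformation on a single universal object, and then to propagate the identification to all objects by naturality together with the stack condition. The universal object is the tautological trivial $(H_1,H_0)$-torsor $T=(H_1\times H_0\to H_0,\,t)$ over the base $H_0$, where $t(h,x)=x\,\del h$; over a section $x\colon U\to H_0$ it restricts to $(H_1|_U,x)$, so $T=x^*T$ along the classifying map $x$. First I would compute $\Phi(E)(T)$: an $H_1$-equivariant lift $e\colon H_1\times H_0\to E$ of $t$ through $\pi$ is determined by the section $\sigma=e(1,-)\colon H_0\to E$ satisfying $\pi\circ\sigma=\id_{H_0}$, and conversely. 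Since the NE-SW extension $1\to G_1\overset{\imath}{\to}E\overset{\pi}{\to}H_0\to1$ exhibits $E$ as the total space of the $G_1$-torsor $\pi\colon E\to H_0$, this yields a canonical identification $\Phi(E)(T)\isoto(E,\jmath)$ of $(G_1,G_0)$-torsors over $H_0$, with right action $e\cdot g=e\,\imath(g)$ and structure map $s=\jmath$. In other words the \emph{entire} datum $E$, together with $\imath$, $\jmath$, and $\pi$, is recovered from $\Phi(E)$ evaluated at $T$.

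Faithfulness is then immediate: for a morphism of butterflies $\phi\colon E\isoto E'$ the natural transformation $\Phi(\phi)=\phi_*$ acts by $e\mapsto\phi\circ e$, so under the identification above its component at $T$ is $\phi$ itself, viewed as a map $E\to E'$; hence $\phi$ is determined by $\Phi(\phi)$.

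For fullness I would start from a $2$-morphism $\theta\colon\Phi(E)\Rightarrow\Phi(E')$ and set $\phi\coloneq\theta_T\colon E\to E'$ via the identifications $\Phi(E)(T)\iso(E,\jmath)$, $\Phi(E')(T)\iso(E',\jmath')$. As a morphism of $(G_1,G_0)$-torsors over $H_0$, $\phi$ automatically satisfies $\pi'\circ\phi=\pi$, $\jmath'\circ\phi=\jmath$ and $\phi(e\,\imath(g))=\phi(e)\,\imath'(g)$, the last of which gives $\phi\circ\imath=\imath'$ once one knows $\phi(1)=1$. The $\kappa$-compatibility $\phi\circ\kappa=\kappa'$ I would extract from naturality of $\theta$ with respect to the $H_1$-translation automorphisms of $T$, which under $\Phi$ become right multiplication by $\kappa(h)$ and $\kappa'(h)$. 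The decisive point is that $\phi$ is a \emph{group homomorphism}, and this is exactly where the \emph{additive} nature of $\theta$ enters. The multiplication $m\colon H_0\times H_0\to H_0$ classifies $m^*T\iso p_1^*T\cprod{H_1}p_2^*T$, and the additivity constraint~\eqref{eq:30} for $\Phi(E)$ identifies $p_1^*E\cprod{G_1}p_2^*E\isoto m^*E$ over $H_0\times H_0$ with the contraction $(e_0,e_1)\mapsto e_0e_1$ induced by the group law of $E$ (and similarly for $E'$). Compatibility of $\theta$ with the monoidal structures (the analogue of~\eqref{eq:76}) then produces a commuting square whose two legs compute $\phi(e_0e_1)$ and $\phi(e_0)\phi(e_1)$, giving multiplicativity. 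Thus $\phi$ is a morphism of butterflies.

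It remains to check $\Phi(\phi)=\theta$. Both are cartesian natural transformations, so by naturality each has component $x^*\phi$ on the trivial torsor $x^*T=(H_1|_U,x)$ for every $x\colon U\to H_0$; hence they agree on all trivial torsors. Since every object of $\grh$ is locally isomorphic to a trivial torsor and $\grg$ is a stack, so that its $2$-morphisms form a sheaf, naturality and descent force $\Phi(\phi)$ and $\theta$ to agree on every object, and $\Phi$ is full. The one genuinely nonformal step is the homomorphism property of $\phi$: establishing it requires matching the additivity constraint~\eqref{eq:30} at the universal object $T$ with the group law of $E$, and this is where I expect the main work to lie; the rest is naturality combined with the stack condition.
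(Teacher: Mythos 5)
Your proof is correct, and its core mechanism is the same as the paper's: an additive transformation is pinned down by its values on trivial torsors, and the additivity constraint~\eqref{eq:30} evaluated there is exactly the group law of $E$, which is what forces the recovered map to be a homomorphism. Where you differ is in the packaging, and the difference is worth noting. The paper evaluates $\Phi(E)$ at each trivial torsor $\pi_\grh(y)=(H_1\rvert_U,y)$ separately, for all points $y\colon U\to H_0$, obtaining the fibers $E_y$ of $\pi\colon E\to H_0$, and then appeals to Schreier theory --- the bitorsor cocycle $E_y\cprod{G_1}E_{y'}\iso E_{yy'}$ of~\eqref{eq:33} --- to reconstruct the group $E$, its extension structure, and the glued homomorphism $E\to E'$ from this fiberwise data. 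You instead evaluate once, at the tautological torsor $T$ over the sheaf $H_0$ itself, and observe that $\Phi(E)(T)\iso(E,\jmath)$ outright, so $E$ with all its structure maps appears directly as a value of the functor and no reconstruction or gluing of fibers is needed; the fibers $E_y$ of the paper are recovered as the pullbacks $x^*T$. The price is that $T$ lives over the non-representable base $H_0$, so you are implicitly using the extension of stacks and of natural transformations to presheaf bases, which the paper's framework does provide (see~\ref{prgr:1}) but which you should invoke explicitly. Your write-up is also more complete than the paper's at two points it leaves implicit: the verification that $\phi=\theta_T$ respects $\kappa$ and $\imath$ (your use of naturality against the morphisms $\pi_\grh(y'\del h)\to\pi_\grh(y')$, which $\Phi$ turns into right translation by $\kappa(h)$, resp.\ $\kappa'(h)$, is the same computation the paper performs inside the proof of Proposition~\ref{prop:5}), and the final check that $\Phi(\phi)=\theta$, without which fullness is not actually established; your local-triviality-plus-sheaf-of-2-morphisms descent argument for that step is the right one. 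One small ordering quibble: you use $\phi(1)=1$ to get $\phi\circ\imath=\imath'$ and $\phi\circ\kappa=\kappa'$ before multiplicativity is proved; since $\phi(1)=1$ follows only from multiplicativity (or needs a separate naturality argument), state the homomorphism property first.
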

\begin{proof}
  Suppose first $\alpha$, $\beta\colon E\to E'$ are morphisms of
  butterflies such that
  \begin{equation*}
    \alpha_*=\beta_* \colon
    \shHom_{H_1}(Q,E)_t \lto \shHom_{H_1}(Q,E')_t
  \end{equation*}
  for each $(H_1,H_0)$-torsor $(Q,t)$. In particular, if
  $(Q,t)=\pi_\grh(y)$, for $y\colon U\to H_0$, we have
  \begin{equation*}
    \shHom_{H_1}(H_1\rvert_U,E)_y \iso E_y,
  \end{equation*}
  where $E_y$ is the fiber of $\pi\colon E\to H_0$ above $y$,
  which indeed is a $G_1$-torsor. In fact $E_y$ is a
  $G_1$-\emph{bi}torsor, and, according to the account of the
  Schreier theory in ref.~\cite{MR0354656-VII} (see
  also~\cite{MR92m:18019}), the ``bitorsor cocycle''
  \begin{equation}
    \label{eq:33}
    E_y \cprod{G_1}E_{y'} \lisoto E_{yy'}
  \end{equation}
  allows to recover $E$ as well as the extension $1\to G_1\to
  E\to H_0\to 1$.

  Thus the identity $\alpha_*=\beta_*$ reduces to two identical
  maps
  \begin{equation*}
    E_y\lto E'_y
  \end{equation*}
  for all $y$, compatible, by~\eqref{eq:30}, with~\eqref{eq:33}
  and the corresponding one for $E'$. It follows that
  $\alpha=\beta$.

  If, on the other hand, $[H_\bullet,E,G_\bullet]$ and
  $[H_\bullet,E',G_\bullet]$ are two butterflies and $\phi\colon
  \Phi(E)\to \Phi(E')$ a morphism of additive functors, by
  definition we have a natural morphism
  \begin{equation*}
    \phi_{Q,t}\colon
    \shHom_{H_1}(Q,E)_t \lto \shHom_{H_1}(Q,E')_t
  \end{equation*}
  for each $(H_1,H_0)$-torsor $(Q,t)$. Once again, when
  $(Q,t)=\pi_\grh(y)$ we obtain an isomorphism
  \begin{equation*}
    \phi_{H_1,y}\colon E_y\lto E'_y,
  \end{equation*}
  for all $y\colon U\to H_1$. The same arguments as above, in
  particular the compatibility with~\eqref{eq:33}, allow to
  conclude that the various $\phi_{H_1,y}$ glue into a
  homomorphism $E\to E'$. (That it indeed is a homomorphism, in
  particular, follows from the compatibility with~\eqref{eq:33}.)
\end{proof}

To prove essential surjectivity, we need to construct, for any
additive functor $F\colon \grh\to \grg$, a butterfly $E_F$ and a
morphism
\begin{equation*}
  \phi_F \colon F\lto \Phi(E_F) 
\end{equation*}
of additive functors.  This will follow from the following
\begin{proposition}
  \label{prop:5}
  For each $(H_1,H_0)$-torsor $(Q,t)$ there is a natural
  isomorphism
  \begin{equation*}
    \phi_{Q,t} \colon F(Q,t)\lisoto \shHom_{H_1}(Q,E)_t, \\
  \end{equation*}
  where
  \begin{equation*}
    E \eqdef H_0\times_{F,\grg} G_0.
  \end{equation*}
\end{proposition}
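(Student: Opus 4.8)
The plan is to exploit that both $F(Q,t)$ and $\shHom_{H_1}(Q,E)_t$ are $(G_1,G_0)$-torsors over $U$, and to produce the comparison isomorphism by a local-to-global argument whose local model is completely explicit once $Q$ is trivialized. Since $(G_1,G_0)$-torsors form a stack, it suffices to define $\phi_{Q,t}$ on local sections, to check that it is $G_1$-equivariant and compatible with the structure maps to $G_0$, and to verify independence of the auxiliary choices so that it descends. Concretely, I would first pass to a cover over which $Q$ admits a section $q$, put $y \eqdef t\circ q$, so that $q$ identifies $(Q,t)$ with the trivial torsor $\pi_\grh(y)$ and hence $F(Q,t)$ with $F(\pi_\grh(y))$; on the other side, an $H_1$-equivariant lift $e\colon Q\to E$ of $t$ is determined by its value $e(q)$ on the section, so that $\shHom_{H_1}(Q,E)_t$ is identified with the fiber $E_y$ of $\pi\colon E\to H_0$ over $y$, exactly as already used in the proof of Lemma~\ref{lem:9}.

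The heart of the local computation is to identify $E_y$ with $F(\pi_\grh(y))$. Since $E=H_0\times_{\grg}G_0$, a point of $E_y$ is a pair $(x,f)$ with $x\colon U\to G_0$ and $f\colon F(\pi_\grh(y))\isoto \pi_\grg(x)$ an isomorphism of $(G_1,G_0)$-torsors. Such a pair is exactly a global section $f^{-1}(1)$ of the $(G_1,G_0)$-torsor $F(\pi_\grh(y))$ together with the record $x$ of its image under the structure map; conversely a section $p$ of $F(\pi_\grh(y))$ yields $x=s(p)$ and the unique trivialization $f$ sending $p$ to $1$. One checks directly that this respects the torsor data: right translation by $\imath(g)=(1,f_g,\del g)$ corresponds to $p\mapsto p\,g$ (whence $x\mapsto x\,\del g$), so the bijection $E_y\cong F(\pi_\grh(y))$ is $G_1$-equivariant and intertwines $\jmath$ with the structure map $s$. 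Composing the three identifications yields the local form of $\phi_{Q,t}$.

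The main obstacle is to see that this local isomorphism is independent of the section $q$, equivalently that the lift $e_p$ attached to a section $p$ of $F(Q,t)$ is genuinely $H_1$-equivariant. Replacing $q$ by $qh$ changes $y$ to $y\,\del_H h$ and changes the two trivializations of $F(Q,t)$ by $F(h)$, where $h$ is viewed as the morphism $\pi_\grh(\del_H h)\to\pi_\grh(1)$ of~\eqref{eq:32}. Because $F(h)$ is an isomorphism of $(G_1,G_0)$-torsors it preserves the structure map, so the $G_0$-component is unchanged, matching the $1$ in the last slot of $\kappa(h)=(\del_H h,f_h,1)$; and the discrepancy in the trivializing isomorphisms is precisely absorbed by the factor $f_h=\lambda_F\circ F(h)$, by the very definition~\eqref{eq:32} of $f_h$ together with the compatibility of $F$ with the monoidal structure. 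Carrying this out is a diagram chase establishing $e_p(qh)=e_p(q)\,\kappa(h)$, and it is the only genuinely computational point; everything else is bookkeeping.

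Finally I would check naturality: for a morphism $\psi\colon(Q',t')\to(Q,t)$ of $(H_1,H_0)$-torsors, the square relating $\phi_{Q,t}$ and $\phi_{Q',t'}$ commutes because both the identification $E_y\cong F(\pi_\grh(y))$ and the pullback~\eqref{eq:29} are defined by applying $F$ to morphisms, so functoriality of $F$ gives the compatibility at once. This produces the natural transformation $\phi_F\colon F\Rightarrow\Phi(E_F)$ whose components are isomorphisms, which is the essential surjectivity statement needed; combined with the full faithfulness of Lemma~\ref{lem:9}, it completes the proof of Theorem~\ref{thm:2}.
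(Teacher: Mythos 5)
Your proposal is correct and takes essentially the same route as the paper's own proof: reduce to the trivial torsor $\pi_\grh(y)$, identify the fiber $(H_0\times_{F,\grg}G_0)_y$ with $F(\pi_\grh(y))$ via $(f,x)\mapsto f^{-1}(1)$, verify compatibility with the $H_1$-action (your identity $e_p(qh)=e_p(q)\,\kappa(h)$ is exactly the commutativity of the paper's diagram~\eqref{eq:36}, proved using $f_h$ and the additivity of $F$), and then glue by descent. The only difference is presentational: the paper phrases the gluing step via \v{C}ech cocycle data $(h,y)$ for a trivializing cover, while you phrase it as independence of the chosen local section $q$, which is the same computation.
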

Note that in the previous statement $E$ is simply the butterfly
obtained by applying the functor $\Psi$ to $F$. We have
explicitly marked the dependency on $F$ in the notation for
clarity.
\begin{proof}[Proof of Proposition~\ref{prop:5}]
  Let us begin by assuming, as we have repeatedly done above,
  that $(Q,t)=\pi_\grh(y)$, for $y\colon U\to H_0$, so we have
  \begin{equation*}
    \shHom_{H_1} (H_1,H_0\times_{F,\grg} G_0)_y \iso
    (H_0\times_{F,\grg} G_0)_y.
  \end{equation*}
  The right hand side above consists of pairs $(f,x)$, where
  $x\colon U\to G_0$ is a point, and
  \begin{equation*}
    f \colon F(\pi_\grh(y)) \lisoto \pi_\grg (x),
  \end{equation*}
  the point $y$ being fixed. Incidentally, that
  $(H_0\times_{F,\grg} G_0)_y$ is a $G_1$-torsor directly results
  from the diagram of $(G_1,G_0)$-torsors:
  \begin{equation*}
    \xymatrix@C-2pc@R-1pc{%
      & \pi_\grg (x) \ar[dd]^{f'f^{-1}} \\
      **[l] F(\pi_\grh(tu)) \ar@/^0.1pc/[ur]^f \ar@/_0.1pc/[dr]_{f'}\\
      & \pi_\grg (x')
    }
  \end{equation*}
  $f'f^{-1}$, as a morphism between the underlying $G_1$-torsors,
  is identified with an element $g\colon U\to G_1$. Moreover,
  from~\eqref{eq:24} it follows that $x = x'\del g$. From this we
  immediately recognize that $(H_0\times_{F,\grg} G_0)_y$ has a
  structure of $(G_1,G_0)$-torsor, where the equivariant map to
  $G_0$ is simply the projection (Equivariance follows at once
  from the diagram above).

  If for simplicity we simply denote $F(\pi_\grh(y))$ by $(P,s)$,
  then from the previous paragraphs it follows that at the level
  of underlying $G_1$-torsors we have an isomorphism
  \begin{equation}
    \label{eq:34}
    (H_0\times_{F,\grg} G_0)_y \lisoto \shHom_{G_1}(G_1,P)
  \end{equation}
  obtained by sending $(f,x)$ to $f^{-1}$.  The right-hand side
  is a $G_1$-torsor in a trivial way (from the left-action of
  $G_1$ onto itself), and in addition we have an isomorphism
  \begin{equation}
    \label{eq:35}
    \shHom_{G_1}(G_1,P) \lisoto P,
  \end{equation}
  obtained by evaluating a map $m$ on the left-hand side at the
  unit $1\in G_1$ (see~\cite[III.1.2.7 (i)]{MR49:8992}). It is
  also clear that~\eqref{eq:34} is a morphism of
  $(G_1,G_0)$-torsors: the projection sending $(f,x)$ to $x$ maps
  to $s\circ f^{-1}$, and this goes to $s$ itself via the latter
  isomorphism.

  In summary, we set $\phi_{H_1,y}$ equal to the composition
  of~\eqref{eq:35} with~\eqref{eq:34}.
  
  We must verify the property that given a morphism
  \begin{equation*}
    h\colon \pi_\grh(y) \lto \pi_\grh (y'),
  \end{equation*}
  which corresponds to an element $h\colon U\to H_1$ such that
  $y=y'\,\del h$, the isomorphism just defined $\phi_{H_1,y}$
  behaves naturally with respect to it, namely that the diagram
  \begin{equation}
    \label{eq:36}
    \vcenter{%
      \xymatrix{%
        (H_0\times_{F,\grg} G_0)_y \ar[r] \ar[d]_{\phi_{H_1,y}}
        & (H_0\times_{F,\grg} G_0)_{y'} \ar[d]^{\phi_{H_1,y'}} \\
        F(\pi_\grh(y)) \ar[r]_{F(h)} 
        &  F(\pi_\grh(y')) 
      }}
  \end{equation}
  commutes. By $H_1$-equivariance, the top horizontal map is just
  the right $H_1$-action of $h^{-1}$ via $\kappa$, namely the one
  sending $(y,f,x)$ to
  \begin{equation*}
    (y,f,x)\,\kappa(h^{-1}) = (y', ff_h,1),
  \end{equation*}
  where $f_h$ was defined before along with $\kappa$. The
  isomorphisms $f$ and $ff_h$ fit in the commutative diagram
  \begin{equation*}
    \xymatrix@-0.8pc{%
      **[l]F(\pi_\grh(y)) \ar@/^0.2pc/[dr]^{f} \ar[dd]_{F(h)} \\
      & \pi_\grg(1) \\
      **[l]F(\pi_\grh(y')) \ar@/_0.2pc/[ur]_{ff_h}
    }
  \end{equation*}
  for $F$ is additive, and we can use~\eqref{eq:32}.  Recalling
  the definition of $\phi_{H_1,y}$, it is clear
  that~\eqref{eq:36} commutes.

  The case of a general $(H_1,H_0)$-torsor $(Q,t)$ is obtained by
  gluing local instances of the above construction via
  descent. That is, if $U$ is an object of $\s$, and $Y\to U$ a
  local epimorphism such that $u\colon Y\to Q$ gives a
  trivialization $(H_1)_Y\isoto Q_Y$ of the underlying
  $H_1$-torsor, we obtain a morphism of $(H_1,H_0)$-torsors
  \begin{equation*}
    (Q,t)_Y \lisoto (H_1,y)_Y
  \end{equation*}
  where $y = u^*t$, and in turn
  \begin{equation*}
    h\colon \pi_\grh (d_0^*y) \lisoto \pi_\grh (d_1^*y) 
  \end{equation*}
  is realized by an element $h$ of $H_1$ over $Y\times_UY$, so
  that the pair $(h,y)$ is a $0$-cocycle relative to $\cech (Y\to
  U)$ as seen in sect.~\ref{sec:cocycles}.

  By applying $F$, we obtain descent data for the
  $(G_1,G_0)$-torsors $F(\pi_\grh(y))$ via the morphisms $F(h)$.
  This reconstructs $F(Q,t)$, since locally we have
  $F(H_1,y)_Y\isoto F(Q,t)_Y$.

  Similarly, $\shHom_{H_1}(Q,H_0\times_{F,\grg} G_0)_t$ is
  obtained from the corresponding descent data for the various
  $(H_0\times_{F,\grg} G_0)_{y}$ resulting from the
  diagram~\eqref{eq:36}.

  This shows at once that the $\phi_{H_1,y}$ glue into a global
  \begin{equation*}
    \phi_{Q,t}\colon F(Q,t)\lisoto \shHom_{H_1}(Q,H_0\times_{F,\grg} G_0)_t.
  \end{equation*}
  That the morphism $\phi_{Q,t}$ is itself natural with respect
  to morphisms of $(H_1,H_0)$-torsors follows again by patching
  arguments.
\end{proof}
This concludes the proof of Theorem~\ref{thm:2}.\qed

\subsection{Strict morphisms and butterflies}
\label{sec:strict-morph-butt}

We have observed that strict morphisms of crossed modules induce
weak morphisms in an obvious way.  On the other hand,
Theorem~\ref{thm:2} entails the notion that butterflies ought to
be considered as (weak) morphisms.  It is natural to ask what
kind of butterfly diagrams correspond to the strict morphisms of
Definition~\ref{def:3}.
\begin{paragr}
  Let $F=(f_1,f_0)$ be a strict morphism $F\colon H_\bullet\to
  G_\bullet$ as in Definition~\ref{def:3}. The corresponding
  butterfly is
  \begin{equation}
    \label{eq:37}
    \vcenter{%
      \xymatrix@R-0.5em{%
        H_1\ar[dd]_\del \ar@/_0.1pc/[dr]^\kappa  & &
        G_1 \ar@/^0.1pc/[dl]_\imath \ar[dd]^\del\\
        & H_0\ltimes G_1\ar@/_0.1pc/[dl]_\pi
        \ar@/^0.1pc/[dr]^\jmath &  \\
        H_0 & & G_0
      }}
  \end{equation}
  where $\pi= \mathrm{pr}_1$, $\imath = (1,\id)$, $\kappa (h) =
  (\del (h), f_1(h^{-1}))$, and $\jmath (y, g) = f_0(y)\del (g)$.
  The semi-direct product at the center of the butterfly
  corresponds to the trivial extension of $H_0$ by $G_1$
  (\cite{maclane:hom}).  Equivalently, the NE-SW diagonal of the
  butterfly is a split extension. Also, the product law
  \emph{depends} on the actual (strict) morphism $F=(f_1,f_0)$:
  \begin{equation*}
    (y_0,g_0)(y_1,g_1) = (y_0y_1, g_0^{f_0(y_1)}g_1).
  \end{equation*}
  Therefore it would be more appropriate to record this
  dependency in the notation: $H_0\overset{\scriptscriptstyle
    F}{\ltimes} G_1$.
\end{paragr}

It is also clear that given the butterfly~\eqref{eq:37} one can
construct a unique strict morphism $(f_1,f_0)\colon H_\bullet\to
G_\bullet$.  This is due to the canonical splitting homomorphism
$s\colon H_0 \to H_0\ltimes G_1$ which sends $y$ to $(y,1)$.
\begin{paragr}
  More generally, a butterfly~\eqref{eq:27} is \emph{splittable}
  if there exists a homomorphism $s\colon H_0\to E$. As a result,
  from standard arguments, the NE-SW diagonal is in the same
  isomorphism class as the one in~\eqref{eq:37}.  Moreover, a
  unique strict morphism $(f_0,f_1)$ can be constructed from a
  splittable butterfly once it has been equipped with a specific
  choice of the splitting homomorphism $s$: $f_0=\jmath\circ s$,
  and $f_1$ is determined by the difference between $\kappa$ and
  $s\circ \del_H$, to wit:
  \begin{equation*}
    s(\del h) = \kappa (h) \imath (f_1 (h)), \quad h\in H_1.
  \end{equation*}
  One can easily check that the pair $(f_0,f_1)$ so determined
  has all the required properties~\eqref{eq:9} and~\eqref{eq:10}.
\end{paragr}
The next statement is therefore an immediate consequence of
Theorem~\ref{thm:2}:
\begin{proposition}
  \label{prop:6}
  A weak morphism $F \colon H_\bullet\to G_\bullet$ is equivalent
  to a strict one if and only if its butterfly is isomorphic to a
  split one.
\end{proposition}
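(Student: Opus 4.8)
The plan is to deduce the statement directly from Theorem~\ref{thm:2} together with the explicit dictionary between strict morphisms and split butterflies recorded in the two preceding paragraphs. By Theorem~\ref{thm:2} the functors $\Phi$ and $\Psi$ are quasi-inverse equivalences of groupoids, so each preserves and reflects isomorphisms: a weak morphism $F$ is isomorphic in $\cat{WM}(H_\bullet,G_\bullet)$ to a weak morphism $F'$ if and only if $\Psi(F)\cong\Psi(F')$ in $\cat{B}(H_\bullet,G_\bullet)$, and symmetrically for $\Phi$. Here \emph{equivalent to a strict one} means precisely that $F$ is isomorphic, as an additive functor $\grh\to\grg$, to the additive functor $\bar F$ induced by some strict morphism $(f_0,f_1)\colon H_\bullet\to G_\bullet$ (as in the remark following Definition~\ref{def:6}), and \emph{its butterfly} is $\Psi(F)$.

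Granting this, both implications become formal once one knows the single bridging fact that the butterfly $\Psi(\bar F)$ attached to the weak morphism $\bar F$ induced by a strict morphism $(f_0,f_1)$ is isomorphic to the split butterfly~\eqref{eq:37}. First I would establish this by computing $\Psi(\bar F)=H_0\times_{\bar F,\grg}G_0$ directly: since $\bar F$ sends the object $\pi_\grh(y)$ of $\grh$ to $\pi_\grg(f_0(y))$, an object of the stack fibered product over $U$ is a triple $(y,\phi,x)$ with $\phi\colon\pi_\grg(f_0(y))\isoto\pi_\grg(x)$, and such isomorphisms are classified by elements $g\colon U\to G_1$ with $x$ and $f_0(y)$ differing by $\del g$ (as in~\eqref{eq:24}). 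The assignment $(y,\phi,x)\mapsto(y,g)$ is then a group isomorphism onto $H_0\ltimes G_1$ identifying the four structure maps with those of~\eqref{eq:37}, hence a morphism of butterflies.

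With the bridge in hand I would argue both directions. For necessity, if $F\simeq\bar F$ then $\Psi(F)\cong\Psi(\bar F)\cong$~\eqref{eq:37}, which is split via the canonical section $y\mapsto(y,1)$; hence the butterfly of $F$ is isomorphic to a split one. For sufficiency, suppose $\Psi(F)$ is isomorphic to a split butterfly $E'$. Choosing a splitting $s\colon H_0\to E'$ and invoking the construction preceding the proposition produces a strict morphism $(f_0,f_1)$ whose butterfly~\eqref{eq:37} is isomorphic to $E'$; by the bridging fact this butterfly is in turn isomorphic to $\Psi(\bar F)$. Thus $\Psi(F)\cong E'\cong\Psi(\bar F)$, and applying $\Phi$ with $\Phi\circ\Psi\simeq\id$ yields $F\simeq\bar F$, so $F$ is equivalent to the strict morphism $(f_0,f_1)$.

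The only genuine computation is the bridging fact, and that is where I expect the main (and rather mild) obstacle to lie: one must check not merely that the central group of $\Psi(\bar F)$ is abstractly $H_0\ltimes G_1$, but that the isomorphism intertwines all four maps $\kappa,\imath,\pi,\jmath$ with those of~\eqref{eq:37}, and in particular that the product on the fiber product matches the $F$-dependent semidirect product law $H_0\overset{\scriptscriptstyle F}{\ltimes}G_1$. Everything else is a formal consequence of the equivalence of groupoids supplied by Theorem~\ref{thm:2}.
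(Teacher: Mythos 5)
Your proposal is correct and takes essentially the same route as the paper: there, Proposition~\ref{prop:6} is stated as an immediate consequence of Theorem~\ref{thm:2} combined with the two-way dictionary between strict morphisms and split butterflies (with chosen splittings) developed in the paragraphs just before it, which is precisely the structure of your argument. The only difference is one of explicitness: you isolate and propose to verify the ``bridging fact'' that $\Psi(\bar F)$ is isomorphic to the split butterfly~\eqref{eq:37}, a point the paper leaves implicit when it declares~\eqref{eq:37} to be \emph{the} butterfly corresponding to a strict morphism.
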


If $\gamma\colon F\Rightarrow F'$ is a strict 2-morphism as in
Definition~\ref{def:4}, it is easy to see that the group
isomorphism
\begin{align*}
  \phi \colon H_0\overset{\scriptscriptstyle F}{\ltimes} G_1
  &\lto H_0\overset{\scriptscriptstyle F'}{\ltimes} G_1 \\
  (x,g) &\longmapsto (x, \gamma_x^{-1} g)
\end{align*}
is a morphism between the split butterflies corresponding to $F$
and $F'$.

In summary, we have a functor from the category of strict
morphisms between $H_\bullet$ and $G_\bullet$ and
$\cat{B}(H_\bullet, G_\bullet)$.

\subsection{Stacks of butterflies and weak morphisms}
\label{sec:stacks-butt-weak}

For two crossed modules $H_\bullet$ and $G_\bullet$ of $\cat{T}$
we have introduced the groupoid $\cat{B} (H_\bullet, G_\bullet)$
of butterflies from $H_\bullet$ to $G_\bullet$.  There is a
sheaf-theoretic counterpart, denoted by $\stb (H_\bullet,
G_\bullet)$, which is defined as usual by assigning to $U\in
\Ob\s$ the groupoid
\begin{equation*}
  \cat{B}(H_\bullet\rvert_U,G_\bullet\rvert_U),
\end{equation*}
and to every arrow $V\to U$ of $\s$ the functor
\begin{equation*}
  \cat{B}(H_\bullet\rvert_U,G_\bullet\rvert_U) \lto
  \cat{B}(H_\bullet\rvert_V,G_\bullet\rvert_V).
\end{equation*}
\begin{proposition}
  \label{prop:7}
  $\stb (H_\bullet, G_\bullet)$ is a stack over $\s$.
\end{proposition}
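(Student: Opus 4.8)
The plan is to verify directly the two conditions defining a stack: that $\stb(H_\bullet, G_\bullet)$ is a prestack (its Hom presheaves are sheaves) and that descent data are effective. The guiding observation is that a butterfly is nothing but a finite diagram of group objects of $\cat{T}$ and homomorphisms between them, subject to conditions---exactness of the NE-SW diagonal, the complex condition on the NW-SE diagonal $H_1 \to E \to G_0$, and the equivariance relations~\eqref{eq:28}---all of which are local in nature. Since sheaves of groups on $\s$, together with their homomorphisms, themselves satisfy descent (group objects of $\cat{T}$ form a stack over $\s$), everything will reduce to gluing sheaves of groups.

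First I would check the prestack condition. A morphism in $\cat{B}(H_\bullet\rvert_U, G_\bullet\rvert_U)$ is, by Definition~\ref{def:9}, an isomorphism $\phi\colon E \isoto E'$ of sheaves of groups over $U$ compatible with the structure maps $\kappa, \imath, \pi, \jmath$. Given a hypercover $Y_\bullet \to U$ and a family of local isomorphisms agreeing on $Y_1$, these glue to a unique morphism of sheaves of groups $E \to E'$ because morphisms of sheaves of groups form a sheaf; and since the compatibility relations of Definition~\ref{def:9} can be tested locally, the glued map is again a morphism of butterflies. Hence the Hom presheaves are sheaves and $\stb(H_\bullet, G_\bullet)$ is a prestack.

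The substance is in effective descent. Given a hypercover $Y_\bullet \to U$ and a descent datum in the sense of~\ref{prgr:4}---a butterfly over $Y_0$ with central term $E$, together with an isomorphism of butterflies $\psi\colon d_0^* E \isoto d_1^* E$ over $Y_1$ satisfying the cocycle condition over $Y_2$---I would first glue the central terms. Because group objects of $\cat{T}$ form a stack, the pair $(E, \psi)$ is an effective descent datum for a sheaf of groups $\widetilde E$ over $U$. Next, the structure homomorphisms $\kappa, \imath, \pi, \jmath$ defined over $Y_0$ are, by hypothesis, intertwined by $\psi$ (this is exactly the content of $\psi$ being a morphism of butterflies), so by descent for homomorphisms of sheaves of groups they descend to homomorphisms relating $H_\bullet\rvert_U$, $G_\bullet\rvert_U$, and $\widetilde E$, assembling into a diagram of the shape~\eqref{eq:27} over $U$.

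It remains to see that this diagram is a genuine butterfly, i.e.\ that the conditions of Definition~\ref{def:7} hold over $U$. Each of them---exactness of $1 \to G_1 \to \widetilde E \to H_0 \to 1$, triviality of the composite along the NW-SE diagonal, and the equivariance identities~\eqref{eq:28}---is a condition on morphisms of sheaves that may be verified after restriction to $Y_\bullet$, where it holds by assumption; hence it holds over $U$. This produces the required object of $\cat{B}(H_\bullet\rvert_U, G_\bullet\rvert_U)$ restricting to the given descent datum. The one step deserving care is the effective gluing of the central group $\widetilde E$, which rests entirely on the fact that sheaves of groups on $\s$ satisfy descent; everything else is the formal observation that the butterfly structure and its axioms are preserved under gluing because they are local. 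Alternatively, since the functors $\Phi$ and $\Psi$ of Theorem~\ref{thm:2} are manifestly natural in $U$, they sheafify to an equivalence between $\stb(H_\bullet, G_\bullet)$ and $\shcatHom_{\grstacks (\s)}(\grh, \grg)$; the latter is a stack because $\grg$ is one (an internal Hom of stacks is again a stack), which gives the result at once.
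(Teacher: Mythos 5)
Your proof is correct and follows essentially the same route as the paper's: the prestack condition comes from gluing morphisms of sheaves of groups, the central term $E$ is glued using that group objects of $\cat{T}$ form a stack, the structural homomorphisms descend since morphisms of sheaves of groups glue, and the butterfly axioms (exactness of the NE-SW diagonal, the complex condition, and the equivariance relations~\eqref{eq:28}) are local and so can be verified after restriction. One caveat on your closing alternative: gluing \emph{additive} functors requires gluing the additivity isomorphisms $\lambda$ as well, which is the content of Theorem~\ref{thm:4} (proved in the appendix), not merely the general fact that internal Homs into a stack form a stack.
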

\begin{proof}
  Since we are restricting morphisms of ordinary group objects,
  so in particular sets, it is clear that $\stb (H_\bullet,
  G_\bullet)$ is fibered over $\s$ and it is a prestack.
  
  The same idea applies to proving that the descent condition on
  objects is effective. In slightly more details, let $Y\to U$ be
  a local epimorphism, and let us consider butterfly descent data
  along it.  Thus, let $E'$ be a butterfly on $Y$ from
  $(H_\bullet)_Y$ to $(G_\bullet)_Y$, and $\phi$ a morphism of
  butterflies
  \begin{equation*}
    \phi \colon d_0^*E' \lto d_1^*E'
  \end{equation*}
  over $\cech Y_1=Y\times_UY$ satisfying the cocycle condition
  $d_1^*\phi=d_2^*\phi\circ d_0^*\phi$ over $\cech Y_2$.

  Since sheaves of groups form a stack, these data determine a
  group $E$ object over $U$ such that $\psi\colon E' \isoto E_Y$
  and $d_1^*\psi\circ \phi = d_0^*\psi$. Moreover, it is easily
  seen that all the structural maps in the butterfly
  $[(H_\bullet)_Y,E',(G_\bullet)_Y]$ glue to provide the
  corresponding ones for a butterfly $[H_\bullet,E,G_\bullet]$
  over $U$: this follows at once from the fact that $\stb
  (H_\bullet, G_\bullet)$ is a prestack.  Consider, for instance,
  $\imath'\colon (G_1)_Y\to E'$ and the composite
  \begin{equation*}
    (G_1)_Y \overset{\imath'}{\lto} E' \overset{\psi}{\lto} E_Y.
  \end{equation*}
  Since $\phi$ is a morphism of butterflies, we have $\phi\circ
  d_0^*\imath' = d_1^*\imath'$, and we get immediately the
  equality
  \begin{equation*}
    d_0^*(\psi\circ \imath') = d_1^*(\psi\circ \imath')
  \end{equation*}
  from which it follows (since $\stb (H_\bullet, G_\bullet)$ is a
  prestack) that there is a morphism
  \begin{equation*}
    \imath\colon 
    G_1\lto E
  \end{equation*}
  of group objects over $U$. The remaining structural maps, as
  well as the relations~\eqref{eq:28} are handled in an entirely
  similar manner.
\end{proof}
If we start from $\cat{WM} (H_\bullet, G_\bullet)$, we can define
$\stwm (H_\bullet,G_\bullet)$ in the same way as we have done for
$\stb (H_\bullet, G_\bullet)$.  Then, as a consequence of
Proposition~\ref{prop:7} and the equivalence in
Theorem~\ref{thm:2}, we have the following:
\begin{corollary}
  \label{cor:1}
  $\stwm(H_\bullet,G_\bullet)$ is a stack over $\s$.
\end{corollary}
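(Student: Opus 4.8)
The plan is to upgrade the fiberwise equivalence of Theorem~\ref{thm:2} to an equivalence of fibered categories over $\s$, and then invoke the fact that the property of being a stack is invariant under such an equivalence. Since $\stb(H_\bullet,G_\bullet)$ is already known to be a stack by Proposition~\ref{prop:7}, this transports the stack structure to $\stwm(H_\bullet,G_\bullet)$.

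First I would observe that for every object $U\in\Ob\s$ the theorem, applied over the localized site $\s/U$, produces a quasi-inverse pair $\Phi_U$, $\Psi_U$ between $\cat{B}(H_\bullet\rvert_U,G_\bullet\rvert_U)$ and $\cat{WM}(H_\bullet\rvert_U,G_\bullet\rvert_U)$, that is, between the fibers of $\stb(H_\bullet,G_\bullet)$ and $\stwm(H_\bullet,G_\bullet)$ over $U$. The crucial point is that these functors are natural with respect to restriction: for any arrow $V\to U$ of $\s$, the square comparing $\Phi_U$ followed by restriction to $V$ with restriction followed by $\Phi_V$ commutes up to a canonical natural isomorphism. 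This holds because the construction of $\Phi(E)$ is built entirely out of the sheaf $\shHom_{H_1}(Q,E)_t$, whose formation commutes with restriction along $V\to U$ (pullback of the $H_1$-torsor $Q$, of the extension $E$, and of the fiber over $t$ all commute with base change); dually, the construction $\Psi(F)=[H_\bullet,H_0\times_{\grg}G_0,G_\bullet]$ is made out of a stack fibered product, whose formation likewise commutes with base change. Consequently $\Phi$ and $\Psi$ assemble into Cartesian functors of fibered categories over $\s$ that are fiberwise quasi-inverse equivalences; hence they are quasi-inverse equivalences of fibered categories, giving $\stwm(H_\bullet,G_\bullet)\iso\stb(H_\bullet,G_\bullet)$. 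Because the prestack condition (each $\Hom$ being a sheaf) and the effectiveness of descent data are both preserved under an equivalence of fibered categories in groupoids, and $\stb(H_\bullet,G_\bullet)$ satisfies them by Proposition~\ref{prop:7}, it follows that $\stwm(H_\bullet,G_\bullet)$ is a stack as well.

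The main obstacle I expect is precisely the verification that $\Phi$ and $\Psi$ are Cartesian, i.e.\ that the equivalences of Theorem~\ref{thm:2} are genuinely natural in $U$ rather than merely existing fiberwise. Once the base-change compatibility of $\shHom_{H_1}(Q,E)_t$ and of the stack fibered product is granted---both being essentially manifest from the definitions in sect.~\ref{sec:weak-morph-butt}---the transport of the stack structure along the resulting equivalence of fibered categories is formal, so no further descent computation for $\stwm$ is required.
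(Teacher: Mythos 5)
Your proposal is correct and follows essentially the same route as the paper, which derives Corollary~\ref{cor:1} precisely by combining Proposition~\ref{prop:7} (that $\stb(H_\bullet,G_\bullet)$ is a stack) with the equivalence of Theorem~\ref{thm:2}. The paper leaves the compatibility of $\Phi$ and $\Psi$ with restriction implicit, whereas you spell it out (base-change invariance of $\shHom_{H_1}(Q,E)_t$ and of the stack fibered product), but this is a filling-in of detail rather than a different argument.
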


\subsection{Weak vs. Strict morphisms}
\label{sec:weak-vs.-strict}

We have seen strict morphisms $H_\bullet \to G_\bullet$ give rise
to special kinds of butterflies, namely the split ones (cf.\
Proposition~\ref{prop:6}).

To see how strict morphisms relate to the weak ones, and in
particular how they fit into the stack structure for the weak
morphisms introduced in sect.~\ref{sec:stacks-butt-weak}, fix the
crossed modules $H_\bullet$ and $G_\bullet$ and consider the
category $\cat{B} (H_\bullet,G_\bullet)$.  Within it, consider
the sub-category comprising strict morphisms and 2-morphisms from
$H_\bullet$ to $G_\bullet$, which we denote by
\begin{math}
  \cat {B} (H_\bullet,G_\bullet)_{\mathrm{str}}.
\end{math}
The notation means that we view strict morphisms as split
butterflies as explained in sect.~\ref{sec:strict-morph-butt}.

The obvious inclusion
\begin{equation*}
  \cat {B} (H_\bullet,G_\bullet)_{\mathrm{str}}
  \hookrightarrow
  \cat{B} (H_\bullet,G_\bullet)
\end{equation*}
extends to the fibered situation. Namely, we have an inclusion of
fibered categories.
\begin{equation*}
  \stb (H_\bullet,G_\bullet)_{\mathrm{str}}
  \hookrightarrow
  \stb (H_\bullet,G_\bullet),
\end{equation*}
where
\begin{math}
  \stb (H_\bullet,G_\bullet)_{\mathrm{str}}
\end{math}
is defined by repeating the procedure of
sect.~\ref{sec:stacks-butt-weak}. Thus its fiber category over
$U\in \Ob\s$ is simply category
\begin{equation*}
  \cat {B} (H_\bullet\rvert_U,G_\bullet\rvert_U)_{\mathrm{str}}.
\end{equation*}
That that this defines a fibered category is immediate, since
after all morphisms of (sheaves of) groups pull-back along arrows
in $\s$.

Consider the stack completion diagram
\begin{equation*}
  \xymatrix{%
    \stb (H_\bullet,G_\bullet)_{\mathrm{str}}
    \ar[r]^a \ar @/_1pc/ [dr]
    \drtwocell<\omit>{}
    &
    \stb (H_\bullet,G_\bullet)_{\mathrm{str}}\sptilde
    \ar@{.>}[d]\\
    & \stb (H_\bullet,G_\bullet)
  }
\end{equation*}
It is clear that the objects of the stack $\stb
(H_\bullet,G_\bullet)_{\mathrm{str}}\sptilde$ are butterflies,
since the stackification process happens ``inside'' $\stb
(H_\bullet,G_\bullet)$. Also, by the very nature of this process,
each object is locally isomorphic to one of $\stb
(H_\bullet,G_\bullet)_{\mathrm{str}}$, hence to a split one. It
follows that $\stb (H_\bullet,G_\bullet)_{\mathrm{str}}\sptilde$
is the stack of \emph{locally split} butterflies, in the sense of
the following definition:
\begin{definition}
  \label{def:10}
  A butterfly $E$ from $H_\bullet$ to $G_\bullet$ is
  \emph{locally split} if there is a generalized cover $V$ such
  that the extension on the NE-SW diagonal splits over $V$.
\end{definition}
Morphisms of locally split butterflies are butterfly morphisms
$\phi\colon E\to E'$ such that, if $E$ splits over $V$ and $E'$
splits over $V'$, then $\phi\rvert_{V\times V'}$ is a strict
2-morphism as expounded at the end of sect.\
\ref{sec:strict-morph-butt}.

\section{The bicategory of crossed modules and weak morphisms}
\label{sec:fiber-bicat-cross}

To analyze the global structure of crossed modules equipped with
weak morphisms provided by butterflies we need to use a few facts
regarding fibered bicategories.  Our goal is to prove that
crossed modules and weak morphisms over $\s$ comprise a fibered
bicategory which is in fact a bistack, in the sense made explicit
below.

The definitions one states in the context of 2-categories fibered
over a site, and the ensuing consequent distinctions based on what
actually glues---fibered, prestack, stack---, have a mirror in
the realm of fibered bicategories. (We refer to~\cite{MR0364245}
for fibered 2-categories over a site, and to~\cite[Chapter
1]{MR95m:18006} for a discussion of the extensions of these
concepts to the bicategorical situation.)  In particular, by a
pre-bistack, we mean a fibered bicategory where the descent
conditions are satisfied at all levels, except for objects. In
particular, the morphism (fibered) category between any pair of
objects will form a stack. A fibered bicategory is a bistack if
in addition the 2-descent condition on objects, which is
formulated in much the same way as for 2-categories, is
effective.

\subsection{Composition of butterflies and the bicategory of
  crossed modules}
\label{sec:bicat-cross-modul}

Recall that $\cat{B}(H_\bullet,G_\bullet)$ is the groupoid of
butterflies from $H_\bullet$ to $G_\bullet$. There is a
composition functor
\begin{equation*}
  \cat{B} (K_\bullet,H_\bullet) \times
  \cat{B} (H_\bullet,G_\bullet) \lto
  \cat{B} (K_\bullet, G_\bullet)
\end{equation*}
which is constructed in the following way (cf.\
ref.~\cite{Noohi:weakmaps}).
\begin{definition}
  \label{def:20}
  Given two butterflies
  \begin{equation*}
    \vcenter{%
      \xymatrix@R-0.5em{%
        K_1\ar[dd]_{\del_K} \ar@/_0.1pc/[dr]  & &
        H_1 \ar@/^0.1pc/[dl]_{\imath'} \ar[dd]^{\del_H}\\
        & F\ar@/_0.1pc/[dl] \ar@/^0.1pc/[dr]^{\jmath'} &  \\
        K_0 & & H_0
      }} \qquad
    \vcenter{%
      \xymatrix@R-0.5em{%
        H_1\ar[dd]_{\del_H} \ar@/_0.1pc/[dr]^\kappa  & &
        G_1 \ar@/^0.1pc/[dl] \ar[dd]^{\del_G}\\
        & E\ar@/_0.1pc/[dl]_\pi \ar@/^0.1pc/[dr] &  \\
        H_0 & & G_0
      }}
  \end{equation*}
  their composition is the butterfly (defined set-theoretically
  in \loccit):
  \begin{equation}
    \label{eq:38}
    \vcenter{%
      \xymatrix{%
        K_1\ar[dd]_{\del_K} \ar@/_0.1pc/[dr]  & &
        G_1 \ar@/^0.1pc/[dl] \ar[dd]^{\del_G}\\
        & {\displaystyle F\times_{H_0}^{H_1}E}
        \ar@/_0.1pc/[dl] \ar@/^0.1pc/[dr] &  \\
        K_0 & & G_0
      }}
  \end{equation}
  where the center is given by the following pull-back/push-out
  construction: the pull-back of the extension $1\to G_1\to E \to
  H_0\to 1$ along $\jmath' \colon F\to H_0$ gives the extension
  \begin{equation*}
    1\lto G_1 \lto F\times_{H_0}E \lto F \lto 1
  \end{equation*}
  Then the exact NE-SW diagonal of~\eqref{eq:38} arises as the
  cokernel of the morphism
  \begin{equation*}
    \xymatrix{%
      1 \ar[r] \ar[d] & H_1 \ar@{=}[r] \ar[d]^{(\imath',\kappa)} & H_1
      \ar[d]^{\imath'} \\
      G_1 \ar[r]      & F\times_{H_0}E \ar[r] & F
    }
  \end{equation*}
  All vertical maps are monomorphisms, and that the image of $H_1
  \xrightarrow{(\imath',\kappa)} F\times_{H_0}E$ is normal thanks
  to the properties \eqref{eq:28} of the maps in the butterflies.
\end{definition}
\begin{paragr}
  It is also clear that if
  \begin{equation*}
    [K_\bullet,F,H_\bullet]\to [K_\bullet,F',H_\bullet] \,,\quad
    [H_\bullet,E,G_\bullet]\to [H_\bullet,E',G_\bullet]
  \end{equation*}
  are morphisms of butterflies given by $\psi\colon F\isoto F'$
  and $\phi\colon E\isoto E'$, respectively, then there is a
  corresponding morphism
  \begin{equation*}
    [K_\bullet, F\times^{H_1}_{H_0} E, G_\bullet] \lto
    [K_\bullet, F'\times^{H_1}_{H_0} E', G_\bullet]
  \end{equation*}
  where
  \begin{equation*}
    F\times^{H_1}_{H_0} E \lisoto F'\times^{H_1}_{H_0} E'
  \end{equation*}
  is induced by
  \begin{equation*}
    (\phi,\psi)\colon F\times_{H_0} E \lisoto F'\times_{H_0} E'
  \end{equation*}
  after taking the quotient by the image of $H_1$ in both ends.
\end{paragr}
\begin{paragr}
  By the usual arguments, the construction of
  $F\times^{H_1}_{H_0} E$, is such that if we consider a third
  butterfly $[L_\bullet, M, K_\bullet]$, then there only is an
  isomorphism
  \begin{equation*}
    \bigl( M\times^{K_1}_{K_0}F \bigr) \times^{H_1}_{H_0} E \lisoto
    M\times^{K_1}_{K_0} \bigl( F \times^{H_1}_{H_0} E \bigr).
  \end{equation*}
\end{paragr}
As a result the composition law is only associative up to
isomorphism.  With these provisions, we immediately have the
following analog of\ \cite[Theorem 10.1]{Noohi:weakmaps}:
\begin{theorem}
  \label{thm:3}
  When equipped with the morphism groupoids $\cat{B}(-,-)$,
  crossed modules in $\cat{T}$ form a bicategory.
\end{theorem}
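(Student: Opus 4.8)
The plan is to supply the remaining bicategorical data and verify the coherence axioms, most of the structural pieces being already in place. The objects are crossed modules in $\cat{T}$; the hom-groupoid from $H_\bullet$ to $G_\bullet$ is $\cat{B}(H_\bullet,G_\bullet)$, with $2$-cells the morphisms of butterflies composed vertically in the evident way; and the horizontal composition is the functor $-\times^{H_1}_{H_0}-$ of Definition~\ref{def:20}, whose action on $2$-cells was described just after it. It therefore remains to choose identity $1$-cells, the associator, and the unitors, and to check the pentagon and triangle identities.

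First I would take as the identity $1$-cell $\id_{H_\bullet}\in\cat{B}(H_\bullet,H_\bullet)$ the split butterfly of eq.~\eqref{eq:37} attached to the identity strict morphism $H_\bullet\to H_\bullet$, i.e.\ the one whose center is the semidirect product $H_0\ltimes H_1$. For the associator I would use the isomorphism
\[
  \bigl(M\times^{K_1}_{K_0}F\bigr)\times^{H_1}_{H_0}E \;\cong\; M\times^{K_1}_{K_0}\bigl(F\times^{H_1}_{H_0}E\bigr)
\]
already recorded above: it is induced by the canonical associativity isomorphism of the iterated fibre products before one passes to the quotients by the images of $K_1$ and $H_1$, hence it is natural in all three butterfly arguments. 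The left and right unitors I would obtain by specializing one factor to the split identity butterfly: plugging $E=\id_{H_\bullet}$ or $F=\id_{H_\bullet}$ into the pull-back/push-out of Definition~\ref{def:20} makes the extension split off and collapses the construction, yielding canonical isomorphisms $F\times^{H_1}_{H_0}\id_{H_\bullet}\cong F$ and $\id_{H_\bullet}\times^{H_1}_{H_0}E\cong E$.

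The main obstacle is the verification of the pentagon and triangle coherence axioms. I would handle these by observing that every map in a composite butterfly is pinned down by a universal property---first the fibre product along $\jmath'$, then the cokernel by the (normal) image of $H_1$---so that the comparison isomorphism between any two parenthesizations of an iterated composite is the \emph{unique} group isomorphism of the two centers compatible with all the structural maps $\del$, $\imath$, $\jmath$, $\kappa$, $\pi$. Both ways around the pentagon (resp.\ triangle) diagram are such compatible isomorphisms between the \emph{same} two centers, hence they coincide; this is precisely the ``usual argument'' already invoked to produce the associator.

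Alternatively, and perhaps more cleanly, I would transport the whole structure across Theorem~\ref{thm:2}. Gr-stacks and additive functors over $\s$ form a \emph{strict} $2$-category (composition of additive functors is associative, as recalled in Section~\ref{sec:gr-categories}), so its pentagon and triangle hold trivially. Theorem~\ref{thm:2} furnishes, for each pair, an equivalence $\Phi\colon\cat{B}(H_\bullet,G_\bullet)\lisoto\cat{WM}(H_\bullet,G_\bullet)$, and the composition of Definition~\ref{def:20} was designed precisely so that $\Phi$ carries it to composition of additive functors up to a natural isomorphism $\Phi(F)\circ\Phi(E)\cong\Phi(F\times^{H_1}_{H_0}E)$, with $\Phi(\id_{H_\bullet})\cong\id_{\grh}$. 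Once this compatibility is recorded, coherence of the butterfly associator and unitors follows formally from the coherence in the target, so that $\cat{B}(-,-)$ does assemble into a bicategory, biequivalent to $\twocat{Gr\mbox{-}Stacks}(\s)$.
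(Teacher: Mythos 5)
Your assembly of the data is right and matches the paper: the identity $1$-cells are the split butterflies \eqref{eq:37} attached to identity strict morphisms, the associator is the canonical isomorphism of the pull-back/push-out centers, and the unitors come out of the special cases of composition with strict morphisms (sect.~\ref{sec:spec-cases-comp}). The gap is in your coherence verification. Your first argument rests on a false claim: an isomorphism of centers compatible with all of $\imath,\kappa,\pi,\jmath$ is \emph{not} unique, because butterflies admit non-trivial automorphisms. Already over the punctual site, take the split butterfly from $[0\to\ZZ]$ to $[\ZZ\to 0]$ with center $E=\ZZ\oplus\ZZ$, $\imath$ the first inclusion, $\pi$ the second projection, $\kappa$ and $\jmath$ zero: every map $(a,b)\mapsto (a+f(b),b)$ with $f\in\Hom(\ZZ,\ZZ)$ is an automorphism of this butterfly. (This is no accident: by Theorem~\ref{thm:2}, automorphisms of a butterfly are automorphisms of the corresponding additive functor, which are generically non-trivial.) So ``both composites around the pentagon are structure-compatible isomorphisms between the same centers, hence equal'' does not follow. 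The uniqueness that is actually available lives one level up: each parenthesization of a triple composite receives an \emph{epimorphism} from the iterated fiber product $M\times_{K_0}F\times_{H_0}E$ (it is the quotient by the image of $K_1\times H_1$), the associator is the unique map commuting with these quotient epimorphisms, and the pentagon holds because both composites around it are induced by the same canonical identification of $4$-fold fiber products. Your argument can be repaired along these lines, but as written the step fails.

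Your alternative route---transport across Theorem~\ref{thm:2} into the strict $2$-category of gr-stacks---is sound in outline and would make coherence automatic, but it hinges on precisely the point you leave as an assertion: the construction of natural isomorphisms $\Phi(E)\circ\Phi(F)\cong\Phi\bigl(F\times^{H_1}_{H_0}E\bigr)$ (note the order: $F$ goes from $K_\bullet$ to $H_\bullet$, $E$ from $H_\bullet$ to $G_\bullet$) and $\Phi(\id_{H_\bullet})\cong\id_{\grh}$, together with their compatibility with the chosen associator. Identifying local lifts of $t$ to $F\times^{H_1}_{H_0}E$ with lifts-of-lifts through $F$ and then $E$ is a genuine verification, comparable in effort to the direct pentagon check it is meant to replace, and it is not established anywhere at this point (it is, in effect, what makes $G_\bullet\mapsto[G_1\to G_0]\sptilde$ a homomorphism of bicategories in Theorem~\ref{thm:5}). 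For comparison, the paper does not verify the axioms at all: having defined the composition (Definition~\ref{def:20}) and the associativity isomorphism, it invokes the set-theoretic case, Theorem~10.1 of the second author's paper on weak maps, and observes that those constructions carry over to $\cat{T}$ verbatim.
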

It is convenient to recall at this point the following special
cases. Their handling is unchanged from the set-theoretical
situation of ref.\ \cite{Noohi:weakmaps}, to which we refer for
more details.

\subsection{Special cases}
\label{sec:spec-cases-comp}

Composition of butterflies assumes a simpler form
than~\eqref{eq:38} when one of the morphisms is strict (cf.\
sect.\ref{sec:strict-morph-butt}).  If the morphism
$(q_1,q_0)\colon K_\bullet\to H_\bullet$ is strict, then the
composition is
\begin{equation*}
  \xymatrix{%
    K_1\ar[dd]_{\del_K} \ar@/_0.1pc/[dr]  & &
    G_1 \ar@/^0.1pc/[dl] \ar[dd]^{\del_G}\\
    &  q_0^*(E)
    \ar@/_0.1pc/[dl] \ar@/^0.1pc/[dr] &  \\
    K_0 & & G_0
  }
\end{equation*}
where the NE-SW diagonal is the pull-back of the extension
\begin{equation*}
  1\to G_1\lto E\lto H_0\lto 1
\end{equation*}
along the homomorphism $q_0\colon K_0\to H_0$.  In particular,
$q_0^*(E) = \displaystyle F\times_{H_0}E$ is the fiber product.

Similarly, if the second morphism $(p_1,p_0)\colon H_\bullet\to
G_\bullet$ is strict instead, then the composition is
\begin{equation*}
  \xymatrix{%
    K_1\ar[dd]_{\del_K} \ar@/_0.1pc/[dr]  & &
    G_1 \ar@/^0.1pc/[dl] \ar[dd]^{\del_G}\\
    & {p_1}_*(F)
    \ar@/_0.1pc/[dl] \ar@/^0.1pc/[dr] &  \\
    K_0 & & G_0
  }
\end{equation*}
where this time the NE-SW diagonal is the push-forward of the
extension
\begin{equation*}
  1\to H_1\lto F\lto K_0\lto 1
\end{equation*}
along the homomorphism $p_1\colon H_1\to G_1$.  Also, ${p_1}_*(F)
= {\displaystyle F\sqcup_{H_1}E}$ is the push-out.

We close with the following simple
\begin{lemma}
  \label{lem:8}
  The weak morphism $F$ determined by a butterfly~\eqref{eq:27}
  is an \emph{equivalence} if and only if the butterfly is
  \emph{flippable,} or \emph{reversible,} as in
  Definition~\ref{def:19}.
\end{lemma}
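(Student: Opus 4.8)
The plan is to prove both implications by working through the pull-back/push-out composition~\eqref{eq:38} to characterize when a butterfly is invertible up to 2-isomorphism. The key observation is that the identity weak morphism on $G_\bullet$ corresponds, under $\Psi$, to the trivial split butterfly whose center is $G_0\ltimes G_1$ (cf.~the discussion in sect.~\ref{sec:strict-morph-butt}), and that a weak morphism $F\colon H_\bullet\to G_\bullet$ is an equivalence precisely when there is a weak morphism $F'\colon G_\bullet\to H_\bullet$ with $F'\circ F$ and $F\circ F'$ 2-isomorphic to the respective identities. By Theorem~\ref{thm:2} this is entirely a statement about butterflies and their composition, so I may argue diagrammatically throughout.

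First I would prove the easy direction ($\Leftarrow$). Suppose the butterfly~\eqref{eq:27} is flippable, so that the NW-SE sequence $G_1\overset{\imath}{\to}E\overset{\pi}{\to}H_0$ is also an extension (in addition to the NE-SW sequence being one). Then the butterfly obtained by reflecting~\eqref{eq:27} across its vertical axis---swapping the roles of $H_\bullet$ and $G_\bullet$, i.e.\ reading $\jmath$ and $\pi$ in the opposite roles---is again a bona fide butterfly $[G_\bullet, E, H_\bullet]$, call it $E^{\mathrm{op}}$. I would then compute the composite $E^{\mathrm{op}}\circ E$ via~\eqref{eq:38} and exhibit an explicit isomorphism of its center $E\times^{H_1}_{H_0}E^{\mathrm{op}}$ with the semidirect product $H_0\ltimes H_1$, thereby showing by Proposition~\ref{prop:6} that the composite is equivalent to a strict morphism, and in fact to the identity. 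The symmetric computation handles $E\circ E^{\mathrm{op}}$. Hence $\Phi(E)$ is an equivalence.

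For the harder direction ($\Rightarrow$), assume $F=\Phi(E)$ is an equivalence of gr-stacks. Equivalences of gr-stacks induce isomorphisms on both homotopy sheaves $\pi_0$ and $\pi_1$; I would translate this into the group-theoretic data of the butterfly. Writing $A_H=\ker\del_H$, $B_H=\coker\del_H$ and similarly for $G$, the butterfly already yields the exact NE-SW sequence $1\to G_1\to E\to H_0\to 1$, and the isomorphisms on $\pi_0$ and $\pi_1$ should force the NW-SE complex $G_1\to E\to H_0$ to be short exact as well. Concretely, I expect surjectivity of $\pi\colon E\to H_0$ to come for free, so the real content is the injectivity/exactness of $\imath\colon G_1\to E$ at the middle and the vanishing of the relevant homology; this is where the isomorphism $F_*\colon\pi_i(\grh)\isoto\pi_i(\grg)$ must be used to pin down that $\ker\pi$, which is $\imath(G_1)$ by construction, is all that fails to inject. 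Establishing that~\eqref{eq:27} is then flippable---that the NW-SE diagonal is genuinely an extension and not merely a complex---is the main obstacle, since it requires showing the map $\kappa$ together with $\pi$ arrange $H_1$ and $E$ into an extension of $H_0$, and I anticipate the cleanest route is to identify $\pi_i(\tstf)$ for the homotopy fiber (Proposition~\ref{prop:10}, or the cone $F^\bullet\colon H_1\to E\to G_0$) and observe that $F$ being an equivalence forces all these homotopy sheaves to vanish, which is exactly the flippability condition read off the diagonal.
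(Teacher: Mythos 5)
Your proposal is correct, and its forward half is exactly the paper's argument: compose \eqref{eq:27} with its flip via \eqref{eq:38} and identify the two composites with the split butterflies $H_0\ltimes^{\Id}H_1$ and $G_0\ltimes^{\Id}G_1$ encoding the identity morphisms (the paper, like you, then invokes the correspondence between split butterflies and strict morphisms). Where you genuinely depart from the paper is the converse. You read flippability as acyclicity of the NW-SE complex $F^\bullet\colon H_1\to E\to G_0$ and deduce it from vanishing of the homotopy sheaves of the homotopy fiber: indeed $\shH^{-2}(F^\bullet)=0$, $\shH^{-1}(F^\bullet)=0$, $\shH^{0}(F^\bullet)=0$ are literally injectivity of $\kappa$, exactness at $E$, and surjectivity of $\jmath$, and all three follow from the exact sequence of Theorem~\ref{thm:6} once $\pi_0(F)$ and $\pi_1(F)$ are isomorphisms. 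This is sound and not circular (nothing in section~\ref{sec:exact-sequences-gr} uses Lemma~\ref{lem:8}), but it leans on machinery developed only later and at much greater cost; a lighter version of your idea needs only Proposition~\ref{prop:10} and Proposition~\ref{prop:11}: triviality of the homotopy kernel forces $H_1\isoto\Ker\jmath$, and essential surjectivity of $F$ forces $\jmath$ to be an epimorphism. The paper's own converse is shorter still and stays inside section~\ref{sec:butt-weak-morph}: since by the proof of Theorem~\ref{thm:2} one may take $E=H_0\times_{\grg}G_0$, the homotopy-exact sequence \eqref{eq:26} for $\grh$ remains homotopy-exact after composing with the equivalence $F$, and pulling it back along $\pi_\grg\colon G_0\to\grg$ yields short-exactness of $H_1\to E\to G_0$ at once. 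So the paper buys brevity and logical economy, while your route buys the conceptual reading of flippability as vanishing of the cone, in the spirit of section~\ref{sec:deriv-categ-cross}. Two bookkeeping slips should be fixed: in \eqref{eq:27} the maps $\imath$ and $\pi$ lie on the NE-SW diagonal, which is an extension by definition, so injectivity of $\imath$, surjectivity of $\pi$, and $\Ker\pi=\imath(G_1)$ are free and the entire content of flippability concerns $\kappa$ and $\jmath$; and the composite $E^{\mathrm{op}}\circ E$, an endomorphism butterfly of $H_\bullet$, has center $E\times^{G_1}_{G_0}E^{\mathrm{op}}\isoto H_0\ltimes^{\Id}H_1$, whereas $E\times^{H_1}_{H_0}E^{\mathrm{op}}$ is the center of the other composite and is isomorphic to $G_0\ltimes^{\Id}G_1$.
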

In this case the same butterfly, but read right-to-left:
\begin{equation*}
  \xymatrix@R-0.5em{%
    G_1\ar[dd]_\del \ar@/_0.1pc/[dr]^\imath  & &
    H_1 \ar@/^0.1pc/[dl]_\kappa \ar[dd]^\del\\
    & E\ar@/_0.1pc/[dl]_\jmath \ar@/^0.1pc/[dr]^\pi &  \\
    G_0 & & H_0
  }
\end{equation*}
is a diagram corresponding to the choice of an inverse functor
$F^*$ of $F$.
\begin{proof}[Proof of the lemma]
  If the butterfly~\eqref{eq:27} is reversible, composing it with
  its flipped counterpart according to~\eqref{eq:38} yields the
  isomorphisms
  \begin{equation*}
    E\times_{G_0}^{G_1} E \isoto H_0\ltimes^{\Id{}} H_1
    \,,\quad
    E\times_{H_0}^{H_1} E \isoto G_0\ltimes^{\Id{}} G_1.
  \end{equation*}
  so that $F$ is an equivalence.

  Conversely, if $F$ is an equivalence, then we have seen from
  the proof of Theorem~\ref{thm:2} that
  \begin{equation*}
    E = H_0\times_{\grg} G_0
  \end{equation*}
  with respect to $F\circ \pi_{\grh}\colon H_0\to \grg$. Since
  $F$ is an equivalence, the sequence
  \begin{equation*}
    H_1 \lto H_0 \xrightarrow{F\circ \pi_{\grh}} \grg
  \end{equation*}
  is still homotopy-exact. Hence, by pull-back along $G_0\to
  \grg$, the sequence
  \begin{equation*}
    H_1\lto E\lto G_0
  \end{equation*}
  is short-exact, that is, the butterfly is flippable.
\end{proof}

\subsection{The bistack of crossed modules}
\label{sec:bist-cross-modul}

Let $\CM (\s)$, or $\CM$ for short, the bicategory of crossed
modules of $\cat{T}$ in Theorem~\ref{thm:3}.  Define $\cm (\s)$,
or simply $\cm$, by assigning to every object $U$ of $\s$ the
bicategory
\begin{equation*}
  \CM (\s/U)
\end{equation*}
of crossed modules over $\s/U$, and to every arrow $V\to U$ of
$\s$ the homomorphism \footnote{We use the term ``homomorphism''
  in the sense of: $1$-morphism between bicategories, such that
  the structural 2-morphisms are
  isomorphisms---see~\cite{MR0220789}}
\begin{equation*}
  \CM (\s /U) \lto \CM (\s /V)
\end{equation*}
obtained by composition with the morphism $\s /V \to \s /U$.
\begin{proposition}
  \label{prop:8}
  $\cm$ is fibered over $\s$.  Moreover, we have an equivalence
  \begin{equation*}
    \CM \lisoto \Lim_{U\in \Ob (\s)} \cm
  \end{equation*}
  in the sense of bicategories.
\end{proposition}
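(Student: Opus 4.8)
The plan is to treat the statement in two parts—that the pseudofunctor $U\mapsto \CM(\s/U)$ organizes into a fibered bicategory, and that the evident restriction homomorphism onto the bicategory of Cartesian sections is a biequivalence. Both parts reduce, at each categorical level, to the single elementary fact that a sheaf of groups on $\s$ is the same datum as a Cartesian section $U\mapsto G^U$ of the fibered category whose fibre over $U$ is the category of sheaves of groups on $\s/U$, the transition functors being restriction; everything else is diagrammatic structure stable under the exact restriction functors.

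For the fibration, I would observe that $U\mapsto \CM(\s/U)$, together with the restriction homomorphism $f^*$ attached to each $f\colon V\to U$, is a contravariant pseudofunctor from $\s$ into the $2$-category of bicategories. Indeed $f^*$ is induced by the inverse-image (exact) functor on sheaves, hence carries sheaves of groups to sheaves of groups, preserves boundary maps, actions, and the crossed-module axioms~\eqref{eq:7}, preserves extensions and complexes, and preserves both the butterfly equivariance conditions~\eqref{eq:28} and the pullback/push-out composition of Definition~\ref{def:20}. The coherence isomorphisms for a composite $W\to V\to U$ come from the canonical identification of iterated inverse images. Applying the bicategorical Grothendieck construction (\cite{MR0364245}, \cite[Chapter 1]{MR95m:18006}) to this pseudofunctor produces $\cm$; this part is formal, the only content being the pseudofunctoriality just recorded.

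For the comparison, I would define $\Lambda\colon \CM\to \Lim_{U} \cm$ on objects by $G_\bullet\mapsto \bigl(U\mapsto G_\bullet\rvert_U\bigr)$, with the canonical Cartesian identifications $(G_\bullet\rvert_U)\rvert_V\isoto G_\bullet\rvert_V$; on a butterfly $[H_\bullet,E,G_\bullet]$ by the family $[H_\bullet\rvert_U,E\rvert_U,G_\bullet\rvert_U]$; and on a morphism of butterflies by restricting the central isomorphism $E\isoto E'$. That $\Lambda$ is a homomorphism of bicategories follows because restriction commutes, up to the coherent isomorphisms already recorded, with the composition of Definition~\ref{def:20}. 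To prove $\Lambda$ is a biequivalence it suffices to check essential surjectivity on objects together with the induced equivalence on each hom-groupoid. For essential surjectivity, a Cartesian section yields for $i=0,1$ a compatible family of sheaves of groups $G_i^U$ on $\s/U$; by the baseline fact these glue to global sheaves $G_0,G_1$, and the boundary map, action, and axioms~\eqref{eq:7} glue by the same mechanism, being diagrams and identities of group objects preserved and reflected by the equivalence, so the section comes from a global crossed module. For the hom-groupoids, fix $H_\bullet$ and $G_\bullet$ over $\cat{T}$: by construction of the bilimit the hom-groupoid in $\Lim_{U}\cm$ is the groupoid of Cartesian sections of $U\mapsto \cat{B}(H_\bullet\rvert_U,G_\bullet\rvert_U)$, that is, the global sections of $\stb(H_\bullet,G_\bullet)$, and Proposition~\ref{prop:7} says this is a stack—precisely the assertion that a Cartesian family of local butterflies (resp.\ of butterfly morphisms) glues uniquely to a global one matching $\cat{B}(H_\bullet,G_\bullet)$.

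The main obstacle is bookkeeping rather than mathematics. Since the genuine content at every level is carried by the gluing of sheaves of groups and by the stack property of Proposition~\ref{prop:7}, the real work is to arrange the coherence data so that $\Lambda$ is manifestly a homomorphism of bicategories—in particular that it respects horizontal composition of butterflies, which rests on the pullback/push-out of Definition~\ref{def:20} commuting with restriction—and to confirm that the descent isomorphisms supplied by Proposition~\ref{prop:7} are compatible with these coherences, so that the local equivalences on hom-groupoids assemble into a single biequivalence.
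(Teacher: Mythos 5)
Your treatment of the fibered claim is fine and is essentially the paper's own (one-line) argument: both rest on the fact that sheaves of groups over $\s$ form a split fibered category, so that crossed modules, butterflies, and morphisms of butterflies, being diagrams of sheaves of groups, pull back strictly along arrows of $\s$. The gap is in your essential-surjectivity step for the comparison $\CM \to \Lim_{U}\cm$. By the paper's own reading of $\Lim$ (paragraphs \ref{prgr:1} and \ref{prgr:5}: Cartesian sections, i.e.\ Cartesian 2-functors from the base into $\cm$), an object of $\Lim_U \cm$ assigns to each arrow $f\colon V\to U$ a \emph{Cartesian} 1-morphism of $\cm$, that is an \emph{equivalence} $f^*G_\bullet^U \isoto G_\bullet^V$ in $\CM(\s/V)$; and by Lemma~\ref{lem:8} such an equivalence is a \emph{reversible butterfly}, not a level-wise isomorphism of sheaves of groups. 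So the assertion that a Cartesian section ``yields for $i=0,1$ a compatible family of sheaves of groups $G_i^U$'' does not follow: a reversible butterfly induces isomorphisms on $\pi_0$ and $\pi_1$ but not on the terms $G_i$ themselves. Concretely, $[\ZZ \xrightarrow{2} \ZZ]$ and $[0\to \ZZ/2\ZZ]$ are connected by a reversible butterfly (take $E=\ZZ$, $\pi=\id$, $\kappa=2$, $\jmath=$ reduction mod $2$) yet are not level-wise isomorphic, so a Cartesian section may mix such presentations over different objects $U$, and then the terms simply do not glue.

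What your argument actually proves is that \emph{strict} sections---those whose transitions are level-wise isomorphisms---arise from global crossed modules. Showing that every Cartesian section is equivalent, in $\Lim_U\cm$, to a strict one is exactly a strictification statement, and it cannot be obtained by sheaf gluing alone: the natural route is to pass to the associated gr-stacks $[G_\bullet^U]\sptilde$, glue those (their transition data are now honest additive equivalences, and stacks do satisfy this Cartesian-section property), and then invoke Proposition~\ref{prop:9} together with Theorem~\ref{thm:2} to present the glued gr-stack by a global crossed module and to produce the comparison butterflies. In fairness, the paper's printed proof is silent on this point as well---it only establishes the existence of pull-backs, i.e.\ the fibered claim---but the step as you wrote it would fail on any non-strict section. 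By contrast, your hom-groupoid argument is sound, because 2-morphisms in $\cm$ \emph{are} level-wise (a morphism of butterflies is an isomorphism of the sheaves of groups at the center), so there the reduction to gluing of sheaves of groups, in the spirit of Proposition~\ref{prop:7}, is legitimate.
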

\begin{proof}
  Sheaves of groups over $\s$ form a stack, hence in particular a
  fibered category over $\s$, which is in fact
  split~\cite{MR49:8992}.  Since all morphisms and 2-morphisms in
  $\CM (\s/U)$ are in effect diagrams of morphisms of sheaves of
  groups, it follows that pull-backs (as bifunctors) exist in
  $\cm$.
\end{proof}
Moreover, as an immediate consequence of Proposition~\ref{prop:7}
we obtain:
\begin{proposition}
  The fibered bicategory $\cm$ of crossed modules over $\s$ is a
  pre-bistack.
\end{proposition}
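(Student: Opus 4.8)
The plan is to read off the pre-bistack condition directly from the definition recalled at the opening of Section~\ref{sec:fiber-bicat-cross} and to supply the missing ingredient from Proposition~\ref{prop:7}. Recall that a fibered bicategory is a \emph{pre-bistack} exactly when it is fibered and the descent conditions hold at every level save that of objects; concretely, this amounts to requiring that for each pair of objects the fibered category of $1$-morphisms between them—whose objects are the $1$-cells and whose morphisms are the $2$-cells—be a stack. Thus, once fiberedness is in hand, the entire content of the statement is carried by the $\Hom$-categories.

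First I would invoke Proposition~\ref{prop:8}, which already establishes that $\cm$ is fibered over $\s$: pull-backs of crossed modules, butterflies, and morphisms of butterflies exist because they are assembled from morphisms of sheaves of groups, and these pull back along arrows of $\s$. Next I would identify the $\Hom$-category in $\cm$ between two crossed modules $H_\bullet$ and $G_\bullet$ with the fibered groupoid $\stb(H_\bullet,G_\bullet)$ introduced in Section~\ref{sec:stacks-butt-weak}: its objects over $U$ are the butterflies from $(H_\bullet)\rvert_U$ to $(G_\bullet)\rvert_U$ (the $1$-cells), and its morphisms are the morphisms of butterflies (the $2$-cells).

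Proposition~\ref{prop:7} asserts precisely that $\stb(H_\bullet,G_\bullet)$ is a stack over $\s$. Since a stack structure on this fibered groupoid encodes at once the effectivity of descent for butterflies and the sheaf condition on their morphisms—that is, descent at the level of $1$-cells and of $2$-cells—the sought-after descent conditions at all levels except objects are exactly the stack condition of Proposition~\ref{prop:7}. Combining this with fiberedness yields the claim.

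The only point that warrants a moment's care, and the closest thing to an obstacle, is verifying that the bicategorical descent data are faithfully reflected in the $\Hom$-stacks, so that no descent condition at an intermediate level is overlooked. This is immediate here because the composition of butterflies, together with its associativity and unit constraints (Theorem~\ref{thm:3}), is built entirely from fiber products, push-outs, and morphisms of sheaves of groups, all of which commute with restriction along arrows in $\s$. Hence the coherence and composition data are automatically compatible with the stack structure on each $\Hom$-category, and there is nothing further to check beyond Propositions~\ref{prop:7} and~\ref{prop:8}.
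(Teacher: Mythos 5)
Your proof is correct and follows essentially the same route as the paper's: the paper likewise combines the fiberedness established in Proposition~\ref{prop:8} with the identification $\shcatHom_\cm(H_\bullet,G_\bullet)=\stb(H_\bullet,G_\bullet)$ and then cites Proposition~\ref{prop:7} to conclude that each morphism category is a stack. Your closing remark on compatibility of the stack structure with composition is a harmless extra observation that the paper omits, since the pre-bistack condition is carried entirely by the $\Hom$-stacks.
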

\begin{proof}
  Given two objects, $G_\bullet$ and $H_\bullet$, the fibered
  category of morphisms
  \begin{equation*}
    \shcatHom_\cm(H_\bullet, G_\bullet)
  \end{equation*}
  is precisely $\stb (H_\bullet, G_\bullet)$, which is a stack.
\end{proof}
We now turn to the question of whether 2-descent for objects of
$\cm$ is effective.

\begin{paragr}
  \label{prgr:2}
  For the sake of definiteness, let us provide a detailed list
  for a 2-descent datum for $\cm$ over $U\in \Ob\s$.
  \begin{enumerate}
  \item a hypercover $Y_\bullet \to U$ (e.g.\ the \Cech complex
    $\cech Y$ associated to a generalized cover $Y\to U$);
  \item a crossed module $G'_\bullet$ over $Y_0$;
  \item a reversible butterfly $[d_0^*G'_\bullet, E,
    d_1^*G'_\bullet]$ over $Y_1$;
  \item a morphism of butterflies $\alpha \colon d_1^*E
    \Rightarrow d_2^* E \circ d_0^*E$ over $Y_2$;
  \item a coherence condition for the $d_i^*\alpha$ over $Y_3$,
    $i=0,1,2,3$.
  \end{enumerate}
  The descent datum is effective if there exists a crossed-module
  $G\bullet$ over $U$, a reversible butterfly
  $[G'_\bullet,F,G_\bullet\rvert_{Y_0}]$, and a morphism of
  butterflies
  \begin{equation*}
    \beta \colon d_1^* F \circ E \Rightarrow d_0^*F
  \end{equation*}
  over $Y_1$, which is coherent over $Y_2$.
\end{paragr}
To prove that all 2-descent data in $\cm$ are effective (so that
$\cm$ is a bistack) it is best to exploit the relationship of
$\cm$ with the 2-category of gr-stacks (as opposed to giving a
direct proof).

For this, first consider the 2-category $\twocat{Stacks}(\s)$
of stacks over the site $\s$. It gives rise to a fibered
2-category $\stacks(\s)$ over $\s$ whose fiber over $U\in
\Ob\s$ is $\twocat{Stacks}(\s/U)$.  To every morphism $V\to U$ in
$\s$ it corresponds a Cartesian 2-functor
\begin{equation*}
  f^*\colon 
  \twocat{Stacks}(\s/U) \lto \twocat{Stacks}(\s/V)
\end{equation*}
arising from the pull-back $\stf \rightsquigarrow f^*\stf$, where
$\stf$ is a stack over $\s/U$. It is  known that $\stacks
(\s)$ is a 2-stack over $\s$ (see \cite{MR95m:18006}).

Gr-stacks form a sub-2-category $\twocat{Gr\mbox{-}Stacks}(\s)$
of $\twocat{Stacks}(\s)$ in the obvious way: the $1$-morphisms
are the additive functors and the 2-morphisms are the natural
transformations of additive functors. There is an obvious
forgetful functor
\begin{equation*}
  \twocat{Gr\mbox{-}Stacks}(\s)\lto \twocat{Stacks}(\s)
\end{equation*}
which simply forgets the additive structure.  Once again, these
considerations extend to the fibered situation to yield
$\grstacks(\s)$, the fibered 2-category of gr-stacks over the
site $\s$, as well as the forgetful functor
\begin{equation*}
  \grstacks(\s)\lto \stacks(\s).
\end{equation*}
We have the following important result:
\begin{theorem}
  \label{thm:4}
  $\grstacks(\s)$ is a 2-stack.
\end{theorem}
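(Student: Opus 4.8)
The plan is to deduce the 2-stack property of $\grstacks(\s)$ from the corresponding---and already known---property of $\stacks(\s)$ (see \cite{MR95m:18006}) by transporting it across the forgetful functor $\grstacks(\s)\lto\stacks(\s)$. Concretely, a gr-stack is a stack equipped with additional structure (the multiplication, unit, and inverse, together with the associativity and unit constraints), and an additive functor is a morphism of stacks equipped with the coherence datum $\lambda$; all of these data, and all the axioms they satisfy, are of an essentially local nature. Thus the strategy splits into two parts: first, show that the morphism 2-categories $\shcatHom_{\grstacks(\s)}(\grg,\grh)$ are stacks; second, show that 2-descent for objects is effective, the glued stack automatically inheriting a gr-structure.

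For the first part, fix gr-stacks $\grg$ and $\grh$ over some $U\in\Ob\s$ and consider the fibered category over $\s/U$ whose sections are additive functors and whose morphisms are their natural transformations. Since $\stacks(\s)$ is a 2-stack, the underlying $\shcatHom_{\stacks(\s)}(\grg,\grh)$ of morphisms of stacks (forgetting additivity) is a stack. An additive functor is such a morphism together with the isomorphisms $\lambda_{X,Y}$ of \S\ref{sec:gr-categories}, subject to the compatibility diagram~\eqref{eq:6}; a morphism of additive functors is a natural transformation rendering~\eqref{eq:76} commutative. Both the datum $\lambda$ and the stated compatibilities are local, hence glue, so additive functors and their morphisms form a substack of $\shcatHom_{\stacks(\s)}(\grg,\grh)$. (In the crossed-module model this is exactly the content of Proposition~\ref{prop:7} and Corollary~\ref{cor:1}.)

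For the second part, let $(\grg,\phi,\alpha)$ be a 2-descent datum for $\grstacks(\s)$ over $U$ relative to a hypercover $Y_\bullet\to U$ in the sense of Definition~\ref{def:18}: here $\grg$ is a gr-stack over $Y_0$, $\phi\colon d_0^*\grg\isoto d_1^*\grg$ is an additive equivalence over $Y_1$, and $\alpha\colon d_1^*\phi\lto d_2^*\phi\circ d_0^*\phi$ is a morphism of additive functors over $Y_2$, coherent over $Y_3$. Applying the forgetful functor yields a 2-descent datum in $\stacks(\s)$, which is effective: there is a stack $\stf$ over $U$ together with an equivalence $\stf\rvert_{Y_0}\isoto\grg$ identifying the gluing data, after forgetting the additive structure. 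It remains to promote $\stf$ to a gr-stack. The multiplication $\otimes\colon\grg\times\grg\to\grg$ over $Y_0$, together with the coherence isomorphism witnessing the additivity of $\phi$ and the compatibility of $\alpha$ with multiplication (diagram~\eqref{eq:76}), is precisely a descent datum for a $1$-morphism $\stf\times\stf\to\stf$ in the 2-stack $\stacks(\s)$; by the first part this descends to a morphism $\otimes\colon\stf\times\stf\to\stf$. In the same fashion the unit object, the inverse functor $(\cdot)^*$, the associator, and the unit constraints~\eqref{eq:67} descend, being $1$- and $2$-morphisms whose descent data are furnished by the gr-structure on $\grg$ and by the additivity of $\phi$ and $\alpha$.

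It then remains to verify that the descended structure satisfies the gr-stack axioms and is compatible with the gluing equivalence. Each axiom (the pentagon, the triangle~\eqref{eq:5}, and the inversion laws) is an equality of $2$-morphisms which holds after restriction to $Y_0$, where $\stf\iso\grg$ as gr-stacks; since equalities of $2$-morphisms are detected locally in a stack, they hold over $U$. Finally, unwinding the construction shows that the descent equivalence $\stf\rvert_{Y_0}\isoto\grg$ is itself additive, so the datum is effective in $\grstacks(\s)$. I expect the main obstacle to be purely organizational: assembling the structural morphisms and all the coherence $2$-morphisms of the monoidal structure into genuine descent data over $Y_1$, $Y_2$, and $Y_3$, and checking that ``$\phi$ additive'' and ``$\alpha$ a morphism of additive functors'' supply exactly the cocycle and coherence compatibilities required. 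This is conceptually routine---no idea beyond the 2-stack property of $\stacks(\s)$ is needed---but the bookkeeping is heavy.
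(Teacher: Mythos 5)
Your proposal is correct and takes essentially the same route as the paper's own proof in Appendix~\ref{sec:2-stack-gr}: transport the known 2-stack property of $\stacks(\s)$ across the forgetful functor, first showing that the Hom-categories of additive functors are stacks (the paper's Claims~\ref{cl:1} and~\ref{cl:2} on separatedness and the prestack property), then gluing the underlying stack of an object 2-descent datum in $\stacks(\s)$ and descending the multiplication, associator, and remaining gr-structure as 1- and 2-morphisms whose cocycle compatibilities are furnished by the additivity of $\phi$ and $\alpha$, with the axioms (pentagon, unit, inversion) checked locally. The bookkeeping you defer as ``organizational'' is exactly what the paper's diagrams~\eqref{eq:50}--\eqref{eq:53} and the accompanying pasting arguments carry out.
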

A direct proof is sketched in Appendix~\ref{sec:2-stack-gr}. A
more conceptual proof will be available in a forthcoming paper by
the authors.

Note that Corollary~\ref{cor:1} also follows directly from
Thm.~\ref{thm:4}, which is logically independent of any statement
about crossed modules.
\begin{definition}
  \label{def:11}
  Let $F$ be the homomorphism
  \begin{equation}
    \label{eq:39}
    F\colon \cm (\s) \lto \grstacks (\s)
  \end{equation}
  defined by sending the crossed module $[G_1\to G_0]$ to its
  associated gr-stack $[G_1\to G_0]\sptilde$, and, for two
  crossed modules $H_\bullet$ and $G_\bullet$, $\stb
  (H_\bullet,G_\bullet)$ to $\stwm (H_\bullet, G_\bullet)$.
\end{definition}
On the right-hand side of~\eqref{eq:39} we consider $\grstacks
(\s)$ as a bicategory (and in fact, as a bistack) in the obvious
way.

It immediately follows from Theorem~\ref{thm:2} that $F$ is
2-faithful---or fully faithful---in the sense of bicategories
(cf.~\cite{MR0364245} for the definition of
$i$-faithful---$i=0,\dots, 3$---in the context of
2-categories). The main results is:
\begin{theorem}
  \label{thm:5}
  The homomorphism $F$ in~\eqref{eq:39} is a
  biequivalence. Therefore $\cm (\s)$ is a bistack.
\end{theorem}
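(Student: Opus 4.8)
The plan is to verify the two conditions characterizing a biequivalence of (fibered) bicategories: that $F$ is a local equivalence on morphism categories, and that it is essentially surjective on objects, both holding fiberwise over $\s$ and compatibly with restriction. The first condition is precisely the fibered form of Theorem~\ref{thm:2}, already recorded as the 2-faithfulness of $F$ just after Definition~\ref{def:11}: for crossed modules $H_\bullet$ and $G_\bullet$ over $\s/U$ the induced functor $\stb(H_\bullet,G_\bullet)\to\stwm(H_\bullet,G_\bullet)$ is an equivalence of stacks. What remains is essential surjectivity on objects, after which the biequivalence will follow from the standard characterization.

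For essential surjectivity I would invoke Proposition~\ref{prop:9}: every gr-stack $\grg$ over $\s/U$ admits a crossed module $G_\bullet$ together with an equivalence of gr-stacks $F(G_\bullet)\lisoto\grg$. Thus $F$ reaches every object of $\grstacks(\s/U)$ up to equivalence. A homomorphism of bicategories that is a local equivalence and essentially surjective on objects is a biequivalence (cf.~\cite{MR0220789}); since $F$ is assembled from pullback-compatible assignments, this is in fact a biequivalence of fibered bicategories over $\s$.

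To deduce that $\cm(\s)$ is a bistack I would transport the 2-stack property of $\grstacks(\s)$ (Theorem~\ref{thm:4}) across the biequivalence. As $\cm$ is already a pre-bistack, only effectiveness of 2-descent on objects is at issue. Given a 2-descent datum in $\cm$ over $U$ of the form listed in~\ref{prgr:2}, applying $F$ yields a 2-descent datum for $\grstacks$ over $U$, which is effective by Theorem~\ref{thm:4}; call the resulting gr-stack $\grg$. By Proposition~\ref{prop:9} there is a crossed module $G_\bullet$ over $U$ with $F(G_\bullet)\iso\grg$, and by the local equivalence of Theorem~\ref{thm:2} the reversible comparison butterfly over $Y_0$ and the coherent gluing 2-morphism $\beta$ required in~\ref{prgr:2} are obtained by transporting the corresponding gr-stack equivalence and natural transformation back along $F$. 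Hence the original datum is effective in $\cm$, and $\cm(\s)$ is a bistack.

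The main obstacle I anticipate lies not in either ingredient separately but in making the transfer of 2-descent across the biequivalence precise: one must argue that $F$ induces an equivalence between the bicategories of 2-descent data on the two sides. This is the expected behavior of a local equivalence that is essentially surjective, but in the bicategorical setting it requires checking that the coherence 2-morphism $\alpha$ and its cocycle identity over $Y_3$ correspond under $F$, and that solving the problem in $\grstacks$, pulling the solution back, and re-gluing respects the associativity and unit constraints of $\cm$. Because the hom-objects $\stb(-,-)$ are genuine stacks and $F$ is an equivalence on them, these compatibilities hold, and effectiveness then follows formally.
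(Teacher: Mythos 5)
Your proposal follows the paper's own proof: the paper likewise reduces Theorem~\ref{thm:5} to 2-faithfulness of $F$ (already noted as an immediate consequence of Theorem~\ref{thm:2}) plus essential surjectivity on objects, which is exactly Proposition~\ref{prop:9}, and then obtains the bistack property of $\cm(\s)$ by exploiting the biequivalence with the 2-stack $\grstacks(\s)$ of Theorem~\ref{thm:4} rather than verifying 2-descent directly. Your final paragraphs merely spell out the transfer of 2-descent data across the biequivalence, which the paper leaves implicit.
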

Since we have already remarked that $F$ is 2-faithful, the only
thing to be proved is essential surjectivity.  We state it
separately in the next proposition, which is also of independent
interest.
\begin{proposition}
  \label{prop:9}
  Let $\grg$ be a gr-stack. Then there exists a crossed module
  $[G_1\to G_0]$ such that $\grg$ is equivalent to the gr-stack
  $[G_1\to G_0]\sptilde$.
\end{proposition}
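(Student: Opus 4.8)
The plan is to realize $\grg$ by first constructing a sheaf of groups $G_0$ together with an \emph{essentially surjective} additive functor $p\colon G_0\to\grg$, where $G_0$ is viewed as a discrete gr-stack with its strict group law, and then to extract a crossed module from the homotopy fibre of $p$. Granting such a $p$, I would define $G_1$ to be the sheaf of groups whose sections over $U$ are the pairs $(\gamma,\alpha)$ with $\gamma\in G_0(U)$ and $\alpha\colon p(\gamma)\isoto I$ an isomorphism in $\grg(U)$; equivalently $G_1=G_0\times_{\grg}\mathbf{1}$, which fits into a $2$-cartesian square exactly as in~\eqref{eq:26}. The map $\del\colon G_1\to G_0$, $(\gamma,\alpha)\mapsto\gamma$, is then a homomorphism, and conjugation in $G_0$ combined with the additivity isomorphisms of $p$ equips $G_1$ with a right $G_0$-action; checking the two identities of Definition~\ref{def:2} is a routine diagram chase against the equivariance of type~\eqref{eq:28}.

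The crux, and the step I expect to be the main obstacle, is the construction of the pair $(G_0,p)$. Since $\grg$ is a gerbe over the sheaf of groups $\pi_0=\pi_0(\grg)$, the idea is to neutralise this gerbe over a sheaf of groups mapping onto $\pi_0$: I would choose an epimorphism of sheaves of groups $\varphi\colon G_0\twoheadrightarrow\pi_0$ for which the gerbe $\grg$ pulled back along $\varphi$ admits a global section, which is the same as an additive functor $p\colon G_0\to\grg$ whose induced map on $\pi_0$ is $\varphi$. Such a $G_0$ can be manufactured from the local objects of $\grg$: over a generalized cover of the terminal object the gerbe is neutral, and adjoining these local trivialisations (for instance, passing to a suitable free sheaf of groups built from the objects of $\grg$ and using the monoidal law of $\grg$ to define $p$ on products) yields a sheaf of groups surjecting onto $\pi_0$ and carrying the desired lift. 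Essential surjectivity of $p$ is then immediate, since by construction every object of $\grg$ is locally in the image of $p$.

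With $(G_0,p)$ in hand the homotopy sheaves come out correctly. We have $\Ker\del=\{(1,\alpha)\}\cong\shAut_{\grg}(I)=\pi_1(\grg)$, because $p(1)\isoto I$ canonically; and $\im\del=\Ker(\varphi\colon G_0\to\pi_0)$, so that $\Coker\del=\im\varphi=\pi_0(\grg)$ as $\varphi$ is an epimorphism. Thus the crossed module $[G_1\to G_0]$ has the same $\pi_0$ and $\pi_1$ as $\grg$, together with the same $\pi_0$-action on $\pi_1$.

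Finally, I would produce the comparison equivalence. The data of $p$ and the canonical isomorphisms $\alpha$ assemble, via the complex $G_1\xrightarrow{\del}G_0\xrightarrow{p}\grg$, into an additive functor $\grpd{G}\to\grg$ from the gr-prestack determined by $[G_1\to G_0]$; since $\grg$ is a stack, the universal property of stackification recalled in~\ref{sec:stack-assoc-prest}, together with Proposition~\ref{prop:2}, extends it to an additive functor $\bar p\colon [G_1\to G_0]\sptilde\to\grg$. By the previous paragraph $\bar p$ induces isomorphisms on $\pi_0$ and on $\pi_1$. A morphism of gr-stacks inducing isomorphisms on $\pi_0$ and on the band $\pi_1$ is necessarily an equivalence: essential surjectivity follows from the isomorphism on $\pi_0$ and the local connectedness of the gerbes, and full faithfulness from the isomorphism on the automorphism sheaves $\pi_1$. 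Hence $\bar p$ is the required equivalence.
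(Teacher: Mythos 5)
Your proposal follows essentially the same route as the paper's own proof: take $G_0$ to be (the sheafification of) a free group built on the objects of $\grg$, define an additive functor $\pi\colon G_0\to\grg$ on words via the monoidal law, set $G_1=G_0\times_{\grg}\mathbf{1}$ with $\del$ the projection and the $G_0$-action given by conjugation, and conclude by checking the comparison functor is an equivalence. The two points you flag as the crux are precisely what the paper's proof supplies in detail: a skeleton of $\grg$ gives a \emph{separated} presheaf of objects whose sheafification $X$ admits an essentially surjective map to $\grg$ (Lemma~\ref{lem:1}), and Laplaza's coherence theorem makes $\pi$ well defined on words of $F\langle X\rangle\sptilde$ despite the weak associativity of $\otimes_{\grg}$ (Lemma~\ref{lem:2}); your closing argument via isomorphisms on $\pi_0$ and $\pi_1$ is the paper's Lemma~\ref{lem:5}, where the paper instead observes directly that full faithfulness is built into the definition of $G_1$ and essential surjectivity into that of $\pi$.
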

\begin{proof}
  The first step is to construct an additive functor
  \begin{equation*}
    \pi\colon G_0 \lto \grg
  \end{equation*}
  where $G_0$ is a group-object in $\cat{T}$. $G_0$ is
  considered, of course, as a gr-stack in the obvious way.
  First, we have the following:
  \begin{lemma}
    \label{lem:1}
    There exists an essentially surjective map
    \begin{equation*}
      \pi_0\colon X \lto \grg,
    \end{equation*}
    where $X$ is a \emph{space} over $\s$.
  \end{lemma}
  \begin{proof}[Proof of the lemma]
    Choose a skeleton $\sk \grg$ of $\grg$.  (Recall that a
    skeleton is a full subcategory having one object for each
    isomorphism class of objects of the ambient category,
    cf.~\cite{MR0349793}.) The inclusion $\imath \colon \sk \grg
    \to \grg$ is an equivalence, and it is easy to show that $\sk
    \grg$ is fibered, and in fact split, over $\s$. Therefore
    $\Ob(\sk \grg)$ is a presheaf of sets.  It is actually a
    \emph{separated} presheaf, as follows.

    A descent datum for $\Ob\sk\grg$ is given by a pair $(x,
    V_\bullet)$, where $V_\bullet \to U$ is a (hyper)cover of
    $U\in \Ob \s$, and $x$ is an object of $\sk\grg$ over $V_0$
    such that $d_0^*x=d_1^*x$ over $V_1$.  This defines a descent
    datum for $\grg$ (with identity maps as morphisms), so that
    there exists an object $y$ of $\grg$ over $U$ such that its
    pullback to $V_0$ is isomorphic to $x$. The object $y$ is
    defined up to isomorphism (in $\grg_U$), which shows that if
    it is in $\sk\grg_U$, then it must be unique. (Incidentally,
    this argument also shows why we ought not expect $\Ob\sk\grg$
    to be a sheaf.)

    We define $X$ to be the associated sheaf: since $\Ob\sk\grg$
    is separated to begin with, the ``plus'' construction will
    have to be done only once. Define $\pi_0$ to be the extension
    of $\imath$ to $X$. It exists, because an element of $X(U)$
    is given by a pair $\xi=(x,V_\bullet)$ as above, with gluing
    object $y\in \Ob\grg_U$, so we set $\pi_0(\xi)= y$.
  \end{proof}
  For any set $S$, let $F\langle S\rangle$ denote the free group
  over $S$.  Define $G_0$ as
  \begin{equation*}
    G_0 = F\langle X\rangle\sptilde,
  \end{equation*}
  where the tilde denotes sheafification. In other words, the
  right-hand side is the sheafification of the presheaf
  $U\rightsquigarrow F\langle X(U)\rangle$.
  \begin{lemma}
    \label{lem:2}
    The map $\pi_0$ extends to an additive functor $\pi\colon G_0
    \to \grg$.
  \end{lemma}
  \begin{proof}[Proof of the lemma]
    One follows the same pattern used to show the free group
    $F\langle S\rangle$ has the universal property with respect
    to set morphisms from $S$ to groups. Since the group law of
    $\grg$ is only a weak one in general, an ordering problem
    arises.  Locally, given a word $x_1\dotsm x_n$ where
    $x_1,\dotsc, x_n$ are element of $X(U)$ over some $U\in
    \Ob\s$, we define $\pi (x_1\dots x_n)$ by choosing a specific
    nesting of parentheses and then using the group law of
    $\grg$. Specifically, we set:
    \begin{equation*}
      \pi_0 ( x_1\dots x_n) \eqdef  (\dotsm ((x_1 \otimes x_2)\otimes
      x_3)\otimes \dotsm \otimes x_n)
    \end{equation*}
    associating from the left. That this is well defined follows
    from results of Laplaza about coherence in
    gr-categories---cf.~\cite{MR723395}.
  \end{proof}

  $G_1$ is defined by the square
  \begin{equation*}
    \xymatrix{%
      G_1 \ar[d] \ar[r]^{\del} & G_0 \ar[d]^{\pi} \\
      \mathbf{1} \ar[r] & \grg
    }
  \end{equation*}
  of gr-stacks, where $\mathbf{1}\to \grg$ corresponds to the
  unit object ($\mathbf{1}$ is the category with one object and
  one arrow).  Thus, the elements of $G_1$ consist of pairs
  $(x,\alpha)$ where $x\in G_0$ and $\alpha\colon I\to \pi (x)$
  in $\grg$. The map $\del$ is the projection to $G_0$ sending
  $g=(x,\alpha)$ to $x$.

  The multiplicative structure of $G_1$ is given by
  \begin{equation*}
    (x,\alpha ) (y,\beta) = (xy, \alpha \beta ),
  \end{equation*}
  where $\alpha\beta$ is to be interpreted as the composite arrow
  \begin{equation*}
    I \lisoto I\otimes I
    \xrightarrow{\alpha \otimes \beta}
    \pi (x) \otimes \pi(y) \lisoto \pi (xy).
  \end{equation*}
  The unit of $G_1$ is the pair $(1,\mu)$, where $\mu$ is the
  morphism such that
  \begin{equation*}
    \mu\colon I \lisoto \pi (1),
  \end{equation*}
  and $1$ is the unit element of $G_0$.  The inverse of the pair
  $(x,\alpha)$ is given by the pair $(x^{-1},\hat\alpha )$, where
  $\hat\alpha$ is the composite
  \begin{equation*}
    I \xrightarrow{(\alpha^*)^{-1}}
    \pi (x)^* \lisoto \pi(x^{-1}).
  \end{equation*}
  (Recall that the choice of a quasi-inverse in a
  gr-category---and therefore in a gr-stack---gives a functor
  $*\colon \grg^{\mathrm{op}}\to \grg$.)  With these definitions
  the map $\del$ evidently is a group homomorphism.

  Let us define an action of $G_0$ on $G_1$ by setting:
  \begin{equation}
    \label{eq:40}
    (y,\beta)^x = (x^{-1}yx, x^{-1} \beta x).
  \end{equation}
  Here $x\in G_0$, $(y,\beta)\in G_1$, and $x^{-1}\beta x$ is
  defined to be the composite
  \begin{equation*}
    I \lto
    \pi(x)^* \otimes \pi(x) \lto
    (\pi(x)^* \otimes I)\otimes \pi(x) \lto
    (\pi(x)^* \otimes \pi (y))\otimes \pi(x)\lisoto
    \pi (x^{-1} y x)
  \end{equation*}
  where the star denotes the inverse operation in $\grg$, and the
  last arrow on the right is in turn the composite of
  \begin{equation*}
    (\pi(x)^* \otimes \pi (y))\otimes \pi(x) \iso
    (\pi(x^{-1}) \otimes \pi (y))\otimes \pi(x) \iso
    \pi(x^{-1}y)\otimes \pi(x) \iso
    \pi (x^{-1}yx).
  \end{equation*}
  The arrow in the middle in the definition of $x^{-1}\beta x$ is
  \begin{math}
    (\pi(x)^* \otimes \beta )\otimes \pi(x).
  \end{math}
  Had we chosen to use
  \begin{math}
    \pi(x)^* \otimes ( \beta \otimes \pi(x))
  \end{math}
  instead, then the commutativity of~\eqref{eq:5} in any
  gr-category, and the functoriality of the associator, ensure
  the definition of $x^{-1}\beta x$ is unaffected.

  We must verify that the two axioms~\eqref{eq:7} of a crossed
  module hold for the action~\eqref{eq:40}. The first,
  \begin{equation*}
    \del (h^x) = x^{-1} \del h\,x
  \end{equation*}
  with $h=(y,\beta)$, is obvious. For the second, namely
  \begin{equation*}
    h^{\del g} = g^{-1} h\,g,
  \end{equation*}
  with $g=(x,\alpha)$ and $h=(y,\beta)$, to hold, we must have
  \begin{equation*}
    \hat\alpha\beta\alpha = x^{-1}\beta x.
  \end{equation*}
  To see why this is true, consider the following diagram:
  \begin{equation*}
    \xymatrix{%
      I \ar[r]^\sim &
      I \otimes I \ar[r]^\sim
      \ar[d]|{(\alpha^*)^{-1} \otimes \alpha} &
      I \otimes (I \otimes I)
      \ar[r]^{(\alpha^*)^{-1} \otimes (\beta\otimes \alpha)}
      \ar[d]|{(\alpha^*)^{-1} \otimes (I\otimes \alpha)} &
      **[r]\pi (x)^* \otimes ( \pi(y) \otimes \pi(x)) \\
      & \pi (x)^* \otimes \pi(x) \ar[r]
      & \pi (x)^* \otimes ( I \otimes \pi(x))
      \ar@(r,d)[ur]_(.7){\;\;\;\pi(x)^* \otimes (\beta\otimes \pi(x))} }
  \end{equation*}
  The relevant part of $\hat\alpha\beta\alpha$ is the composition
  in the top horizontal line.  The left square is commutative due
  to functoriality and the definition of $\alpha^*$. The
  composition of arrows at the bottom gives $x^{-1}\beta x$.

  Finally, the equivalence between $\grg$ and $[G_1\to
  G_0]\sptilde$ is obvious: full faithfulness is built-in in the
  definition of $G_1$, whereas essential surjectivity follows from
  the definition of $\pi$.

  This concludes the proof of the proposition.
\end{proof}
\begin{remark}
  An entirely similar proof in the case of strictly Picard
  gr-stacks is due to Deligne, cf.\ ref.\ \cite[Lemme 1.4.13
  i]{0259.14006}.  Breen and Messing have sketched a proof for
  the general case, using a rigidification of the group law on
  the simplicial object determined by $\grg$, cf.~\cite[Appendix
  B]{MR2183393}.
\end{remark}

\subsection{Derived category of crossed modules}
\label{sec:deriv-categ-cross}

This section is devoted to some remarks and analogies with
standard (abelian) homological algebra.

The first observation is that butterflies really \emph{are}
fractions.

Let $[H_\bullet, E,G_\bullet]$ be a butterfly.  It follows
immediately from its properties, and explicitly from
\cite{Noohi:weakmaps}, that
\begin{equation*}
  E_\bullet \colon
  \xymatrix@1{%
    H_1\times G_1 \ar[r]^>>>>{\kappa\cdot \imath }& E}
  \qquad
  (h,g) \lmto \kappa (g) \, \imath (g),
\end{equation*}
is a crossed module. The butterfly thus gives rise to a
``fraction,'' that is, a diagram of \emph{strict} morphisms of
crossed modules
\begin{equation*}
  \xymatrix@1{%
    H_\bullet & E_\bullet \ar[l]_\sim^{Q} \ar[r]^{P}
    & G_\bullet}
\end{equation*}
or, explicitly,
\begin{equation*}
  \xymatrix{%
    H_1 \ar[d]_\del &
    H_1\times G_1 \ar[l]_{\mathrm{pr}_1}
    \ar[d]_{\kappa\cdot \imath} \ar[r]^>>>>{\mathrm{pr}_2}
    & G_1 \ar[d]^\del \\
    H_0 & E \ar[l]^\pi \ar[r]_\jmath & G_0
  }
\end{equation*}
The strict morphism $Q$ is a quasi-isomorphism, inducing an
isomorphism of homotopy groups, as it can be readily checked.

The following is very easy, but it is worthwhile to point the
statement out.
\begin{lemma}
  \label{lem:5}
  $F\colon \grh\to \grg$ is a quasi-isomorphism if and only if it
  is an equivalence.
\end{lemma}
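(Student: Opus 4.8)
The plan is to deduce the nontrivial implication from the flippability criterion of Lemma~\ref{lem:8}. The reverse direction is immediate: an equivalence of gr-stacks induces isomorphisms on $\pi_0$ and $\pi_1$, since these are invariants of the equivalence class and a quasi-inverse induces the inverse isomorphisms; hence every equivalence is a quasi-isomorphism.

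For the forward direction I would first invoke Theorem~\ref{thm:2} to represent $F$ by a butterfly $[H_\bullet, E, G_\bullet]$ as in~\eqref{eq:27}. By Lemma~\ref{lem:8}, $F$ is an equivalence exactly when this butterfly is flippable in the sense of Definition~\ref{def:19}, that is, when the NW-SE diagonal
\begin{equation*}
  F^\bullet \colon \quad 1 \lto H_1 \overset{\kappa}{\lto} E \overset{\jmath}{\lto} G_0 \lto 1
\end{equation*}
is a group extension (the NE-SW diagonal being always an extension). So the entire content is to show that $F$ is a quasi-isomorphism if and only if $F^\bullet$ is exact. Reading $F^\bullet$ as the three-term complex in degrees $[-2,0]$ furnished by the butterfly, "being a group extension" is literally the same as "being acyclic": exactness at $H_1$ says $\kappa$ is injective, i.e.\ $\H^{-2}(F^\bullet)=1$; exactness at $E$ says $\im\kappa=\Ker\jmath$, i.e.\ $\H^{-1}(F^\bullet)=1$; exactness at $G_0$ says $\jmath$ is onto, i.e.\ $\H^{0}(F^\bullet)=1$.

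The remaining step is to match these three homology objects against $F$, and this is precisely what Theorem~\ref{thm:6} supplies: there $F^\bullet$ is exactly the NW-SE diagonal, one has $\pi_i(\tstf)=\H^{-i}(F^\bullet)$, and the six-term exact sequence
\begin{equation*}
  0 \to \pi_2(\tstf) \to \pi_1(\grh) \xrightarrow{\pi_1(F)} \pi_1(\grg) \to \pi_1(\tstf) \to \pi_0(\grh) \xrightarrow{\pi_0(F)} \pi_0(\grg) \to \pi_0(\tstf) \to 1
\end{equation*}
allows a routine diagram chase showing that $\pi_0(F)$ and $\pi_1(F)$ are both isomorphisms if and only if $\pi_0(\tstf)=\pi_1(\tstf)=\pi_2(\tstf)=1$, i.e.\ if and only if $F^\bullet$ is acyclic. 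Chaining the equivalences, $F$ is a quasi-isomorphism iff $F^\bullet$ is exact iff the butterfly is flippable iff $F$ is an equivalence.

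The main obstacle is the six-term sequence itself, which is genuinely non-abelian at the two $\pi_0$ terms. If one prefers not to invoke Theorem~\ref{thm:6}, I would build the connecting maps by hand from the NE-SW extension $1\to G_1\overset{\imath}{\to}E\overset{\pi}{\to}H_0\to 1$: the relation $\pi\kappa=\del_H$ identifies $\Ker\pi_1(F)$ with $\Ker\kappa$, while $\jmath\imath=\del_G$ realizes $\Ker\jmath/\im\kappa$ and $\Coker\jmath$ as the obstructions to surjectivity of $\pi_1(F)$ and $\pi_0(F)$ respectively, with the equivariance relations~\eqref{eq:28} ensuring that every map is well defined and that the relevant images are normal so the quotients are groups. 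The only delicate verifications are exactness at the two $\pi_0$ spots, where one must track conjugation by $\pi$-lifts; everything in degree $1$ is abelian and poses no difficulty.
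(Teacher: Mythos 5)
Your proof is correct, but it takes a genuinely different route from the paper's. The paper's own proof is a two-line gerbe d\'evissage: since a quasi-isomorphism induces isomorphisms on $\pi_0$ and $\pi_1$, one may regard $F$ (following \cite[\S 7]{MR95m:18006}) as a morphism of stacks over the common base sheaf $\pi_0$; both $\grh$ and $\grg$ are then gerbes over $\pi_0$ banded by the same abelian sheaf $\pi_1$, and any such morphism of gerbes is automatically an equivalence; the converse is noted as obvious, exactly as you say. Your argument instead runs through the butterfly calculus: Theorem~\ref{thm:2} to represent $F$, Lemma~\ref{lem:8} to convert ``equivalence'' into flippability, the (correct) observation that flippability of the NW-SE diagonal is precisely acyclicity of $F^\bullet$, and the exact sequence of Theorem~\ref{thm:6} to convert acyclicity into ``quasi-isomorphism''; your diagram chase is sound, since every spot where you need ``trivial kernel implies injective'' involves a homomorphism of sheaves of groups, so the non-abelian caveat you flag is harmless. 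What your route buys is a sharper, quantitative statement of independent interest---the kernels and cokernels of $\pi_i(F)$ are exactly the homology sheaves of the butterfly's NW-SE complex, so ``quasi-isomorphism $\Leftrightarrow$ flippable'' becomes explicit---which the paper only assembles later. What it costs is a forward reference: Lemma~\ref{lem:5} sits in section~\ref{sec:deriv-categ-cross}, while Theorem~\ref{thm:6} is proved in section~\ref{sec:exact-sequences-gr}. This is not circular, since the proofs of Propositions~\ref{prop:10}, \ref{prop:11} and of Theorem~\ref{thm:6} (via Proposition~\ref{prop:13}) nowhere invoke Lemma~\ref{lem:5}---indeed Proposition~\ref{prop:13} is itself proved by the same banded-gerbe argument one categorical level up---but in the paper's logical order your proof would need either a rearrangement or your hand-built connecting-map argument, which as written is only a sketch. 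The paper's argument is shorter and self-contained at that point in the text; yours trades that economy for making the homological content visible.
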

\begin{proof}
  Since the homotopy groups are the same, following the ideas
  of~\cite[\S 7]{MR95m:18006}, we can consider $F$ as a morphism
  over a space (=sheaf) $\pi_0$. Both $\grh$ and $\grg$ are
  $\pi_1$-gerbes over $\pi_0$, hence they must be equivalent.

  The converse is obvious.
\end{proof}

It follows from the lemma that if we call $\grstack{E}$ the
gr-stack associated to the crossed  module $E_\bullet$, the
induced morphism $\grstack{E}\to \grh$ is an equivalence. Thus,
analogously to the abelian situation in~\cite{0259.14006} $F$
factorizes as $F=P\circ Q^*$, where $Q^*$ is a quasi-inverse to $Q$
(as morphisms of gr-stacks).

Continuing the analogy with \loccit, let us denote by
$\twocat{Gr\mbox{-}Stacks}^\flat(\s)$ the category whose objects are the
gr-stacks of $\s$ and whose morphisms are isomorphism classes of
additive functors.  Similarly, $\CM^\flat (\s)$ will denote the
category whose objects are crossed modules and whose morphisms
are isomorphism classes of butterflies. Then Theorem~\ref{thm:5}
implies that there is an equivalence of categories
\begin{equation*}
  \CM^\flat (\s) \lisoto \twocat{Gr\mbox{-}Stacks}^\flat(\s).
\end{equation*}

\section{Exact sequences of gr-stacks and homotopy groups}
\label{sec:exact-sequences-gr}

In the set-theoretic context, given a butterfly
\begin{equation*}
  \xymatrix@R-0.5em{%
    H_1\ar[dd]_\del \ar@/_0.1pc/[dr]^\kappa  & &
    G_1 \ar@/^0.1pc/[dl]_\imath \ar[dd]^\del\\
    & E\ar@/_0.1pc/[dl]_\pi \ar@/^0.1pc/[dr]^\jmath &  \\
    H_0 & & G_0
  }
\end{equation*}
the complex $F_\bullet$ on the NW-SE diagonal participates in the
long exact sequence of homotopy groups:
\begin{equation*}
  1 \to \pi_2 (\twocat{F}) \to \pi_1 (\grpd{H}) \to \pi_1 (\grpd{G})
  \to \pi_1 (\twocat{F}) \to \pi_0 (\grpd{H}) \to \pi_0 (\grpd{G}) \to
  \pi_0(\twocat{F}) \to 1,
\end{equation*}
where $\grpd{H}, \grpd{G}$ are the groupoids corresponding to
$H_\bullet$ and $G_\bullet$, respectively, and $\twocat{F}$ is a
2-groupoid built from the complex $F_\bullet$, cf.\
\cite{Noohi:weakmaps}. In all instances these homotopy groups
coincide with the naïve (non-abelian) homology groups of the
complexes.  It is shown in \loccit that there is homotopy fiber
sequence of simplicial objects
\begin{equation*}
  \smp{F}_\bullet \lto \smp{H}_\bullet\lto \smp{G}_\bullet.
\end{equation*}
All terms are the nerves of the corresponding (2-)groupoids: for
$\smp{H}_\bullet$ and $\smp{G}_\bullet$ this was briefly recalled
in sect.~\ref{sec:cocycles}. They also are simplicial groups. For
2-groupoids, see~\cite{MR1239560}. Note that $\smp{F}_\bullet$ is
\emph{not} a simplicial group.

We give a geometric account of this circle of ideas in the
sheaf-theoretic case, based on the correspondence between weak
morphisms and morphisms of gr-stacks.

\subsection{Homotopy kernel}
\label{sec:homotopy-kernel}

Let $F\colon \grh\to \grg$ be a morphism of gr-stacks. The
homotopy kernel of $F$, $\grk = \shcatKer (F)$, is a gr-stack
defined by the (stack) fiber product:
\begin{equation*}
  \xymatrix{%
    \grk \drtwocell<\omit>{\epsilon}
    \ar[r]^J \ar[d] & \grh \ar[d]^F \\
    \mathbf{1} \ar[r] & \grg
  }.
\end{equation*}
We have explicitly marked the 2-morphism $\epsilon \colon F\circ
J \Rightarrow I$, where $I$ is the null functor, sending every
object $I$ of $\grk$ to the unit object of $\grg$, and every
morphism to the identity morphism of the unit object.
Explicitly, the objects of $\grk$ are given by pairs $(Y,f)$,
where $Y$ is an object of $\grh$, and $f\colon F(Y)\isoto
I_\grg$. A morphism $a\colon (Z,g)\to (Y,f)$ is given by a
morphism $a\colon Z\to Y$ in $\grh$ such that the diagram
\begin{equation*}
  \xymatrix{%
    F(Z) \ar@/_/[dr]_g \ar[r]^{F(a)} & F(Y) \ar[d]^f \\
    & I_\grg
  }
\end{equation*}
commutes. The multiplication law is given by:
\begin{equation*}
  (Y,f)\otimes_\grk (Z,g) = (Y\otimes_\grh Z, fg),
\end{equation*}
where $fg$ is the morphism obtained by composing $f\otimes_\grg
g$ with the obvious structural ones. In fact the construction of
the group $G_1$ in the proof of Lemma~\ref{lem:2} is but an
instance of homotopy kernel.  There will be no difficulty in
realizing that the multiplication and inverse laws of $\grk$ are
just the obvious translations of those already analyzed in detail
in that case.

Let $H_\bullet$ and $G_\bullet$ be crossed modules corresponding
to $\grh$ and $\grg$, respectively. Let $[H_\bullet, E,
G_\bullet]$ be the butterfly corresponding to the weak morphism
$H_\bullet\to G_\bullet$ determined by $F\colon \grh\to \grg$.
We may take $E=H_0\times_{\grg,F}G_0$.  According to
Proposition~\ref{prop:9}, there exists a crossed module such that
its associated gr-stack is equivalent to $\grk$.  More precisely,
consider the map $\jmath\colon E\to G_0$ for the butterfly
$[H_\bullet, E, G_\bullet]$.  We have:
\begin{proposition}
  \label{prop:10}
  The kernel $\grk$ of $F$ is equivalent to the gr-stack
  associated to the crossed module $[H_1\to
  \Ker\jmath]$. Moreover, $J\colon \grk\to \grh$ corresponds to
  the obvious \emph{strict} morphism of crossed modules
  \begin{equation*}
    \xymatrix{%
      H_1 \ar@{=}[r] \ar[d]_{\bar\kappa}  & H_1 \ar[d]^\del \\
      \Ker\jmath \ar[r] & H_0
    }
  \end{equation*}
\end{proposition}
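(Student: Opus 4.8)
The plan is to first record the crossed-module structure on $[H_1\to\Ker\jmath]$ together with the strict morphism to $H_\bullet$, and then to exhibit the claimed equivalence by an explicit additive functor which I check is fully faithful and essentially surjective.

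First I would observe that, since the NW--SE diagonal of~\eqref{eq:27} is a complex, $\jmath\circ\kappa=1$, so $\kappa$ corestricts to a homomorphism $\bar\kappa\colon H_1\to\Ker\jmath$ of group objects of $\cat{T}$. I equip $H_1$ with the action $h^e\coloneq h^{\pi(e)}$ of $\Ker\jmath$, using $\pi\colon E\to H_0$ and the given $H_0$-action. The first crossed-module axiom $\bar\kappa(h^e)=e^{-1}\bar\kappa(h)e$ is then immediate from the second relation in~\eqref{eq:28}, and the Peiffer identity $h_0^{\bar\kappa(h_1)}=h_1^{-1}h_0h_1$ follows from $\pi\circ\kappa=\del_H$ together with~\eqref{eq:7} for $H_\bullet$. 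The very same two identities show that $\iota=(\id_{H_1},\pi|_{\Ker\jmath})$ is a strict morphism $[H_1\to\Ker\jmath]\to H_\bullet$ in the sense of Definition~\ref{def:3}; this is exactly the diagram displayed in the statement, and I write $\iota\sptilde\colon[H_1\to\Ker\jmath]\sptilde\to\grh$ for the induced additive functor.

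Next, taking $E=H_0\times_{F,\grg}G_0$ so that $\jmath$ is the projection onto $G_0$, I note that $\Ker\jmath$ consists of the pairs $(y,f)$ with $y\colon U\to H_0$ and $f\colon F(\pi_\grh(y))\isoto\pi_\grg(1)\cong I_\grg$ --- which are precisely the local objects of $\grk$ of the form $(\pi_\grh(y),f)$. I would therefore define an additive functor $\Theta$ from the groupoid determined by $[H_1\to\Ker\jmath]$ to $\grk$ by $(y,f)\mapsto(\pi_\grh(y),f)$ on objects, sending the morphism labelled by $h\in H_1$, whose target is $(y,f)\,\bar\kappa(h)=(y\del h,ff_h)$, to the morphism of $\grh$ determined by $h$; the triangle~\eqref{eq:32} defining $f_h$ is exactly the compatibility making this a morphism of $\grk$, and functoriality of $F$ makes $\Theta$ respect composition and the monoidal structure. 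As $\grk$ is a stack, $\Theta$ extends uniquely to $\hat\Theta\colon[H_1\to\Ker\jmath]\sptilde\to\grk$, and by construction $J\circ\hat\Theta$ sends $(y,f)$ to $\pi_\grh(y)$, i.e. equals $\iota\sptilde$; this identifies $J$ with the strict morphism of the statement and settles the second assertion.

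Finally I would prove $\hat\Theta$ is an equivalence. Essential surjectivity is geometric: $\pi_\grh\colon H_0\to\grh$ is essentially surjective by~\eqref{eq:26}, so every object $(Y,f)$ of $\grk$ is locally isomorphic to some $(\pi_\grh(y),f)=\Theta(y,f)$, and effectivity of descent in the stack $\grk$ concludes. For full faithfulness, a morphism of $\grk$ from $\Theta(y,f)$ to $\Theta(y',f')$ is locally a morphism $a\colon\pi_\grh(y)\to\pi_\grh(y')$ of $\grh$, equivalently an $h\in H_1$ with $y=y'\del h$, subject only to the condition $f'\circ F(a)=f$ built into the definition of $\grk$. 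The hard part will be to unwind this single condition through~\eqref{eq:32} and the fibre-product description of $E$ and see that it holds exactly when $f'=ff_h$, that is, exactly when $(y',f')=(y,f)\,\bar\kappa(h)$ in $\Ker\jmath$ --- the target relation in the groupoid of $[H_1\to\Ker\jmath]$. This diagram chase, which must track the trivialisation $f$ and the direction conventions for morphisms of $\grh$ with care, is the only genuinely delicate point; granting it, $\hat\Theta$ is a bijection on local morphisms, hence fully faithful, and therefore an equivalence. Alternatively one can avoid the chase by checking that $\hat\Theta$ induces isomorphisms on $\pi_0$ and $\pi_1$ and appealing to Lemma~\ref{lem:5}.
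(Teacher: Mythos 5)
Your proposal is correct in outline but takes a genuinely different route from the paper, and the paper's route is worth knowing because it makes your ``only genuinely delicate point'' disappear entirely. The paper argues in the torsor model: with $\grh=\tors(H_1,H_0)$ and $F=\Phi(E)$, the image $F(Q,t)$ is the $G_1$-torsor $\shHom_{H_1}(Q,E)_t$ of local lifts of $t$, with equivariant map $e\mapsto\jmath\circ e$. An object of $\grk$ is then $(Q,t)$ together with a trivialization of $F(Q,t)$, i.e.\ a \emph{global} lift $e\colon Q\to E$ with $\jmath\circ e=1$; such an $e$ is exactly an $H_1$-equivariant map $Q\to\Ker\jmath$, and $t=\pi\circ e$ is recovered from it. So objects of $\grk$ simply \emph{are} $(H_1,\Ker\jmath)$-torsors, functoriality is immediate, and Theorem~\ref{thm:1} gives the equivalence with $[H_1\to\Ker\jmath]\sptilde$: no stackification, no descent argument for essential surjectivity, and no computation with $f_h$. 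Your argument instead uses the fiber-product model $E=H_0\times_{F,\grg}G_0$, compares the prestack groupoid of $[H_1\to\Ker\jmath]$ with $\grk$ on the generators $\pi_\grh(y)$, and then stackifies; what this buys is independence from the torsor description of Theorem~\ref{thm:1}, at the price of all the local-to-global bookkeeping.

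The one real weakness is that you defer full faithfulness (``granting it''), which is the substance of the equivalence, so as written the proof is incomplete. The check does succeed: by~\eqref{eq:32} one has $f_h=\lambda_F\circ F(h)$, so the target relation $f'=ff_h$ in $\Ker\jmath$ translates into the defining condition of a morphism in $\grk$ once you account for an inversion --- by~\eqref{eq:16} the morphism labelled $h$ in the groupoid of $[H_1\to\Ker\jmath]$ goes from $(y,f)$ to $(y\,\del h,ff_h)$, whereas in the identification used in the proof of Proposition~\ref{prop:5} the morphism of $\grh$ labelled $h$ goes from $\pi_\grh(y'\del h)$ to $\pi_\grh(y')$, so the bijection on morphism sheaves is $h\mapsto h^{-1}$ --- and both morphism sheaves between fixed objects are, when nonempty, torsors under $\Ker\bar\kappa$. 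You should either write this out or simply adopt the paper's model, where the issue never arises. Your fallback through Lemma~\ref{lem:5} is legitimate but not cheaper: identifying $\pi_i(\grk)$ with $\Ker\bar\kappa$ and $\Coker\bar\kappa$ and checking that $\hat\Theta$ induces these identifications is essentially the same computation.
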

\begin{proof}
  It is immediately checked that $[H_1\to \Ker\jmath]$ is a
  crossed module.
  
  Now, assume we are given an $(H_1,H_0)$-torsor $(Q,t)$ such
  that its image by $F$ is isomorphic to the unit object of
  $\grg$, which we identify with $(G_1,1)$ (the trivial
  $G_1$-torsor, equipped with the equivariant map $1\colon G_1\to
  G_0$).  Recall that the image of $(Q,t)$ by $F$ is the
  $G_1$-torsor $\shHom_{H_1}(Q,E)_t$ of local lifts of $t$ to
  $E$, equipped with the global equivariant map sending each
  local lift $e$ to $\jmath (e)$.  To say that this is isomorphic
  to the identity object $(G_1, 1)$ means that there exists a
  global lift $e\colon Q\to E$ such that $\jmath (e) =1$.  Thus
  the objects of $\grk$ are $(H_1,\Ker\jmath)$-torsors.  It is
  clear that these constructions are functorial.

  The second part of the proposition is obvious.
\end{proof}
The butterfly corresponding to $J$ (from the proposition) and the
one corresponding to $F$ are composed according to the
prescription of~\ref{sec:spec-cases-comp}.  It is immediately
verified that the result, which corresponds to $F\circ J$, is
globally split---the splitting homomorphism $s\colon \ker\jmath
\to \ker\jmath \times_{H_0} E$ is simply the diagonal---and
moreover it has the two properties in Lemma 10.3
of~\cite{Noohi:weakmaps}. Hence it corresponds to the trivial
morphism $1\colon [H_1\to \ker\jmath] \to [G_1\to G_0]$.

\subsection{Exact sequences}
\label{sec:exact-sequences}
\begin{paragr}
  The sequence
  \begin{equation}
    \label{eq:41}
    \grk \overset{G}{\lto} \grh \overset{F}{\lto} \grg
  \end{equation}
  of morphisms of gr-stacks is a \emph{complex} if there is a
  2-morphism $\epsilon$ from $F\circ G$ to the null-functor:
  \begin{equation*}
    \xymatrix{%
      \grk \ar[r]^G \ar@/_1.9pc/[rr]_I
      \rrlowertwocell<\omit>{<3>\epsilon}
      & \grh \ar[r]^F & \grg
    }
  \end{equation*}
\end{paragr}
This idea of a complex of gr-stacks is standard. (In the
set-theoretic case, compare ref.\ \cite{MR1935985}.) For
geometric applications, see ref.\
\cite{doi:10.1016/j.jpaa.2007.07.020}.  From~\eqref{eq:41} there
results a functor:
\begin{equation*}
  \bar G\colon \grg\lto
  \shcatKer (F)
\end{equation*}
defined by sending $Y\in \Ob\grg_U$ to the pair
$(G(Y),\epsilon_Y)$.  While obvious, this allows to formulate the
notion of exactness in the middle of the sequence~\eqref{eq:41}
as follows:
\begin{definition}
  \label{def:12}
  The sequence~\eqref{eq:41} is exact at $\grh$ if the functor
  $\bar G$ is full and essentially surjective.
\end{definition}
This definition can be found, for instance, in ref.\
\cite{MR1935985} in the context of gr-categories.  The
formulation for gr-stacks is the same, while taking care that
``full'' and ``essentially surjective'' must be intended in the
appropriate context.

For the definition of ``short exact,'' still in the context of
gr-categories, see refs.\ \cite{MR93k:18019,MR2072344}.  We
repeat it in the context of gr-stacks
\begin{definition}
  \label{def:13}
  The sequence~\eqref{eq:41} is:
  \begin{itemize}
  \item \emph{left exact} if it is exact at $\grh$ and $\bar
    G\colon \grk \to \shcatKer (F)$ is an equivalence;
  \item an extension of gr-stacks if it is both left exact and
    $F$ is essentially surjective.
  \end{itemize}
\end{definition}
We leave out the fibration condition found in \cite{MR92m:18019},
see section~\ref{sec:exact-sequence-non} below for more details.
\begin{paragr}
  Recall that if $\grg$ is a gr-stack we define $\pi_1 (\grg) =
  \shAut_{\grg}(I)$ and $\pi_0(\grg)$ as the sheaf associated to
  the presheaf $U\rightsquigarrow \Ob\grg_U/\sim$.  Given the
  sequence~\eqref{eq:41} one obtains corresponding sequences
  \begin{equation}
    \label{eq:42}
    \pi_i (\grk) \lto \pi_i(\grh) \lto \pi_i(\grg),\qquad i=0,1.
  \end{equation}
\end{paragr}
We have the following easy lemma (cf.\ \cite{MR1935985}).  We
sketch its proof anyway, in view of subsequent applications and
the fact it is being formulated for gr-stacks, as opposed to
gr-categories, as in \loccit
\begin{lemma}
  \label{lem:3}
  If~\eqref{eq:41} is exact at $\grh$, the
  sequences~\eqref{eq:42} are exact at $\pi_i(\grh)$, $i=0,1$.
\end{lemma}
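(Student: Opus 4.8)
The plan is to factor $G$ through the homotopy kernel and reduce everything to a computation of $\pi_0$ and $\pi_1$ of $\shcatKer(F)$ together with the maps induced by the forgetful functor. Write $U\colon \shcatKer(F)\to \grh$ for the functor $(Y,f)\mapsto Y$. By the very definition of $\bar G$ (which sends $Y$ to $(G(Y),\epsilon_Y)$) one has a \emph{strict} factorization $G = U\circ \bar G$: on objects $U\bar G(Y)=G(Y)$, and on a morphism $a$ the naturality of $\epsilon$ against the null functor is exactly the condition $\epsilon_{Y'}\circ F(G(a))=\epsilon_Y$ that makes $G(a)$ a morphism of $\shcatKer(F)$. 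Hence $\pi_i(G)=\pi_i(U)\circ \pi_i(\bar G)$ for $i=0,1$.

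First I would compute the homotopy groups of $\shcatKer(F)$ and the maps $\pi_i(U)$. The unit object of $\shcatKer(F)$ is $(I_\grh,\lambda_F)$, where $\lambda_F\colon F(I_\grh)\isoto I_\grg$ is the structural isomorphism of the additive functor $F$; an automorphism of it is an element $a\in\shAut_\grh(I_\grh)=\pi_1(\grh)$ satisfying $\lambda_F\circ F(a)=\lambda_F$, that is, $\pi_1(F)(a)=1$. Thus $\pi_1(\shcatKer F)=\ker(\pi_1 F)$, and $\pi_1(U)$ is precisely the inclusion $\ker(\pi_1 F)\hookrightarrow\pi_1(\grh)$. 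On $\pi_0$, a class $[Y]\in\pi_0(\grh)$ lies in $\im\pi_0(U)$ if and only if, locally, there is a trivialization $f\colon F(Y)\isoto I_\grg$, which is exactly the condition $[Y]\in\ker(\pi_0 F)$; hence $\im\pi_0(U)=\ker(\pi_0 F)$.

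Next I would invoke exactness at $\grh$ (Definition~\ref{def:12}), i.e.\ that $\bar G$ is full and essentially surjective. Essential surjectivity, read locally over $\s$, says that $\pi_0(\bar G)\colon\pi_0(\grk)\to\pi_0(\shcatKer F)$ is an epimorphism of sheaves. Fullness, applied to the automorphisms of the unit object (after the coherent identification $\shAut_{\shcatKer F}(\bar G I)\cong\shAut_{\shcatKer F}(I)=\pi_1(\shcatKer F)$ by left multiplication, as in the abelianness argument for $\pi_1$), gives that $\pi_1(\bar G)\colon\pi_1(\grk)\to\pi_1(\shcatKer F)$ is likewise an epimorphism of sheaves. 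Combining the two steps, for $i=0,1$,
\[
  \im\pi_i(G)=\im\bigl(\pi_i(U)\circ\pi_i(\bar G)\bigr)
  =\pi_i(U)\bigl(\im\pi_i(\bar G)\bigr)
  =\pi_i(U)\bigl(\pi_i(\shcatKer F)\bigr)
  =\im\pi_i(U)=\ker(\pi_i F),
\]
the third equality using that $\pi_i(\bar G)$ is a sheaf epimorphism (so its image is all of $\pi_i(\shcatKer F)$) and the last using the computation above. This is exactness of~\eqref{eq:42} at $\pi_i(\grh)$. (The trivial inclusion $\im\pi_i(G)\subseteq\ker\pi_i(F)$ is in any case already forced by the complex condition, since $\pi_i(F)\circ\pi_i(G)=\pi_i(F\circ G)$ is trivial.)

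I expect the only delicate point to be the passage from presheaf to sheaf: ``full'' and ``essentially surjective'' must be interpreted locally, so that the induced maps on the $\pi_i$ are \emph{sheaf} epimorphisms and ``kernel'' and ``image'' are understood as subsheaves. The substantive inputs are the identifications $\pi_1(\shcatKer F)=\ker(\pi_1 F)$ and $\im\pi_0(U)=\ker(\pi_0 F)$; once these are established the remainder of the argument is purely formal, which is why the lemma is genuinely easy.
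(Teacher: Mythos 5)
Your proof is correct and is essentially the paper's own argument in a more structured packaging: the identifications $\pi_1(\shcatKer F)=\Ker(\pi_1 F)$ and $\im\pi_0(U)=\Ker(\pi_0 F)$, together with the local surjectivity of $\pi_i(\bar G)$ coming from fullness (for $i=1$) and essential surjectivity (for $i=0$), are exactly the steps the paper performs as a direct element chase, without naming the factorization $G=U\circ\bar G$. Your explicit handling of the presheaf-versus-sheaf issue (interpreting ``full'' and ``essentially surjective'' locally, so that images and kernels are subsheaves) matches the paper's use of generalized covers $V$.
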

\begin{proof}
  For $i=1$, we have that if $f$ is an automorphism of $I_\grh$
  over $U$, then its image $\pi_1(F)(f)$ is defined by:
  \begin{equation*}
    \xymatrix{%
      F(I_\grh) \ar[d]_{\eta_F} \ar[r]^{F(f)} & F(I_\grh) \ar[d]^{\eta_F} \\
      I_\grg \ar@{.>}[r]_{\pi_1(F)(f)} & I_\grg
    }
  \end{equation*}
  where $\eta_F$ is the morphism resulting from the additivity.
  Therefore, if $\pi_1(F)(f)$ is the identity, then by definition
  $f$ is an automorphism of $(I_\grh,\eta_F)$, considered as an
  object of $\shcatKer (F)$. Since $\bar G$ is full, there exists
  a generalized cover $V$ and an automorphism $g$ of $I_\grk$
  over $V$ such that $\pi_1(g)$ becomes equal to $f\rvert_V$ via
  $G(I_\grk)\isoto (I_\grh,\eta_F)$, as wanted.

  For $i=0$, let $\xi$ be a section of $\pi_0(\grh)$ over $U\in
  \Ob\s$. We can assume, up to refining $U$, that $\xi = [X]$,
  where $X$ is an object of $\grh_U$. Then
  $\pi_0(F)(\xi)=[F(X)]$.  If this is equal to $1\in
  \pi_0(\grg)$, then we must have that there is an isomorphism
  $f\colon F(X)\isoto I_\grg$, or in other words, $(X,f)$ is an
  object of $\shcatKer (F)_U$.  Since $\bar G$ is essentially
  surjective, there exists a (generalized) cover $V\to U$ and an
  object $Y$ of $\grk_V$ such that $\bar G (Y)\isoto
  (X,f)\rvert_V$.  So this means there exists $a\colon G(Y)\isoto
  X$ in $\grh_V$ such that
  \begin{equation*}
    \xymatrix{%
      F(G(Y)) \ar[r]^{F(a)}
      \ar@/_/[dr]_{\epsilon_Y}
      & F(X) \ar[d]^d \\
      & I_\grg
    }
  \end{equation*}
  Thus $[X]\rvert_V = [G(Y)] = \pi_0(G)([Y])$.
\end{proof}
\begin{proposition}
  \label{prop:11}
  If the sequence~\eqref{eq:41} is left-exact, then there is a
  connecting homomorphism $\Delta \colon \pi_1(\grg)\to \pi_0
  (\grk)$ leading to a long exact sequence
  \begin{equation*}
    0 \lto \pi_1 (\grk) \lto \pi_1(\grh) \lto \pi_1(\grg)
    \overset{\Delta}{\lto}
    \pi_0 (\grk) \lto \pi_0(\grh) \lto \pi_0(\grg).
  \end{equation*}
\end{proposition}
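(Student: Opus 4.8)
The plan is to reduce to the homotopy kernel and then run the usual diagram chase, now at the level of sheaves. Since \eqref{eq:41} is left exact, Definition~\ref{def:13} provides an equivalence $\bar G\colon \grk \lisoto \shcatKer(F)$, so I may replace $\grk$ by $\shcatKer(F)$ throughout and take $G$ to be the canonical functor $J\colon \shcatKer(F)\to \grh$, $(Y,f)\mapsto Y$, whose objects and morphisms were described explicitly in sect.~\ref{sec:homotopy-kernel}. Lemma~\ref{lem:3} already yields exactness of the two sequences \eqref{eq:42} at $\pi_i(\grh)$, $i=0,1$. Thus the only things left to establish are injectivity of $\pi_1(G)$, the construction of $\Delta$, and exactness at $\pi_1(\grg)$ and at $\pi_0(\grk)$; each is a local statement about sheaves, and I will verify it by producing the relevant sections locally and checking naturality so that the constructions descend to genuine morphisms of sheaves.

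For the left end, I would compute $\pi_1(\grk)=\shAut_{\grk}(I_\grk)$ directly. The unit of $\shcatKer(F)$ is $(I_\grh,\eta_F)$ with $\eta_F\colon F(I_\grh)\lisoto I_\grg$ the additivity isomorphism, and an automorphism of this object is precisely a section $a\in \shAut_\grh(I_\grh)$ satisfying $\eta_F\circ F(a)=\eta_F$, i.e.\ $\pi_1(F)(a)=1$ in the notation of Lemma~\ref{lem:3}. Hence $\pi_1(G)$ identifies $\pi_1(\grk)$ with $\Ker\bigl(\pi_1(F)\colon \pi_1(\grh)\to\pi_1(\grg)\bigr)$; in particular it is injective, which is exactness at $\pi_1(\grk)$.

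Next I would define the connecting morphism. Given a local section $\phi$ of $\pi_1(\grg)=\shAut_\grg(I_\grg)$ over some $U$, the pair $(I_\grh,\phi\circ\eta_F)$ is an object of $\shcatKer(F)$, and I set $\Delta(\phi)=[(I_\grh,\phi\circ\eta_F)]\in \pi_0(\grk)$. This assignment is natural in $U$ and compatible with restriction, so it sheafifies to a morphism $\Delta\colon \pi_1(\grg)\to\pi_0(\grk)$; that it is a homomorphism follows from the multiplication law $(Y,f)\otimes_\grk(Z,g)=(Y\otimes_\grh Z, fg)$ recalled in sect.~\ref{sec:homotopy-kernel} together with the commutativity of $\pi_1(\grg)$, since $(I_\grh,\phi\eta_F)\otimes_\grk(I_\grh,\psi\eta_F)\iso (I_\grh,(\phi\psi)\eta_F)$.

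Finally, exactness at the two remaining spots is a matter of unwinding these definitions. A class $\Delta(\phi)$ is trivial in $\pi_0(\grk)$ exactly when, after passing to a cover, there is an isomorphism $(I_\grh,\phi\eta_F)\isoto(I_\grh,\eta_F)$, i.e.\ a section $a\in\shAut_\grh(I_\grh)$ with $\pi_1(F)(a)=\phi$; thus $\Ker\Delta=\im\pi_1(F)$ as subsheaves of $\pi_1(\grg)$, giving exactness at $\pi_1(\grg)$. For exactness at $\pi_0(\grk)$, an element of $\Ker\bigl(\pi_0(G)\bigr)$ is represented, after refining, by some $(Y,f)$ with $Y\iso I_\grh$; transporting $f$ along this isomorphism writes $(Y,f)\iso(I_\grh,\phi\eta_F)$ with $\phi=f\circ\eta_F^{-1}$ a local section of $\pi_1(\grg)$, so the class lies in $\im\Delta$, and the reverse inclusion is immediate from $[I_\grh]=1$. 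The main point requiring care throughout is the bookkeeping between local constructions and global sheaf maps---checking that $\Delta$ and the identifications above are genuinely natural, and that ``image'' and ``kernel'' are read in the sheaf-theoretic sense and matched against the local surjectivity and fullness already supplied by Lemma~\ref{lem:3}---rather than any single hard computation.
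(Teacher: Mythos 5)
Your proposal is correct and is essentially the paper's own argument: the connecting map is built from exactly the same object $(I_{\grh},\phi\circ\eta_F)$ of $\shcatKer(F)$, and the verifications at $\pi_1(\grg)$ and $\pi_0(\grk)$ are the same local diagram chases the paper performs. The only (cosmetic) difference is organizational: the paper packages the chase by first asserting exactness, in the sense of Definition~\ref{def:12}, of the extended sequence of gr-stacks $0\to\pi_1(\grk)\to\pi_1(\grh)\to\pi_1(\grg)\overset{\Delta}{\to}\grk\to\grh\to\grg$ and then applies Lemma~\ref{lem:3} uniformly to descend to homotopy sheaves, whereas you run the resulting sheaf-level checks directly; the underlying computations coincide.
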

\begin{proof}
  We follow the set-theoretic arguments of ref.\
  \cite{MR2072344}.

  Observe that there exists a \emph{functor}
  \begin{equation*}
    \Delta \colon \pi_1(\grg) \lto \shcatKer (F)
  \end{equation*}
  defined as follows.  Any $g\in \shAut_\grg (I)(U)$ is sent to
  $(I_\grh,g\circ \eta_F)$.  (See the proof of the previous lemma
  for the meaning of symbols.) We claim that the sequence of
  gr-stacks
  \begin{equation*}
    0 \lto \pi_1 (\grk) \lto \pi_1(\grh) \lto \pi_1(\grg)
    \overset{\Delta}{\lto}
    \grk \lto \grh \lto \grg
  \end{equation*}
  is exact in the sense of Definition~\ref{def:12}.  (The group
  objects in the sequence are considered as gr-stacks in the
  obvious way.)

  First of all, it is clear that the homomorphism $\pi_1 (\grk)
  \to \pi_1(\grh)$ is an injection, and we need to check
  exactness only at $\pi_1(\grg)$ and $\grk$. We may assume that
  $\grk = \shcatKer (F)$.

  Exactness at $\pi_1(\grg)$ holds because if $\Delta (g) \isoto
  (I_\grh,\eta_F)$, this means there exists $f\colon I_\grh\to
  I_\grh$ over $U$ such that
  \begin{equation*}
    \xymatrix{%
      F(I_\grh) \ar[r]^{F(f)} \ar[d]_{\eta_F} & F(I_\grh)
      \ar[d]^{\eta_F} \\
      I_\grg \ar[r]_g & I_\grg
    }
  \end{equation*}
  commutes, which by definition means $g$ is the image of $f$ by
  $\pi_1(F)$.

  Exactness at $\grk$ holds because if $(X,f)$ becomes isomorphic
  to $I_\grh$ in $\grh$, this means that there is already an
  isomorphism $a\colon X\isoto I_\grh$, and so the diagram
  \begin{equation*}
    \xymatrix{%
      F(I_\grh) \ar[r]^{F(a)} \ar[d]_{f} & F(I_\grh)
      \ar[d]^{\eta_F} \\
      I_\grg & I_\grg \ar@{.>}[l]^g
    }
  \end{equation*}
  defines the required automorphism of $I_\grg$.

  Having established the exactness of the above sequence, we need
  only apply lemma \ref{lem:3} to it to obtain the sequence in
  the statement.
\end{proof}
\begin{remark}
  The sequence in the proposition is exact at the rightmost place
  iff $F$ is essentially surjective.
\end{remark}

\subsection{The homotopy fiber of a butterfly}
\label{sec:cone-butterfly}

Let us return to the morphism $F\colon \grh\to \grg$ and the
corresponding butterfly, which we rewrite for convenience:
\begin{equation*}
  \xymatrix@R-0.5em{%
    H_1\ar[dd]_\del \ar@/_0.1pc/[dr]^\kappa  & &
    G_1 \ar@/^0.1pc/[dl]_\imath \ar[dd]^\del\\
    & E\ar@/_0.1pc/[dl]_\pi \ar@/^0.1pc/[dr]^\jmath &  \\
    H_0 & & G_0
  },
\end{equation*}
Let us denote by $\grk$ the homotopy kernel of $F$.  From
Proposition~\ref{prop:10} it follows that
\begin{align*}
  \pi_1 (\grk) & = \Ker (\bar\kappa\colon H_1\to \Ker\jmath) \\
  \intertext{and} \pi_0 (\grk) & = \Coker (\bar\kappa \colon
  H_1\to \ker\jmath)
\end{align*}
respectively coincide with the non-abelian cohomology sheaves
$\shH^{-2}(F_\bullet)$ and $\shH^{-1}(F_\bullet)$, where
$F_\bullet$ is the complex
\begin{equation*}
  \left[
    H_1\lto E\lto G_0 \right]_{-2,0}
\end{equation*}
placed in degrees $[-2,0]$.  Moreover, $\pi_i (\grg) =
\shH^{-i}(G_\bullet)$, $i=0,1$, and similarly for $H_\bullet$.
By the previous discussion we have the exact sequence of homology
sheaves:
\begin{equation*}
  0\lto \shH^{-2}(F_\bullet) \lto \shH^{-1}(H_\bullet) \lto \shH^{-1}
  (G_\bullet) \lto \shH^{-1}(F_\bullet) \lto \shH^{0}(H_\bullet)
  \lto \shH^{0} (G_\bullet).
\end{equation*}
Again, the above sequence will be exact on the right if the
butterfly corresponds to an essentially surjective morphism.
Otherwise, it is easy to see that the obstruction is
$\shH^0(F_\bullet)$.

The complex $F_\bullet$ itself begs for an explanation akin to
the one given in~\cite{Noohi:weakmaps}, but more in keeping with
the present geometric context.  We conclude this section by
providing the construction of a 2-stack which, in a somewhat
imprecise sense, takes the rôle of the cone in an exact triangle.
The discussion in the rest of this section requires more
background than we were able to provide in the rest of this work,
especially concerning 2-(gr)-stacks and their relations with
complexes of length 3, for which we refer to one of the sequels
of this work (\cite{ButterfliesIII,ButterfliesIV}).  Here we will
simply gloss over several details of the construction, and refer
to the quoted references for the necessary background and
additional details not explicitly included here.
\begin{paragr}
  Following~\cite{Noohi:weakmaps}, there is a (sheaf of)
  2-groupoid(s), call it $\twocat{F}$, determined by
  $F_\bullet$. Its objects are given by the sections of $G_0$,
  and in general a 2-cell is
  \begin{equation*}
    \xymatrix@+1pc{%
      x \rtwocell<5>^{e_0}_{e_1}{h} & x'
    }
  \end{equation*}
  with $x,x'\in G_0$, $e_0, e_1\in E$, and $h\in H_1$, satisfying
  the relations
  \begin{equation*}
    e_1 = e_0\, \kappa (h),\qquad
    x' = x\, \jmath (e_0) = x\, \jmath (e_1) .
  \end{equation*}
  The vertical composition of 2-arrows is given by multiplication
  in $H_1$.  For the horizontal composition we have
  \begin{equation*}
    \xymatrix@+1pc{%
      x \rtwocell<5>^{e_0}_{e_1}{h} &
      x' \rtwocell<5>^{e'_0}_{e'_1}{\phantom{x}h'} & x''
    }=
    \xymatrix@C+2pc{%
      x \rtwocell<5>^{e_0e'_0}_{e_1e'_1}{\phantom{x}h''} & x''
    }
  \end{equation*}
  with $h'' = h^{\pi(e'_0)}h'$.  This follows at once from the
  properties of the butterfly.  Observe that this 2-groupoid is
  strict, in the sense that the 1-morphisms are strictly
  invertible, as opposed to just being equivalences.
\end{paragr}
\begin{paragr}
  The homomorphism $\kappa\colon H_1\to E$ defines a crossed
  module, where the action of $E$ on $H_1$ occurs via $\pi\colon
  E\to H_0$.  Let $\grc$ be the associated gr-stack:
  \begin{equation*}
    \grc = \left[ H_1\overset{\kappa}{\lto} E\right ]\sptilde
    \iso \tors (H_1,E).
  \end{equation*}
  An object $(P,e)$ of $\grc$ is therefore an $H_1$-torsor
  equipped with an $H_1$-equivariant map $e\colon P\to E$. It
  follow at once that $\jmath\circ e$ is invariant under the
  action of $H_1$, hence it is an element of $G_0$. Moreover, for
  a morphism $\phi\colon (P,e)\to (P',e')$ in $\grc$ the
  corresponding element in $G_0$ is the same: $\jmath\circ e =
  (\jmath\circ e')\circ \phi$.  This can be summarized by saying
  that there is an additive functor
  \begin{equation*}
    J\colon \grc \lto G_0 
  \end{equation*}
  sending the object $(P,e)$ to $\jmath (e)$, where, again, $G_0$
  is regarded as a gr-stack in the obvious way.
\end{paragr}
Incidentally, it is easy to verify that the homotopy kernel of
$J$ is again $\grk$. There is also a morphism of gr-stacks
$\pi\colon \grc\to \grh$ induced by the obvious corresponding
strict morphism of crossed modules $[H_1\to E]\to [H_1\to H_0]$
contained in the butterfly.  The (homotopy) kernel of the latter
is $G_1$, as it is immediately checked. Thus we have the
following proposition, whose proof will be left to the reader,
giving a stacky dévissage of the butterfly:
\begin{proposition}
  \label{prop:14}
  The butterfly $E$ from $H_\bullet$ to $G_\bullet$ gives rise to
  the commutative diagram of gr-stacks
  \begin{equation}
    \label{eq:55}
    \vcenter{%
      \xymatrix{%
        & \grk \ar@{=}[r] \ar[d] & \grk \ar[d] \\
        G_1 \ar[r] \ar@{=}[d] & \grc \ar[r]^\pi \ar[d]^J & \grh \ar[d]^F \\
        G_1 \ar[r]_\del & G_0 \ar[r]_{\pi_\grg} & \grg
      }
    }
  \end{equation}
\end{proposition}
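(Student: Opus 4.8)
The plan is to read every arrow of~\eqref{eq:55} off the butterfly and its associated strict morphisms of crossed modules, and then to verify the commutativity of the three squares while identifying the rows and columns as homotopy-kernel sequences. Most of the data is already in place: $\grc=[H_1\xrightarrow{\kappa}E]\sptilde\iso\tors(H_1,E)$, the additive functor $J\colon\grc\to G_0$ sending $(P,e)$ to $\jmath(e)$, and the functor $\pi\colon\grc\to\grh$ coming from the strict morphism $[H_1\xrightarrow{\kappa}E]\to[H_1\xrightarrow{\del}H_0]$ which is the identity on $H_1$ and $\pi\colon E\to H_0$ on the centers. The bottom row is nothing but the sequence~\eqref{eq:26} attached to $G_\bullet$, and the right-hand column is the defining homotopy-kernel sequence of $F$; so the genuinely new content is the identification of the other two fiber sequences and the commutativity of the lower-right square.

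First I would dispose of the two squares that commute already at the level of crossed modules. For the lower-left square, the map $G_1\to\grc$ is the homotopy kernel of $\pi$: concretely it sends $g$ to the trivial $H_1$-torsor equipped with the equivariant map $1\mapsto\imath(g)$, and $\pi$ carries this to the unit object of $\grh$ because $\pi\circ\imath=1$ in the extension $1\to G_1\xrightarrow{\imath}E\xrightarrow{\pi}H_0\to1$. Composing with $J$ gives $\jmath\circ\imath=\del$, which holds because the right-hand triangle of the butterfly commutes; hence $J\circ(G_1\to\grc)=\del$. For the top square, Proposition~\ref{prop:10} gives $\grk\iso[H_1\to\Ker\jmath]\sptilde$, where $\kappa$ indeed lands in $\Ker\jmath$ since the NW-SE diagonal is a complex ($\jmath\circ\kappa=1$). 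Both the inclusion $\grk\to\grc$ and the homotopy-kernel inclusion $\grk\to\grh$ of Proposition~\ref{prop:10} have bottom component the inclusion $\Ker\jmath\hookrightarrow E$ (resp.\ $\Ker\jmath\xrightarrow{\pi}H_0$), so $\pi\circ(\grk\to\grc)$ and $\grk\to\grh$ coincide and the square commutes strictly. That $\grk$ is also the homotopy kernel of $J$ is the same computation on crossed modules: the fiber of $[H_1\xrightarrow{\kappa}E]\to[1\to G_0]$ over the unit is $[H_1\to\Ker\jmath]$.

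The real work is the lower-right square, i.e.\ the natural isomorphism $F\circ\pi\iso\pi_{\grg}\circ J$. I would evaluate on an object $(P,e)$ of $\grc$, so that $P$ is an $H_1$-torsor and $e\colon P\to E$ is equivariant. By construction $\pi(P,e)=(P,\pi\circ e)$ as an $(H_1,H_0)$-torsor, and the explicit description of $F=\Phi(E)$ from sect.~\ref{sec:weak-morph-butt} yields $F(\pi(P,e))=\shHom_{H_1}(P,E)_{\pi\circ e}$, the $G_1$-torsor of lifts of $\pi\circ e$ to $E$, equipped with the equivariant map to $G_0$ given by $\jmath$. The key observation is that $e$ is itself a global such lift, hence a canonical trivialization of this $G_1$-torsor under which the equivariant map takes the value $\jmath(e)=J(P,e)$; this is precisely the trivial $(G_1,G_0)$-torsor $\pi_{\grg}(J(P,e))$. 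I expect the main obstacle to be not this pointwise identity but the verification that the trivialization is natural in $(P,e)$ and compatible with the monoidal structures, so that it assembles into a genuine $2$-morphism of additive functors; this should follow from the naturality of $\Phi(E)$ under the pull-backs~\eqref{eq:29} and from its monoidal isomorphism~\eqref{eq:30}, using once more $\jmath\circ\imath=\del$. Once the three squares commute and the rows and columns are identified as homotopy-kernel sequences---via Proposition~\ref{prop:10},~\eqref{eq:26}, and the two fiber computations above---the diagram~\eqref{eq:55} is established, exhibiting the announced dévissage of the butterfly.
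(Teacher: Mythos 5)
Your proof is correct, and it is essentially the argument the paper intends: the paper states this proposition with the proof ``left to the reader'' immediately after assembling exactly the ingredients you use ($\grc\iso\tors(H_1,E)$, the functors $J$ and $\pi$, and the identifications of the kernels of $\pi$ and $J$ with $G_1$ and $\grk$). In particular, your treatment of the only non-trivial step---the $2$-isomorphism $F\circ\pi\iso\pi_{\grg}\circ J$ obtained from the canonical trivialization of $\shHom_{H_1}(P,E)_{\pi\circ e}$ by the global lift $e$, with naturality and monoidality checked against~\eqref{eq:29} and~\eqref{eq:30}---is exactly the computation the paper's setup calls for.
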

In order to proceed any further we need to introduce the notion
of torsor over a gr-stack and that of $(\grh, \grg)$-torsor, for
$F\colon \grh\to \grg$ is a morphism of gr-stacks.

If $\grh$ is a gr-stack, the notion of (right) $\grh$-torsor is
expounded in detail in\ \cite{MR92m:18019}, and essentially
categorifies that of torsor over a sheaf of groups.  Thus, a
right $\grh$-torsor is a stack $\stx$ equipped with a right
$\grh$-action, and it is locally non-empty in a way that makes it
locally equivalent to $\grh$.  Morphisms of $\grh$-torsors are
morphisms of the underlying stacks that weakly commute with the
$\grh$-action, that is, up to coherent natural
transformation. Similarly, 2-morphisms are again 2-morphisms
between the underlying stacks that are compatible with the
$\grh$-action.  We denote by $\tors (\grh)$ the fibered
2-category of $\grh$-torsors over $\s$. In fact it is a (neutral)
2-gerbe over $\s$.
\begin{paragr}
  A $(\grh, \grg)$-torsor is the categorification of the concept
  of $(A,B)$-torsor, for a homomorphism $A\to B$ of group
  objects, and its definition was given in ref.\
  \cite{doi:10.1016/j.jpaa.2007.07.020}---albeit in a fully
  abelian context.  A $(\grh, \grg)$-torsor is a pair $(\stx, S)$
  where $\stx$ is a (right) $\grh$-torsor equipped with a
  $\grh$-equivariant functor
  \begin{equation*}
    S\colon \stx \lto \grg.
  \end{equation*}
  This is equivalent to saying that $\stx$ is equipped with a
  trivializing equivalence $S\colon \stx\cprod{\grc} \grg\isoto
  \grg$.  (The contracted product for stacks equipped with an
  action by a gr-stack is defined in \cite{MR92m:18019}.)

  The notion of morphism of $(\grh,\grg)$-torsors is the one
  obtained by taking the obvious generalization
  (=categorification) of~\eqref{eq:24}. A morphism of
  $\grh$-torsors is a pair $(F,\mu)$ such that $F\colon \stx \to
  \sty$ is a morphisms of stacks and $\mu$ is the natural
  transformation expressing the weak compatibility of $F$ with
  the torsor structures. We further require that the diagram
  \begin{equation}
    \label{eq:58}
    \vcenter{%
      \xymatrix{%
        \stx \ar[r]^F \ar@/_/[dr]_S^(0.6){}="S" 
        & \sty \ar[d]^T_(0.4){}="T" \\
        & \grg \ar@{=>}@/_0.3pc/ "T";"S"_{\lambda_F}
      }}
  \end{equation}
  of $\grh$-equivariant morphisms 2-commutes.  Note there is no
  additional condition on $\mu$. Finally, a 2-morphism
  $\alpha\colon (F, \mu)\Rightarrow (G,\nu)\colon \stx\to \sty$
  is a 2-morphism of $(\grh,\grg)$-torsors if the natural
  transformation $\alpha$, in addition to satisfying the
  properties required in \cite[6.1.7]{MR92m:18019}, also fits in
  the diagrammatic equality of 2-morphisms
  \begin{equation}
    \label{eq:59}
    \vcenter{%
      \xymatrix@C+1pc{%
        \stx \rtwocell^F_G{\alpha}
        \ar@/_1pc/[dr]_S^(0.6){}="S" 
        & \sty \ar[d]^T_(0.4){}="T" \\
        & \grg \ar@{=>}@/_/ "T";"S"^{\lambda_G} }}
    =
    \vcenter{%
      \xymatrix@C+1pc{%
        \stx \ar[r]^F \ar@/_1pc/[dr]_S^(0.6){}="S"
        & \sty \ar[d]^T_(0.4){}="T" \\
        & \grg \ar@{=>}@/_/ "T";"S"_{\lambda_F} }}
  \end{equation}
\end{paragr}
$(\grh, \grg)$-torsors comprise a 2-stack denoted $\tors
(\grh,\grg)$.  Actually more is true: given the morphism $F\colon
\grh\to \grg$ of gr-stacks there is an induced functor
\begin{equation*}
  F_* \colon \tors (\grh) \lto \tors (\grg)
\end{equation*}
that sends the $\grh$-torsor $\stx$ to
$\stx\cprod{\grh}\grg$. This is the analog---or
``categorified''---notion of extension of the structural group
for ordinary torsors.  Then $\tors (\grh,\grg)$ is the ``homotopy
fiber'' of $F_*$. More precisely we have:
\begin{lemma}
  \label{lem:4}
  There is a pull-back diagram
  \begin{equation*}
    \xymatrix{%
      \tors (\grh,\grg) \ar[r] \ar[d] & \tors (\grh) \ar[d]^{F_*} \\
      \twocat{\mathbf{1}} \ar[r] & \tors (\grg)
    }
  \end{equation*}
  in the sense of fibered product for 2-categories, as in
  \cite{MR0364245}.
\end{lemma}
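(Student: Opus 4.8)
The plan is to prove the lemma by exhibiting $\tors(\grh,\grg)$ as the two-categorical fibered product appearing in the square, comparing the two 2-categories cell by cell (objects, $1$-morphisms, $2$-morphisms). First I would make the bottom arrow explicit: the $2$-functor $\twocat{\mathbf{1}}\to \tors(\grg)$ sends the unique object to the trivial $\grg$-torsor, which is $\grg$ itself equipped with its canonical right action. Recalling from \cite{MR0364245} the construction of the fibered product of $2$-categories, an object of $\twocat{\mathbf{1}}\times_{\tors(\grg)}\tors(\grh)$ is then a pair $(\stx,\sigma)$ consisting of a $\grh$-torsor $\stx$ together with an equivalence $\sigma\colon F_*(\stx)\lisoto\grg$ of $\grg$-torsors. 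Since $F_*(\stx)=\stx\cprod{\grh}\grg$ by the definition of $F_*$, such a $\sigma$ is exactly a trivializing equivalence $\stx\cprod{\grh}\grg\isoto\grg$, which by the dictionary recorded just before the statement is the same datum as a $\grh$-equivariant functor $S\colon\stx\to\grg$. Thus objects on both sides coincide, namely $(\grh,\grg)$-torsors.

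Second, I would match the higher cells. A $1$-morphism $(\stx,\sigma)\to(\sty,\tau)$ in the fibered product is a morphism $(F,\mu)\colon\stx\to\sty$ of $\grh$-torsors together with an invertible $2$-cell in $\tors(\grg)$ comparing $\tau\circ F_*(F)$ with $\sigma$ (the component over the identity $1$-cell of $\twocat{\mathbf{1}}$). Translating the contracted-product picture back to the equivariant-functor picture, this $2$-cell is precisely the natural transformation $\lambda_F$ making the triangle \eqref{eq:58} $2$-commute; hence $1$-morphisms agree with morphisms of $(\grh,\grg)$-torsors. Likewise, a $2$-morphism in the fibered product is a $2$-morphism $\alpha$ of $\grh$-torsors subject to the compatibility forced by the chosen comparison $2$-cells, and unwinding this compatibility yields exactly the diagrammatic identity \eqref{eq:59} defining $2$-morphisms of $(\grh,\grg)$-torsors. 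Assembling the three identifications produces a $2$-functor $\tors(\grh,\grg)\to \twocat{\mathbf{1}}\times_{\tors(\grg)}\tors(\grh)$ that is essentially bijective on objects and an equivalence on each Hom-category, hence a biequivalence; the universal property of the fibered product then transports to $\tors(\grh,\grg)$, giving the asserted pull-back square.

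The hard part will be the coherence bookkeeping in the middle step: the fibered product of $2$-categories carries its comparison $2$-cells as part of the data, and one must check that the single natural transformation $\lambda_F$ of \eqref{eq:58} really does encode all of that data with no further constraint, and that the condition \eqref{eq:59} is neither stronger nor weaker than what the $2$-categorical pull-back imposes on $2$-cells. A secondary point requiring care is that the object-level dictionary between trivializing equivalences $\stx\cprod{\grh}\grg\isoto\grg$ and equivariant functors $\stx\to\grg$ must be shown natural enough to extend coherently to $1$- and $2$-cells, so that it assembles into a genuine $2$-functor rather than merely a bijection of object sets; this reduces to checking that the extension-of-structure $(-)\cprod{\grh}\grg=F_*$ is pseudofunctorial in a manner compatible with the dictionary, which follows from the properties of the contracted product recorded in \cite{MR92m:18019}.
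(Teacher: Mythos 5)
Your proposal is correct and follows essentially the same route as the paper: the paper's proof likewise makes the bottom arrow the trivial $\grg$-torsor, invokes the definition of the 2-categorical fibered product from \cite{MR0364245} so that an object becomes an $\grh$-torsor $\stx$ together with an equivalence $F_*(\stx)\iso\grg$, and then observes this is by definition an $(\grh,\grg)$-torsor. The paper stops at the object level, treating the matching of 1- and 2-cells as built into the definitions \eqref{eq:58} and \eqref{eq:59}; your explicit cell-by-cell verification fills in exactly that implicit bookkeeping and introduces nothing at odds with the paper's argument.
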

In the lemma we have indicated with $\twocat{\mathbf{1}}$ the
2-category with only one object and identity (2-)morphisms.
\begin{proof}[Proof of the lemma]
  The morphism $\twocat{\mathbf{1}}\to \tors (\grg)$ sends the
  unique object of $\twocat{\mathbf{1}}$ to the trivial
  $\grg$-torsor.  According to \cite{MR0364245}, an object of the
  fibered product
  \begin{equation*}
    \twocat{\mathbf{1}} \times_{\tors (\grg)}\tors (\grh)
  \end{equation*}
  is by definition given by an $\grh$-torsor $\stx$, plus an
  equivalence $F_* (\stx)\iso \grg$.  This is by definition an
  $(\grh, \grg)$-torsor.
\end{proof}
\begin{paragr}
  Returning now to $\twocat{F}$, note that it is a 2-category
  fibered in 2-groupoids over $\s$. Moreover, this 2-category is
  separated in the sense that it has a \emph{sheaf} of
  2-morphisms.  It is promoted to a 2-prestack by promoting the
  categories of morphism from simply being isomorphic copies of
  the groupoids associated to the crossed module $[H_1\to E]$ to
  be equivalent to the gr-stack $\grc$.  This 2-prestack, which
  should be more properly called a \emph{bi-}prestack, since it
  turns out to be fibered in bicategories, is then made into a
  2-stack by an additional sheafification process.  This latter
  step rigidifies it again, so that it is an actual 2-stack.
\end{paragr}
Rather than describe the various stages in detail, let us give a
convenient model for $\tstf$, the 2-stack associated to the
2-groupoid $\twocat{F}$.  We claim that
\begin{equation*}
  \tstf \iso \tors (\grc, G_0).
\end{equation*}
Thus objects are pairs $(\stx, s)$, where $\stx$ is a
$\grc$-torsor and $s\colon \stx \to G_0$ is a $\grc$-equivariant
map via the morphism $J\colon \grc \to G_0$.

To have at least an intuitive idea of why $\tstf$ should be the
2-stackification of the 2-groupoid $\twocat{F}$ at all, one may
look into the 2-descent datum determined by an object $(\stx, s)$
of $\tstf$ over $U$, resulting in a 0-cocycle over $U$ with
values in $F_\bullet$.
\begin{paragr}
  Since a torsor is by definition locally trivial, there will
  exist an object $X$ of $\stx$ over a cover $V_0\to U$
  determining an equivalence $\stx\iso \grc$ over $V_0$.
  Moreover, applying $s$ we obtain an element $x=s(X)$ of
  $G_0$. Now, assuming we are provided with a $V_1$ such that
  $V_1 \rightrightarrows V_0$, for instance by working with a
  \Cech resolution $\cech (V_0\to U)$, the fact that $\stx$ is a
  $\grc$-torsor further implies there is a morphism over $V_1$:
  \begin{equation*}
    f\colon d_1^*X \isoto d_0^*X\cdot (P,e),
  \end{equation*}
  for an appropriate object $(P,e)$ of $\grc_{V_1}$.  Again,
  applying $s$ results in
  \begin{equation}
    \label{eq:43}
    d_1^*(x) = d_0^*(x)\, \jmath (e).
  \end{equation}
  The morphism $f$ will have to satisfy the 1-cocycle conditions
  given in~\cite{MR92m:18019}, which amount to the existence of a
  morphism of $(H_1,E)$-torsors
  \begin{equation*}
    \phi\colon d_2^*(P,e) \otimes d_0^*(P,e) \lto d_1^*(P,e)
  \end{equation*}
  over $V_2$ satisfying a compatibility condition over $V_3$,
  which are ultimately the conditions given in \cite[6.2.8 and
  6.2.10]{MR92m:18019}.  Upon choosing trivializations for the
  $E$-torsors compatible with the cover, the morphism $\phi$
  above yields the relation
  \begin{equation}
    \label{eq:56}
    d_2^*e\, d_0^*e = d_1^*e \, \kappa (h),
  \end{equation}
  where $h\in H_1 (V_2)$ arises from expressing the morphism of
  the underlying torsors in terms of the trivializations. The
  coherence condition results in the following relation over
  $V_3$:
  \begin{equation}
    \label{eq:57}
    d_2^* h\, d_0^* h = d_1^*h\, (d_3^* h)^{(d_0d_1)^*e}.
  \end{equation}
  Equations~\eqref{eq:43}, \eqref{eq:56} and~\eqref{eq:57} give
  the required cocycle.
\end{paragr}
\begin{paragr}
  The homotopy sheaves $\pi_i$, for $i=0,1,2$, are defined for a
  2-stack (cf.\ ref.\ \cite[Chap. 8]{MR95m:18006}).  In fact a
  2-stack $\tstf$ is a 2-gerbe over the localized site $\s
  \downarrow \pi_0(\tstf)$, as shown in \loccit, much in the same
  way as a stack is a gerbe over its own $\pi_0$-sheaf.
  Moreover, $\tstf$ as a 2-gerbe over $\s \downarrow \pi_0
  (\tstf)$ is neutral, as $I$, the trivial $(\grc, G_0)$-torsor,
  provides the necessary global object.  It follows that as a
  2-gerbe it is equivalent to $\tors (\shcatAut (I))$, and
  therefore it is classified by the invariants of the gr-stack
  $\shcatAut (I)$.
\end{paragr}
\begin{paragr}
  It can be readily verified by means of a local calculation that
  there is an equivalence of gr-stacks
  \begin{equation*}
    \shcatAut (I) \lisoto \grk= \left[ H_1 \overset{\bar\kappa}{\to}
      \ker\jmath \right]\sptilde.
  \end{equation*}
  Hence we can use the results in \cite[Chap. 8]{MR95m:18006} to
  conclude that
  \begin{equation*}
    \shH^{-2}(F_\bullet) = \pi_2(\tstf) = \pi_1 (\grk) ,
    \qquad \shH^{-1}(F_\bullet) = \pi_1(\tstf) = \pi_0 (\grk)\,
  \end{equation*}
  as wanted. Moreover, since $\tstf$ is the 2-stack associated to
  the 2-groupoid $\twocat{F}$, it follows that $\pi_0(\tstf) =
  \shH^0(F_\bullet)$.
\end{paragr}
The previous discussion is subsumed by part of the following
statement whose proof will be sketched below.
\begin{theorem}
  \label{thm:6}
  Let $F\colon \grh\to \grg$ be a morphism of gr-stacks, where
  $\grh= [H_1\to H_0]\sptilde$ and $\grg=[G_1\to G_0]\sptilde$,
  and let $[H_\bullet, E, G_\bullet]$ be the corresponding
  butterfly.

  There exists a 2-stack $\twostack{F}$ such that $\pi_i(\tstf) =
  \shH^{-i}(F_\bullet)$, $i=0,1,2$, where $F_\bullet$ is the
  complex on the NW-SE diagonal of the butterfly.  Moreover,
  there is a homotopy fiber sequence of 2-stacks:
  \begin{equation}
    \label{eq:44}
    \grh[0] \xrightarrow{F[0]} \grg[0]
    \overset{\iota}{\lto} \tstf
    \overset{\Delta}{\lto}
    \tors ({\grh}) \overset{F_*}{\lto} \tors ({\grg})
  \end{equation}
  giving rise to the exact sequence of homotopy sheaves
  \begin{equation*}
    \xymatrix{%
      0 \ar[r] & \pi_{2}(\twostack{F}) \ar[r]&
      \pi_{1}(\grh) \ar[r] & \pi_{1} (\grg) \ar[r] &
      \pi_{1}(\twostack{F}) \ar `r[d] `[lll] `[dlll] [dlll]\\
      & \pi_{0}(\grh) \ar[r] & \pi_{0} (\grg) \ar[r] & \pi_0
      (\twostack{F}) \ar[r] & 1
    }
  \end{equation*}
\end{theorem}
The various morphisms in the sequence are as follows.  The suffix
$[0]$ is used to indicate that the affected objects are to be
considered ``discrete'' 2-stacks with no non-trivial 2-morphisms,
so $F[0]$ is just $F$. $F_*$ is the ``push-forward'' of torsors
along $F$, introduced before.  The morphism $\Delta$ sends a
$(\grc, G_0)$ torsor $(\stx, s)$ to the $\grh$-torsor
$\stx\cprod{\grc} \grh$ via the morphism of gr-stacks $\grc\to
\grh$.  Finally, the morphism $\iota\colon \grg[0]\to \tstf$ can
be seen as the one induced by the 2-stackification of the strict
morphism of 2-groupoids $\grpd{G}[0]\to \twocat{F}$ occurring in
the butterfly.

The sequence is a ``homotopy fiber sequence'' in the sense that
each term in \eqref{eq:44}, starting from $\tstf$ and moving to
the left, can be understood as a pull-back (fiber product)
diagram of 2-stacks in the sense of \cite{MR0364245}.  For
example:
\begin{equation}
  \label{eq:54}
  \vcenter{%
    \xymatrix{%
      \tstf \ar[r] \ar[d] & \tors (\grh) \ar[d] \\
      \twocat{\mathbf{1}} \ar[r] & \tors (\grg)
    }}
\end{equation}
where $\twocat{\mathbf{1}}$ is the 2-category with one object and
only identity morphism and 2-morphism.  A similar consideration
holds for the other three-part segments.  Note also that the
resulting exact sequence involving the last three terms is well
defined, since all the three 2-stacks appearing in the sequence
are naturally pointed by the trivial torsor.  (The others are
2-groups, so they are naturally pointed too.)

\begin{proof}[Proof of Theorem~\ref{thm:6}]
  From Lemma~\ref{lem:4}, $\tors (\grh, \grg)$ can be taken as
  the homotopy fiber of the functor $F_*$.  Moreover, it is easy
  to see that it is part of the full homotopy fiber sequence:
  \begin{equation*}
    \xymatrix@1{%
      \grh[0] \ar[r]^{F[0]} &
      \grg[0] \ar[r]^<<<{\iota} &
      \tors (\grh,\grg) \ar[r]^{\Delta} &
      \tors ({\grh}) \ar[r]^{F_*} & \tors ({\grg})}
  \end{equation*}
  This time, $\Delta$ forgets the trivialization, whereas $\iota$
  sends a $(G_1,G_0$)-torsor $(P,s)$ to the $(\grh,\grg)$-torsor
  \begin{equation*}
    (\grh, (P,s))
  \end{equation*}
  where $(P,s)$ is to be considered as the $\grg$-equivariant
  functor sending the unit object $I_\grh$ of $\grh$ to $(P,s)$.

  The homotopy fiber sequence above can be extended to the left
  as follows:
  \begin{equation*}
    \xymatrix@1{%
      0\ar[r] & \pi_1 (\grk) \ar[r] & \pi_1 (\grh) \ar[r] &
      \pi_1(\grg) \ar[r] & \grk \ar[r] & \grh \ar[r] & \dotsm }
  \end{equation*}
  The (abelian) groups to the left are considered as 2-stacks in
  the obvious way: they are totally discrete 2-stacks with only
  identity arrows and 2-arrows.

  By applying $\pi_0$ to the combined sequence, and observing
  that $\pi_0 (\tors (\grh))$ and $\pi_0 (\tors (\grg))$ are
  trivial, we get the homotopy sequence in the statement, with
  $\tors (\grh, \grg)$ as the relevant 2-stack instead of
  $\tstf$.

  The statement then follows from the following proposition:
  \begin{proposition}
    \label{prop:13}
    There is a 2-equivalence (in the sense of ref.\
    \cite{MR0364245}) $G\colon \tstf \lisoto \tors (\grh, \grg)$.
  \end{proposition}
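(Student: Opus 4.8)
The plan is to exhibit the butterfly's defining square as a homotopy cartesian square of gr-stacks and then to apply the torsor construction. First I would record that the lower-right square of diagram~\eqref{eq:55},
\begin{equation*}
  \vcenter{\xymatrix{
    \grc \ar[r]^\pi \ar[d]_J & \grh \ar[d]^F \\
    G_0 \ar[r]_{\pi_\grg} & \grg,
  }}
\end{equation*}
is $2$-cartesian, i.e.\ that the natural comparison morphism $\grc \lto \grh\times_\grg G_0$ is an equivalence of gr-stacks. Both $J$ and $F$ are fibrations with homotopy kernel $\grk$ (for $J$ by construction of $\grc$, for $F$ by Proposition~\ref{prop:10}), and the top row of~\eqref{eq:55} identifies the map induced by $\pi$ on these kernels with the identity of $\grk$. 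By Lemma~\ref{lem:5} it suffices to see that $\grc\lto\grh\times_\grg G_0$ is a quasi-isomorphism, and this follows from the five lemma applied to the long exact sequences (Proposition~\ref{prop:11}) of the two vertical fibrations, whose fibers agree. (Alternatively this is immediate from $E=H_0\times_{F,\grg}G_0$ together with $\grc\iso\tors(H_1,E)$, cf.\ Theorem~\ref{thm:1}.)

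The conceptual core is then to apply $\tors(-)$ to this square. The torsor $2$-stack construction carries a $2$-cartesian square of gr-stacks to a $2$-cartesian square of $2$-stacks (in the sense of \cite{MR0364245}; this is the expected compatibility of $\tors$ with homotopy limits, \cite{MR92m:18019,MR95m:18006}), yielding a $2$-cartesian square relating $\tors(\grc), \tors(\grh), \tors(G_0), \tors(\grg)$ through $\pi_*, J_*, F_*, (\pi_\grg)_*$. Now I invoke the model $\tstf\iso\tors(\grc,G_0)$ established above: by the evident analogue of Lemma~\ref{lem:4} for the morphism $J\colon\grc\to G_0$, the $2$-stack $\tors(\grc,G_0)$ is the homotopy fiber of $J_*$ over the trivial $G_0$-torsor. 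Pasting this fiber onto the $2$-cartesian square and cancelling the common factor $\tors(G_0)$ identifies it with the homotopy fiber of $F_*$ over the trivial $\grg$-torsor, which by Lemma~\ref{lem:4} is precisely $\tors(\grh,\grg)$. This chain of equivalences is the desired $G\colon\tstf\lisoto\tors(\grh,\grg)$.

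It is worth recording $G$ explicitly, since that is what is needed for the naturality statements of Theorem~\ref{thm:6}. Under the model $\tstf\iso\tors(\grc,G_0)$ it sends a pair $(\stx,s)$, with $\stx$ a $\grc$-torsor and $s\colon\stx\to G_0$ equivariant via $J$, to the $\grh$-torsor $\stx\cprod{\grc}\grh$ equipped with the $\grh$-equivariant functor $S\colon\stx\cprod{\grc}\grh\to\grg$ corresponding, by the universal property of the contracted product, to $\pi_\grg\circ s\colon\stx\to\grg$; the $2$-isomorphism $F\circ\pi\iso\pi_\grg\circ J$ from the square guarantees that $\pi_\grg\circ s$ is equivariant for the correct $\grc$-action on $\grg$, so $S$ is well defined. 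A quasi-inverse sends a $(\grh,\grg)$-torsor $(\stx,S)$ to $\stx\times_{S,\grg,\pi_\grg}G_0$, which is a $\grc$-torsor because $\grc\iso\grh\times_\grg G_0$, together with its projection to $G_0$.

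I expect the main obstacle to be the passage from the first square to the second, namely the claim that $\tors(-)$ preserves the relevant homotopy pullback. This is a genuinely $2$-categorical assertion about $2$-stacks and requires unwinding the contracted-product descriptions and the coherence data of \cite{MR92m:18019} rather than a formal limit argument. If one prefers to sidestep it, the same conclusion follows by checking directly that the explicit $G$ above is biessentially surjective and an equivalence on $\Hom$-groupoids, all gluing being handled by the descent already built into $\stb(-,-)$ and $\tors(-,-)$; the coherence bookkeeping is then the real content.
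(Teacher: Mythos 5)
The core step of your main route is false as stated, and this is a genuine gap rather than the bookkeeping issue you anticipate. You assert, as a general compatibility of $\tors(-)$ with homotopy limits, that it carries 2-cartesian squares of gr-stacks to 2-cartesian squares of 2-stacks. Torsor formation is a delooping, and delooping does not commute with homotopy pullbacks; the paper's own closing observation in section~\ref{sec:cone-butterfly} supplies the counterexample. The kernel square exhibiting $\grk$ as the homotopy fiber product $\mathbf{1}\times_{\grg}\grh$ is 2-cartesian, yet applying $\tors(-)$ produces $\tors(\grk)$ on one side (a 2-gerbe over $\s$, hence with trivial $\pi_0$) and, on the other, the homotopy fiber of $F_*\colon\tors(\grh)\to\tors(\grg)$, which by Lemma~\ref{lem:4} is $\tors(\grh,\grg)$ and has $\pi_0\iso\shH^0(F_\bullet)$; these agree only when $F$ is essentially surjective. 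So the statement you need carries a hypothesis: preservation holds for your particular square precisely because $\pi_\grg\colon G_0\to\grg$ is essentially surjective, which is what kills the $\pi_0$-obstruction (the same phenomenon as for ordinary groups, where $\B$ applied to a pullback along $B\to D\leftarrow C$ yields a homotopy pullback only when $B\times C\to D$ is onto). Your proposal flags this passage as the ``main obstacle'' but treats it as a true statement awaiting verification; without isolating the essential-surjectivity hypothesis, the principle you invoke proves too much and is simply wrong, even though the instance of it you actually use happens to be true.

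The other ingredients are sound: the identification $\grc\iso\grh\times_\grg G_0$ is correct (your parenthetical argument via $E=H_0\times_{F,\grg}G_0$ and Theorem~\ref{thm:1} is the clean way to see it), the pasting/cancellation step is formal, and your explicit functor $G$ is exactly the paper's, $(\stx,s)\mapsto\bigl(\stx\cprod{\grc}\grh,\,(\pi_\grg\circ s)\wedge F\bigr)$. But the paper completes the proof by a mechanism you do not use and which bypasses the pullback question entirely: having defined $G$, it reads off from the homotopy exact sequences that $\pi_0(\tstf)\iso\pi_0(\tors(\grh,\grg))$, observes that both sides are therefore 2-gerbes over the same base $\s\downarrow\pi_0$ banded by the same gr-stack $\grk$ (the automorphism gr-stacks of the respective trivial objects), and concludes from the classification of 2-gerbes in \cite{MR95m:18006} that the 2-functor $G$ between them is necessarily a 2-equivalence. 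This rigidity argument is also what would rescue your fallback route: rather than checking bi-essential surjectivity and equivalence on Hom-groupoids by hand, the equivalence comes for free once the $\pi_0$'s and the bands are matched.
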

  \begin{proof}
    \renewcommand{\qedsymbol}{} The 2-functor $G$ is defined as
    follows. Let $(\stx,s)$ be a $(\grc, G_0)$-torsor. Then
    $\stx\cprod{\grc}\!\grh$ is an $\grh$-torsor.  We claim there
    is a global, $\grg$-equivariant, functor
    \begin{equation*}
      S\colon \stx\cprod{\grc}\!\grh \lto \grg
    \end{equation*}
    given by the global map
    \begin{equation*}
      (\pi_\grg\circ s) \wedge F \colon
      \stx\cprod{\grc}\!\grh \lto \grg,
    \end{equation*}
    so that the pair $(\stx\cprod{\grc}\!\grh, S)$ is an $(\grh,
    \grg)$-torsor. Since an object of $\stx\cprod{\grc}\!\grh$ is
    a pair $(X,Y)$, where $X$ and $Y$ are objects of $\stx$ and
    $\grh$, respectively,\footnote{$Y$ is what we would normally
      write as $(Q,t)$ when emphasizing the fact is is a
      $(H_1,H_0)$-torsor, which is not important, at the moment.}
    $S$ is defined by sending $(X,Y)$ to $\pi_\grg (s(X))
    \otimes_\grg F(Y)$.

    Using the explicit characterization for morphisms $(X,Y)\to
    (X',Y') \in \Mor (\stx\cprod{\grc}\!\grg)$ found in \cite[\S
    6.7]{MR92m:18019}, it can be directly verified that $S$ is
    indeed a functor.  The same calculation shows $S$ is
    compatible with the equivalences in $\stx\cprod{\grc}\!\grh$
    \begin{equation*}
      (X,\pi (C)\otimes_\grh Y) \lisoto (X\cdot C, Y)
    \end{equation*}
    resulting from the action of $\grc$.  Thus $S$ is well
    defined.

    It is also easy to verify diagrams of $(\grc,G_0)$-torsors
    as~\eqref{eq:58} and~\eqref{eq:59} (with the additional
    simplification that most 2-morphisms are trivial since there
    are no non trivial morphisms in $G_0$) induce corresponding
    diagrams of $(\grh, \grg)$-torsors. For example, from a
    morphism
    \begin{equation*}
      \xymatrix{%
        \stx \ar[r] \ar@/_/[dr]_s & \sty \ar[d]^t \\ & G_0}
    \end{equation*}
    to which we append $\pi_\grg \colon G_0\to \grg$, we get
    \begin{equation*}
      \xymatrix{%
        \stx \cprod{\grc}\!\grh \ar[r]
        \ar@/_0.7pc/[dr]|{(\pi_\grg\circ s)\wedge F}
        & \sty \cprod{\grc}\!\grh \ar[d]^{(\pi_\grg\circ t)\wedge F} \\ & \grg}
    \end{equation*}
    The latter is 2-commutative, due to the definition of
    $\cprod{\grc}$ in the context of stacks.

    From the exact sequences of homotopy sheaves above it follows
    that
    \begin{equation*}
      \pi_0 (\tors(\grh,\grg)) \iso \pi_0 (\tstf),
    \end{equation*}
    so again following \cite[Chap. 8]{MR95m:18006} we have that
    both $\tors(\grh,\grg)$ and $\tstf$ can be considered as
    2-gerbes over the same base $\s\downarrow\pi_0$.

    Moreover, we have already observed the automorphism gr-stack
    (as automorphism gr-stack of the trivial torsor) of $\tstf$
    is $\grk$.  It is immediately verified the same is true for
    $\tors (\grh, \grg)$. Hence, as 2-gerbes over $\pi_0$, they
    are ``banded'' by the same gr-stack and there is a 2-functor
    $G \colon \tstf \to \tors (\grh, \grg)$. Hence they are
    necessarily 2-equivalent over $\s \downarrow {\pi_0}$. But
    this implies they are equivalent over $\s$ tout-court.
  \end{proof}
  This ends the proof of the proposition and hence of the
  theorem.
\end{proof}
\begin{remark}
  In the course of the proof a longer sequence was obtained,
  namely
  \begin{equation*}
    \xymatrix@1{%
      0\ar[r] & \pi_1 (\grk) \ar[r] & \pi_1 (\grh) \ar[r] &
      \pi_1(\grg) 
       \ar `r[d] `[ll] `[lld] [dll]\\
       & \grk \ar[r]  & \grh \ar[r]^{F} & \grg 
       \ar `r[d] `[ll] `[lld]_\iota [dll] \\
      & \tstf \ar[r]^{\Delta} &
      \tors ({\grh}) \ar[r]^{F_*} & \tors ({\grg})}
  \end{equation*}
  Note that the first three (non-zero) terms are (abelian)
  groups, the next three are 2-groups, whereas the last three are
  2-stacks but lack a group structure, weak or otherwise.

  This sequence ought to be considered (the geometric version of)
  the counterpart of the sequence \cite[(3.9.1)]{MR92m:18019} for
  2-groups. A more detailed analysis will appear in
  \cite{ButterfliesIV}.

  Combining with results and remarks from \cite{MR93k:18019}, one
  can argue for an extension one step to the right if $\tors
  (\grh, \grh)$, or equivalently, $\tstf$, is a \emph{3-}group,
  that is, if $F\colon \grh\to \grg$ is, in the appropriate sense
  discussed in \loccit, a crossed module of gr-stacks.
\end{remark}
\begin{remark}
  Sequence~\eqref{eq:44} is the exact counterpart of the homotopy
  fiber sequence of ref.\ \cite[Theorem 9.1]{Noohi:weakmaps},
  where one regards crossed modules as 2-categories with one
  object, namely considers their suspension.  When applied to a
  gr-stack $\grh$, this process gives rise to its the naïve
  suspension $\grh[1]$, namely the fibered bicategory with one
  object such that the composition of 1-morphisms is given by the
  group-like structure of $\grh$ (cf.  \cite{MR95m:18006}).  Note
  that in~\eqref{eq:44} we have the correct geometric suspensions
  $\tors (\grh)$ and $\tors (\grg)$ of $\grh$ and $\grg$,
  respectively, instead.  As mentioned in \cite{MR95m:18006},
  $\tors (\grh)$ is obtained by taking the associated 2-stack (in
  fact 2-gerbe) of the naïve suspension $\grh [1]$.

  These considerations and Theorem~\ref{thm:6} make it suggestive
  to consider some portion of the exact sequence~\eqref{eq:44} as
  a candidate to play the rôle of an ``exact triangle''
  \begin{equation*}
    \grh \overset{F}{\lto} \grg \lto \twostack{F} \lto
    \grh [1]
  \end{equation*}
  for the non-abelian derived category.  This is only suggestive
  in that the ``cone,'' that is $\tstf$, is a 2-stack---an object
  related to a complex of length three.  Moreover, as we have
  already observed it is not a group object.
\end{remark}
We conclude with the following observation. First, using
Proposition~\ref{prop:14}, the obvious morphism
\begin{math}
  \tors (\grk) \lto \tors (\grc)
\end{math}
determined by $\grk \to \grc$ factors through a morphism
\begin{equation*}
  \tors (\grk) \lto \tors (\grc, G_0).
\end{equation*}
Now, if $F\colon \grh \to \grg$ is essentially surjective, then
the induced map $\pi_0(\grh) \to \pi_0(\grg)$ is an
epimorphism. To put it differently, $\pi_0 (\tstf)=*$, and
$\tstf$ becomes locally connected, hence a 2-gerbe
directly over the site $\s$.  Therefore we obtain an equivalence
of 2-gerbes
\begin{equation*}
  \tors (\grk ) \lisoto \tstf.
\end{equation*}
In other words, when $\grh \to \grg$ is essentially
surjective---so that taken together with its homotopy kernel it
gives rise to a short exact sequence in the sense of
section~\ref{sec:exact-sequences}---the homotopy fiber can be
identified with $\tors (\grk)$, i.e.\ the suspension of the
kernel.

\subsection{Exact sequence in non-abelian cohomology}
\label{sec:exact-sequence-non}
We now consider again a short exact sequence
\begin{equation*}
  \grk \overset{\iota}{\lto} \grh \overset{p}{\lto} \grg
\end{equation*}
of gr-stacks and show there is a corresponding long exact
sequence in non-abelian cohomology.  The definition of
short-exact was given in section~\ref{sec:exact-sequences}.
Note, the fibration condition was not included, and it is our
purpose here to point out how our characterization of weak
morphisms allows us to dispense of the fibration condition.

For the definition of non-abelian cohomology, we use the one
given in \cite[\S 4]{MR92m:18019}, supplemented by the explicit
cocyclic formulas recalled in sections~\ref{sec:cocycles}.

We will make the assumption (which, thanks to
Proposition~\ref{prop:9} is not a restriction) that $\grk$,
$\grh$, and $\grg$ are associated to crossed modules $K_\bullet$,
$H_\bullet$, and $G_\bullet$, respectively.  By implicitly making
use of Proposition~\ref{prop:9} we indifferently write
$\H^i(G_\bullet)$ or $\H^i(\grg)$, for $i=-1,0,1$.

We will not dwell on the interpretation of $\H^1(\grg)$ except to
note, after \loccit, that it should be interpreted as $\pi_0
(\tors (\grg)(*))$ (classes of equivalences of global objects
over $\s$).  This part will be taken up in detail in the
forthcoming \cite{ButterfliesII}.

As for the other degrees, it follows from the considerations in
section~\ref{sec:cocycles} that $\H^0(\grg) = \pi_0 (\grg (*))$
(isomorphism classes of global objects of $\grg$), whereas
$\H^{-1}(\grg) \iso \H^0(\pi_1(\grg))$ (ordinary abelian sheaf
cohomology).  The latter identification follows from the
definition and \cite{MR991977}.
\begin{proposition}
  \label{prop:12}
  The short exact sequence
  \begin{equation*}
    \grk \overset{\iota}{\lto} \grh \overset{p}{\lto} \grg
  \end{equation*}
  induces an exact sequence in cohomology
  \begin{equation}
    \label{eq:61}
    \vcenter{%
      \xymatrix{%
        0 \ar[r] & \H^{-1} (K_\bullet) \ar[r] &
        \H^{-1} (H_\bullet) \ar[r] & \H^{-1} (G_\bullet) \ar[r] &
        \H^{0} (K_\bullet) \ar `r[d] `[lll] `[dlll] [dlll] \\
        &  \H^{0}(H_\bullet) \ar[r] & \H^{0} (G_\bullet) \ar[r] &
        \H^{1} (K_\bullet) \ar[r] & \H^{1} (H_\bullet) \ar[r] & \H^{1} (G_\bullet)
      }}
  \end{equation}
\end{proposition}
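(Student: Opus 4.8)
The plan is to reduce the statement to the situation already handled by Breen in \cite{MR92m:18019}, where the projection is required to be a fibration, by trading $\grh$ for the equivalent gr-stack $\grstack{E}$ furnished by the fraction decomposition of Section~\ref{sec:deriv-categ-cross}. Concretely, let $[H_\bullet, E, G_\bullet]$ be the butterfly corresponding to $p\colon\grh\to\grg$ (Theorem~\ref{thm:2}), let $E_\bullet\colon[H_1\times G_1\to E]$ be the associated crossed module, and recall the fraction
\[
  H_\bullet \overset{Q}{\longleftarrow} E_\bullet \overset{P}{\longrightarrow} G_\bullet,
\]
in which $Q$ is a quasi-isomorphism. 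By Lemma~\ref{lem:5} the induced morphism $\grstack{E}\to\grh$ on associated gr-stacks is an equivalence, and under this equivalence $p$ is identified with the morphism of gr-stacks induced by the \emph{strict} morphism $P$.

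Next I would exhibit, purely at the level of crossed modules, a strict short exact sequence realizing $p$. The morphism $P$ is given in degree $-1$ by $\mathrm{pr}_2\colon H_1\times G_1\to G_1$ and in degree $0$ by $\jmath\colon E\to G_0$; both are epimorphisms of sheaves of groups, the former tautologically and the latter because the NE-SW diagonal of the butterfly is an extension. The levelwise kernel of $P$ is thus the crossed module $[H_1\to\Ker\jmath]$, whose associated gr-stack is, by Proposition~\ref{prop:10}, the homotopy kernel $\grk$ of $p$. Hence
\[
  [H_1\to\Ker\jmath] \lto [H_1\times G_1\to E] \overset{P}{\lto} [G_1\to G_0]
\]
is a \emph{levelwise} short exact sequence of crossed modules, compatible with $\grk\overset{\iota}{\lto}\grh\overset{p}{\lto}\grg$ up to the equivalence $\grstack{E}\iso\grh$.

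Because $P$ is a levelwise epimorphism, the induced map of nerves $\N_\bullet\grpd{E}\to\N_\bullet\grpd{G}$ is a surjection of simplicial groups, hence a fibration; equivalently, $\grstack{E}\to\grg$ \emph{is} a fibration of gr-stacks in the sense of \cite{MR92m:18019}. I would then invoke Breen's long exact sequence in non-abelian cohomology, applied to the short exact sequence $\grk\to\grstack{E}\to\grg$ under the fibration hypothesis, obtaining exactly~\eqref{eq:61} but with $\H^i(E_\bullet)$ in place of $\H^i(H_\bullet)$. To conclude I transport along the equivalence: since $\H^{-1}$, $\H^0$, and $\H^1$ are expressed through $\H^0(\pi_1)$, $\pi_0$ of global objects, and equivalence classes of global $\tors(-)$ objects respectively—all preserved by an equivalence of gr-stacks—the identification $\grstack{E}\iso\grh$ yields natural isomorphisms $\H^i(E_\bullet)\iso\H^i(H_\bullet)$ intertwining the functorial and connecting maps, and~\eqref{eq:61} follows for the original sequence.

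The step I expect to be the main obstacle is not the homological bookkeeping but verifying that the equivalence $\grstack{E}\iso\grh$ is compatible with all three terms simultaneously: that the strict sequence above genuinely refines $\grk\to\grh\to\grg$, and that the resulting isomorphisms of cohomology groups intertwine the boundary maps produced by Breen's construction. This reduces to the naturality of the fraction decomposition with respect to homotopy kernels, for which Proposition~\ref{prop:10} and Lemma~\ref{lem:5} supply the essential input.
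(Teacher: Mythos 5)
Your strategy is the paper's own: replace $\grh$ by the equivalent gr-stack $\grstack{E}$ coming from the fraction $H_\bullet \overset{Q}{\longleftarrow} E_\bullet \overset{P}{\lto} G_\bullet$, observe that the strict morphism $P$ is a fibration with homotopy kernel $\grk$ (Proposition~\ref{prop:10}), invoke Breen's long exact sequence for fibrations of gr-stacks, and transport the result back along the equivalence $Q$ (your justification of $\H^i(E_\bullet)\iso \H^i(H_\bullet)$ via the invariance of the cohomological descriptions under equivalence is a legitimate substitute for the paper's use of the ``intelligent filtration'' $\pi_1(\grh)[1]\to\grh\to\pi_0(\grh)[0]$). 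However, there is one genuinely wrong justification, and it sits at the exact point where the short-exactness hypothesis has to be used.

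You assert that $\jmath\colon E\to G_0$ is an epimorphism ``because the NE-SW diagonal of the butterfly is an extension.'' That diagonal is $G_1\overset{\imath}{\lto} E\overset{\pi}{\lto} H_0$; its exactness makes $\pi$, not $\jmath$, an epimorphism. The map $\jmath$ lies on the NW-SE diagonal $H_1\to E\to G_0$, which is only a complex, and $\jmath$ is a sheaf epimorphism precisely when the weak morphism $p$ is essentially surjective (cf.\ the equivalence recorded in Lemma~\ref{lem:7}(1) and the end of section~\ref{sec:cone-butterfly}). If your justification were correct, every butterfly would give an essentially surjective morphism and Proposition~\ref{prop:12} would hold for an arbitrary morphism $p$, which is false: essential surjectivity is exactly what Definition~\ref{def:13} (extension of gr-stacks) adds to the data, it is what the paper invokes at this step (``Since $\grh\to\grg$ is essentially surjective, then $\jmath$ is an epimorphism''), and it is what makes $P$ a levelwise epimorphism so that the induced map of nerves is a surjection of simplicial groups and Breen's fibration machinery applies. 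As written, your proof never uses the hypothesis that the sequence is short exact except to identify the kernel, so the argument as stated proves too much. The repair is one line---derive the surjectivity of $\jmath$ from the essential surjectivity of $p$---after which everything else goes through; and, contrary to your closing worry, the compatibility of the equivalence $\grstack{E}\iso\grh$ with the three-term sequence is the easy part (the homotopy kernel of $P$ is $\grk$ on the nose, by Proposition~\ref{prop:10}), not the main obstacle.
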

\begin{proof}
  Let $[H_\bullet, E, G_\bullet]$ be a butterfly corresponding to
  $p$, e.g.\ by the fiber product construction. Let
  \begin{equation*}
    H_\bullet \overset{Q}{\longleftarrow} E_\bullet
    \overset{P}{\lto} G_\bullet
  \end{equation*}
  be the fraction determined by it, and let $\grstack{E}$ be the
  gr-stack associated to $E_\bullet$. Since $\grh\to \grg$ is
  essentially surjective, then $\jmath$ is an
  epimorphism. Moreover, the projection $H_1\times G_1\to G_1$ is
  also evidently so. At the level of simplicial groups we have an
  epimorphism $\smp{E}_\bullet \to \smp{G}_\bullet$.  Moreover,
  it follows that the morphism of gr-stacks
  \begin{equation*}
    P\colon \grstack{E} \lto \grg
  \end{equation*}
  arising from the strict morphism $P'$ is actually not only
  essentially surjective, but also a fibration.

  With the observation that the homotopy kernel of $P$ is still
  $\grk$ (since that of $P'$ is the crossed module $[H_1\to \ker
  \jmath]$), we can apply the results of \cite[\S
  5.1]{MR92m:18019}, in particular the sequence (5.1.3). It
  follows there is a long exact cohomology sequence
  \begin{equation*}
    \xymatrix{%
      0 \ar[r] & \H^{-1} (K_\bullet) \ar[r] & \H^{-1} (E_\bullet) \ar[r] &
      \H^{-1} (G_\bullet) \ar[r] &
      \H^{0} (K_\bullet) \ar `r[d] `[lll] `[dlll] [dlll] \\
      & \H^{0} (E_\bullet) \ar[r] & \H^{0} (G_\bullet) \ar[r] & \H^{1} (K_\bullet)
      \ar[r] & \H^{1} (H_\bullet) \ar[r] & \H^{1} (G_\bullet)
    }
  \end{equation*}
  Now, $Q'$ is a quasi-isomorphism, or equivalently the
  corresponding morphism
  \begin{equation*}
    Q\colon \grstack{E} \lto \grh
  \end{equation*}
  is an equivalence. It follows that $\H^i (\grstack{E}) \iso
  \H^i (\grh)$ (one can use the ``intelligent filtration''
  \begin{equation*}
    \pi_1(\grh) [1] \lto \grh \lto \pi_0 (\grh) [0],
  \end{equation*}
  see \cite[\S 5.3]{MR92m:18019}, for this purpose) which proves
  the proposition.
\end{proof}
The point of the proposition, or rather of its proof, is that we
can dispense of the fibration condition in the definition of
exact sequence, since by the very construction of a weak morphism
we can always replace the essentially surjective morphism
$p\colon \grh \to \grg$ with a fibration.  The butterfly diagram
construction of the weak morphism offers a canonical way to
accomplish it.

\section{Braided and abelian butterflies}
\label{sec:abelian-butterflies}

In this section we specialize our discussion to butterflies in
the abelian category of abelian sheaves over $\s$.  We obtain an
explicit characterization of the derived category of complexes of
length two of abelian sheaves over $\s$, see \cite{0259.14006}.
We are going to do so by introducing various customary
commutativity conditions (braided, symmetric, Picard) on
gr-stacks over $\s$, before devoting ourselves to the fully
abelian situation. While these conditions are all well known, our
approach, we believe, is new, even in the set-theoretic case.

\subsection{Butterflies and braidings}
\label{sec:butt-braid}

The first, indeed, weakest, possible commutativity condition that
can be imposed on a gr-stack (or gr-category, for that matter) is
the simple existence of a \emph{braiding} isomorphism, i.e.\ a
natural isomorphism implementing a formal commutativity
condition.
\begin{paragr}
  A braiding takes the form of a collection of functorial
  isomorphisms
  \begin{equation}
    \label{eq:62}
    s_{X,Y} \colon  X\otimes_\grg Y \lisoto Y\otimes_\grg X
  \end{equation}
  satisfying the well-known two hexagon diagrams of ref.\
  \cite{MR1250465}.

  The braiding is called \emph{symmetric} or (symmetric
  monoidal) if in addition the condition
  \begin{equation}
    \label{eq:63}
    s_{Y,X}\circ s_{X,Y} = \Id_{X\otimes_\grg Y}
  \end{equation}
  is satisfied for all objects $X$ and $Y$ of $\grg$.
  Furthermore, we say the braiding is \emph{Picard,} or that
  $\grg$ is a \emph{Picard stack} (or gr-category in the
  pointwise case) if in addition to the symmetry condition it
  satisfies the condition
  \begin{equation}
    \label{eq:64}
    s_{X,X} = \Id_{X\otimes_\grg X}
  \end{equation}
  for all objects $X$ of $\grg$. For convenience, we use the
  terminology ``Braided,'' ``Symmetric'' and ``Picard,'' where
  others (notably, Breen, see, e.g.\
  \cite{MR95m:18006,MR1702420}) use ``Braided,'' ``Picard'' and
  ``Strictly Picard.''
\end{paragr}
\begin{paragr}
  Let $G_\bullet$ be a crossed module. The previous conditions
  make sense for the (strict) group law on the groupoid
  determined by $G_\bullet$, and~\eqref{eq:62}, in particular,
  gives rise to the braiding map $\braid{-}{-} \colon G_0\times
  G_0 \to G_1$ such that
  \begin{equation}
    \label{eq:65}
    \del\braid{x}{y} = y^{-1}x^{-1}yx
  \end{equation}
  for all $x,y\in G_0$. (This follows immediately from the
  request that $s_{x,y}$ be an isomorphism from $xy$ to $yx$.) As
  pointed out in \cite{MR95m:18006}, if all other commutativity
  conditions are imposed $s$ becomes a full lift of the
  commutator map from $G_0$ to $G_1$.
\end{paragr}
It is well known that for a group to be abelian is equivalent to
the condition that the multiplication map be a group
homomorphism.  A similar approach can also be adopted in the
context of gr-stacks.  Indeed it is a relatively simple exercise
to show that the existence of a braiding on $\grg$ is equivalent
to the fact that the tensor operation $\otimes_\grg\colon
\grg\times \grg\to \grg$ is a morphism of gr-stacks, that is, an
additive functor. (This uses the fact every object in a
gr-category or -gr-stack is regular, see \cite{sinh:gr-cats}.)
\begin{paragr}
  If $\otimes_\grg\colon \grg\times \grg\to \grg$ is a morphism
  of gr-stacks, by the equivalence in Theorem~\ref{thm:2} there
  must be a butterfly diagram
  \begin{equation}
    \label{eq:66}
    \vcenter{%
      \xymatrix@R-0.5em{%
        G_1\times G_1 \ar[dd]_\del \ar@/_0.1pc/[dr]^\alpha  & &
        G_1 \ar@/^0.1pc/[dl]_\beta \ar[dd]^\del\\
        & P\ar@/_0.1pc/[dl]_\rho \ar@/^0.1pc/[dr]^\sigma &  \\
        G_0\times G_0 & & G_0
      }}
  \end{equation}
  from $G_\bullet\times G_\bullet$ to $G_\bullet$.
\end{paragr}
\begin{paragr}
  The butterfly~\eqref{eq:66} has some interesting additional
  properties.  Let
  \begin{equation*}
    \iota_1,\iota_2 \colon \grg\lto \grg\times \grg
  \end{equation*}
  be the two injections sending $X$ to $(X,I_\grg)$ (respectively
  to $(I_\grg,X)$).  In any gr-stack or gr-category the existence
  of the functorial isomorphisms~\eqref{eq:67} can be recast as
  the requirement that the composite of the multiplication law
  with $\iota_1$ or $\iota_2$ be isomorphic to the identity
  functor of $\grg$ to itself.  Translated in the language of
  butterflies, this means that the butterfly obtained by
  pre-composing~\eqref{eq:66} with the (strict) morphisms of
  crossed modules $\iota_1,\iota_2\colon G_\bullet\to
  G_\bullet\times G_\bullet$ (defined in the same way as for
  $\grg$) must be isomorphic to the identity morphism.  Since
  pre-composition with a strict morphism means pulling back, we
  arrive at the conclusion that the extension on the NE-SW
  diagonal of~\eqref{eq:66} must split when restricted to either
  factor in $G_0\times G_0$, that is, when pulled back by either
  $\iota_1$ or $\iota_2$.\footnote{We commit the abuse of
    language of still denoting the components of the strict
    morphism $\iota_i\colon G_\bullet \to G_\bullet\times
    G_\bullet$ by the same letter} In other words, since the
  extension
  \begin{equation*}
    G_1\lto \iota_i^*P\lto G_0
  \end{equation*}
  $i=1,2$, splits, there must exist two group homomorphisms
  \begin{equation}
    \label{eq:68}
    s_1,s_2 \colon G_0 \lto P
  \end{equation}
  such that $\rho\circ s_i = \iota_i$, $i=1,2$, as maps $G_0\to
  G_0\times G_0$.

  Since the composed split butterfly corresponds to the identity
  morphism, we must have
  \begin{gather}
    \label{eq:79}
    \sigma\circ s_i=\id_{G_0}\\
    \intertext{and}
    \label{eq:80}
    s_i(\del g) = \alpha (\iota_i (g)) \beta (g),\quad g\in G_1,
  \end{gather}
  for $i=1,2$, where we have used the explicit relation between
  strict morphisms and split butterflies analyzed in
  sect~.\ref{sec:strict-morph-butt}.
\end{paragr}
\begin{paragr}
  The existence of the two homomorphisms $s_1$ and $s_2$
  implies~\eqref{eq:66} is a \emph{strong} butterfly in the sense
  of Definition~\ref{def:8}. Indeed, let
  \begin{equation*}
    \tau \colon G_0\times G_0 \lto P
  \end{equation*}
  be defined by $\tau (x,y) = s_1 (x) s_2 (y)$, for $x,y\in G_0$.
  We have
  \begin{equation*}
    \rho\circ \tau (x,y) = \rho (s_1(x)s_2(y)) = (x,1)(1,y) = (x,y),
  \end{equation*}
  therefore $\tau$ provides a global set-theoretic splitting, as
  required.
\end{paragr}
\begin{paragr}
  Analyzing how far $\tau$ is from being a group homomorphism,
  leads to consider the combination
  \begin{equation}
    \label{eq:69}
    s_2(y)^{-1}s_1(x)^{-1}s_2(y)s_1(x).
  \end{equation}
  It is immediate that
  \begin{equation*}
    \rho ( s_2(y)^{-1}s_1(x)^{-1}s_2(y)s_1(x))
    = (1,y^{-1})(x^{-1},1)(1,y)(x,1) = 1
  \end{equation*}
  so that there exists $c(x,y)\in G_1$ such that
  \begin{equation}
    \label{eq:70}
    \beta (c(x,y)) = s_2(y)^{-1}s_1(x)^{-1}s_2(y)s_1(x).
  \end{equation}
  Moreover, by applying $\sigma$:
  \begin{equation*}
    \del c(x,y) = \sigma\beta c(x,y) = \sigma
    (s_2(y)^{-1}s_1(x)^{-1}s_2(y)s_1(x))
    = y^{-1}x^{-1}yx,
  \end{equation*}
  which should be compared with~\eqref{eq:65}.
  Thus~\eqref{eq:70} defines a braiding in the standard way.
  Note that with the previous choices the failure for $\tau$ to
  be a homomorphism is measured as:
  \begin{equation*}
    \tau (x_0x_1,y_0y_1)  \beta( c(x_1,y_0)^{y_1}) = \tau (x_0,y_0) \tau
    (x_1,y_1).
  \end{equation*}
\end{paragr}
\begin{paragr}
  For the interpretation of~\eqref{eq:70} as a braiding to be
  complete, two more checks are necessary. First, the two hexagon
  diagrams of \cite{MR1250465} must be satisfied. It is well
  known that in the case of a strict 2-group, i.e.\ crossed
  module, they reduce to the cocycle conditions
  \begin{align*}
    \braid{x}{yz} & = \braid{x}{y}^z\braid{x}{z}\\
    \intertext{and} \braid{xy}{z} & = \braid{y}{z}\braid{x}{z}^y
  \end{align*}
  for the braiding map. With~\eqref{eq:70}, the above cocycle
  conditions become an immediate consequence of the fact that
  $s_1$ and $s_2$ are homomorphisms. This simple fact is left as
  an exercise to the reader.

  Second, the functoriality condition for the
  isomorphisms~\eqref{eq:62} is expressed in terms of the
  braiding by two relations
  \begin{align*}
    \braid{x}{\del h} = h^{-1} h^x \\
    \intertext{and} \braid{\del g}{y} = g^{-y} g,
  \end{align*}
  where $x,y\in G_0$ and $g,h\in G_1$.  It is immediately
  verified that both conditions are satisfied by~\eqref{eq:70} as
  a consequence of~\eqref{eq:80}.
\end{paragr}
We can summarize the situation so far in the following
proposition.
\begin{proposition}
  \label{prop:15}
  Let $\grg\iso [G_1 \xrightarrow{\del} G_0]$.  Then the
  following are equivalent.
  \begin{itemize}
  \item $\grg$ is braided.
  \item $G_\bullet$ is braided.
  \item There is a butterfly~\eqref{eq:66} equipped with
    prescribed splittings~\eqref{eq:68} such that $\rho\circ s_i
    = \iota_i$ and both~\eqref{eq:79} and~\eqref{eq:80} hold.
  \end{itemize}
\end{proposition}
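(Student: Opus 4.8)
The plan is to route everything through the single reformulation that underlies the whole preceding discussion: a braiding on $\grg$ is the same datum as an additive-functor structure on the multiplication $\otimes_\grg\colon \grg\times\grg\to\grg$. Granting this, Theorem~\ref{thm:2} turns such a structure into a butterfly~\eqref{eq:66}, and the unit constraints turn into the splitting conditions, while the explicit formula~\eqref{eq:70} links the butterfly picture to the classical braiding map. Concretely I would establish the cycle by proving (first bullet) $\Leftrightarrow$ (third bullet) and (third bullet) $\Leftrightarrow$ (second bullet); the equivalence involving the middle statement then follows formally.

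For (first bullet) $\Leftrightarrow$ ``$\otimes_\grg$ additive'' I would run the categorical Eckmann--Hilton argument. The product in $\grg\times\grg$ being componentwise, an additive structure $\lambda$ on $\otimes_\grg$ supplies coherent isomorphisms $(X_1\otimes Y_1)\otimes(X_2\otimes Y_2)\lisoto(X_1\otimes X_2)\otimes(Y_1\otimes Y_2)$; specializing $Y_1=I$ and $X_2=I$ and using the unit isomorphisms~\eqref{eq:67} yields, after the canonical identifications, an isomorphism $Y\otimes X\lisoto X\otimes Y$, that is, a braiding. The two hexagon diagrams are then precisely the instances of the coherence square~\eqref{eq:6} for $\lambda$, and conversely one rebuilds $\lambda$ from a braiding together with the associator. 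The one point needing care is that every object of a gr-stack is regular (multiplication by it is an equivalence), which is exactly what lets one recover the components of $\lambda$ unambiguously from the braiding. This is the step I expect to carry the genuine content; everything else is translation.

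For ``$\otimes_\grg$ additive'' $\Leftrightarrow$ (third bullet), Theorem~\ref{thm:2} produces for the additive functor $\otimes_\grg$ a butterfly of the shape~\eqref{eq:66} from $G_\bullet\times G_\bullet$ to $G_\bullet$, and conversely a butterfly of that shape realizes an additive functor $\grg\times\grg\to\grg$. The unit constraints amount to saying that the restrictions of this functor along $\iota_1,\iota_2$ are isomorphic to the identity. Using the dictionary of sect.~\ref{sec:strict-morph-butt}, where precomposition with the strict morphisms $\iota_i$ is pullback of the NE-SW extension and isomorphism to the identity means the pulled-back extension splits compatibly (cf.\ Proposition~\ref{prop:6}), these restriction conditions translate into the existence of homomorphisms $s_1,s_2$ as in~\eqref{eq:68} with $\rho\circ s_i=\iota_i$, together with~\eqref{eq:79} and~\eqref{eq:80}.

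Finally, for (second bullet) $\Leftrightarrow$ (third bullet) I would use the explicit formula. Given the butterfly with its two splittings, define $c(x,y)\in G_1$ by~\eqref{eq:70}; applying $\sigma$ recovers $\del c(x,y)=y^{-1}x^{-1}yx$, which is~\eqref{eq:65}, so $c$ is a candidate braiding map. Its two cocycle (hexagon) identities follow from $s_1$ and $s_2$ being homomorphisms, and the two functoriality identities follow from~\eqref{eq:80}; conversely, a braiding map on $G_\bullet$ splits each of the two factor-extensions and thereby produces a butterfly~\eqref{eq:66} of the required form (a braiding on the prestack $\grpd{G}$ moreover extending uniquely to $\grg$ by the universal property of stackification). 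Together with the previous step this closes the cycle, so all three statements are equivalent. The bulk of this last verification is routine and has essentially been carried out in the paragraphs preceding the statement.
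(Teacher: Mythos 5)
Your proposal is correct and takes essentially the same route as the paper: the paper's proof of Proposition~\ref{prop:15} is precisely the discussion preceding it, which passes from a braiding on $\grg$ to additivity of $\otimes_\grg$ (the ``simple exercise'' using regularity that you flesh out via Eckmann--Hilton), then through Theorem~\ref{thm:2} to the butterfly~\eqref{eq:66} with its splittings and conditions~\eqref{eq:79}, \eqref{eq:80}, and back to the braiding map via~\eqref{eq:70}, with the hexagons following from $s_1,s_2$ being homomorphisms and the functoriality identities from~\eqref{eq:80}. One slip to fix in your Eckmann--Hilton step: to extract the swap you must specialize $X_1=I$ and $Y_2=I$ (i.e.\ apply $\lambda$ to the pairs $(I,Y)$ and $(X,I)$), which yields $Y\otimes X\lisoto X\otimes Y$; your choice $Y_1=I$, $X_2=I$ makes both sides canonically $X\otimes Y$ and produces no braiding at all.
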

\begin{remark}
  Changing either or both splittings~\eqref{eq:68} has the effect
  of replacing the ``braiding''~\eqref{eq:70} with an equivalent
  one.  However, it is more appropriate to consider $s_1$ and
  $s_2$ as part of the structure.
\end{remark}

\subsection{Symmetric crossed modules and 2-groups}
\label{sec:symm-picard-butt}

The exchange of $s_1$ and $s_2$ in~\eqref{eq:70} replaces
$c(x,y)$ with $c'(x,y)=c(y,x)^{-1}$.  According to
\cite{MR1250465}, this is still a braiding, and the underlying
tensor category is called \emph{symmetric} if it so happens that
$c'=c$.  We want then to interpret the symmetry and Picard
conditions in terms of the characterization provided by
Proposition~\ref{prop:15}.

Let $T\colon \grg\times \grg\to \grg\times \grg$ be the swap
functor which exchanges the factors: $(X,Y)\mapsto (Y,X)$.  The
same letter will denote the corresponding map for $G_\bullet$, as
well as $G_1$ and $G_0$ separately.

Since $T$ can of course be considered as a strict morphism of
crossed modules $G_\bullet\times G_\bullet \to G_\bullet\times
G_\bullet$, it can be composed with the butterfly~\eqref{eq:66}
to yield a the new one: $[G_\bullet\times G_\bullet, T^*P,
G_\bullet]$. Recall that since we are composing with a strict
morphism, the resulting object in the center is just a pullback.
The butterfly so obtained corresponds to the additive functor
$\grg\times \grg\to \grg$ given by the opposite multiplication
law: $(X,Y)\mapsto Y\otimes_\grg X$.

We consider the natural symmetry condition with respect to $T$,
as spelled in the following definition.
\begin{definition}
  \label{def:14}
  The butterfly diagram~\eqref{eq:66} is \emph{symmetric} if it
  is isomorphic to its own pullback under $T$.  In other words,
  if there exists a group isomorphism
  \begin{equation*}
    \psi \colon P\lto T^*P
  \end{equation*}
  realizing a morphism of butterflies from $G_\bullet\times
  G_\bullet$ to $G_\bullet$.
\end{definition}
It is easy to see that this definition is equivalent to the
standard notion of braided symmetric 2-group.  Indeed the
condition on the butterfly spelled in the previous definition is
the translation in terms of butterfly diagrams of the following
way to recast the symmetry condition for a braiding.  We state it
as a Proposition.
\begin{proposition}
  \label{prop:17}
  The braiding $s$ on $\grg$ is symmetric if and only if it is a
  morphism of additive functors
  \begin{equation*}
    s \colon \otimes_\grg \Longrightarrow \otimes_\grg\circ T
    \colon \grg\times \grg \lto \grg.
  \end{equation*}
\end{proposition}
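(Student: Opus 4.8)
The plan is to show that the symmetry condition of Definition~\ref{def:14} on the butterfly~\eqref{eq:66} is exactly the statement that the braiding $s$, when viewed as a natural transformation, is a \emph{morphism of additive functors} in the sense of sect.~\ref{sec:gr-categories}. The governing principle is Theorem~\ref{thm:2}: a 2-morphism between the two additive functors $\otimes_\grg$ and $\otimes_\grg\circ T$ corresponds, under the equivalence $\Phi$, to an \emph{isomorphism of butterflies} between the butterflies representing those two functors. Since pre-composition with the strict morphism $T$ corresponds to pulling back the central group, the butterfly of $\otimes_\grg\circ T$ is precisely $[G_\bullet\times G_\bullet, T^*P, G_\bullet]$. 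Thus an isomorphism of butterflies $\psi\colon P\isoto T^*P$ is the same datum as a 2-morphism $s\colon \otimes_\grg\Rightarrow \otimes_\grg\circ T$, and the two statements of the proposition are two sides of this single correspondence.

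First I would recall that by Proposition~\ref{prop:15} the braiding $c$ is encoded by~\eqref{eq:70}, built from the prescribed splittings $s_1,s_2$ of~\eqref{eq:68}. The functor $\otimes_\grg\circ T$ sends $(X,Y)\mapsto Y\otimes_\grg X$, whose braiding datum is the exchanged one $c'(x,y)=c(y,x)^{-1}$ discussed at the start of sect.~\ref{sec:symm-picard-butt}. Hence I would make explicit that $s$ being a natural transformation $\otimes_\grg\Rightarrow\otimes_\grg\circ T$, compatible with the additive (monoidal) structure as in diagram~\eqref{eq:76}, is exactly the classical symmetry identity $c=c'$, i.e.\ $c(x,y)=c(y,x)^{-1}$, which is~\eqref{eq:63} written at the level of $G_\bullet$. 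This identifies the right-hand condition of the proposition (that $s$ is a morphism of additive functors) with the symmetry of the braiding in the usual sense.

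Next I would translate the left-hand side. An isomorphism of butterflies $\psi\colon P\isoto T^*P$ respecting all the maps in~\eqref{eq:66} amounts, via the splittings, to a compatibility forcing $\psi$ to interchange $s_1$ and $s_2$ up to the central correction governed by $c$; unwinding the equivariance conditions~\eqref{eq:28} together with~\eqref{eq:79} and~\eqref{eq:80} yields again the relation $c'=c$. So both halves of the proposition reduce to the same equation. I would then invoke Theorem~\ref{thm:2} once more to convert these cocycle-level identities back into the functorial statement, closing the loop: the butterfly $\psi$ exists iff $s$ is a morphism of additive functors.

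The main obstacle I anticipate is bookkeeping rather than conceptual: one must verify that the compatibility of $\psi$ with \emph{all four} structural maps $\alpha,\beta,\rho,\sigma$ of~\eqref{eq:66} (and with the chosen splittings) does not impose more than the single symmetry identity, and conversely that a morphism of additive functors supplies precisely such a $\psi$ and nothing stronger. In particular I would have to check that the naturality square~\eqref{eq:76} for $s$ translates into the equivariance required of $\psi$ under~\eqref{eq:28}, and that the hexagon/cocycle relations already secured in Proposition~\ref{prop:15} guarantee consistency, so that no additional coherence constraint survives. This is the verification I would carry out carefully, relegating the routine diagram chases to the reader as the surrounding text already does.
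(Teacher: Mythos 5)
Your proposal is correct, and its heart coincides with the paper's proof of Proposition~\ref{prop:17}: the paper's argument is precisely to write diagram~\eqref{eq:76} for $\otimes_\grg$, $\otimes_\grg\circ T$ and $s$, and to check---via the two hexagon axioms, most easily at the crossed-module level with the bracket $\braid{-}{-}$---that this compatibility is equivalent to the symmetry condition~\eqref{eq:63}, i.e.\ to $c(x,y)=c(y,x)^{-1}$; like you, the paper leaves that computation to the reader. Where you genuinely diverge is in the packaging: you prove Propositions~\ref{prop:17} and~\ref{prop:16} simultaneously, using Theorem~\ref{thm:2} and the identification of the butterfly of $\otimes_\grg\circ T$ with $T^*P$ to pass back and forth between butterfly isomorphisms $\psi\colon P\isoto T^*P$ and 2-morphisms of additive functors, and then showing both conditions reduce to $c=c'$. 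The paper instead keeps these separate: Proposition~\ref{prop:17} is stated and proved as a purely monoidal statement about $\grg$, with no butterflies at all, and Definition~\ref{def:14} is then matched to it in Proposition~\ref{prop:16} as a corollary (with the direct $\psi$/splittings analysis given afterwards as an instructive check---essentially your ``left-hand side'' paragraph). The paper's ordering buys a cleaner logical structure, since the statement under review does not mention butterflies; your fused argument buys economy, proving the corollary at the same time. One caution about your use of Theorem~\ref{thm:2}: it identifies butterfly isomorphisms $P\isoto T^*P$ with \emph{some} 2-morphism of additive functors $\otimes_\grg\Rightarrow\otimes_\grg\circ T$, not a priori with the specific transformation $s$; so for the statement as written, the \eqref{eq:76}-plus-hexagons computation for the braiding itself (your step invoking $c=c'$, where the ``Hence'' hides real work) remains the irreducible content, and no appeal to butterflies can replace it.
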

As a corollary of Proposition~\ref{prop:17} we have:
\begin{proposition}
  \label{prop:16}
  The braiding on $\grg$ is symmetric if and only if the
  butterfly~\eqref{eq:66} has the symmetry property of
  Definition~\ref{def:14}.
\end{proposition}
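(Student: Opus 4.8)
The plan is to obtain Proposition~\ref{prop:16} as a formal consequence of Proposition~\ref{prop:17} by transporting the condition on the braiding across the equivalence of Theorem~\ref{thm:2}. By Proposition~\ref{prop:17}, the braiding $s$ is symmetric exactly when it is a \emph{morphism of additive functors} $\otimes_\grg \Longrightarrow \otimes_\grg\circ T$. The whole content of Proposition~\ref{prop:16} is that this datum is precisely what Theorem~\ref{thm:2} converts into a morphism of butterflies between the representative~\eqref{eq:66} of $\otimes_\grg$ and the representative of $\otimes_\grg\circ T$, so that Definition~\ref{def:14} is the faithful butterfly-theoretic transcription of the condition in Proposition~\ref{prop:17}.

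First I would record the two identifications. The butterfly~\eqref{eq:66} represents $\otimes_\grg$ by construction, and, as noted just before Definition~\ref{def:14}, precomposition with the strict swap $T$ replaces~\eqref{eq:66} by its pullback $T^*P$, which represents $\otimes_\grg\circ T$. Applying Theorem~\ref{thm:2} to the pair $G_\bullet\times G_\bullet$ and $G_\bullet$, the equivalence $\Phi$ induces a bijection between butterfly morphisms $\psi\colon P\lisoto T^*P$ and natural transformations of additive functors $\otimes_\grg\Rightarrow\otimes_\grg\circ T$. Since by Definition~\ref{def:14} the butterfly is symmetric precisely when such a $\psi$ exists, one direction is immediate: if the braiding $s$ is symmetric then Proposition~\ref{prop:17} makes it a morphism of additive functors, and its image under $\Psi$ is the desired isomorphism $\psi$, so~\eqref{eq:66} is symmetric.

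The converse is where care is needed, and I expect it to be the main obstacle. From a butterfly morphism $\psi$ one extracts via $\Phi$ only \emph{some} morphism of additive functors $t\colon \otimes_\grg\Rightarrow\otimes_\grg\circ T$, and one must deduce that the \emph{braiding} $s$ itself satisfies the additivity diagram~\eqref{eq:76}. The way I would close this is to pin down $t$ on the objects $(X,I_\grg)$ and $(I_\grg,Y)$: because the prescribed splittings~\eqref{eq:68} make the restrictions $\iota_i^*P$ split and identify $\iota_i^*$ of~\eqref{eq:66} with the identity functor, any butterfly morphism $\psi$ must restrict there to the unit constraints, so that $t$ agrees with $s$ on $(X,I_\grg)$ and $(I_\grg,Y)$. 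Compatibility of $t$ with the additive structures~\eqref{eq:76} then propagates this agreement to every $(X,Y)$ through the hexagon relations already satisfied by $s$, forcing $t=s$; hence $s$ is a morphism of additive functors and Proposition~\ref{prop:17} yields symmetry of the braiding.

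Finally, I would add, at the level of crossed modules, the hand verification confirming this. Writing out what it means for the group isomorphism $\psi\colon P\to T^*P$ to commute with $\alpha,\beta,\rho,\sigma$ and their $T$-pulled-back counterparts, and feeding the splittings into formula~\eqref{eq:70}, the existence of $\psi$ becomes equivalent to the identity $c(x,y)=c'(x,y)=\braid{y}{x}^{-1}$, that is, to $\braid{x}{y}\braid{y}{x}=1$, which is exactly the symmetry of the braiding map. I would keep the conceptual argument as the main line and relegate this cocycle computation to a remark, since it is the routine but reassuring check that the symmetry property of Definition~\ref{def:14} records nothing more and nothing less than the condition of Proposition~\ref{prop:17}.
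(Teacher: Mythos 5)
Your forward direction coincides with the paper's: symmetry of $s$ makes $s$ itself a morphism of additive functors (Proposition~\ref{prop:17}), and Theorem~\ref{thm:2} converts it into an isomorphism $\psi\colon P\isoto T^*P$, i.e.\ the symmetry of Definition~\ref{def:14}. The gap is in the converse, precisely at the step you single out as the crux. You claim that ``any butterfly morphism $\psi$ must restrict [over $\iota_1,\iota_2$] to the unit constraints,'' and from this deduce $t=s$. Neither claim is automatic, and in general both fail. A morphism of butterflies is only required to commute with the four wings $\alpha,\beta,\rho,\sigma$ of~\eqref{eq:66} and of $T^*P$ (Definition~\ref{def:9}); nothing in that definition forces it to carry the prescribed splitting $s_1$ of~\eqref{eq:68} to $s_2$. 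Concretely, writing $\chi\colon P\to P$ for $\psi$ read through the identification of $T^*P$ with $P$, the conditions $\rho\circ\chi=T\circ\rho$ and $\sigma\circ\chi=\sigma$ together with~\eqref{eq:79} only give $\chi(s_1(x))=\beta(\phi(x))\,s_2(x)$ for some map $\phi\colon G_0\to\Ker\del$; the restriction $\iota_1^*\psi$ is then the canonical identification twisted by the automorphism of the identity additive functor that $\phi$ determines, which need not be trivial. Consequently the transformation $t=\Phi(\psi)$ need not equal $s$, and Proposition~\ref{prop:17}, which is a statement about $s$ itself and not about the existence of \emph{some} morphism $\otimes_\grg\Rightarrow\otimes_\grg\circ T$, cannot be applied as you do. The same unproved compatibility is silently used in your concluding crossed-module ``remark,'' which is in fact the paper's actual argument, not a routine afterthought.

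There are two ways to close the hole. The paper's way is by convention: after Proposition~\ref{prop:15} the splittings $s_1,s_2$ are declared part of the structure, so a symmetric structure is required to satisfy $\psi\circ s_1=s_2$ and $\psi\circ s_2=s_1$ as in~\eqref{eq:75}; since a butterfly morphism fixes $\beta(G_1)$ pointwise, applying $\psi$ to~\eqref{eq:70} then gives $c(x,y)=\psi(c(x,y))=c(y,x)^{-1}$ directly. Alternatively, your argument can be repaired without that convention: the discrepancy $\phi$ above lands in $\Ker\del$, and for a \emph{braided} crossed module the $G_0$-action on $\Ker\del$ is trivial (take $h\in\Ker\del$ in the relation $\braid{x}{\del h}=h^{-1}h^x$), so by the equivariance~\eqref{eq:28} the subgroup $\beta(\Ker\del)$ is central in $P$; hence the factors $\beta(\phi(\cdot))$ cancel in the commutator defining~\eqref{eq:70}, and any butterfly isomorphism $\psi$, splitting-compatible or not, still forces $c(x,y)=c(y,x)^{-1}$. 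One of these two patches must replace your assertion that the restrictions are the unit constraints; as written, both your main argument and your final remark rest on it.
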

\begin{proof}[Proof of Proposition~\ref{prop:17}]
  That the braiding $s$ can be seen as a natural transformation
  is an obvious fact. The point is of course that it is a natural
  transformation of \emph{additive} functors.  Writing
  diagram~\eqref{eq:76} for $\otimes_\grg$, $\otimes_\grg\circ
  T$, and $s$, and using the two hexagon diagrams shows the
  equivalence between~\eqref{eq:63} and the symmetry condition.
  This is most easy when working directly with a crossed module
  and the braiding $\braid{-}{-}$, and it is left as a task to
  the reader.
\end{proof}
It is instructive to deduce the symmetry of the braiding at the
crossed-module level directly from Definition~\ref{def:14}.
\begin{paragr}
  The existence of $\psi$ in Definition~\ref{def:14} is
  equivalent to saying that there should be an automorphism
  $\psi\colon P\to P$ such that
  \begin{equation}
    \label{eq:71}
    \vcenter{\xymatrix{%
        P \ar[r]^\psi{} \ar[d]_\rho & P \ar[d]^\rho \\
        G_0\times G_0 \ar[r]_T & G_0\times G_0
      }}
  \end{equation}
  commutes and it is compatible with all the morphisms
  of~\eqref{eq:66}. In particular, this implies that $\psi$ fixes
  $G_1$ inside $P$.

  Using Grothendieck's theory of extensions, the automorphism
  $\psi$ in the previous diagram can be equivalently understood
  as a collection of isomorphisms
  \begin{equation*}
    \psi_{x,y} \colon P_{x,y} \lto P_{y,x}
  \end{equation*}
  of $G_1$-bitorsors $E_{x,y}$ above each point $(x,y)\in
  G_0\times G_0$, compatible with the multiplication structure of
  bitorsors
  \begin{equation}
    \label{eq:72}
    \vcenter{%
      \xymatrix{%
        P_{x_0,y_0} \cprod{G_1} P_{x_1,y_1} \ar[r]
        \ar[d]_{\psi\wedge \psi} &
        P_{x_0x_1,y_0y_1} \ar[d]^\psi \\
        P_{y_0,x_0} \cprod{G_1} P_{y_1,x_1} \ar[r] & P_{y_0y_1,x_0x_1}
      }}
  \end{equation}
  The horizontal arrows are the bitorsor contractions
  corresponding to the multiplication law in $P$.
\end{paragr}
\begin{paragr}
  The butterfly $T^*P$ must split when restricted to one of the
  factors of $G_0\times G_0$. Thus, there will exist group
  homomorphisms ${\hat s}_i \colon G_0\to \iota_i^*T^*P =
  (T\circ\iota_i)^*P$, $i=1,2$.  Now, since
  $T\circ\iota_1=\iota_2$, we can rather think of ${\hat s}_i$ as
  homomorphisms
  \begin{equation*}
    {\hat s}_i \colon G_0 \lto P
  \end{equation*}
  such that $\rho\circ {\hat s}_1 = \iota_2$ and $\rho\circ {\hat
    s}_2 = \iota_1$. In other words, we must have
  \begin{equation*}
    \rho \circ {\hat s}_1 = \rho\circ s_2,\qquad
    \rho \circ {\hat s}_2 = \rho\circ s_1
  \end{equation*}
  In general, this implies that ${\hat s}_1$ differ from $s_2$ by
  multiplication of an element in $G_1$ provided by an
  appropriate crossed homomorphism, and similarly for ${\hat
    s}_2$ and $s_1$.  However, if we consider that $s_1$ and
  $s_2$ are part of the structure, as they implement the given
  functorial isomorphisms~\eqref{eq:67}, then they must be simply
  swapped by $T$, so that we have
  \begin{equation}
    \label{eq:73}
    {\hat s}_1 = s_2, \qquad {\hat s}_2= s_1. 
  \end{equation}
\end{paragr}
\begin{paragr}
  If the butterfly is symmetric, so that the automorphism
  $\psi\colon P\to P$ as in Definition~\ref{def:14} exists, we
  must have that the following diagram (which
  completes~\eqref{eq:71})
  \begin{equation}
    \label{eq:74}
    \vcenter{\xymatrix{%
        P \ar[rr]^\psi{} \ar[dd]_\rho & & P \ar[dd]^\rho \\
        & G_0 \ar@/_/[ul]^{s_{1,2}} \ar@/^/[ur]_{{\hat s}_{1,2}}
        \ar@/^/[dl]_{\iota_{1,2}} \ar@/_/[dr]^{\iota_{2,1}} \\
        G_0\times G_0 \ar[rr]_T & & G_0\times G_0}}
  \end{equation}
  commutes.  Taking~\eqref{eq:73} into account, we obtain that
  $\psi$ must swap $s_1$ and $s_2$:
  \begin{equation}
    \label{eq:75}
    \psi\circ s_1 = s_2, \qquad \psi\circ s_2 = s_1.
  \end{equation}
  Thus, in view of the remark at the beginning of this section,
  $\psi$ replaces the braiding $c(x,y)$ with the symmetric one
  $c(y,x)^{-1}$.

  On the other hand, we have observed that compatibility
  of~\eqref{eq:71} with the rest of the butterfly means that
  $\psi$ must fix $G_1$, so that
  \begin{equation*}
    c(x,y) = \psi (c(x,y))  = c(y,x)^{-1}
  \end{equation*}
  that is, the braiding is symmetric in the usual sense.
\end{paragr}

\subsection{Picard crossed modules and 2-groups}
\label{sec:picard-cross-modul}

Let $\Delta\colon \grg \to \grg\times \grg$ the diagonal
functor. It is obvious that $\Delta$ is a strict additive
functor. Let $\Delta$ also denote the corresponding diagonal
functor for $G_\bullet$, as well as the degree-wise diagonal
homomorphisms for $G_0$ and $G_1$.
\begin{paragr}
  We can pull back the butterfly~\eqref{eq:66} to $G_\bullet$ via
  $\Delta$. Since $\Delta$ is a strict morphism, the composed
  butterfly corresponds to the additive functor $\grg \to \grg$,
  $X\to X\otimes X$.  If the butterfly is symmetric as per
  Definition~\ref{def:14}, we obtain an automorphism
  \begin{equation}
    \label{eq:77}
    \Delta^*\psi \colon \Delta^*P \lto \Delta^*P.
  \end{equation}
\end{paragr}
\begin{definition}
  \label{def:15}
  A symmetric crossed module is \emph{Picard} if the
  automorphism in eq.~\eqref{eq:77} is the identity.
\end{definition}
\begin{paragr}
  This is equivalent to the standard notion of ``Picard'' since,
  using Proposition~\ref{prop:17} and pre-composing with
  $\Delta\colon \grg\to \grg\times \grg$, it becomes the
  statement that $s*\Delta$, as a transformation from $X\to
  X\otimes X$ to itself, is the identity---which is
  condition~\eqref{eq:64}.
\end{paragr}
\begin{paragr}
  The Picard condition simply means that the automorphisms
  \begin{equation*}
    \psi_{x,x} \colon P_{x,x}\lto P_{x,x},
  \end{equation*}
  $x\in G_0$, are equal to the identity.  Then,
  using~\eqref{eq:72} with $(x_0,y_0)=(x,1)$ and
  $(x_1,y_1)=(1,x)$ leads to the condition
  \begin{equation}
    \label{eq:78}
    s_1(x)s_2(x) = s_2(x)s_1(x),
  \end{equation}
  which implies that $c(x,x)=1$. It is also easy to see that if
  we assume condition~\eqref{eq:78}, and use the fact that
  $P_{x,1}$ and $P_{1,y}$ are \emph{canonically trivial} as
  $G_1$-bitorsors, we obtain an automorphism $\psi$
  satisfying~\eqref{eq:71} and $\psi_{x,x}=\id$ for all $x\in
  G_0$.
\end{paragr}
To conclude, let us remark that the stronger condition that the
homomorphisms $s_1$ and $s_2$ have commuting images, that is
\begin{equation*}
  [s_1(x), s_2(y)]=1
\end{equation*}
for all $x,y\in G_1$, implies that $\tau\colon G_0\times G_0\to
P$ is a homomorphism, so that the braided butterfly~\eqref{eq:66}
splits and~\eqref{eq:70} shows that the braiding is identically
equal to $1$.  As a consequence, we find that both $G_1$ and
$G_0$ are abelian and the action of $G_0$ on $G_1$ is
trivial. The crossed module reduces to a length-two complex of
abelian sheaves.

From a homological point of view, this reduces to the extension
problem
\begin{equation*}
  0\lto A \lto G_1 \lto G_1 \lto B \lto 0
\end{equation*}
of abelian sheaves.

\subsection{Braided butterflies}
\label{sec:braided-butterflies}

Assume $\grh$ and $\grg$ are braided gr-stacks, and let
$H_\bullet$ and $G_\bullet$ be corresponding braided crossed
modules (cf.\ Proposition~\ref{prop:15}).  The following
definition is, \emph{mutatis mutandis,} the same as the one in
\cite[Definition~12.1]{Noohi:weakmaps}:
\begin{definition}
  \label{def:16}
  A butterfly $[H_\bullet, E, G_\bullet]$ is \emph{braided} if
  the following condition is satisfied:
  \begin{equation*}
    \kappa \braid{\pi (x)}{\pi (y)}_H \, \imath
    \braid {\jmath (x)}{\jmath (y)}_G = y^{-1} x^{-1} y x,
  \end{equation*}
  for all $x,y\in E$.
\end{definition}
If the butterfly comes from a strict morphism, then being braided
corresponds to the usual notion of morphism of braided
categorical groups, that is:
\begin{equation*}
  \braid{f_0(x)}{f_0(y)}_G = f_1( \braid{x}{y}_H)
\end{equation*}
for all $x,y\in H_0$ (see e.g.\ \cite{MR1250465}).
\begin{paragr}
  One way to understand Definition~\ref{def:16} is to notice that
  \begin{equation}
    \label{eq:81}
    \braid{x}{y}_E \eqdef \bigl(
    \braid{\pi (x)}{\pi (y)}_H , 
    \braid {\jmath (x)}{\jmath (y)}_G \bigr)
  \end{equation}
  defines a braiding on the crossed module $H_1\times G_1\to E$
  compatible with the strict morphisms to $H_\bullet$ and
  $G_\bullet$ in the sense explained above.  The condition in the
  definition is just the statement that $\kappa\cdot\imath
  (\braid{x}{y}_E) = y^{-1}x^{-1}yx$.

  It is easy to verify that the formula~\eqref{eq:81} for the
  braiding on $[H_1\times G_1\to E]$ is actually dictated by the
  above requirements.
\end{paragr}
\begin{paragr}
  The weak morphism counterpart of a braided butterfly is that of
  a weak morphism (i.e.\ additive functor between associated
  stacks) that is compatible with the braidings---or braided, for
  short. So, $F\colon \grh\to \grg$ is braided if all objects
  $X,Y$ of $\grh$ we have
  \begin{equation*}
    \xymatrix{%
      F(X)\otimes_\grg F(Y) \ar[r]^{\lambda_{X,Y}}
      \ar[d]_{s_{F(X),F(Y)}} &
      F(X\otimes_\grh Y) \ar[d]^{F(s_{X,Y})} \\
      F(Y)\otimes_\grg F(X) \ar[r]_{\lambda_{Y,X}} &
      F(Y\otimes_\grh X)}
  \end{equation*}
\end{paragr}
\begin{proposition}
  \label{prop:18}
  The butterfly $[H_\bullet, E, G_\bullet]$ is braided, that is,
  it satisfies the condition of Definition~\ref{def:16} if and
  only if the corresponding weak morphism is a morphism of
  braided gr-stacks.
\end{proposition}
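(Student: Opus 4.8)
The plan is to reduce the statement to the level of crossed modules, using the fraction representation of $F$ together with the correspondence of Proposition~\ref{prop:15}. Recall from section~\ref{sec:deriv-categ-cross} that the butterfly $[H_\bullet,E,G_\bullet]$ produces a diagram of strict morphisms
\[
  H_\bullet \overset{Q}{\longleftarrow} E_\bullet \overset{P}{\lto} G_\bullet,
\]
where $E_\bullet\colon H_1\times G_1\xrightarrow{\kappa\cdot\imath} E$, with $Q=(\mathrm{pr}_1,\pi)$ and $P=(\mathrm{pr}_2,\jmath)$, and where $Q$ is a quasi-isomorphism; by Lemma~\ref{lem:5} the induced morphism $\grstack{E}\to\grh$ is an equivalence, and $F\iso P\circ Q^{-1}$ for a quasi-inverse $Q^{-1}$. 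Since a braided equivalence has a braided quasi-inverse and composites of braided morphisms are braided, $F$ is a braided morphism if and only if $P$ is, so it suffices to analyse $P$.

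First I would record the relevant facts about the map $\braid{-}{-}_E$ of~\eqref{eq:81}. Using that $\pi$ and $\jmath$ are homomorphisms together with the equivariance relations~\eqref{eq:28} and the butterfly identities $\pi\kappa=\del_H$, $\jmath\imath=\del_G$, $\jmath\kappa=1$, $\pi\imath=1$, a direct (routine) check shows that~\eqref{eq:81} obeys the cocycle and functoriality axioms of a braiding on $E_\bullet$, these reducing componentwise to the corresponding axioms for $\braid{-}{-}_H$ and $\braid{-}{-}_G$. The single remaining braiding axiom~\eqref{eq:65} for $E_\bullet$, namely $\del_E\braid{x}{y}_E=y^{-1}x^{-1}yx$ with $\del_E=\kappa\cdot\imath$, spells out precisely as $\kappa\braid{\pi(x)}{\pi(y)}_H\,\imath\braid{\jmath(x)}{\jmath(y)}_G=y^{-1}x^{-1}yx$, i.e. as the condition of Definition~\ref{def:16}. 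Moreover the components of~\eqref{eq:81} are exactly what makes $Q$ and $P$ into morphisms of braided crossed modules in the strict sense.

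This gives both implications at once. In the forward direction, if the butterfly is braided then~\eqref{eq:81} also satisfies~\eqref{eq:65}, hence is a genuine braiding; by Proposition~\ref{prop:15} the gr-stack $\grstack{E}$ is braided and $Q,P$ become braided additive functors, so $F\iso P\circ Q^{-1}$ is braided. For the converse, transport the braiding of $\grh$ along the equivalence $Q$: this makes $\grstack{E}$ braided with $Q$ a braided equivalence, and by Proposition~\ref{prop:15} the braiding is encoded by a braiding map $\braid{-}{-}_E$ on $E_\bullet$ whose $H_1$-component is forced to equal $\braid{\pi(-)}{\pi(-)}_H$ (because $Q$ is braided) and which satisfies~\eqref{eq:65}. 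Then $F$ braided $\iff P\iso F\circ Q$ braided $\iff \mathrm{pr}_2\braid{x}{y}_E=\braid{\jmath(x)}{\jmath(y)}_G$; substituting into~\eqref{eq:65} and using injectivity of $\imath$ to cancel the common factor $\kappa\braid{\pi(x)}{\pi(y)}_H$, this last equality is equivalent to Definition~\ref{def:16}.

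The main obstacle is the converse, and specifically the need to pin down the braiding on $\grstack{E}$: one must argue that the braiding transported from $\grh$ corresponds under Proposition~\ref{prop:15} to a braiding map whose two components are rigidly determined by the requirement that $Q$ and $P$ be braided, so that it can only be~\eqref{eq:81}. The identity~\eqref{eq:65} for this transported map, combined with injectivity of $\imath$, is exactly what converts the braidedness of $P$ into the intrinsic condition of Definition~\ref{def:16}. The accompanying verification that~\eqref{eq:81} satisfies the cocycle and functoriality axioms is routine but should be carried out, since it is what guarantees that $\braid{-}{-}_E$ is an honest braiding and not merely a map.
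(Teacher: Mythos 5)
Your proposal is correct and follows essentially the same route as the paper's proof: reduce via the fraction $H_\bullet \leftarrow E_\bullet \to G_\bullet$ to strict morphisms, note that the braiding axiom~\eqref{eq:65} for the map~\eqref{eq:81} on $E_\bullet$ is precisely the condition of Definition~\ref{def:16}, and for the converse transport the braiding of $\grh$ along the equivalence $Q$, then use $P \iso F\circ Q$ and descend from the stack level to the strict crossed-module level (the step the paper handles via Lemma~\ref{lem:6} and the prestack/stackification diagram, and which you correctly single out as the crux). Your use of injectivity of $\imath$ to force the $G_1$-component is only a cosmetic reorganization of the paper's conclusion that $P'$ and $Q'$ are strictly braided.
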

\begin{proof}
  Let
  \begin{equation*}
    \xymatrix@1{
      H_\bullet & E_\bullet \ar[l]_{Q'} \ar[r]^{P'} & G_\bullet
    }
  \end{equation*}
  be the factorization of the butterfly in terms of strict
  morphisms (with $Q'$ a quasi-isomorphism). Let
  \begin{equation}
    \label{eq:82}
    \xymatrix@1{
      \grh & \grstack{E} \ar[l]_Q \ar[r]^P & \grg
    }
  \end{equation}
  be the corresponding additive morphisms of gr-stacks, with $F$
  factorized as $P\circ Q^{-1}$. 
  
  If the butterfly is braided, then previous considerations show
  that $P'$ and $Q'$ are strictly braided morphisms and then $P$
  and $Q$ are braided morphisms of gr-stacks. Thus $F$ is
  braided.

  Conversely, let us assume $F$ is braided. First, in the
  decomposition~\eqref{eq:82}, $\grstack{E}$ is braided and $Q$
  is also braided. This actually follows from the diagram:
  \begin{equation*}
      \xymatrix{%
        \grh \times \grh \ar[d]_\otimes &
        \grstack{E}\times\grstack{E} \ar[d]^\otimes
        \ar[l]_<<<{Q\times Q} \\
        \grh & \grstack{E} \ar[l]^Q 
      }
  \end{equation*}
  (Note that the diagram will only be 2-commutative.)  Choosing a
  quasi-inverse $Q^*$ for $Q$ we obtain that
  $\otimes_{\grstack{E}}$ is an additive functor, or equivalently
  that $\grstack{E}$ is braided.  That $Q$ itself is braided
  follows from the next lemma, whose proof is left to the reader.
  \begin{lemma}
    \label{lem:6}
    Let $\cat{C}$ and $\cat{D}$ be braided gr-categories. An
    additive functor
    \begin{equation*}
      (F,\lambda)\colon \cat{D} \to \cat{C}
    \end{equation*}
    is braided if and only if the following diagram
    \begin{equation*}
      \xymatrix{%
        \cat{D}\times \cat{D} \ar[r]^{F\times F} \ar[d]_\otimes &
        \cat{C}\times \cat{C} \ar[d]^\otimes_{}="a" \\
        \cat{D}\ar[r]_F^{}="b" \ar@{=>}@/_/ "a";"b"_\lambda&
        \cat{C}
      }
    \end{equation*}
    (which defines $(F,\lambda)$ to be an additive functor) is in
    fact a morphism of additive functors.
  \end{lemma}
  Using the factorization~\eqref{eq:82} again, we conclude that
  $P \iso F\circ Q$ is braided.  It follows from
  Proposition~\ref{prop:15} that $H_\bullet$, $G_\bullet$, and
  $E_\bullet$ are braided.

  Now, both $P$ (resp.\ $Q$) arise from \emph{strict} morphisms
  $P'$ (resp.\ $Q'$), and we want to conclude that $P'$ and $Q'$
  themselves are braided.  Observe that $P'$ gives rise to a
  morphism of (necessarily braided) groupoids $\grpd{E}\to
  \grpd{G}$ and moreover one has the (2-commutative) diagram of
  prestacks
  \begin{equation*}
    \xymatrix{%
      \grpd{E} \ar[r]^{P'} \ar[d]_a & \grpd{G} \ar[d]^a \\
      \grstack{E} \ar[r]_P & \grg
    }
  \end{equation*}
  where the vertical arrows are the ``associated stack''
  functors.  Since these are equivalences, a moment's thought
  will convince the reader that $P$ braided implies that so is
  $P'$.  The situation with $Q$, $Q'$ is analogous. Thus $P'$ and
  $Q'$ are braided and strict, so the butterfly is braided, as wanted.
\end{proof}
The results of \cite[\S 12]{Noohi:weakmaps} are still valid in
the present context: thus braided butterflies compose according
to the rules of section~\ref{sec:general-definitions}; braided
crossed modules over $\s$ form a bicategory $\twocat{BrXMod}$,
and there is a forgetful functor $\twocat{BrXMod}\to \CM$. 

We note a particular case of braided butterfly. Assume that
$G_\bullet$ is fully abelian in the sense described in the last
couple of paragraphs in section~\ref{sec:symm-picard-butt}. Then
the condition on the braiding reduces to:
\begin{equation*}
  \kappa \braid{\pi (x)}{\pi (y)}_H
  = y^{-1} x^{-1} y x.
\end{equation*}
It is also easy to see that the NE-SW diagonal of the butterfly
is a \emph{central} extension of $H_0$ by $G_1$.  If the
butterfly is an equivalence, so that the other diagonal is also
an extension, it is easy to verify that the braiding
$\braid{-}{-}_H$ is Picard, as expected.  This remark will be of
some relevance in the discussion of butterflies in abelian
categories.  The following statements give a slightly more
general take on the same theme. The proof is very easy, using
Definition~\ref{def:16}, and we will leave it out.
\begin{lemma}
  \label{lem:7}
  Let $[H_\bullet, E, G_\bullet]$ be a braided butterfly.
  \begin{enumerate}
  \item If the corresponding weak morphism $\grh\to \grg$ is
    essentially surjective, or equivalently if $\jmath\colon E\to
    G_0$ is a sheaf epimorphism, then $H_\bullet$ symmetric
    (resp.\ Picard) implies $G_\bullet$ symmetric (resp.\ Picard).
  \item If $\grh \to \grg$ has trivial homotopy kernel, or
    equivalently if $\kappa \colon H_1\to E$ is injective, then
    $G_\bullet$ symmetric (resp.\ Picard) implies $H_\bullet$
    symmetric (resp.\ Picard).
  \end{enumerate}
\end{lemma}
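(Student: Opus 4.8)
I will prove Lemma~\ref{lem:7} by working directly at the level of crossed modules, using the braiding map characterization of Proposition~\ref{prop:15} together with the defining condition of a braided butterfly (Definition~\ref{def:16}). The key idea is that the braiding bracket $\braid{-}{-}_E$ on the middle crossed module $[H_1\times G_1\to E]$, given by the formula~\eqref{eq:81}, provides the bridge: its two components are $\braid{\pi(x)}{\pi(y)}_H$ and $\braid{\jmath(x)}{\jmath(y)}_G$, so symmetry or Picard-ness of $E_\bullet$ translates into simultaneous relations on the brackets of $H_\bullet$ and $G_\bullet$. The strategy is therefore to relate all three brackets through the maps $\pi$ and $\jmath$ in the butterfly, and then use the surjectivity/injectivity hypotheses to transfer a relation from one side to the other.

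\textbf{Step one.} First I would recall that for a braided crossed module $G_\bullet$, symmetry is the condition $\braid{x}{y}_G\,\braid{y}{x}_G = 1$ for all $x,y\in G_0$ (equivalently $c'=c$ in the notation of section~\ref{sec:symm-picard-butt}), and Picard-ness adds the condition $\braid{x}{x}_G=1$ for all $x$. The same holds verbatim for $H_\bullet$. These are pointwise conditions on the bracket maps, so the whole argument reduces to chasing these identities through the butterfly.

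\textbf{Step two: part (1).} Suppose $H_\bullet$ is symmetric and $\jmath\colon E\to G_0$ is a sheaf epimorphism. I want to show $\braid{u}{v}_G\,\braid{v}{u}_G=1$ for all sections $u,v$ of $G_0$. Since $\jmath$ is an epimorphism, locally we may write $u=\jmath(x)$, $v=\jmath(y)$ for sections $x,y$ of $E$. By Definition~\ref{def:16}, the product $\kappa\braid{\pi(x)}{\pi(y)}_H\,\imath\braid{\jmath(x)}{\jmath(y)}_G$ equals the commutator $y^{-1}x^{-1}yx$ in $E$; applying the same with $x,y$ swapped and using symmetry of $H_\bullet$ (so the $\kappa$-factors are mutually inverse, as $\braid{\pi(x)}{\pi(y)}_H\braid{\pi(y)}{\pi(x)}_H=1$) together with the fact that the images of $\kappa$ and $\imath$ commute in $E$ (Proposition~\ref{prop:4}), I can isolate $\imath\bigl(\braid{\jmath(x)}{\jmath(y)}_G\,\braid{\jmath(y)}{\jmath(x)}_G\bigr)$ as a product of two commutators. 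The commutators multiply to the identity in $E$ (since $y^{-1}x^{-1}yx\cdot x^{-1}y^{-1}xy$ collapses via the commutator identities for conjugation), forcing $\braid{u}{v}_G\,\braid{v}{u}_G$ to lie in $\ker\imath$; as $\imath$ is injective, it is $1$. The Picard case is the same computation specialized to $u=v$, using $x=y$. The whole argument is local in $\s$, so the sheaf-epimorphism hypothesis is exactly what lets me choose the local lifts.

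\textbf{Step three: part (2).} Now suppose $G_\bullet$ is symmetric and $\kappa\colon H_1\to E$ is injective. I want symmetry of $H_\bullet$. Given $x,y\in E$, the defining relation again reads $\kappa\braid{\pi(x)}{\pi(y)}_H\cdot\imath\braid{\jmath(x)}{\jmath(y)}_G = y^{-1}x^{-1}yx$. Multiplying the version for $(x,y)$ by the version for $(y,x)$ and invoking symmetry of $G_\bullet$ to kill the $\imath$-factors (together with commutativity of the images of $\kappa,\imath$), I obtain $\kappa\bigl(\braid{\pi(x)}{\pi(y)}_H\,\braid{\pi(y)}{\pi(x)}_H\bigr)$ as a product of commutators equal to $1$ in $E$. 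Injectivity of $\kappa$ then gives $\braid{\pi(x)}{\pi(y)}_H\,\braid{\pi(y)}{\pi(x)}_H=1$; since $\pi\colon E\to H_0$ is an epimorphism (it sits in the extension $1\to G_1\to E\to H_0\to 1$), every pair of sections of $H_0$ arises as $\pi(x),\pi(y)$, so $H_\bullet$ is symmetric. Again the Picard case specializes to $x=y$.

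\textbf{The main obstacle.} The one delicate point—which is why the two parts split cleanly—is \emph{reversing the direction of inference} between the brackets. In part~(1) I start from a relation on the $H$-bracket and need to produce one on the $G$-bracket, so I must be able to realize arbitrary pairs in $G_0$ as $\jmath$-images (whence the epimorphism hypothesis) and to cancel the $\imath$-image (whence injectivity of $\imath$, which holds automatically since the NE-SW diagonal is an extension). In part~(2) the roles reverse: I realize pairs in $H_0$ via the always-surjective $\pi$, but now must cancel a $\kappa$-image, which is only injective under the stated hypothesis $\kappa\colon H_1\to E$ injective. So the real content is bookkeeping which of the two structural maps ($\imath$ or $\kappa$) is being inverted and ensuring the relevant injectivity holds; this is precisely what the two hypotheses guarantee. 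Because the paper explicitly declares the proof ``very easy, using Definition~\ref{def:16},'' I expect the commutator manipulations to collapse immediately once the images of $\kappa$ and $\imath$ are known to commute, and the only genuine care needed is the local choice of lifts and the correct placement of the injectivity/surjectivity assumptions.
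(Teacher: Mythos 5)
Your proof is correct and is exactly the argument the paper has in mind: the paper omits the proof, saying only that it is ``very easy, using Definition~\ref{def:16},'' and your computation---multiplying the braided-butterfly identity for $(x,y)$ by the one for $(y,x)$, using Proposition~\ref{prop:4} to collect the $\kappa$- and $\imath$-factors, noting the product of commutators collapses to $1$, and then cancelling via injectivity of $\imath$ (automatic from the NE-SW extension) in part~(1), resp.\ of $\kappa$ (the hypothesis) in part~(2), with local lifts supplied by the epimorphism $\jmath$, resp.\ $\pi$---is precisely that. Your placement of the hypotheses and the specialization $x=y$ for the Picard case are both handled correctly.
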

Combining the two statements above into one, we obtain that if
$[H_\bullet, E, G_\bullet]$ is a braided \emph{reversible}
butterfly, then obviously $H_\bullet$ is symmetric (resp.\
Picard) if and only if $G_\bullet$ is symmetric (resp.\ Picard).

\section{Butterflies and abelian categories}
\label{sec:abelian-categories}

In this section we specialize our discussion to the case of
abelian crossed modules, that is, complexes of length two in an
abelian category. This topic has been treated in some generality
in ref.\ \cite[\S 12.3]{Noohi:weakmaps}.  We want to
expand on this topic in the case of $\cat{Ch}(\s)$, the abelian
category of complexes of abelian sheaves over $\s$.

\subsection{Crossed modules in an abelian category}
\label{sec:crossed-modules-ab}

Let $\cat{A}$ be an abelian category.
\begin{paragr}
  Crossed modules in $\cat{A}$ are simply (cohomological)
  complexes of length two of objects of $\cat{A}$ without
  additional requirements. Typically, a complex will be denoted
  as $A^\bullet \colon A^{-1}\to A^0$, restoring the upper-index
  convention---and following ref.\ \cite{0259.14006}.
\end{paragr}
\begin{paragr}
  Complexes of length 2 in $\cat{A}$, supported in degrees
  $[-1,0]$ form an abelian sub-category
  $\cat{Ch}^{[-1,0]}(\cat{A})$ of the abelian category $\cat{Ch}
  (\cat{A})$ of complexes of objects of $\cat{A}$. If $\cat{A}$
  equal the category of abelian sheaves over $\s$ we write
  directly $\cat{Ch}(\s)$ and $\cat{Ch}^{[-1,0]}(\s)$.

  It is immediate that the notions of (strict) morphism and
  2-morphism explained in sect.~\ref{sec:crossed-modules} simply
  reduce to the standard ones of morphism of complexes and
  (chain) homotopy between such morphisms, respectively.
\end{paragr}
\begin{paragr}
  Since we can consider a complex $A^\bullet$ as a crossed module
  with trivial braiding, it follows from our previous analyses
  that a ``braided'' butterfly from $B^\bullet$ to $A^\bullet$ is
  a diagram
  \begin{equation}
    \label{eq:83}
    \vcenter{%
      \xymatrix@R-0.5em{%
        B^{-1}\ar[dd]_d \ar@/_0.1pc/[dr]^\kappa  & &
        A^{-1} \ar@/^0.1pc/[dl]_\imath \ar[dd]^d\\
        & E\ar@/_0.1pc/[dl]_\pi \ar@/^0.1pc/[dr]^\jmath &  \\
        B^0 & & A^0
      }}
  \end{equation}
  of abelian sheaves such that the NE-SW diagonal is an
  extension.  The difference with Definition~\ref{def:7} is that
  all compatibility requirements for the various actions are
  dropped.
\end{paragr}
As such, this definition makes sense in any abelian category
$\cat{A}$, as noted in ref.\ \cite{Noohi:weakmaps}.
\begin{definition}
  \label{def:17}
  Let $\cat{A}$ be an abelian category.
  A butterfly in $\cat{A}$ is a diagram of the form~\eqref{eq:83}
  of objects of $\cat{A}$ such that the NE-SW diagonal is short
  exact, and the NW-SE diagonal is a complex.
\end{definition}

Comparing with section~\ref{sec:deriv-categ-cross}, we see that
for length-two complexes
the butterfly diagram~\eqref{eq:83} provides a canonical choice
for the complex $E^\bullet$ quasi-isomorphic to $B^\bullet$.

\subsection{2-categories of Picard and abelian objects}
\label{sec:2-categories-picard}

From now on, we set $\cat{A}$ equal to the category of abelian
sheaves over $\s$.

We have already mentioned the category $\cat{Ch}^{[-1,0]}(\s)$ of
length-two complexes. Morphisms are simply morphisms of complexes
in the usual sense.

$\cat{Ch}^{[-1,0]}(\s)$ neglects the homotopies. When they are
included, we actually obtain a 2-category, to be denoted
$\twocat{Ch}^{[-1,0]}(\s)_{\mathrm{str}}$.  Objects and
1-morphisms are the same as $\cat{Ch}^{[-1,0]}(\s)$, and
2-morphisms are chain homotopies between strict morphisms. Thus
$\cat{Ch}^{[-1,0]}(\s)$ is tautologically the 1-category obtained
from the 2-category $\twocat{Ch}^{[-1,0]}(\s)_{\mathrm{str}}$ by
simply forgetting the 2-morphisms.

The bicategory $\bicat{Ch}^{[-1,0]}(\s)$ has still the same
objects, plus abelian butterflies as 1-morphisms, and morphisms
of (abelian) butterflies as 2-morphisms. (This is modeled on the
definitions of section~\ref{sec:bicat-cross-modul}.) Let us
denote by $\catHom(B^\bullet, A^\bullet)$ the morphism groupoid
from $B^\bullet$ to $A^\bullet$ in $\bicat{Ch}^{[-1,0]}(\s)$.
In a similar way, let us denote by
$\catHom(B^\bullet,A^\bullet)_{\mathrm{str}}$ the morphism
groupoid of the 2-category
$\twocat{Ch}^{[-1,0]}(\s)_{\mathrm{str}}$.

We mention that the groupoids $\catHom (B^\bullet, A^\bullet)$
acquire an extra structure: they are symmetric gr-categories,
since, thanks to the abelianness of everything involved,
butterflies such as~\eqref{eq:83} can be added, there is an
inverse, and an identity (the zero butterfly corresponding to the
identity morphism). The formulas are identical to the ones worked
out in \cite{Noohi:weakmaps} and will not be repeated here: there
is no change in passing from the set-theoretic context to that of
sheaves over the site $\s$.

$\twocat{Pic}(\s)$ will denote the 2-category of Picard stacks
over $\s$. This is a sub-2-category of $\twocat{Gr\mbox{-}Stacks}
(\s)$.

The gr-stack associated to a Picard crossed module is evidently a
Picard stack. In particular so is the stack associated to a
complex $A^\bullet\colon A^{-1}\to A^0$. (Considering $\tors
(A^{-1}, A^0)$, for instance, immediately gives the Picard
structure.)

\begin{paragr}
  Given two complexes $A^\bullet$ and $B^\bullet$ we define, in
  analogy to what was done in
  section~\ref{sec:general-definitions}, the groupoid
  $\cat{WM}(B^\bullet, A^\bullet)$ of weak morphisms from
  $B^\bullet$ to $A^\bullet$, as
  \begin{equation*}
    \cat{WM}(B^\bullet, A^\bullet) \eqdef
    \catHom_{\twocat{Pic}(\s)} ({B^\bullet}\sptilde, {A^\bullet}\sptilde),
  \end{equation*}
  that is, as the groupoid of additive functors of Picard stacks
  from $[B^{-1}\to B^0]\sptilde \to [A^{-1}\to A^0]\sptilde$.

  Thus, there is a natural homomorphism:
  \begin{equation}
    \label{eq:84}
    \bicat{Ch}^{[-1,0]} (\s)\lto \twocat{Pic} (\s).
  \end{equation}
  This is just the composition of the natural inclusion of
  $\bicat{Ch}^{[-1,0]} (\s) \to \CM (\s)$ with $\CM (\s) \to
  \twocat{Gr\mbox{-}Stacks} (\s)$, factoring through
  $\twocat{Pic} (\s)$.
\end{paragr}
Finally, we can specialize the construction of the bicategory
$\CM (\s)$ in section~\ref{sec:bicat-cross-modul} to Picard
crossed modules in the sense of
section~\ref{sec:symm-picard-butt}.  We obtain in this way a
bicategory denoted $\picCM (\s)$ whose objects are Picard crossed
modules, and morphism groupoids from $H^\bullet$ to $G^\bullet$,
denoted $\cat{PicB}(H^\bullet, G^\bullet)$, are Picard
butterflies and their morphisms.
\begin{paragr}
  We can very well give the same definition of weak morphism of
  Picard crossed modules, by setting
  \begin{equation*}
    \cat{WM}(H^\bullet, G^\bullet) \eqdef
    \catHom_{\twocat{Pic}(\s)} ({H^\bullet}\sptilde, {G^\bullet}\sptilde),
  \end{equation*}
  where $H^\bullet$ and $G^\bullet$ are two objects of $\picCM
  (\s)$. There also is an obvious homomorphism
  \begin{equation}
    \label{eq:85}
    \picCM (\s) \lto
    \twocat{Pic} (\s).
  \end{equation}
\end{paragr}

\subsection{Deligne's results in "La formule de dualité globale"}
\label{sec:delignes-results}

Theorem~\ref{thm:2} remains true in the present context, in the
following alternative form:
\begin{theorem}[Theorem~\ref{thm:2} for Picard stacks]
  \label{thm:7}
  For two objects $B^\bullet$, $A^\bullet$ of
  $\bicat{Ch}^{[-1,0]}(\s)$ there is an equivalence of
  groupoids:
  \begin{equation*}
    \cat{WM}(B^\bullet, A^\bullet)  \lisoto
    \catHom (B^\bullet, A^\bullet).
  \end{equation*}
\end{theorem}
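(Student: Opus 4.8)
The plan is to deduce this from the general butterfly equivalence (Theorem~\ref{thm:2}) together with its braided refinement (Proposition~\ref{prop:18}), after matching up the abelian data on both sides. First I would record that a length-two complex of abelian sheaves, regarded as a crossed module with trivial action and trivial braiding, is a Picard crossed module in the sense of section~\ref{sec:symm-picard-butt}; by Proposition~\ref{prop:15} and the discussion there its associated gr-stack ${A^\bullet}\sptilde$ is a Picard stack, so the homomorphism~\eqref{eq:84} genuinely factors through $\twocat{Pic}(\s)$. Consequently both $\cat{WM}(B^\bullet,A^\bullet)=\catHom_{\twocat{Pic}(\s)}({B^\bullet}\sptilde,{A^\bullet}\sptilde)$ and $\catHom(B^\bullet,A^\bullet)$, the morphism groupoid of $\bicat{Ch}^{[-1,0]}(\s)$ (i.e.\ abelian butterflies), sit inside the corresponding groupoids for the underlying crossed modules, and it will suffice to see that the equivalence of Theorem~\ref{thm:2} restricts to an equivalence between these sub-groupoids.

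On the \emph{target} side, a $1$-morphism of $\twocat{Pic}(\s)$ between Picard stacks is an additive functor compatible with the braiding, that is, a braided weak morphism; since the braidings here are symmetric, braided compatibility is the only requirement and there is no further ``Picard'' condition on morphisms. Thus $\cat{WM}(B^\bullet,A^\bullet)$ is precisely the sub-groupoid of braided weak morphisms inside the groupoid of all additive functors ${B^\bullet}\sptilde\to{A^\bullet}\sptilde$. By Proposition~\ref{prop:18} this sub-groupoid corresponds, under the quasi-inverse functors of Theorem~\ref{thm:2}, exactly to the groupoid of \emph{braided} butterflies from $B^\bullet$ to $A^\bullet$ (section~\ref{sec:braided-butterflies}).

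On the \emph{source} side I would identify braided butterflies with abelian ones. A butterfly~\eqref{eq:83} in which $B^\bullet$ and $A^\bullet$ carry the trivial braiding is braided in the sense of Definition~\ref{def:16} if and only if $y^{-1}x^{-1}yx=\kappa\braid{\pi(x)}{\pi(y)}_H\,\imath\braid{\jmath(x)}{\jmath(y)}_G=1$ for all $x,y\in E$, i.e.\ if and only if $E$ is abelian; moreover, because all the actions are trivial the equivariance conditions~\eqref{eq:28} hold automatically. Hence a braided butterfly between these objects is exactly a diagram of abelian sheaves whose NE-SW diagonal is short exact and whose NW-SE diagonal is a complex, which is an abelian butterfly in the sense of Definition~\ref{def:17}. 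The same observation shows that a morphism of such butterflies is just a group isomorphism of the (abelian) centers commuting with the structure maps, matching the $2$-cells of $\bicat{Ch}^{[-1,0]}(\s)$. Combining the two identifications gives the asserted equivalence of groupoids.

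The main obstacle is precisely the target-side matching, and it is the reason for invoking Proposition~\ref{prop:18} rather than Theorem~\ref{thm:2} alone. An arbitrary additive functor between Picard stacks need \emph{not} be braided: by the general correspondence it is represented by a butterfly whose central group $E$ may be nonabelian (for instance a nontrivial central extension of $B^0$ by $A^{-1}$, compatible with~\eqref{eq:28} yet noncommutative), and such functors are excluded from $\twocat{Pic}(\s)$. Keeping the braided condition synchronized on both sides—on functors and on butterflies—is exactly what Proposition~\ref{prop:18} guarantees, and this is what makes the passage to the abelian, Picard setting clean; the remaining verifications (compatibility with composition, $2$-morphisms, and the symmetric gr-category structure on the hom-groupoids) are then routine and unchanged from the general case.
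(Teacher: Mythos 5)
Your proposal is correct, but it follows a genuinely different route from the paper's. The paper proves Theorem~\ref{thm:7} by \emph{re-running} the proof of Theorem~\ref{thm:2}---the explicit constructions $\Phi$ and $\Psi$---assuming everything is abelian and neglecting the actions, so that one direction visibly reduces to Deligne's proof of \cite[Lemme 1.4.13 (II)]{0259.14006}; the braided machinery of section~\ref{sec:abelian-butterflies} is never invoked there. You instead use Theorem~\ref{thm:2} and Proposition~\ref{prop:18} as black boxes: the equivalence restricts to the full sub-groupoids of braided weak morphisms and braided butterflies, and for complexes carrying the trivial braiding the condition of Definition~\ref{def:16} collapses to $y^{-1}x^{-1}yx=1$ for all $x,y\in E$, so braided butterflies are exactly the abelian butterflies of Definition~\ref{def:17} (with~\eqref{eq:28} then automatic). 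This is in effect the same mechanism the paper deploys later for the Corollary of section~\ref{sec:2-category-picard}, so your argument sits naturally inside the paper's framework; its advantage is that it makes explicit the point the paper's one-line proof glosses over, namely \emph{why} the center $B^0\times_{\gra}A^0$ produced by $\Psi$ is abelian: precisely because $1$-morphisms of $\twocat{Pic}(\s)$ are, as in Deligne, required to be compatible with the symmetric structure. Your counterexample remark is the right one---a non-abelian central extension of $B^0$ by $A^{-1}$ does satisfy~\eqref{eq:28}, since triviality of the actions only forces the images of $\imath$ and $\kappa$ to be central, and hence gives an additive functor isomorphic to no abelian butterfly, by full faithfulness of $\Phi$---and it resolves, in the only way that makes the theorem true, the ambiguity left by the paper's bare statement that $\twocat{Pic}(\s)$ is a sub-2-category of $\twocat{Gr\mbox{-}Stacks}(\s)$. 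What the paper's route buys in exchange is a self-contained abelian argument whose term-by-term comparison with Deligne's is immediate, which is the stated purpose of section~\ref{sec:delignes-results}.
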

In particular one direction---from weak morphisms to
butterflies---corresponds to \cite[Lemme 1.4.13
(II)]{0259.14006}. The proof is obtained by translating the proof
of Theorem~\ref{thm:2} to abelian butterflies by simply assuming
everything is abelian and neglecting the actions.  The reader
will be able to check that it reduces to Deligne's proof (in one
direction). In the other direction, our result provides a direct
converse to Deligne's 1.4.13~(II), different from \loccit,
Corollaire 1.4.17.

Proposition~\ref{prop:9} also remains true,
upon replacing ``gr-stack'' with ``Picard stack.'' Namely, we have:
\begin{proposition}[Proposition~\ref{prop:9} for Picard stacks]
  \label{prop:21}
  Let $\gra$ be a gr-stack. Then there exists a complex
  $A^\bullet \colon A^{-1}\to A^0$ such that $\gra$ is equivalent
  to the Picard stack $[A^{-1}\to A^0]\sptilde$.
\end{proposition}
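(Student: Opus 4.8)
The plan is to run the proof of Proposition~\ref{prop:9} essentially verbatim, the only substantive change being to replace the free group functor $F\langle-\rangle$ by the free \emph{abelian} group functor, so that the object sitting at the bottom of the crossed module is abelian from the outset. (Throughout, $\gra$ is to be read as a Picard stack, as the title of the statement indicates.) The first observation is that Lemma~\ref{lem:1} transfers without change: its proof uses only that $\gra$ is a stack in groupoids, not the group law, so it still yields a space $X$ over $\s$ together with an essentially surjective morphism $\pi_0\colon X\to \gra$.

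Next, in place of $G_0=F\langle X\rangle\sptilde$, I would set
\[
  A^0 = \ZZ\langle X\rangle\sptilde,
\]
the sheafification of the presheaf $U\rightsquigarrow \ZZ\langle X(U)\rangle$ of free abelian groups on the sets $X(U)$. The task is then to extend $\pi_0$ to an additive functor $\pi\colon A^0\to\gra$, mirroring Lemma~\ref{lem:2}: locally a section of $A^0$ is a word $x_1\cdots x_n$ in the generators, and one sets $\pi(x_1\cdots x_n)$ equal to a left-nested tensor product $(\cdots(x_1\otimes x_2)\otimes\cdots)\otimes x_n$. With $A^0$ abelian one now needs independence of this assignment not only under rebracketing but also under reordering of the factors, and this is exactly where the commutativity of $\gra$ must enter. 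I expect this to be the main obstacle, in that it is the one genuinely new point relative to Proposition~\ref{prop:9}; it is resolved by the coherence theorem for symmetric monoidal (Picard) categories, i.e.\ the same circle of Laplaza-type coherence results used in Lemma~\ref{lem:2} (cf.~\cite{MR723395}), now applied to composites of associativity \emph{and} symmetry constraints. Coherence guarantees that any two such composites between two orderings of the same product agree, so that $\pi$ is well defined and symmetric monoidal, hence additive.

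With $\pi$ in hand I would define $A^{-1}$ by the same homotopy-kernel square as $G_1$,
\[
  \vcenter{\xymatrix{
    A^{-1}\ar[r]^\del \ar[d] & A^0 \ar[d]^\pi \\
    \mathbf{1}\ar[r] & \gra
  }}
\]
so that $A^{-1}$ consists of pairs $(x,\alpha)$ with $x\in A^0$ and $\alpha\colon I\isoto\pi(x)$, with the multiplication $(x,\alpha)(y,\beta)=(xy,\alpha\beta)$ of the proof of Lemma~\ref{lem:2}. Since $A^0$ is abelian and $\gra$ is symmetric, this multiplication is commutative, so $A^{-1}$ is abelian; moreover, because $\gra$ is Picard, the $A^0$-action on $A^{-1}$ given by~\eqref{eq:40} reduces to the trivial one (this is precisely the reduction recorded at the end of section~\ref{sec:picard-cross-modul}, and can also be read off from Proposition~\ref{prop:15}). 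Hence $[A^{-1}\to A^0]$ carries no nontrivial crossed-module data: it is simply a complex of abelian sheaves, an object of $\cat{Ch}^{[-1,0]}(\s)$, whose associated stack is Picard.

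Finally, the equivalence $\gra\iso[A^{-1}\to A^0]\sptilde$ follows exactly as in Proposition~\ref{prop:9}: full faithfulness is built into the definition of $A^{-1}$, and essential surjectivity comes from that of $\pi_0$ and hence of $\pi$. I would close by noting that, as remarked after Proposition~\ref{prop:9}, this argument is the sheaf-theoretic analog of Deligne's \cite[Lemme~1.4.13~i]{0259.14006}.
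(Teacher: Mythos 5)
Your proposal is correct and follows essentially the same route as the paper, whose proof is exactly the two-step modification you carry out: replace the free group on $X$ by the free abelian group, and replace the Laplaza coherence argument by coherence for symmetric monoidal categories (the paper cites the argument of \cite{MR0338002} as used in \cite{0259.14006}, but the content is the same), after which the proof of Proposition~\ref{prop:9} goes through and reduces to Deligne's. Your additional verification that the action~\eqref{eq:40} trivializes and that $A^{-1}$ is abelian is a detail the paper leaves implicit, and it is handled correctly.
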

This is actually Lemme
1.4.13 (I) of \loccit  Again, the proof can be obtained from the
proof of Proposition~\ref{prop:9} by: (1)  replacing the
sheafification of the free group on a sheaf of sets with the free
abelian group thereof; (2) replacing the coherence
argument from~\cite{MR723395} as used above with the one
from~\cite{MR0338002} found in~\cite{0259.14006}.  In this way
the proof becomes essentially the same as the one found in
Deligne's work.

\subsection{2-category of Picard crossed modules}
\label{sec:2-category-picard}

A slightly different point of view on the relationship between
Proposition~\ref{prop:9} and~\cite[Lemme 1.4.13 (I)]{0259.14006}
is as follows. Combining the former with
Proposition~\ref{prop:15} and the considerations on Picard
butterflies in sect.~\ref{sec:symm-picard-butt}, we can state an
alternative version of Proposition~\ref{prop:21}:
\begin{proposition}[Proposition~\ref{prop:9} for Picard
  stacks---2\textsuperscript{nd} version]
  \label{prop:19}
  Let $\gra$ be a Picard stack. Then there exists a Picard
  crossed module $[G^{-1}\to G^0]$ such that $\gra$ is equivalent
  to the Picard stack $[G^{-1}\to G^0]\sptilde$.
\end{proposition}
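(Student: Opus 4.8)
The plan is to derive this from the general strictification of gr-stacks (Proposition~\ref{prop:9}) and then upgrade the crossed module it produces to a Picard one, using the dictionary between commutativity data on a gr-stack and on its crossed module set up in Propositions~\ref{prop:15} and~\ref{prop:16} and Definition~\ref{def:15}. Thus I would not reprove Deligne's construction (as in Proposition~\ref{prop:21}); instead I would feed the already-available results back into the general machinery.

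First I would forget the Picard structure and view $\gra$ as a bare gr-stack. Proposition~\ref{prop:9} then yields a crossed module $[G^{-1}\to G^0]$ and an equivalence of gr-stacks $\Phi\colon \gra\lisoto [G^{-1}\to G^0]\sptilde$; choosing an additive quasi-inverse $\Psi$, I would transport the braiding $s$ of $\gra$ to a braiding $s'$ on $[G^{-1}\to G^0]\sptilde$ by conjugating $s_{\Psi(U),\Psi(V)}$ with the additivity constraints of $\Phi$ and $\Psi$. The only point that genuinely requires care---and the main obstacle---is checking that this transported $s'$ is again a braiding and that it inherits the symmetry identity~\eqref{eq:63} and the Picard identity~\eqref{eq:64}; this is a formal verification against the pentagon and hexagon coherences, amounting to the assertion that ``symmetric'' and ``Picard'' are properties stable under additive equivalence. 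Granting this, $[G^{-1}\to G^0]\sptilde$ is a Picard stack equivalent to $\gra$.

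It then remains to descend the Picard structure from the associated gr-stack to the crossed module itself. By Proposition~\ref{prop:15} the braidedness of $[G^{-1}\to G^0]\sptilde$ is equivalent to $[G^{-1}\to G^0]$ being a braided crossed module, realized by a braiding butterfly~\eqref{eq:66} with braiding map $\braid{-}{-}\colon G^0\times G^0\to G^{-1}$. Proposition~\ref{prop:16} identifies the symmetry of $s'$ with the symmetry property of that butterfly (Definition~\ref{def:14}), and the analysis of section~\ref{sec:picard-cross-modul} identifies the Picard identity~\eqref{eq:64} with the condition $\Delta^*\psi=\id$, i.e.\ with $[G^{-1}\to G^0]$ being a Picard crossed module in the sense of Definition~\ref{def:15}. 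Since $s'$ is symmetric and Picard, $[G^{-1}\to G^0]$ is therefore a Picard crossed module, and $\Phi$ exhibits $\gra$ as equivalent, as a Picard stack, to $[G^{-1}\to G^0]\sptilde$, which is the claim.
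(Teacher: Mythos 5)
Your proposal is correct and follows essentially the same route as the paper, which obtains Proposition~\ref{prop:19} precisely by combining Proposition~\ref{prop:9} with Proposition~\ref{prop:15} and the symmetric/Picard butterfly analysis of sections~\ref{sec:symm-picard-butt} and~\ref{sec:picard-cross-modul}. The transport-of-structure step you isolate (that braided, symmetric, and Picard structures carry over along an additive equivalence and its quasi-inverse) is left implicit in the paper---it is the ``hence, any'' in Proposition~\ref{prop:15}---so your write-up just makes explicit what the paper compresses into one sentence.
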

The crossed module obtained in this way is not necessarily an
object of $\bicat{Ch}^{[-1,0]}(\s)$.  However, it is
\emph{equivalent} to one. More precisely, the combination of
Propositions~\ref{prop:21} and~\ref{prop:19} guarantees that
given a Picard crossed module $[G^{-1}\to G^0]$ one can find a
complex $A^{-1}\to A^0$ of abelian sheaves such that there is an
equivalence
\begin{equation*}
  [G^{-1}\to G^0]\sptilde \lisoto [A^{-1}\to A^0]\sptilde
\end{equation*}
and that moreover this equivalence can be realized by a reversible
butterfly, say $[G^\bullet, P, A^\bullet]$.

The foregoing discussion has the following consequence, which can
be considered as an alternative statement for Theorem~\ref{thm:7}:
\begin{corollary}
  For two objects $H^\bullet$, $G^\bullet$ of $\picCM (\s)$ there
  is an equivalence of groupoids:
  \begin{equation*}
    \cat{WM} (H^\bullet, G^\bullet) \lisoto
    \cat{PicB}(H^\bullet, G^\bullet) .
  \end{equation*}
\end{corollary}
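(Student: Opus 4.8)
The plan is to reduce this final Corollary to the machinery already assembled, rather than to prove it from scratch. The statement asserts an equivalence of groupoids $\cat{WM}(H^\bullet, G^\bullet) \lisoto \cat{PicB}(H^\bullet, G^\bullet)$ for two Picard crossed modules, where by definition $\cat{WM}(H^\bullet, G^\bullet) = \catHom_{\twocat{Pic}(\s)}({H^\bullet}\sptilde, {G^\bullet}\sptilde)$. The key point is that this is Theorem~\ref{thm:7} (equivalently Theorem~\ref{thm:2}) together with the observation that the Picard condition is preserved and reflected on both sides under the correspondence between weak morphisms and butterflies.

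First I would set up the comparison by invoking the discussion just preceding the corollary: given a Picard crossed module $H^\bullet$ (resp.\ $G^\bullet$), the combination of Propositions~\ref{prop:21} and~\ref{prop:19} provides a complex of abelian sheaves---call it $\hat H^\bullet$ (resp.\ $\hat G^\bullet$), an object of $\bicat{Ch}^{[-1,0]}(\s)$---together with a \emph{reversible} butterfly realizing an equivalence ${H^\bullet}\sptilde \lisoto {\hat H^\bullet}\sptilde$ (resp.\ for $G$). By Lemma~\ref{lem:8} these reversible butterflies are invertible $1$-morphisms in the relevant bicategory, so composition with them induces equivalences of the Hom-groupoids on both the weak-morphism side and the butterfly side. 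Concretely, $\cat{WM}(H^\bullet, G^\bullet) \iso \catHom_{\twocat{Pic}(\s)}({\hat H^\bullet}\sptilde, {\hat G^\bullet}\sptilde) = \cat{WM}(\hat H^\bullet, \hat G^\bullet)$, and correspondingly $\cat{PicB}(H^\bullet, G^\bullet) \iso \cat{PicB}(\hat H^\bullet, \hat G^\bullet) = \catHom(\hat H^\bullet, \hat G^\bullet)$ in the notation of Theorem~\ref{thm:7}. Thus it suffices to establish the equivalence for complexes $\hat H^\bullet, \hat G^\bullet$ in $\bicat{Ch}^{[-1,0]}(\s)$, which is exactly the content of Theorem~\ref{thm:7}.

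What remains, and what I expect to be the main obstacle, is to verify that the equivalence $\Phi$ of Theorem~\ref{thm:2}/\ref{thm:7} restricts to an equivalence between the \emph{Picard} subgroupoids: that a butterfly is Picard (in the sense of Definitions~\ref{def:14} and~\ref{def:15}, i.e.\ braided, symmetric, and with $\Delta^*\psi = \id$) precisely when the corresponding additive functor is a morphism of Picard stacks. The forward and backward directions of this follow from Proposition~\ref{prop:18}, which says $[H_\bullet, E, G_\bullet]$ is braided if and only if the corresponding weak morphism is a morphism of braided gr-stacks, combined with Lemma~\ref{lem:7}: since the comparison butterflies are reversible, Lemma~\ref{lem:7} guarantees that the symmetric (resp.\ Picard) condition is transported faithfully in both directions along them. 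Because $\Phi$ is already fully faithful and essentially surjective (Theorem~\ref{thm:2}), and because it matches the braided structure by Proposition~\ref{prop:18}, restricting to the Picard butterflies on the source and to morphisms of Picard stacks on the target yields a fully faithful, essentially surjective functor; hence the desired equivalence.

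I would present this as a short proof: cite Theorem~\ref{thm:7} for the case of complexes, cite the reversible-butterfly reduction (Lemma~\ref{lem:8}, Propositions~\ref{prop:21} and~\ref{prop:19}) to pass from arbitrary Picard crossed modules to honest complexes, and cite Propositions~\ref{prop:15}, \ref{prop:18} and Lemma~\ref{lem:7} to confirm that the Picard commutativity data are matched on both sides. The one place requiring genuine (though routine) care is checking that a morphism of Picard butterflies in the sense of $\cat{PicB}$ corresponds under $\Phi$ to a morphism of \emph{Picard-additive} functors---that the $2$-morphisms respect the symmetry isomorphism $\psi$---but this is a diagram-chase entirely parallel to the braided case handled in the proof of Proposition~\ref{prop:18}, and I would indicate it is left to the reader, as the paper does for analogous verifications.
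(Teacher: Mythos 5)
Your proposal is correct and follows essentially the same route the paper intends: the paper offers no separate proof, deriving the corollary from ``the foregoing discussion,'' which is exactly your chain---replace $H^\bullet$, $G^\bullet$ by complexes $\hat H^\bullet$, $\hat G^\bullet$ via the reversible butterflies of Propositions~\ref{prop:21} and~\ref{prop:19}, apply Theorem~\ref{thm:7} there, and transport the symmetric/Picard data back using Propositions~\ref{prop:15}, \ref{prop:18} and Lemma~\ref{lem:7}. Your explicit attention to the fact that the correspondence must match braided/Picard structures (not just underlying additive functors) is precisely the point the paper leaves implicit, and your handling of it is sound.
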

Putting all together we immediately obtain the following:
\begin{proposition}
  \label{prop:22}
  There are biequivalences
  \begin{equation*}
    \twocat{Pic} (\s) \iso \bicat{Ch}^{[-1,0]} (\s) \iso \picCM (\s)
  \end{equation*}
  induced by the homomorphisms~\eqref{eq:84}, \eqref{eq:85}.
\end{proposition}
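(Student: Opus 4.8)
The plan is to recognize Proposition~\ref{prop:22} as the mere assembly of the local (Hom-groupoid) comparison results and the essential-surjectivity results already established. The tool is the standard criterion (see~\cite{MR0220789}) that a homomorphism of bicategories is a biequivalence if and only if it is biessentially surjective on objects and induces an equivalence of groupoids on each pair of objects. I would apply this criterion separately to the two homomorphisms \eqref{eq:84} and \eqref{eq:85}, both of which target $\twocat{Pic}(\s)$, and then chain the resulting biequivalences through quasi-inverses to produce the displayed equivalences $\twocat{Pic}(\s) \iso \bicat{Ch}^{[-1,0]}(\s) \iso \picCM(\s)$.

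First I would treat \eqref{eq:84}. By construction this homomorphism sends a length-two complex $A^\bullet$ to its associated Picard stack ${A^\bullet}\sptilde$ and an abelian butterfly to the corresponding weak morphism, so its effect on Hom-groupoids is exactly the comparison functor $\catHom(B^\bullet, A^\bullet) \to \cat{WM}(B^\bullet, A^\bullet)$. Theorem~\ref{thm:7} asserts precisely that this functor is an equivalence for all pairs $B^\bullet, A^\bullet$, which gives $2$-full-faithfulness. Biessential surjectivity on objects is Proposition~\ref{prop:21}: every Picard stack is equivalent to one of the form ${A^\bullet}\sptilde$. Hence \eqref{eq:84} is a biequivalence.

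The homomorphism \eqref{eq:85} is handled identically. Its effect on Hom-groupoids is the comparison functor $\cat{PicB}(H^\bullet, G^\bullet) \to \cat{WM}(H^\bullet, G^\bullet)$, which is an equivalence by the Corollary stated immediately above (itself extracted from the preceding discussion, in particular Theorem~\ref{thm:7} together with Propositions~\ref{prop:15} and~\ref{prop:19}); this supplies $2$-full-faithfulness. Biessential surjectivity on objects is Proposition~\ref{prop:19}, which produces, for an arbitrary Picard stack, an equivalent Picard crossed module. Thus \eqref{eq:85} is a biequivalence as well.

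To conclude, I would choose quasi-inverse homomorphisms for the two biequivalences above and compose one with the appropriate forward homomorphism to realize $\bicat{Ch}^{[-1,0]}(\s) \iso \picCM(\s)$ directly; since biequivalences are closed under composition and quasi-inversion, this delivers the entire chain. The only point deserving more than routine bookkeeping is the invocation of the bicategorical criterion itself: one must check that the structural $2$-cells of \eqref{eq:84} and \eqref{eq:85} are invertible, so that these are genuine homomorphisms rather than merely lax functors---which is immediate, since both arise from stackification and the juxtaposition of butterflies is associative up to coherent isomorphism---and one must appeal to the fact that "local equivalence plus biessential surjectivity" does assemble into a weak (bicategorical) quasi-inverse. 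This coherence fact is standard, so I expect no serious obstacle; the mathematical substance has already been discharged in Theorem~\ref{thm:7} and Propositions~\ref{prop:21} and~\ref{prop:19}.
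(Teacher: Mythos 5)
Your proposal is correct and matches the paper's own reasoning: the paper derives Proposition~\ref{prop:22} as an immediate consequence (``putting all together'') of Theorem~\ref{thm:7} and Proposition~\ref{prop:21} for the homomorphism~\eqref{eq:84}, and of the Corollary on $\cat{PicB}$ together with Proposition~\ref{prop:19} for~\eqref{eq:85}, exactly as you do. Your explicit invocation of the bicategorical criterion (local equivalence on Hom-groupoids plus biessential surjectivity) and the chaining through quasi-inverses simply spells out what the paper leaves implicit.
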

Analogously to ref.\ \cite[Proposition 1.4.15]{0259.14006}, it
results from the above that there is an equivalence of categories
between any of $\twocat{Pic}(\s)^\flat$,
$\bicat{Ch}^{[-1,0]}(\s)^\flat$, or $\picCM (\s)^\flat$, and
$\cat{D}^{[-1,0]}(\s)$, the sub-category of the derived
category of $\cat{Ch}(\s)$ consisting of complexes $K^\bullet$
with $\H^i(K^\bullet)\neq 0$ only for $i= -1,0$.

\subsection{Stacky analogs}
\label{sec:stacky-analogs}

Many of the constructions of the previous sections can be
sheafified over $\s$.

For two objects $A^\bullet$ and $B^\bullet$ of
$\bicat{Ch}^{[-1,0]}(\s)$, the sheaf-theoretic counterpart of the
groupoid $\catHom (B^\bullet, A^\bullet)$, denoted $\shcatHom
(B^\bullet, A^\bullet)$, is obtained in the same way as its
non-abelian counterpart $\stb (-,-)$ in
section~\ref{sec:stacks-butt-weak}, namely by assigning to every
object $U$ of $\s$ the groupoid $\catHom (B^\bullet\rvert_U,
A^\bullet\rvert_U)$.

The same proof as the one for Proposition~\ref{prop:7} yields
\begin{proposition}
  \label{prop:20}
  $\shcatHom (B^\bullet, A^\bullet)$ is a stack over $\s$.
\end{proposition}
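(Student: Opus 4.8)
The plan is to transcribe the proof of Proposition~\ref{prop:7} into the abelian setting, since an abelian butterfly (Definition~\ref{def:17}) is just the diagram~\eqref{eq:83} of abelian sheaves subject to conditions that are \emph{weaker} than those of Definition~\ref{def:7}: one keeps short-exactness of the NE-SW diagonal $A^{-1}\overset{\imath}{\lto} E\overset{\pi}{\lto} B^0$ and the complex condition on the NW-SE diagonal $B^{-1}\overset{\kappa}{\lto} E\overset{\jmath}{\lto} A^0$, but drops all the equivariance requirements. First I would record that $\shcatHom(B^\bullet, A^\bullet)$ is fibered over $\s$ and is a prestack: its objects and arrows are finite diagrams of morphisms of abelian sheaves, and such morphisms pull back along arrows of $\s$ and already form a stack, hence a prestack. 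Consequently the sole content to be verified is the effectiveness of descent on objects.

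For that, I would fix a local epimorphism $Y\to U$ and a descent datum: an abelian butterfly $E'$ over $Y$ from $(B^\bullet)_Y$ to $(A^\bullet)_Y$, together with a morphism of butterflies $\phi\colon d_0^*E'\lto d_1^*E'$ over $\cech Y_1 = Y\times_U Y$ satisfying $d_1^*\phi = d_2^*\phi\circ d_0^*\phi$ over $\cech Y_2$. Since abelian sheaves themselves form a stack, the central object $E'$, regarded merely as an abelian sheaf, glues to an abelian sheaf $E$ over $U$ together with an isomorphism $\psi\colon E'\isoto E_Y$ satisfying $d_1^*\psi\circ\phi = d_0^*\psi$. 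Each structural map then glues by the prestack property exactly as in Proposition~\ref{prop:7}: for instance $\phi\circ d_0^*\imath' = d_1^*\imath'$ yields $d_0^*(\psi\circ\imath') = d_1^*(\psi\circ\imath')$, producing a unique $\imath\colon A^{-1}\lto E$ over $U$; the maps $\kappa$, $\pi$, $\jmath$ and the commutativity of the two squares are obtained in the identical fashion.

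The only point not already spelled out in the proof of Proposition~\ref{prop:7} is that the glued diagram must again satisfy Definition~\ref{def:17}, and here I would simply invoke the local nature of both conditions. The NW-SE composite $\jmath\circ\kappa\colon B^{-1}\lto A^0$ vanishes over $U$ because it vanishes after pullback to $Y$ (an equality of two maps of abelian sheaves is a local condition), so that diagonal is a complex; and the NE-SW sequence $A^{-1}\lto E\lto B^0$ is short exact over $U$ precisely because it is short exact over $Y$, exactness of a sequence of abelian sheaves being testable on a generalized cover. Hence $[B^\bullet, E, A^\bullet]$ is an abelian butterfly over $U$ restricting through $\psi$ to the given datum, and morphisms of descent data glue by the prestack property, which finishes the argument. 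I anticipate no genuine obstacle: passing to the abelian category only removes the equivariance relations~\eqref{eq:28}, and the descent of short-exactness is the lone ingredient beyond a verbatim repetition of Proposition~\ref{prop:7}.
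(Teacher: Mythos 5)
Your proposal is correct and follows exactly the route the paper takes: the paper's own proof of Proposition~\ref{prop:20} is literally the statement that ``the same proof as the one for Proposition~\ref{prop:7}'' applies, which is the transcription you carry out. Your added remark that the complex condition and the short-exactness of the NE-SW diagonal can be tested after pullback along a generalized cover is a correct (and welcome) explicit justification of a point the paper leaves implicit.
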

Note that by virtue of the remarks on the additivity of the
abelian butterflies, it follows that $\shcatHom (B^\bullet,
A^\bullet)$ is a symmetric gr-stack.

There is an obvious fibered analog $\ch (\s)$ of
$\bicat{Ch}^{[-1,0]}(\s)$ defined exactly as $\cm (\s)$ in
section~\ref{sec:bist-cross-modul}. Recall that for each object
$U$ of $\s$ its fiber over $U$ is $\bicat{Ch}^{[-1,0]}
(\s/U)$. Again, this is a fibration in bicategories and, by the
previous proposition, $\ch (\s)$ is a pre-bistack.  Analogously
to Theorem~\ref{thm:5}, we have
\begin{theorem}
  \label{thm:8}
  $\ch (\s)$ is a bistack.
\end{theorem}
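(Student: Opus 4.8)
The plan is to follow the proof of Theorem~\ref{thm:5} almost verbatim, substituting Picard stacks for gr-stacks and length-two complexes of abelian sheaves for crossed modules throughout. The role played there by $\grstacks(\s)$ will be played here by $\picstacks(\s)$, the fibered 2-category whose fiber over $U\in\Ob\s$ is the 2-category $\twocat{Pic}(\s/U)$ of Picard stacks over $\s/U$, with the evident pullback homomorphisms along arrows $V\to U$. The role of the biequivalence $F$ of~\eqref{eq:39} will be played by the fibered version of the homomorphism~\eqref{eq:84}, sending a complex $A^\bullet$ to its associated Picard stack $A^\bullet\sptilde$.

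First I would establish that $\picstacks(\s)$ is a 2-stack. It is a sub-2-category of $\grstacks(\s)$, which is a 2-stack by Theorem~\ref{thm:4}, so the only point to check is that a 2-descent datum all of whose constituents are Picard admits an effective gluing object that is again Picard. Since $\grstacks(\s)$ is a 2-stack, the datum glues to a gr-stack $\gra$ over $U$; the braiding, symmetry, and Picard constraints~\eqref{eq:62}, \eqref{eq:63}, \eqref{eq:64} are expressed by (commuting) diagrams of morphisms of stacks that are themselves local data subject to the same descent, so $\gra$ inherits a Picard structure. Hence 2-descent for Picard objects is effective and $\picstacks(\s)$ is a 2-stack.

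Next, the homomorphism $\ch(\s)\to\picstacks(\s)$, $A^\bullet\mapsto A^\bullet\sptilde$, is 2-faithful: on morphism groupoids it is the equivalence of Theorem~\ref{thm:7}, which sheafifies to an equivalence between the morphism stacks $\shcatHom(B^\bullet,A^\bullet)$ (a stack by Proposition~\ref{prop:20}) and the corresponding stacks of weak morphisms, exactly as in the non-abelian case treated in Proposition~\ref{prop:7} and Corollary~\ref{cor:1}. It is essentially surjective by Proposition~\ref{prop:21}: every Picard stack over $\s/U$ is equivalent to $A^\bullet\sptilde$ for some complex $A^\bullet$ in $\bicat{Ch}^{[-1,0]}(\s/U)$. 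Therefore the homomorphism is a biequivalence.

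Finally, a fibered bicategory that is biequivalent to a 2-stack is itself a bistack, since the biequivalence transports effective 2-descent back along it (this is the same formal step used to deduce that $\cm(\s)$ is a bistack from the biequivalence of Theorem~\ref{thm:5}). As $\picstacks(\s)$ is a 2-stack, $\ch(\s)$ is a bistack. The only genuinely substantive step is the descent of the Picard structure in the second paragraph; everything else is the abelian specialization of arguments already carried out for $\cm(\s)$, and I expect it to be the main obstacle only insofar as one must check that the symmetry and $s_{X,X}=\id$ conditions, being equalities of morphisms over a prestack, persist after gluing.
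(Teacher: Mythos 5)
Your proposal is correct, but it takes a genuinely different route from the paper. The paper never proves that $\picstacks(\s)$ is a 2-stack (it only asserts this in passing); instead it glues the 2-descent datum inside $\cm(\s)$ using Theorem~\ref{thm:5}, obtaining a bare crossed module $G^\bullet$ over $U$ with no a priori commutativity, then \emph{constructs} the Picard structure on $G^\bullet$ by transporting it through the reversible comparison butterfly $[A^\bullet, F, G^\bullet\rvert_{Y_0}]$ (Lemmas~\ref{lem:6} and~\ref{lem:7}), shows that the two braidings so induced on $G^\bullet\rvert_{Y_1}$ coincide by means of a uniqueness claim (two braidings on the target of an essentially surjective braided butterfly must agree), and finally rectifies the resulting Picard crossed module to a genuine complex of abelian sheaves via Proposition~\ref{prop:22}. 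You instead glue at the level of stacks: Theorem~\ref{thm:4} produces the underlying gr-stack, the braiding and the symmetry/Picard identities descend, and the bistack property is then transported along the fibered biequivalence built from Theorem~\ref{thm:7}, Proposition~\ref{prop:20}, and Proposition~\ref{prop:21} --- the same formal step the paper uses to conclude Theorem~\ref{thm:5}. Your route buys a proof of the statement the paper leaves unproved, namely that $\picstacks(\s)$ is a 2-stack, and it trades the paper's uniqueness Claim for the more formal gluing of the braiding 2-morphism (legitimate because hom-categories in $\grstacks(\s)$ form stacks); the paper's route stays entirely within the butterfly calculus and its Claim is of independent interest. One point you should make explicit: your descent of the braiding works only because the 1-morphisms of the descent datum, being morphisms in $\twocat{Pic}$, are additive functors compatible with the commutativity constraints (Deligne's convention). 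With arbitrary additive functors as morphisms of Picard stacks the braiding would \emph{not} descend; this is the same convention that Theorem~\ref{thm:7} itself requires (a non-symmetric additive functor between Picard stacks corresponds to a butterfly with non-abelian center), so it is not a gap, but your phrase ``subject to the same descent'' hides exactly this input.
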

\begin{proof}[Sketch of the proof]
  We need only show that the 2-descent condition for objects
  holds. To do this, we can rely upon Theorem~\ref{thm:5} to
  conclude that given a 2-descent data relative to $\ch$ are
  effective in $\cm (\s)$.

  Thus, let $\lbrace Y_\bullet, A^\bullet, E, \alpha\rbrace$ be a
  2-descent datum relative to a hypercover $Y_\bullet\to U$ as
  in~\ref{prgr:2}, but with all data in $\ch (\s)$. It glues to
  an object $G^\bullet$ of $\cm (\s)_U = \CM (\s /U)$.

  From~\ref{prgr:2}, there is a reversible butterfly
  \begin{math}
    [A^\bullet , F, G^\bullet\rvert_{Y_0}]
  \end{math}
  which implies, using Lemmas~\ref{lem:6} and~\ref{lem:7}, that
  $G^\bullet$ can be made Picard---and $F$ automatically braided.
  
  When we pull back to $Y_1$, using the descent condition, we
  potentially obtain \emph{two} different Picard structures on
  $G^\bullet\rvert_{Y_1}$. One results from $[d_0^*A^\bullet,
  d_0^*F, G^\bullet\rvert_{Y_1}]$, the other from the composition
  of $[d_0^*A^\bullet, E, d_1^*A^\bullet]$ with $[d_1^*A^\bullet,
  d_1^*F, G^\bullet\rvert_{Y_1}]$. It is easy to verify they are
  in fact the same. More generally:
  \begin{claim}
    Let $H^\bullet$ be braided by $\braid{-}{-}_H$, and
    $G^\bullet$ have \emph{two} braided structures
    $\braid{-}{-}_G$ and $\braid{-}{-}_G'$.  Let $E$ (resp.\
    $E'$) sit in a braided butterfly from $(H^\bullet,
    \braid{-}{-}_H)$ to $(G^\bullet, \braid{-}{-}_G)$ (resp.\
    $(G^\bullet, \braid{-}{-}_G)$).

    Assume $E\isoto E'$. If $E$ (hence $E'$) corresponds to an
    essentially surjective butterfly, then
    \begin{equation*}
      \braid{-}{-}_G = \braid{-}{-}_G'.
    \end{equation*}
  \end{claim}
  It follows from the claim that the Picard structure on
  $G^\bullet$ descends to $U$. Thus
  appealing to~\ref{prop:22}, we can conclude that $G^\bullet$ is
  equivalent to a complex $B^\bullet\colon B^{-1}\to B^0$, i.e.\
  an object of $\ch (\s)$ over $U$.
\end{proof}
There are obvious fibered analogs of what we have just discussed
for Picard crossed modules and Picard stacks, yielding the
bistack $\piccm (\s)$ and the 2-stack $\picstacks (\s)$,
respectively, whose fibers over $U$ are $\picCM (\s/U)$ and
$\twocat{Pic}(\s/U)$.

The homomorphisms~\eqref{eq:84}, \eqref{eq:85}, appropriately
localized, provide morphisms of bistacks
\begin{equation*}
  \ch (\s)\lto \picstacks (\s),\qquad
  \piccm (\s) \lto
  \picstacks (\s).
\end{equation*}
and finally the stack analog of Proposition~\ref{prop:22} is
\begin{proposition}
  \label{prop:23}
  There are biequivalences
  \begin{equation*}
    \picstacks (\s) \iso \ch (\s) \iso \piccm (\s).
  \end{equation*}
\end{proposition}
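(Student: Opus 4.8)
The plan is to obtain Proposition~\ref{prop:23} as the fibered analog of Proposition~\ref{prop:22}, exactly as Theorem~\ref{thm:5} is the bistack analog of the bicategorical statement Theorem~\ref{thm:3}. The two morphisms in question are the localized versions of the homomorphisms~\eqref{eq:84} and~\eqref{eq:85}: over an object $U\in\Ob\s$ they are nothing but the homomorphisms $\bicat{Ch}^{[-1,0]}(\s/U)\lto\twocat{Pic}(\s/U)$ and $\picCM(\s/U)\lto\twocat{Pic}(\s/U)$ furnished by Proposition~\ref{prop:22} for the site $\s/U$. They are Cartesian by construction, being built from the associated-stack functor, which commutes with the pullbacks $\s/V\to\s/U$ attached to arrows $V\to U$ of $\s$.

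First I would record that all three fibered bicategories are of the expected type. The fibered 2-category $\picstacks(\s)$ is a full sub-object of $\grstacks(\s)$; since being Picard is a local (pointwise) condition on a gr-stack in the sense of section~\ref{sec:symm-picard-butt}, it is stable under pullback and descends, so $\picstacks(\s)$ is a sub-2-stack of the 2-stack $\grstacks(\s)$ (Theorem~\ref{thm:4}), hence itself a 2-stack. That $\ch(\s)$ is a bistack is Theorem~\ref{thm:8}; that $\piccm(\s)$ is a bistack follows in the same manner, being cut out of the bistack $\cm(\s)$ (Theorem~\ref{thm:5}) by the Picard condition, which is again local.

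The core step is the principle that a Cartesian homomorphism of fibered bicategories which is a biequivalence on every fiber is itself a biequivalence. Concretely, full faithfulness at the level of $1$- and $2$-morphisms is a fiberwise statement, so it transfers directly from the fiber biequivalences of Proposition~\ref{prop:22}; essential surjectivity need only hold locally, and this is supplied by Proposition~\ref{prop:22} applied over each $\s/U$ (in effect by the local instances of Propositions~\ref{prop:19} and~\ref{prop:21}), together with the $2$-descent data available because the sources and targets are bistacks. Since the proof of Proposition~\ref{prop:22}---and of its inputs Theorem~\ref{thm:7} and Proposition~\ref{prop:19}---uses no special feature of $\s$, it applies verbatim over $\s/U$ for every $U$, which is precisely what fiberwise biequivalence requires. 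Invoking the principle for each of the two localized homomorphisms then yields the two biequivalences in the statement.

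The main obstacle is the purely $2$-categorical bookkeeping hidden in this last step: assembling the fiberwise quasi-inverses provided by Proposition~\ref{prop:22} into a single Cartesian homomorphism, while controlling the coherence isomorphisms and the fact that essential surjectivity is only local (up to a cover). This is where the bistack structures established above do the real work---exactly as in the proof of Theorem~\ref{thm:5}, one trades a direct global construction for the combination of fiberwise biequivalence and effectivity of $2$-descent. Once this is granted no further computation is needed, since the explicit formulas for the abelian butterflies, their addition, and the Picard structure are identical to the pointwise case already treated and require no change in passing to sheaves over $\s$.
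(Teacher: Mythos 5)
Your proposal is correct and takes essentially the same route as the paper, which states Proposition~\ref{prop:23} without a separate proof, presenting it precisely as the ``stack analog'' of Proposition~\ref{prop:22} obtained by localizing the homomorphisms~\eqref{eq:84} and~\eqref{eq:85} over each $U\in\Ob\s$, with the bistack structures supplied by Theorem~\ref{thm:8} and the local nature of the Picard condition. One minor simplification: since Proposition~\ref{prop:22} applies verbatim to the site $\s/U$, the localized homomorphisms are honest biequivalences on every fiber, so your appeal to $2$-descent to repair essential surjectivity is superfluous---a Cartesian homomorphism of fibered bicategories that is fiberwise a biequivalence is already a biequivalence.
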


We conclude with the following analogs of \cite[Proposition
12.3]{Noohi:weakmaps}.  For two abelian groups $A,B$ over $\s$
define
\begin{equation*}
  \catExt (B,A)
\end{equation*}
to be the groupoid whose objects are extensions of $B$ by $A$ and
whose morphisms are isomorphisms of extensions. 

Let $A^\bullet \colon A^{-1}\to A^0$ and $B^\bullet \colon
B^{-1}\to B^0$ be two objects of $\bicat{Ch}^{[-1,0]}
(\s)$. There is an obvious forgetful morphism of groupoids
\begin{equation*}
  \catHom (B^\bullet, A^\bullet) \lto \catExt (B^0, A^{-1})
\end{equation*}
which sends a butterfly to its NE-SW diagonal.
\begin{proposition}
  \label{prop:1}
  The above map is a fibration. Its homotopy kernel is equivalent
  to $\catHom (B^\bullet, A^\bullet)_{\mathrm{str}}$ and the sequence
  \begin{equation*}
    \catHom (B^\bullet, A^\bullet)_{\mathrm{str}} \lto
    \catHom (B^\bullet, A^\bullet)\lto
    \catExt (B^0, A^{-1})
  \end{equation*}
  is a short-exact sequence (in the sense of
  section~\ref{sec:exact-sequences}) of symmetric gr-categories.
\end{proposition}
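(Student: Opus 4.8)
The plan is to split the proposition into its three assertions and verify them in order: that the forgetful functor $F\colon \catHom(B^\bullet,A^\bullet)\to \catExt(B^0,A^{-1})$, sending an abelian butterfly~\eqref{eq:83} to its NE-SW diagonal, is a fibration; that its homotopy kernel $\shcatKer(F)$ is $\catHom(B^\bullet,A^\bullet)_{\mathrm{str}}$; and that the resulting three-term sequence is short-exact in the sense of Definitions~\ref{def:12} and~\ref{def:13}. Since everything takes place in the abelian category of abelian sheaves, all the equivariance and normality conditions of Definition~\ref{def:7} collapse and only the conditions of Definition~\ref{def:17} remain, which is exactly what makes the three groupoids symmetric gr-categories; the group law is Baer sum of extensions on the target and the (abelian) addition of butterflies on the source, and one checks at the outset that $F$ is additive for these structures.

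First I would establish the fibration property by transport of structure. Given a butterfly $E$ with diagonal $\mathcal E=(A^{-1}\overset{\imath}{\to}E\overset{\pi}{\to}B^0)$ and an isomorphism of extensions $\phi\colon \mathcal E\isoto \mathcal E'$, I equip the middle term $E'$ of $\mathcal E'$ with the butterfly structure $\kappa'=\phi\circ\kappa$ and $\jmath'=\jmath\circ\phi^{-1}$. Because $\phi$ is an isomorphism of extensions one has $\pi'\circ\phi=\pi$ and $\phi\circ\imath=\imath'$, so the relations $\pi'\circ\kappa'=d_B$, $\jmath'\circ\imath'=d_A$ and $\jmath'\circ\kappa'=0$ follow immediately from the corresponding relations for $E$. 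Thus $E'$ is a genuine butterfly and $\phi$ is a morphism of butterflies $E\to E'$ lying over the prescribed arrow of $\catExt(B^0,A^{-1})$. This produces the required cartesian lift, so $F$ is a fibration; as the construction is manifestly natural and additive, $F$ is a fibration of symmetric gr-categories.

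Next I would identify the homotopy kernel. An object of $\shcatKer(F)$ is a pair $(E,f)$ with $E$ a butterfly and $f\colon F(E)\isoto I$ an isomorphism of its diagonal extension onto the split extension $I=A^{-1}\oplus B^0$; such an $f$ is precisely a trivialization, equivalently a splitting $s\colon B^0\to E$ of $\pi$. By the analysis of splittable butterflies in section~\ref{sec:strict-morph-butt} (cf.\ Proposition~\ref{prop:6}), a butterfly equipped with a choice of splitting determines and is determined by a unique strict morphism $(f^{-1},f^0)$, with $f^0=\jmath\circ s$ and $f^{-1}$ the difference between $\kappa$ and $s\circ d_B$; and morphisms of trivialized butterflies correspond under this dictionary to chain homotopies, via the abelianization of the formula $(x,g)\mapsto(x,\gamma_x^{-1}g)$ at the end of \loccit Hence $\bar G\colon \catHom(B^\bullet,A^\bullet)_{\mathrm{str}}\to \shcatKer(F)$ is an equivalence, which is exactly the left-exactness of Definition~\ref{def:13}; feeding this into Lemma~\ref{lem:3} and Proposition~\ref{prop:11} produces the six-term exact sequence of homotopy groups, whose terms one recognizes, through Theorem~\ref{thm:7}, as the expected $\Hom$ and $\Ext$ groups.

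The delicate point, and the step I expect to be the main obstacle, is the essential surjectivity of $F$ that distinguishes a genuine extension from a merely left-exact sequence in Definition~\ref{def:13}. Completing a prescribed extension $\mathcal E$ to a butterfly demands both a lift $\kappa$ of $d_B$ through $\pi$ and an extension $\jmath$ of $d_A$ along $\imath$, whose obstructions are the pullback class $d_B^*[\mathcal E]\in \Ext^1(B^{-1},A^{-1})$ and the pushforward class $d_{A*}[\mathcal E]\in \Ext^1(B^0,A^0)$; these do not vanish in general, so $F$ is not literally surjective onto all of $\catExt(B^0,A^{-1})$. I would therefore read essential surjectivity in the stack-theoretic sense of section~\ref{sec:exact-sequences}, passing to the associated gr-stacks $\shcatHom(-,-)$ and $\shcatExt(-,-)$ of Proposition~\ref{prop:20} and constructing the lifts $\kappa$ and $\jmath$ locally over $\s$, where the obstruction classes can be trivialized on a suitable cover; the bulk of the argument is then checking that such local completions can be made to satisfy the butterfly axioms compatibly on overlaps, so that the fibration sequence is short-exact rather than only left-exact. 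This is where the real content resides, the fibration and homotopy-kernel computations above being essentially formal consequences of the abelian reformulation of section~\ref{sec:strict-morph-butt}.
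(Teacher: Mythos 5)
Your first two paragraphs are correct and coincide with the paper's own proof: the paper establishes the fibration property by the same transport-of-structure calculation (stated in the opposite, but for groupoids equivalent, direction: given a butterfly with center $E'$ and an isomorphism of extensions onto its diagonal, one rebuilds the butterfly structure on the other middle term), refers to Noohi's Proposition 12.3 for compatibility with the additive structures, and declares the rest ``trivial''---your splitting dictionary through section~\ref{sec:strict-morph-butt} is exactly what that ``trivial'' unpacks to.

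The last paragraph is where the proposal breaks down, and it is precisely the step you flag as carrying the real content. The claim that the obstruction classes $d_B^*[\mathcal{E}]\in\Ext^1(B^{-1},A^{-1})$ and $d_{A*}[\mathcal{E}]\in\Ext^1(B^0,A^0)$ ``can be trivialized on a suitable cover'' is false: restriction to a cover kills only the part of an $\Ext^1$-class coming from $\H^1(\shHom(-,-))$ in the local-to-global filtration, not the component in $\Gamma(\shcatExt^1(-,-))$, and that sheaf is nonzero in general. Concretely, take the constant sheaves $B^\bullet=[\ZZ/2\xrightarrow{\mathrm{id}}\ZZ/2]$ and $A^\bullet=[\ZZ\to 0]$. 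A butterfly from $B^\bullet$ to $A^\bullet$ is then nothing but an extension of $\ZZ/2$ by $\ZZ$ together with a homomorphic splitting $\kappa$; since $\shHom(\ZZ/2,\ZZ)=0$ over every object of $\s$, the nonsplit extension $0\to\ZZ\xrightarrow{2}\ZZ\to\ZZ/2\to 0$ is not the diagonal of any butterfly over any cover whatsoever. So neither $F$ nor its stack version $\shcatHom(B^\bullet,A^\bullet)\to\shcatExt(B^0,A^{-1})$ is even locally essentially surjective, and no descent argument can change this. (Your obstruction count is also incomplete: even when both classes vanish, $\kappa$ and $\jmath$ can only be modified by $\imath u$ and $v\pi$, so there is a further obstruction to arranging $\jmath\circ\kappa=0$, living in $\Hom(B^{-1},A^0)$ modulo the images of $d_A\circ(-)$ and $(-)\circ d_B$.)

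The upshot for the comparison is this: the paper's proof never addresses essential surjectivity, and what it actually establishes---which is also everything your first two paragraphs establish, and everything that is true---is that the forgetful functor is an additive fibration whose homotopy kernel is $\catHom(B^\bullet,A^\bullet)_{\mathrm{str}}$, i.e.\ left-exactness in the sense of Definition~\ref{def:13} together with the fibration property. The essential image of $F$ is in general a proper subgroupoid of $\catExt(B^0,A^{-1})$ (those extensions admitting compatible $\kappa$ and $\jmath$), so the ``short-exact'' in the statement cannot be read as including the essential-surjectivity clause of Definition~\ref{def:13}; under that reading the example above refutes the assertion rather than leaving a gap one could fill. Deleting your final paragraph leaves a complete proof of what the paper proves; the ``delicate point'' you set out to handle is not provable, and correctly so.
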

\begin{proof}
  For the additive structures, we refer
  to \loccit The rest is trivial, except perhaps the fibration
  condition. If one has an isomorphism $E\to E'$ fitting in an
  isomorphism of extensions of $B^0$ by $A^{-1}$, and there is a
  butterfly whose NE-SW diagonal is the extension $A^{-1}\to E'
  \to B^0$, the construction of a butterfly with center $E$ is
  just a calculation, and it is left to the reader.
\end{proof}
Note that an extension $0\to A\to E \to B\to 0$ can be thought of
as a butterfly from $[0\to B]$ to $[A\to 0]$, that is, as a weak
morphism $B[0] \to \tors (A)$. It follows that there is a
symmetric gr-stack
\begin{equation*}
  \shcatExt (B, A)
\end{equation*}
whose fibers over $U\in \Ob\s$ are $\catExt (B\rvert_U,
A\rvert_U)$.

Recall the \emph{locally split butterflies} of
Definition~\ref{def:10}. With the obvious changes in notations,
let $\shcatHom (B^\bullet, A^\bullet)_{\mathrm{str}}\sptilde$
denote the stack of locally split butteflies from $B^\bullet$ to
$A^\bullet$.  We obtain the following analog of
Proposition~\ref{prop:1}:
\begin{proposition}
  \label{prop:24}
  Let $A^\bullet$ and $B^\bullet$ be two (global) objects of
  $\bicat{Ch}^{[-1,0]}(\s)$.  There is a short exact sequence of
  symmetric gr-stacks
  \begin{equation*}
    \shcatHom (B^\bullet, A^\bullet)_{\mathrm{str}}\sptilde \lto
    \shcatHom (B^\bullet, A^\bullet)\lto
    \shcatExt (B^0, A^{-1})
  \end{equation*}
  The forgetful map from butterflies to extensions is a
  fibration.
\end{proposition}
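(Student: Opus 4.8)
The plan is to obtain Proposition~\ref{prop:24} as the sheaf-theoretic counterpart of Proposition~\ref{prop:1}, exactly as $\stb(-,-)$ and its relatives were obtained from the pointwise groupoids in section~\ref{sec:stacks-butt-weak}. First I would set up the three players as symmetric gr-stacks over $\s$: the stack $\shcatHom(B^\bullet, A^\bullet)$, which is a stack by Proposition~\ref{prop:20} and carries a symmetric additive structure by the additivity remark following it; the stack $\shcatExt(B^0, A^{-1})$, whose fibre over $U$ is the symmetric gr-category $\catExt(B^0\rvert_U, A^{-1}\rvert_U)$; and the forgetful morphism sending a butterfly to the extension on its NE-SW diagonal. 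Since this forgetful assignment is defined fibrewise and strictly commutes with restriction along arrows of $\s$, it is a morphism of fibered categories, hence of stacks, and it is plainly additive for the two symmetric gr-structures (the sum of butterflies maps to the Baer sum of extensions).

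Next I would identify the homotopy kernel. Fibrewise, Proposition~\ref{prop:1} tells us that the homotopy kernel of $\catHom(B^\bullet\rvert_U, A^\bullet\rvert_U) \lto \catExt(B^0\rvert_U, A^{-1}\rvert_U)$ is the groupoid $\catHom(B^\bullet\rvert_U, A^\bullet\rvert_U)_{\mathrm{str}}$ of split butterflies. An object of the stack-theoretic homotopy kernel of the forgetful morphism is therefore a butterfly whose NE-SW extension is, \emph{locally on a cover}, isomorphic to a split one; that is, a locally split butterfly in the sense of Definition~\ref{def:10}. By the discussion of section~\ref{sec:weak-vs.-strict}, transported to the abelian setting, the stack of locally split butterflies is precisely $\shcatHom(B^\bullet, A^\bullet)_{\mathrm{str}}\sptilde$, the stackification of the substack $\shcatHom(B^\bullet, A^\bullet)_{\mathrm{str}}$ of split butterflies. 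This yields the left-hand term and the equivalence of the homotopy kernel with $\shcatHom(B^\bullet, A^\bullet)_{\mathrm{str}}\sptilde$.

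The fibration condition — the substantive point, as in Proposition~\ref{prop:1} — I would verify just as there, sheafified: given an object of $\shcatExt(B^0, A^{-1})$ over $U$ and an isomorphism of extensions $E \to E'$ into the diagonal of a butterfly with centre $E'$, one transports the remaining structural maps across the isomorphism to produce a butterfly with centre $E$. This is the ``calculation left to the reader'' of Proposition~\ref{prop:1}, now performed over each $U$ and compatibly with restriction, so it furnishes an isofibration of stacks. Together with the kernel identification, this exhibits the sequence as short exact, as a fibration sequence in the sense of section~\ref{sec:exact-sequences}, the fibration supplying the relevant exactness on the right; additivity of all three morphisms makes it a sequence of symmetric gr-stacks.

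The hard part will be the precise claim that the \emph{stack-theoretic} homotopy kernel of the forgetful morphism agrees with the \emph{stackification} of the fibrewise homotopy kernels — in other words, that forming the homotopy kernel here commutes with passing to associated stacks. This rests on $\shcatHom(B^\bullet, A^\bullet)$ already being a stack (Proposition~\ref{prop:20}) and on the local nature of the splitting condition: an object lies in the homotopy kernel exactly when its image extension is locally split, a condition stable under refinement and descent. I would make this rigorous by exhibiting, over a cover trivialising the extension, the equivalence between the fibre of the forgetful map and the split butterflies, and then invoking that $\shcatHom(B^\bullet, A^\bullet)_{\mathrm{str}}\sptilde$ is by definition the stack of such locally split objects. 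The remaining compatibilities — additive structures and the behaviour on $2$-morphisms — are routine and can be imported verbatim from Proposition~\ref{prop:1} and the formulas of \cite{Noohi:weakmaps}.
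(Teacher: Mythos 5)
Your proposal is correct and takes essentially the same route as the paper: the paper offers no independent argument for Proposition~\ref{prop:24}, presenting it as the stacky analog of Proposition~\ref{prop:1} in which the stack $\shcatHom(B^\bullet,A^\bullet)_{\mathrm{str}}\sptilde$ of locally split butterflies (Definition~\ref{def:10}) replaces the groupoid of strict morphisms, exactly as you argue. Your two main points --- that the kernel must be the stackification (locally split, not globally split, butterflies), and that the fibration condition is checked fibrewise by transporting the butterfly structure across an isomorphism of extensions --- are precisely the content the paper leaves implicit in Proposition~\ref{prop:1} and the remark preceding Proposition~\ref{prop:24}.
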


\subsection{Ringed sites and locally split butterflies}
\label{sec:locally-split-butterflies}

Let us suppose $\rng$ is a sheaf of commutative unital rings over
$\s$, so that the pair $(\s, \rng)$ becomes a ringed site.  It is
known the category $\Mod (\rng)$ of $\rng$-modules is
abelian.
\begin{paragr}
  The constructions of butterflies, morphism of butterflies,
  etc., in general those of section~\ref{sec:2-categories-picard}
  make sense in the category of $\rng$-modules.  A crossed module
  in $\Mod (\rng)$ will simply be a length-two complex $M^\bullet
  \colon M^{-1}\to M^0$, where $M^{-1}$ and $M^0$ are
  $\rng$-modules. A butterfly from $N^\bullet$ to $M^\bullet$ is
  a diagram
  \begin{equation*}
    \xymatrix@R-0.5em{%
      N^{-1}\ar[dd]_\del \ar@/_0.1pc/[dr]^\kappa  & &
      M^{-1} \ar@/^0.1pc/[dl]_\imath \ar[dd]^\del\\
      & P\ar@/_0.1pc/[dl]_\pi \ar@/^0.1pc/[dr]^\jmath &  \\
      N^0 & & M^0
    }  
  \end{equation*}
  of $\rng$-modules such that the NE-SW diagonal is an extension
  in $\Mod (\rng)$, whereas the NW-SE one is a complex. A split
  butterfly is one where the extension is split in $\Mod
  (\rng)$. A \emph{locally split} one is a butterfly for which
  the splitting occur after restricting to a (generalized) cover.
\end{paragr}
Thus everything is as for the category of abelian sheaves on
$\s$, plus the additional requirement of compatibility with the
$\rng$-linear action. According to this new setting, for two
complexes $N^\bullet$ and $M^\bullet$ let us denote by
$\catHom_{\rng} (N^\bullet, M^\bullet)$ the groupoid of
butterflies, by $\catHom_{\rng} (N^\bullet,
M^\bullet)_{\mathrm{str}}$ that of butterflies corresponding to
strict morphisms. Their stack counterparts will be
$\shcatHom_{\rng}(N^\bullet, M^\bullet)$ and
$\shcatHom_{\rng}(N^\bullet, M^\bullet)_{\mathrm{str}}\sptilde$,
the latter denoting the stack of \emph{locally split}
butterflies.  We then have $\bicat{Ch}^{[-1,0]} (\rng)$, its
strict counterpart $\bicat{Ch}^{[-1,0]} (\rng)_{\mathrm{str}}$,
and the stack analog $\ch (\rng)$.
\begin{paragr}
  $M\in \Ob \Mod (\rng)$ is \emph{locally free} of rank $m$ if
  there is a generalized cover $Y\to *$ such that $M\rvert_Y
  \isoto \bigl( \rng\rvert_Y \bigr)^m$.  Let us say that
  $M^\bullet$ is locally free if both $M^{-1}$ and $M^0$ are.
\end{paragr}
The following is easy:
\begin{proposition}
  \label{prop:25}
  Let $N^\bullet, M^\bullet$ be two locally free complexes. Then
  every butterfly $[N^\bullet, P, M^\bullet]$ is a locally split
  butterfly of locally free objects.
\end{proposition}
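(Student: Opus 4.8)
The plan is to reduce everything to the elementary fact that a short exact sequence of $\rng$-modules whose cokernel is locally free splits locally, since a locally free module is locally projective. A single local splitting will simultaneously produce the local splitting of the butterfly (hence the ``locally split'' conclusion) and exhibit the central object $P$ as locally free, which is the remaining content of ``of locally free objects.''

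Concretely, I would focus on the NE-SW diagonal, which by Definition~\ref{def:17} is a short exact sequence of $\rng$-modules
\begin{equation*}
  0 \lto M^{-1} \overset{\imath}{\lto} P \overset{\pi}{\lto} N^0 \lto 0.
\end{equation*}
First I would choose a single generalized cover $Y\to *$ over which both $N^0$ and $M^{-1}$ are free, say $N^0\rvert_Y\isoto (\rng\rvert_Y)^n$ with free generators $e_1,\dots,e_n$ and $M^{-1}\rvert_Y\isoto (\rng\rvert_Y)^m$; such a $Y$ exists because $N^\bullet$ and $M^\bullet$ are locally free and one may pass to a common refinement. Since $\pi$ is an epimorphism of sheaves, after a further refinement of $Y$ each generator $e_i$ admits a lift $\tilde e_i\in P\rvert_Y$ with $\pi(\tilde e_i)=e_i$. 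Freeness of $N^0\rvert_Y$ then lets me extend $e_i\mapsto \tilde e_i$ to a unique $\rng\rvert_Y$-linear map $s\colon N^0\rvert_Y\to P\rvert_Y$ with $\pi\circ s=\id_{N^0\rvert_Y}$. This is a splitting over $Y$, which by Definition~\ref{def:10} is exactly the assertion that $[N^\bullet, P, M^\bullet]$ is locally split; and the splitting yields $P\rvert_Y\isoto M^{-1}\rvert_Y\oplus N^0\rvert_Y\isoto (\rng\rvert_Y)^{m+n}$, so $P$ is locally free of rank $m+n$.

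The only step requiring any care — and it is entirely routine — is reconciling the two covers involved: the one trivializing $N^0$ and $M^{-1}$, and the one over which the finitely many free generators lift through the local epimorphism $\pi$. Because $\pi$ is a local epimorphism, the lifts $\tilde e_i$ exist after passing to a refinement, and refining does not disturb the trivializations already secured. No genuine obstruction appears precisely because local freeness of the \emph{quotient} $N^0$ makes it locally projective, forcing the extension to split locally; if instead only $M^{-1}$ (the sub) were locally free the argument would fail, so it is worth noting that it is the hypothesis on $N^0$ that does the work here.
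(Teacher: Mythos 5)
Your proposal is correct and follows exactly the paper's (much terser) argument: local freeness of the quotient $N^0$ splits the NE-SW extension locally, and local freeness of $M^{-1}$ then makes $P\iso M^{-1}\oplus N^0$ locally free. The only difference is that you spell out the lifting of free generators through the epimorphism $\pi$, which the paper leaves implicit.
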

\begin{proof}
  In the sequence $0\to M^{-1}\to P\to N^0\to 0$ $N^0$ locally
  free implies that the sequence is locally split. Then $M^{-1}$
  locally free implies that so is $P$.
\end{proof}
\begin{corollary}
  If $N^\bullet$ and $M^\bullet$ are locally free, then
  $\shcatExt_{\rng}(N^0, M^{-1})$ is equivalent to a point.
\end{corollary}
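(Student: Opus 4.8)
The plan is to reduce the statement entirely to the local splitting already recorded in Proposition~\ref{prop:25}. Recall from the remark preceding the definition of $\shcatExt$ that an extension $0\to M^{-1}\to P\to N^0\to 0$ of $\rng$-modules is precisely the NE-SW diagonal of a butterfly from $[0\to N^0]$ to $[M^{-1}\to 0]$, equivalently a weak morphism $N^0[0]\to \tors(M^{-1})$; conversely every such butterfly records an extension of $N^0$ by $M^{-1}$. Since $N^0$ and $M^{-1}$ are locally free, both of these length-two complexes are locally free in the sense of this section, so Proposition~\ref{prop:25} applies and shows that every butterfly between them, hence every object of $\shcatExt_{\rng}(N^0,M^{-1})$, is locally split.

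First I would make this local splitting explicit and intrinsic, independently of the butterfly language, since it is the entire arithmetic content. Choose a generalized cover $Y\to *$ over which $N^0$ becomes free, say $N^0\rvert_Y\iso \bigl(\rng\rvert_Y\bigr)^m$. A free module is projective, so $\Ext^1_{\rng\rvert_Y}(N^0\rvert_Y,M^{-1}\rvert_Y)=0$, and the restricted extension $0\to M^{-1}\rvert_Y\to P\rvert_Y\to N^0\rvert_Y\to 0$ therefore admits a splitting; equivalently $P\rvert_Y\iso M^{-1}\rvert_Y\oplus N^0\rvert_Y$ is the trivial extension. Thus over $Y$ the given extension is isomorphic, as an extension, to the split one.

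From this local triviality the conclusion follows. The split extension $M^{-1}\oplus N^0$ is a global object of $\shcatExt_{\rng}(N^0,M^{-1})$, so the gr-stack is neutral; and the previous step shows that every object becomes, after restriction to a suitable cover, isomorphic to this global object. Hence every object of $\shcatExt_{\rng}(N^0,M^{-1})$ is locally isomorphic to the distinguished global one, which is exactly the assertion that $\shcatExt_{\rng}(N^0,M^{-1})$ is equivalent to a point.

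I expect no serious obstacle: the work is done by Proposition~\ref{prop:25}, equivalently by the vanishing of $\Ext^1$ against a locally free, hence locally projective, module. The only step needing a little care is the passage from ``the extension splits over a cover'' to ``the stack of extensions is equivalent to a point,'' i.e.\ verifying that the local splittings together with the global trivial extension assemble into a local equivalence with the terminal stack. This is routine descent and relies on the fact that $\shcatExt_{\rng}(N^0,M^{-1})$ is a stack, which follows from the analog of Proposition~\ref{prop:20} (equivalently Proposition~\ref{prop:7}) applied to $[0\to N^0]$ and $[M^{-1}\to 0]$.
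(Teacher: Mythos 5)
Your argument is sound up to its last step: identifying an extension of $N^0$ by $M^{-1}$ with a butterfly from $[0\to N^0]$ to $[M^{-1}\to 0]$ is precisely the paper's remark preceding the definition of $\shcatExt$, and the local splitting---whether extracted from Proposition~\ref{prop:25} or directly from local freeness (hence local projectivity) of $N^0$---is correct, as is your appeal to the analog of Proposition~\ref{prop:20} for the stack property. The gap is the final inference. Knowing that $\shcatExt_{\rng}(N^0,M^{-1})$ has a global object (the split extension) to which every object is locally isomorphic does \emph{not} make it equivalent to a point: it makes it a \emph{neutral gerbe} banded by the automorphism sheaf of that object, and it is a point only if that automorphism sheaf also vanishes. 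Your argument therefore establishes $\pi_0\bigl(\shcatExt_{\rng}(N^0,M^{-1})\bigr)=*$ and says nothing about $\pi_1$. This is not a formality one can wave through by ``routine descent'': an automorphism of the split extension $M^{-1}\oplus N^0$ inducing the identity on both ends is exactly a map $(m,n)\mapsto (m+f(n),n)$ with $f\in\shHom_{\rng}(N^0,M^{-1})$, so the band of the gerbe is $\shHom_{\rng}(N^0,M^{-1})$ (nonzero already for $N^0=M^{-1}=\rng$), and what your argument actually yields is an equivalence of $\shcatExt_{\rng}(N^0,M^{-1})$ with $\tors\bigl(\shHom_{\rng}(N^0,M^{-1})\bigr)$ rather than with a point.

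The paper's proof is built precisely to control this second homotopy sheaf, and it takes a different route: Proposition~\ref{prop:25} is applied to the given complexes $N^\bullet$, $M^\bullet$ themselves (not to $[0\to N^0]$ and $[M^{-1}\to 0]$), giving that the inclusion $\shcatHom_{\rng}(N^\bullet,M^\bullet)_{\mathrm{str}}\sptilde\to \shcatHom_{\rng}(N^\bullet,M^\bullet)$ is an equivalence; this equivalence is then fed into the short exact sequence of symmetric gr-stacks of Proposition~\ref{prop:24}, and the long exact sequence of homotopy sheaves attached to a short exact sequence of gr-stacks (Proposition~\ref{prop:11}, together with exactness at the right end coming from essential surjectivity) forces \emph{both} $\pi_0$ and $\pi_1$ of the last term $\shcatExt_{\rng}(N^0,M^{-1})$ to vanish. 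In other words, the triviality of automorphisms of extensions is deduced from the exactness of Proposition~\ref{prop:24}---that is, from the stated relationship between automorphisms of butterflies, homotopies of strict morphisms, and automorphisms of extensions---and this is an ingredient your proposal never invokes. Without it, or some substitute argument addressing $\pi_1$, the proof does not reach the stated conclusion.
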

\begin{proof}
  Proposition~\ref{prop:25} gives
  \(%
  \shcatHom_{\rng}(N^\bullet,
  M^\bullet)\iso \shcatHom_{\rng}(N^\bullet,
  M^\bullet)_{\mathrm{str}}\sptilde,%
  \)
  then use Proposition~\ref{prop:24}.
\end{proof}
Restricting our attention to the locally free objects of $\ch
(\rng)$, we immediately obtain that they comprise a full fibered
sub-bicategory of $\ch (\rng)$, which we denote by
$\ch_{\mathrm{loc.fr.}} (\rng)$. The inclusion
\begin{equation*}
  \ch_{\mathrm{loc.fr.}} (\rng) \hookrightarrow \ch (\rng)
\end{equation*}
is a map of pre-bistacks.

\appendix

\section{The 2-stack of gr-stacks}
\label{sec:2-stack-gr}

\numberwithin{equation}{subsection}

In the main text, in Theorem~\ref{thm:4}, it is claimed that the
fibered 2-category $\grstacks(\s)$ is a 2-stack over $\s$.

Here we provide a sketch of a direct, brute-force approach, proof
of this fact. The main reason for providing one at all is that
although this fact should be well know to experts, the authors
could not find an adequate reference, let alone a proof, to it in
the literature.

\subsection{Preliminaries}
\label{sec:preliminaries}

We begin with writing down a few necessary diagrams, translating
some of the definitions recalled in
sect.~\ref{sec:gr-categories}.  Given two gr-stacks $\grc$ and
$\grd$, an additive functor $(F,\lambda) \colon \grc\to \grd$
corresponds to the diagram
\begin{equation*}
  \xymatrix{%
    \grc \times \grc \ar[r]^{F\times F} \ar[d]_{\otimes_\grc}
    \drtwocell<\omit>{\lambda} &
    \grd \times \grd \ar[d]^{\otimes_\grd} \\
    \grc \ar[r]_{F} & \grd}
\end{equation*}
The compatibility of $(F,\lambda)$ with the associator morphisms,
expressed in diagram~\eqref{eq:6}, can be written as:
\begin{equation}
  \label{eq:45}
  \begin{split}
    (\lambda*(\Id,\otimes_\grc)) \circ (\otimes_\grd*
    (\Id,\lambda))\circ
    (a_\grd * (F\times F\times F))  \\
    = (F*a_\grc)\circ (\lambda * (\otimes_\grc,\Id)) \circ
    (\otimes_\grd * (\lambda,\Id))
  \end{split}
\end{equation}
This expresses the commutativity of the cube one can construct
from the diagram above and the ones resulting from the
associativity morphisms for $\grc$ and $\grd$.

A morphism of additive functors $\theta\colon (F,\lambda)
\Rightarrow (G,\mu)$ translates into the equality of the
following two diagrams:
\begin{equation*}
  \xymatrix@+1.5pc{%
    \grc \times \grc \ar[r]^{F\times F} \ar[d]_{\otimes_\grc}
    \drtwocell<\omit>{\lambda} &
    \grd \times \grd \ar[d]^{\otimes_\grd} \\
    \grc \ar[r]^{F}
    \rlowertwocell_G{\theta} & \grd}
  \qquad
  \xymatrix@+1.5pc{%
    \grc \times \grc
    \ruppertwocell<10>^{F\times F}{{\;\;\;\theta\times \theta}}
    \ar[r]_{{G\times G}}
    \drtwocell<\omit>{\mu} \ar[d]^{\otimes_\grc}  &
    \grd \times \grd \ar[d]^{\otimes_\grd} \\
    \grc \ar[r]_{G}  & \grd}
\end{equation*}
or, in other words:
\begin{equation}
  \label{eq:46}
  (\theta * \otimes_\grc) \circ \lambda
  = \mu \circ (\otimes_\grd * (\theta\times \theta)),
\end{equation}
where $*$ denotes horizontal composition (pasting) of 2-arrows.

Let us say that a fibered 2-category $\twofib{F}$ over $\s$ is
\emph{separated} if the 2-morphisms glue, in other words if for
any two objects $X,Y$ of $\twofib{F}$, the fibered category
$\shcatHom_{\twofib{F}}(X,Y)$ is a prestack.  We will say that
$\twofib{F}$ is a \emph{2-prestack} if
$\shcatHom_{\twofib{F}}(X,Y)$ is in fact a stack, that is if
1-morphisms also glue.

\subsection{The proof}
\label{sec:proof}

\begin{claim}
  \label{cl:1}
  $\grstacks({\s})$ is separated.
\end{claim}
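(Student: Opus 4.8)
The plan is to reduce the statement to the sheaf condition for morphisms in the target gr-stack. Fix two gr-stacks $\grc$ and $\grd$ over $\s$ (the argument is identical over any localization $\s/U$). By the definition of separatedness, the claim amounts to showing that $\shcatHom_{\grstacks(\s)}(\grc,\grd)$ is a prestack, i.e.\ that for any two additive functors $(F,\lambda),(G,\mu)\colon \grc\to \grd$ the presheaf on $\s$ whose sections over $V$ are the morphisms of additive functors $(F,\lambda)|_V\Rightarrow (G,\mu)|_V$ is a sheaf. In short: the $2$-morphisms glue.

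First I would recall that, since $\grc$ and $\grd$ are stacks, the underlying natural transformations of the morphisms of stacks $F$ and $G$ glue. Given a cover $\{V_i\to V\}$ and natural transformations $\theta^i\colon F|_{V_i}\Rightarrow G|_{V_i}$ agreeing on the overlaps $V_{ij}$, for every object $x$ of $\grc$ over some $W\to V$ the morphisms $\theta^i_{x}$ obtained after pulling back to $W\times_V V_i$ agree over $W\times_V V_{ij}$, hence glue to a \emph{unique} morphism $\theta_x\colon F(x)\to G(x)$ in $\grd$ because morphisms of $\grd$ form a sheaf (the stack condition). Uniqueness of the gluing immediately yields naturality of $\theta$ and its compatibility with the restriction functors, so $\theta$ is a well-defined natural transformation $F\Rightarrow G$ over $V$ restricting to the $\theta^i$.

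Next I would check that $\theta$ is a morphism of \emph{additive} functors, i.e.\ that it satisfies~\eqref{eq:46}. Both sides of~\eqref{eq:46} are $2$-morphisms between the two fixed morphisms of stacks $\otimes_\grd\circ(F\times F)$ and $G\circ\otimes_\grc$ from $\grc\times\grc$ to $\grd$, so evaluated on any object of $\grc\times\grc$ they are again morphisms of $\grd$. On each $V_i$ the two sides coincide, because $\theta^i$ is additive; here one uses that restriction along $V_i\to V$ is a Cartesian $2$-functor, hence commutes with whiskering and pasting, so the composites $\theta*\otimes_\grc$ and $\otimes_\grd*(\theta\times\theta)$ restrict to the corresponding composites for $\theta^i$ (with $\lambda,\mu$ also pulled back). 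Since morphisms of $\grd$ form a sheaf, these local equalities glue to a global one, so~\eqref{eq:46} holds for $\theta$. This produces the required morphism of additive functors, unique because the $\theta_x$ are.

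The only substantive input is thus the stack condition on $\grd$; equivalently one may phrase the whole thing as the remark that the forgetful functor $\grstacks(\s)\to\stacks(\s)$ is faithful on $2$-cells and that $\stacks(\s)$ is separated (being a $2$-stack), the extra point being merely that the closed condition~\eqref{eq:46} cutting out additive $2$-cells descends. The one thing to be careful about—and the only place where any care is needed—is verifying that the pasted $2$-cells appearing in~\eqref{eq:46} genuinely restrict to their counterparts on each member of the cover, so that the fibrewise sheaf property may legitimately be invoked; this is routine given that $\otimes_\grc,\otimes_\grd,\lambda,\mu$ are all pulled back along $V_i\to V$.
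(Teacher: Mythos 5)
Your proof is correct, and it shares the paper's skeleton: first glue the underlying $2$-cell, then observe that the additivity condition~\eqref{eq:46}, being an \emph{equality} of $2$-morphisms, can be checked locally and therefore descends. The difference lies in what is taken as input. The paper's proof simply invokes the fact that $\stacks(\s)$ is a $2$-stack (so $2$-morphisms between morphisms of stacks glue automatically, producing $\tilde\theta$ from the descent datum), and then only has to remark that~\eqref{eq:46} holds globally because it holds on $U_0$. You instead prove the gluing step by hand, objectwise, using only the prestack condition on the target $\grd$ (its Hom-presheaves are sheaves); you then verify~\eqref{eq:46} by the same objectwise sheaf argument. What each approach buys: the paper's version is shorter and fits the design of the whole appendix, where Claims~\ref{cl:1} and~\ref{cl:2} and the main $2$-descent argument all systematically lean on the cited $2$-stack property of $\stacks(\s)$; your version is self-contained, does not require that background theorem, and isolates the minimal input --- nothing about $\grc$ is used, and only separatedness of the Hom-sheaves of $\grd$ matters, a point the paper leaves implicit. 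One small caution: the paper's site $\s$ is not assumed to admit fiber products (sect.~\ref{sec:topology}), so your use of $V_{ij}=V_i\times_V V_j$ and $W\times_V V_i$ should be recast in terms of generalized covers or hypercovers, exactly as the paper does by working with $U_\bullet\to *$; this is a cosmetic reformulation, not a gap, since the objectwise sheaf argument goes through verbatim in that formalism.
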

\begin{proof}
  Suppose we are given two gr-stacks $\grc$ and $\grd$, plus two
  additive functors $F,G\colon \grc\to \grd$. Let $U_\bullet\to
  *$ be a hypercover, and let
  \begin{equation*}
    \theta\colon F\rvert_{U_0}\Longrightarrow G\rvert_{U_0}
    \colon \grc\rvert_{U_0} \lto \grd\rvert_{U_0}
  \end{equation*}
  be a morphism of additive functors such that
  \begin{equation*}
    d_0^*\theta = d_1^*\theta
  \end{equation*}
  over $U_1$.  Since $\stacks (\s)$ is a 2-stack, there exists a
  2-morphism of stacks $\tilde\theta$ such that
  \begin{equation*}
    \tilde\theta \rvert_{U_0} = \theta,
  \end{equation*}
  and, by construction, $\tilde\theta$ satisfies~\eqref{eq:46} on
  $U_0$.  Since this is an identity between 2-morphisms, it
  follows that~\eqref{eq:46} is then satisfied globally.  This
  proves the claim.
\end{proof}
\begin{claim}
  \label{cl:2}
  $\grstacks(\s)$ is a 2-prestack.
\end{claim}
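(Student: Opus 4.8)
The plan is to deduce Claim~\ref{cl:2} from Claim~\ref{cl:1} together with the fact, recalled in the main text, that $\stacks(\s)$ is a $2$-stack. By definition, showing that $\grstacks(\s)$ is a $2$-prestack amounts to proving that for any pair of gr-stacks $\grc,\grd$ the fibered category $\shcatHom_{\grstacks(\s)}(\grc,\grd)$ is a stack; since separatedness is already in hand, the only thing left is effectivity of descent data for objects, i.e.\ that additive functors glue. So I would fix a hypercover $U_\bullet\to *$, an additive functor $(F,\lambda)\colon \grc\rvert_{U_0}\to \grd\rvert_{U_0}$, and an isomorphism of additive functors $\theta\colon d_0^*(F,\lambda)\isoto d_1^*(F,\lambda)$ over $U_1$ satisfying the cocycle condition over $U_2$, and produce a global additive functor restricting to this datum.

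First I would forget the monoidal structure. The underlying morphism of stacks $F$ and the underlying $2$-isomorphism $\theta$ form a descent datum for a $1$-morphism in $\stacks(\s)$; because the latter is a $2$-stack, $\shcatHom_{\stacks}(\grc,\grd)$ is a stack, so they glue to a global morphism of stacks $\tilde F\colon \grc\to \grd$ with $\tilde F\rvert_{U_0}=F$ and gluing $2$-isomorphism $\theta$ over $U_1$. Transporting $\tilde F$ through products, the two auxiliary global morphisms $\otimes_\grd\circ(\tilde F\times \tilde F)$ and $\tilde F\circ \otimes_\grc$ from $\grc\times\grc$ to $\grd$ restrict over $U_0$ to the source and target of $\lambda$, and their gluing $2$-isomorphisms over $U_1$ are, respectively, $\otimes_\grd*(\theta\times\theta)$ and $\theta*\otimes_\grc$.

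Next I would observe that the hypothesis that $\theta$ is a \emph{morphism of additive functors}, namely equation~\eqref{eq:46} applied to $\theta$, $d_0^*\lambda$ and $d_1^*\lambda$, is precisely the statement that $\lambda$ is a descent datum for a $2$-morphism between these two auxiliary global morphisms, relative to the gluing data just identified. Invoking once more that $\stacks(\s)$ is a $2$-stack (so that $2$-morphisms of stacks glue), $\lambda$ extends to a global $2$-morphism $\tilde\lambda\colon \otimes_\grd\circ(\tilde F\times \tilde F)\Rightarrow \tilde F\circ\otimes_\grc$. It then remains to check that $(\tilde F,\tilde\lambda)$ is genuinely additive, i.e.\ that it satisfies the associativity coherence~\eqref{eq:45} and the analogous unit compatibility. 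Each of these is an \emph{equality} of $2$-morphisms of stacks holding over $U_0$ because $(F,\lambda)$ is additive; since by Claim~\ref{cl:1} the $2$-morphisms form a sheaf, these equalities propagate globally, and hence $(\tilde F,\tilde\lambda)$ is the required global additive functor, so the descent datum is effective.

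The main obstacle I anticipate is the bookkeeping in the middle step: one must pin down exactly the gluing $2$-isomorphisms of the composites $\otimes_\grd\circ(\tilde F\times \tilde F)$ and $\tilde F\circ\otimes_\grc$ in terms of $\theta$, and then verify term by term that condition~\eqref{eq:46} coincides with the descent compatibility $d_1^*\lambda\circ\bigl(\otimes_\grd*(\theta\times\theta)\bigr)=(\theta*\otimes_\grc)\circ d_0^*\lambda$. Once this identification is secured, the remaining verifications are purely formal, resting only on the principle that an identity between $2$-morphisms which holds locally holds globally, guaranteed by separatedness and by $\stacks(\s)$ being a $2$-stack.
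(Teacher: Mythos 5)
Your proposal is correct and takes essentially the same route as the paper's proof: glue the underlying morphism of stacks using the fact that $\stacks(\s)$ is a 2-stack, glue $\lambda$ as a 2-morphism between the two global composites $\otimes_\grd\circ(\tilde F\times\tilde F)$ and $\tilde F\circ\otimes_\grc$ by recognizing~\eqref{eq:46} as the relevant descent compatibility, and propagate the coherence constraints by the local-to-global principle for identities between 2-morphisms. The one imprecision is in your gluing step: 2-descent in $\stacks(\s)$ produces $\tilde F$ together with an \emph{isomorphism} $\tau\colon F\Rightarrow\tilde F\rvert_{U_0}$ compatible with $\theta$ (the paper's~\eqref{eq:47}), not an equality $\tilde F\rvert_{U_0}=F$; as literally stated, equality together with gluing datum $\theta$ is inconsistent, since a global functor restricts with the trivial gluing. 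The paper therefore first transports $\lambda$ along $\tau$, defining $\tilde\lambda$ over $U_0$ by $\tilde\lambda\circ\bigl(\otimes_\grd*(\tau\times\tau)\bigr)=(\tau*\otimes_\grc)\circ\lambda$, and only then checks $d_0^*\tilde\lambda=d_1^*\tilde\lambda$ from~\eqref{eq:47} and~\eqref{eq:46} before invoking Claim~\ref{cl:1}; this $\tau$-conjugation is exactly the bookkeeping you anticipated in your closing paragraph, and with it your argument goes through.
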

\begin{proof}
  Let $U_\bullet$ be as in the proof of the previous claim.
  
  This time, let us suppose we are given an appropriate descent
  datum for 1-morphisms, that is, suppose we are given
  1-morphisms
  \begin{equation*}
    (F,\lambda) \colon \grc\rvert_{U_0} \lto \grd\rvert_{U_0}.
  \end{equation*}
  over $U_0$, 2-morphisms
  \begin{equation*}
    \theta \colon d_0^*(F,\lambda) \Longrightarrow
    d_1^*(F,\lambda)
  \end{equation*}
  over $U_1$, such that the relation
  \begin{equation*}
    d_1^*\theta = d_2^*\theta\circ d_0^*\theta
  \end{equation*}
  holds over $U_2$.

  This implies that in $\stacks (\s)$ there exist a 1-morphism
  $\tilde F \colon \grc\to \grd$ and a 2-morphism $\tau \colon F
  \Rightarrow \tilde F\rvert_{U_0}$ such that
  \begin{equation}
    \label{eq:47}
    \vcenter{%
      \xymatrix@-1.5pc{%
        d_0^*F \ar@{=>}[rr]^\theta \ar@/_0.5pc/@{=>}[dr]_{d_0^*\tau}
        && d_1^*F \ar@/^0.5pc/@{=>}[dl]^{d_1^*\tau} \\
        & \tilde F\rvert_{U_1}}}
  \end{equation}
  is a commuting diagram of natural transformations over $U_1$.

  We can use these data to enforce the additivity condition on to
  $\tilde F$. Namely, let us \emph{define} the natural
  isomorphism
  \begin{equation*}
    \tilde \lambda \colon \otimes_\grd\circ
    (\tilde F\times \tilde F)
    \Longrightarrow \tilde F\circ \otimes_\grc
  \end{equation*}
  \emph{over $U_0$} by:
  \begin{equation*}
    \tilde\lambda \circ (\otimes_\grd * (\tau\times \tau))
    = (\tau * \otimes_\grc)  \circ \lambda.
  \end{equation*}
  (Note that the compatibility with the associators,
  namely~\eqref{eq:6}, or~\eqref{eq:45}), follows automatically.)
  An elementary manipulation using~\eqref{eq:47} then shows that
  $d_0^*\tilde\lambda = d_1^*\tilde\lambda$ and therefore, by the
  preceding claim, there exists a global $\tilde{\tilde \lambda}$
  satisfying
  \begin{equation*}
    \tilde{\tilde\lambda}\rvert_{U_0} = \tilde\lambda.
  \end{equation*}
  Again, the compatibility with the associators will follow.
  Thus, the morphism $\tilde F$ is actually additive: the pair
  $(\tilde F, \tilde{\tilde \lambda})$ satisfies the required
  properties, which proves the claim.
\end{proof}
Having so far shown that $\grstacks (\s)$ is a 2-prestack, the
hardest step is to show that $\grstacks (\s)$ actually is a
2-stack, as there are many more diagrams to check.

The problem is posed by giving ourselves an appropriate 2-descent
datum of gr-stacks. This means that if $U_\bullet$ is the
previously introduced hypercover, then we are given the following
data:
\begin{enumerate}
\item A gr-stack $\grc$ over $U_0$.
\item An additive functor $(F,\lambda) \colon d_0^* \grc \to
  d_1^*\grc$ over $U_1$.
\item A morphism of additive functors
  \begin{equation*}
    \theta \colon d_2^* (F,\lambda) \circ d_0^* (F,\lambda)
    \Longrightarrow d_1^*(F,\lambda)
  \end{equation*}
  over $U_2$.
\item A coherence diagram for $\theta$ over $U_3$, in other
  words, the diagram formed by the faces of the tetrahedron must
  commute:
  \begin{equation*}
    \vcenter{%
      \xymatrix@+6pc{%
        \grc_3
        \ar[d]^(.4){}="23a"_{F_{23}}
        \ar[r]_(.4){}="03a"_(.7){}="03b"^{F_{03}}
        \ar@/^/[dr]|(.23)\hole|(.55)\hole_(.7){}="13"^(.4){F_{13}}
        \ar@{=>}@/_/ "23a";"03a"_(.3){\theta_{023}}
        &
        \grc_0 \\
        \grc_2
        \ar[r]^(.4){}="12a"^(.7){}="12b"_{F_{12}}
        \ar@/_0.7pc/[ur]_(.4){}="02"_(.7){F_{02}}
        \ar@{=>}@/_0.4pc/ "12a";"02"_{\theta_{012}}
        \ar@{:>}@/^0.3pc/ "12b";"13"_{\theta_{123}}
        &
        \grc_1
        \ar[u]^(.7){}="01"_{F_{01}}
        \ar@{:>}@/^/ "01";"03b"^(.7){\theta_{013}}
      }}
    \qquad
    \theta_{023}\circ (\theta_{012}*F_{23})
    = \theta_{013} \circ (F_{01} * \theta_{123})
  \end{equation*}
\end{enumerate}
We have used a ``missing index'' convention:
$\grc_0=(d_1d_2d_3)^*\grc$, $F_{01}=(d_2d_3)^*F$,
$\theta_{012}=d_3^*\theta$, and so on.

Again, since $\stacks (\s)$ is a 2-stack, there exists a
\emph{stack} $\tilde\stc$ over $\s$ with an equivalence of stacks
\begin{equation*}
  G\colon \grc \lisoto \tilde\stc\rvert_{U_0}
\end{equation*}
and a 2-morphism of stacks over $U_1$:
\begin{equation*}
  \xymatrix{%
    \grc_1
    \ar@/_/[dr]_{G_1}^{}="1"
    \ar[r]^{F} &
    \grc_0 \ar[d]^{G_0} \ar@{=>}@/_0.2pc/ [];"1"^\mu \\
    & \tilde\stc\rvert_{U_1}
  }
\end{equation*}
such that the system is coherent over $U_2$:
\begin{equation}
  \label{eq:48}
  \vcenter{%
    \xymatrix@C+2pc{%
      \grc_2
      \ar@/_/[dr]|{F_{12}}
      \ar@/_/[dddr]^(.6){}="2stc" ^(.75){}="2stca"_{G_2}
      \ar[rr]^{F_{02}}_(.6){}="02"
      &&
      \grc_0
      \ar@/^/[dddl]_(.75){}="0stc"^{G_0} \\
      & \grc_1
      \ar[dd]_(.3){}="1stc"|{G_1}
      \ar@/_/[ur]_(.3){}="01"|{F_{01}}
      \ar@{=>}@/^0.3pc/ [];"02"_{\theta}
      \ar@{=>}@/^0.3pc/ [];"2stc"_{\mu_{12}}
      \\ \\
      & \tilde\stc\rvert_{U_2}
      \ar@{=>}@/^0.3pc/ "01";"1stc"^{\mu_{01}}
      \ar@{:>}@/_0.3pc/ "0stc";"2stca"_<<{\mu_{02}}
    }}
  \qquad
  \mu_{12}\circ (\mu_{01} * F_{12}) =
  \mu_{02} \circ ( G_0 * \theta )
\end{equation}

We want to show these data can be used to induce a gr-structure
on $\tilde\stc$. This means we have to induce multiplication,
inverse, and unit object functors:
\begin{equation*}
  \otimes_{\tilde\stc} \colon \tilde\stc \times \tilde\stc  \lto \tilde\stc,
  \quad
  *\colon \tilde\stc^\mathrm{op} \lto  \tilde\stc,
  \quad
  \mathbf{{1}} \lto \tilde\stc
\end{equation*}
satisfying the usual diagrams, such as:
\begin{equation}
  \label{eq:49}
  \vcenter{%
    \xymatrix{%
      \tilde\stc \times \tilde\stc \times \tilde\stc
      \ar[r]^(.6){\otimes_{\tilde\stc}\times\Id}
      \ar[d]_{\Id\times\otimes_{\tilde\stc}} &
      \tilde\stc \times \tilde\stc
      \ar[d]^{\otimes_{\tilde\stc}}_{}="a" \\
      \tilde\stc \times \tilde\stc
      \ar[r]_{\otimes_{\tilde\stc}}^{}="b" & \tilde\stc
      \ar@{=>}@/_0.4pc/ "a";"b"_a
    }
  }, \quad 
  \vcenter{%
    \xymatrix{%
      \mathbf{1}\times \tilde\stc \ar[r] \ar@/_/[dr]
      \drtwocell<\omit>{<-1.8>l}
      &
      \tilde\stc\times \tilde\stc \ar[d]^\otimes_{\tilde\stc} \\
      & \tilde\stc
    }
  },\quad
  \vcenter{%
    \xymatrix{%
      \tilde\stc \times \mathbf{1} \ar[r] \ar@/_/[dr]
      \drtwocell<\omit>{<-1.8>r}
      &
      \tilde\stc\times \tilde\stc \ar[d]^\otimes_{\tilde\stc} \\
      & \tilde\stc
    }
  }
\end{equation}
and so on.  This is done locally (i.e. on $U_0$) using the
gr-structure of $\grc$. For example, consider the multiplicative
structure. We can induce one via the diagram:
\begin{equation*}
  \xymatrix{%
    \grc \times \grc \ar[r]^{G\times G} \ar[d]_{\otimes_\grc}
    \drtwocell<\omit>{\nu} &
    **[r]\tilde\stc\rvert_{U_0} \times \tilde\stc\rvert_{U_0}
    \ar[d]^{\tilde\otimes} \\
    \grc \ar[r]_{G} & **[r]\tilde\stc\rvert_{U_0}}
\end{equation*}
where $\nu$ is determined by the choice of a quasi-inverse of
$G$.  The multiplication morphism $\tilde\otimes$ defined by this
diagram lives over $U_0$, and we must show that the gluing data
for $\tilde\stc$ in $\stacks (\s)$ yield appropriate gluing data
for $\tilde\otimes$ along $U_\bullet$, so that it will glue to a
global monoidal functor $\otimes_{\tilde\stc}$.

The two possible pull-backs of the morphism $\tilde\otimes$ to
$U_1$ are related by a 2-morphism $\epsilon\colon
\tilde\otimes_1\Rightarrow \tilde\otimes_0$ which is defined by
the following diagram:
\begin{equation}
  \label{eq:50}
  \vcenter{%
    \xymatrix{%
      \grc_1 \times \grc_1
      \ar[rr]^{F\times F}_{}="0011"
      \ar[ddd]_{\otimes_1}
      \ar@/_/[dr]|{G_1\times G_1}="tttt1"
      &  &
      \grc_0 \times \grc_0
      \ar[ddd]^{\otimes_0}
      \ar@/^/[dl]|{G_0\times G_0}
      \\
      & \tilde\stc\times \tilde\stc
      \ar@{=>}@/^/ "0011";"tttt1"^(.3){\mu\times\mu}
      \ar@/_0.7pc/@<-1ex>[d]_(.3){\tilde\otimes_1}_{}="s1"^{}="a1"
      \ar@/^0.7pc/@<+1ex>[d]^(.3){\tilde\otimes_0}^{}="s0"_{}="a0"
      \ar@{=>} "a1";"a0"^\epsilon
      \\
      & \tilde\stc
      \\
      \grc_1
      \ar@/^/[ur]_{G_1}^{}="t1"_(.7){}="tt1"
      \ar[rr]_{F}^{}="01"
      \ar@{=>}@/_/ "01";"tt1"^{\mu}
      &  &
      \grc_0
      \ar@/_/[ul]^(.7){G_0}_{}="t0"
      \ar@{=>}@/_0.3pc/ "s1";"t1"_{\nu_1}
      \ar@{=>}@/^0.3pc/ "s0";"t0"^{\nu_0}
      \ar @{:>} {[]!<-1ex,8ex>};{"01"!<6ex,1ex>}^(.3)\lambda
    }}
\end{equation}
(In the previous diagram we are still using the ``missing index''
convention introduced above.)  We need the 2-arrow $\epsilon$
to be coherent on $U_2$, namely that, after pulling everything
back to $U_2$, it satisfies
\begin{equation}
  \label{eq:51}
  d_1^*\epsilon = d_2^*\epsilon\circ d_0^*\epsilon.
\end{equation}
To see how this may be true, we should consider the pasting
diagram resulting from the three possible pullbacks
of~\eqref{eq:50} to $U_2$.
\begin{equation}
  \label{eq:52}
  \xymatrix@C-1pc{%
    \grc_1\times\grc_1 \ar [0,10]^{F_{01}\times F_{01}}
    \ar [6,0]_{\otimes_1}
    \ar @/^0.2pc/ [1,2]^{G_1\times G_1}
    & & & & & & & & & &
    \grc_0\times\grc_0 \ar [6,0]^{\otimes_0}
    \ar @/_0.2pc/ [1,-5]_{G_0\times G_0}
    \\
    & & \tilde\stc\times\tilde\stc
    \ar @{=} [0,3]
    \ar @{=} @/_0.2pc/[1,1]
    \ar [6,0]_(.35){\tilde\otimes_1}^(.6){}="1"
    & & & \tilde\stc\times\tilde\stc
    \ar [6,0]^(.35){\tilde\otimes_0}_(.6){}="0" \\ 
    & & & \tilde\stc\times\tilde\stc
    \ar @{=} @/_0.2pc/ [-1,2]
    \ar [6,0]^(.35){\tilde\otimes_2}_(.6){}="2" \\ \\
    & & & &\grc_2\times\grc_2 \ar @/_0.2pc/ [-2,-1]_{G_2\times G_2}
    \ar @/^/ [-4,-4]^(.6){F_{12}\times F_{12}}
    \ar [6,0]^(.4){\otimes_2}
    \ar @/_/ [-4,6]_{F_{02}\times F_{02}} \\
    \\
    \grc_1 \ar [0,10]^(.56){F_{01}}
    \ar @/^0.2pc/ [1,2]^{G_1}
    & & & & & & & & & &
    \grc_0 \ar @/_0.2pc/ [1,-5]_{G_0} \\
    & & \tilde\stc \ar @{=} [0,3] \ar @{=} @/_0.2pc/[1,1] 
    & & & \tilde\stc \\
    & & & \tilde\stc \ar @{= } @/_0.2pc/ [-1,2]\\ \\
    & & & & \grc_2 \ar @/_/ [-4,6]_{F_{02}}
    \ar @/_0.2pc/ [-2,-1]_{G_2}
    \ar @/^/ [-4,-4]^{F_{12}}
    \ar @{:>} "1";"0"_{\epsilon_{01}}
    \ar @{:>} @/^0.4pc/ "2";"1"^{\epsilon_{12}}
    \ar @{:>} @/_0.4pc/ "2";"0"^(.3){\epsilon_{02}}
  }
\end{equation}
For added clarity, we have not explicitly written the 2-arrows,
except for the $\epsilon_{ij}$, $ij=01, 02, 12$.  In the diagram,
the bottom pyramid is the diagram in eq.~\eqref{eq:48}, whereas
the the top pyramid is simply the product of same with
itself. The three lateral prisms are the three pull-backs
of~\eqref{eq:50}. All these blocks 2-commute, in the sense that
their faces form a system of commutative 2-arrows.  Therefore,
all the faces of these parts in the diagram in \eqref{eq:52}
commute, forcing the central triangular prism to commute as well.

Since relation~\eqref{eq:51} is satisfied, we can invoke
Claim~\ref{cl:2} to conclude that there exists a global functor
\begin{equation*}
  \otimes_{\tilde\stc} \colon \tilde\stc\times \tilde\stc \lto \tilde\stc
\end{equation*}
equipped with a 2-arrow
\begin{equation*}
  \rho\colon \tilde\otimes \Longrightarrow \otimes_{\tilde\stc}\rvert_{U_0}
\end{equation*}
such that the relation
\begin{equation*}
  \rho_0 \circ \epsilon = \rho_1
\end{equation*}
is satisfied over $U_1$ (again, $\rho_0=d_1^*\rho$ and
$\rho_1=d_0^*\rho$).  As a result, there is a morphism of
additive functors over $U_0$:
\begin{equation*}
  \xymatrix{%
    \grc \times \grc \ar[r]^{G\times G} \ar[d]_{\otimes_\grc}
    \drtwocell<\omit>{\tilde\nu} &
    **[r]\tilde\stc\rvert_{U_0} \times \tilde\stc\rvert_{U_0}
    \ar[d]^{\otimes_{\tilde\stc}\rvert_{U_0}} \\
    \grc \ar[r]_{G} & **[r]\tilde\stc\rvert_{U_0}}
\end{equation*}
where $\tilde\nu = \nu\circ (\rho^{-1} * (G\times G))$, such that
the following diagram holds
\begin{equation}
  \label{eq:53}
  \vcenter{%
    \xymatrix{%
      \grc_1 \times \grc_1
      \ar[rr]^{F\times F}_{}="0011"
      \ar[ddd]_{\otimes_1}
      \ar@/_/[dr]|{G_1\times G_1}="tttt1"
      &  &
      \grc_0 \times \grc_0
      \ar[ddd]^{\otimes_0}
      \ar@/^/[dl]|{G_0\times G_0}
      \\
      & \tilde\stc\times \tilde\stc
      \ar@{=>}@/^/ "0011";"tttt1"^(.3){\mu\times\mu}
      \ar[d]^(.3){\otimes_{\tilde\stc}}_{}="s1"^{}="s0"
      \\
      & \tilde\stc
      \\
      \grc_1
      \ar@/^/[ur]_{G_1}^{}="t1"_(.7){}="tt1"
      \ar[rr]_{F}^{}="01"
      \ar@{=>}@/_/ "01";"tt1"^{\mu}
      &  &
      \grc_0
      \ar@/_/[ul]^(.7){G_0}_{}="t0"
      \ar@{=>}@/_0.5pc/ "s1";"t1"_{\nu_1}
      \ar@{=>}@/^0.5pc/ "s0";"t0"^{\nu_0}
      \ar @{:>} {[]!<-1ex,8ex>};{"01"!<6ex,1ex>}^(.3)\lambda
    }  }
\end{equation}
in place of~\eqref{eq:50}, over $U_1$.

For $\otimes_{\tilde\stc}$ to be a true monoidal functor, there
must be an associator 2-morphism $\tilde a$, as in the first
diagram in~\eqref{eq:49}, satisfying the appropriate coherence
condition---the pentagon identity.  The cube
\begin{equation*}
  \xymatrix{%
    \grc\times \grc\times \grc
    \ar[rr]^{(\otimes,\Id)}
    \ar[dd]_(.3){(\Id,\otimes)}
    \ar@/_/[dr]
    & & \grc\times \grc
    \ar@/^/[dr] 
    \ar[dd]_(.3){\otimes}|\hole \\
    & \tilde\stc\times \tilde\stc\times \tilde\stc
    \ar[rr]^(.34){(\otimes_{\tilde\stc},\Id)}
    \ar[dd]^(.3){(\Id,\otimes_{\tilde\stc})}
    & & \tilde\stc\times \tilde\stc
    \ar[dd]^(.3){\otimes_{\tilde\stc}}
    \ar@{=>} +<-8ex,1ex>;!<-6ex,5ex>^(.6){\scriptscriptstyle(\tilde \nu,\Id_G)}
    \\
    \grc\times \grc
    \ar[rr]^(.4){\otimes}|(.53)\hole
    \ar@/_/[dr]
    & & \grc
    \ar@/^/[dr]|(.62)\hole
    \ar@{=>}@/_/ !<0ex,6ex>;!<-6ex,0ex>^a
    \\
    & \tilde\stc\times \tilde\stc
    \ar[rr]_{\otimes_{\tilde\stc}}
    \ar@{=>} +<-1ex,6ex>;+<-5ex,2ex>_(.6){\scriptscriptstyle(\Id_G,\tilde \nu)}
    & & \tilde\stc
    \ar@{=>}@/_/ !<0ex,7ex>;!<-7ex,0ex>^(.6){\tilde a}
    \ar@{=>} !<0ex,10ex>;!<-6ex,7ex>_(.6){\tilde \nu}
    \ar@{=>} !<-10ex,0ex>;!<-8ex,5ex>^(.6){\tilde \nu}
  }
\end{equation*}
will define the 2-arrow $\tilde a$ over $U_0$. Its two pullbacks
to $U_1$, together with the appropriate number of copies (four)
of~\eqref{eq:53}, and the cube resulting from the fact that
$(F,\lambda)$ is an additive functor (hence compatible with the
associators), give rise to a 2-commutative hypercube from which
it follows that $d_0^*\tilde a=d_1^*\tilde a$.  Applying
Claim~\ref{cl:1} implies that there exists a globally
well-defined 2-arrow
\begin{equation*}
  a^{\tilde\stc}\colon
  \otimes_{\tilde\stc}\circ (\otimes_{\tilde\stc}, \Id)
  \Longrightarrow \otimes_{\tilde\stc}\circ (\Id,\otimes_{\tilde\stc})
\end{equation*}
such that $a^{\tilde\stc}\rvert_{U_0} = \tilde a$.

As for the pentagon identity, it holds locally, that is, on
$U_0$, since it does for $a$. More precisely, from the cube
expressing the pentagon identity for $a$, and the five copies of
the above cube, we can form another hypercubic diagram. Of the
two remaining cubes of this tesseract, one corresponds to the
operation $(X,Y,Z,W)\to (X\otimes Y)\otimes (Z\otimes W)$,
whereas the latter corresponds to the pentagon identity for
$\tilde a$. Since all the first seven cubes are 2-commutative, so
must be the case for the latter. It follows that $a^{\tilde\stc}$
satisfies the pentagon identity.

This proves that $\tilde\stc$ has a monoidal functor. The rest of
the functors comprising the gr-stack structure are treated in an
analogous (and equally lengthy!) way. The same is true for the
coherence conditions that should be satisfied by the
multiplication functor, the inverse, and the identity. After
ref.\ \cite{MR723395}, this amounts to the commutativity of
\begin{equation*}
  \xymatrix{%
    (X\otimes X^*)\otimes X \ar[d]_{e\otimes X}
    \ar[rr]^a & & X\otimes (X^*\otimes X )
    \ar[d]^{X\otimes \eta} \\
    I\otimes X \ar[r]_{l_X} & X & X\otimes I \ar[l]^{r_X}
  }
\end{equation*}
written in an object-wise fashion. This corresponds to the
2-commutativity of
\begin{equation*}
  \xymatrix{%
    \tilde\stc \times \tilde\stc \times \tilde\stc
    \ar[rr]^{(\otimes,\Id)}_{}="es"
    \ar[dd]_{(\Id,\otimes)}^{}="etas"
    & &
    \tilde\stc \times \tilde\stc \ar[dd]^{\otimes}_(.7){}="as"\\
    & \tilde\stc
    \ar[ul]|{(\Id,*,\Id)}
    \ar@/_0.3pc/[ur]^(.25){(I,\Id)}_{}="ls"^{}="et"
    \ar@/^0.2pc/[dl]_(.25){(\Id,I)}^{}="rs"_{}="etat"
    \ar@/^0.2pc/[dr]^\Id_(.3){}="rt"^(.3){}="lt"
    \ar@{=>}@/^/ "ls";"lt"^l
    \ar@{=>}@/_/ "rs";"rt"_r
    \ar@{=>}@/^/ "etas";"etat"_\eta
    \ar@{=>}@/_/ "es";"et"^e
    \\
    \tilde\stc\times \tilde\stc \ar[rr]_{\otimes}^(.7){}="at"
    \ar@{:>}@/_/ "as";"at"^(.6)a
    & & \tilde\stc
  }
\end{equation*}
These observations complete the proof.\qed

\backmatter%
\bibliography{general}%
\bibliographystyle{smfalpha}%

\end{document}